\documentclass[a4paper,11pt]{amsart}
\usepackage[left=2.5cm, right=2.5cm,top=2.5cm,bottom=3cm]{geometry}

\usepackage{amsmath,amssymb,amsthm,mathtools}
\usepackage{booktabs,array}
\usepackage{enumitem}
\usepackage{placeins}

\usepackage{tikz}
\usepackage{algorithm}
\usepackage{algpseudocode}
\usetikzlibrary{arrows.meta}
\usepackage{pgfplots}
\pgfplotsset{compat=1.18}
\usepackage[hidelinks]{hyperref}
\usepackage{xcolor}
\DeclareTextSymbolDefault{\textsection}{OMS}

\newtheorem{theorem}{Theorem}[section]
\newtheorem{proposition}[theorem]{Proposition}
\newtheorem{lemma}[theorem]{Lemma}
\newtheorem{corollary}[theorem]{Corollary}

\theoremstyle{definition}
\newtheorem{definition}[theorem]{Definition}
\newtheorem{assumption}[theorem]{Assumption}
\newtheorem{example}[theorem]{Example}
\theoremstyle{remark}
\newtheorem{remark}[theorem]{Remark}

\newcommand{\R}{\mathbb{R}}
\newcommand{\Z}{\mathbb{Z}}
\newcommand{\C}{\mathbb{C}}
\newcommand{\RP}{\mathbb{RP}}
\newcommand{\T}{\mathbb{T}}
\newcommand{\RT}{\mathbb{RT}}
\DeclareMathOperator{\sgn}{sgn}
\DeclareMathOperator{\conv}{conv}
\DeclareMathOperator{\Trop}{Trop}
\newcommand{\Log}{\operatorname{Log}}
\newcommand{\cA}{\mathcal{A}}
\newcommand{\cB}{\mathcal{B}}
\newcommand{\E}{\mathbb{E}}
\newcommand{\cS}{\mathcal{S}}

\newcommand{\cO}{\mathcal{O}}

\title{Recovering the Topology of a Real Plane Algebraic Curve by Tropical Homotopy}
\author{Weixun Deng}
\address{Academy of Mathematics and Systems Science, Chinese Academy of Sciences, Beijing, China}
\email{wxdeng@amss.ac.cn}
\date{\today}

\begin{document}
\begin{abstract}
We present a tropical homotopy method for computing the numbers of ovals and unbounded components of a real plane algebraic curve, together with the pairing of its ends at infinity. Under explicit support and nondegeneracy assumptions, the homotopy starts from a tropical patchwork and encounters finitely many events. At Morse event points, the curve changes by local surgery. To determine the global attachments, we follow descending separatrices of a single gradient-like field to a fixed collection of base curves. The order of their landing points defines a finite combinatorial state, updated by local rules that also handle events at infinity and simultaneous event points. This recovers component data without tracing the intervening curves. For the dequantizing lifting, given by the negative logarithms of coefficient magnitudes, we prove an explicit event window when the lifting is generic. For fixed support and coefficient signs, with independent, identically distributed log-normal magnitudes, the expected number of events is bounded by a constant divided by the standard deviation of the logarithms. When all event points are Morse, correctness is conditional on complete, correctly classified event data, a sufficiently small starting parameter, and correctly labelled and ordered landing points. A numerical implementation illustrates its behaviour on sparse curves of high degree and on coefficients spanning many scales; these conditions are not certified.
\end{abstract}
\maketitle

\section{Introduction}\label{sec:intro}

Let $f=\sum_{a\in\cA}c_ax^a\in\R[x_1,x_2]$ and $C=\{f=0\}\subset\R^2$. Under the assumptions of \S\ref{sec:setting}, $C$ is a disjoint union of $n_o$ ovals and $n_u$ properly embedded lines. We compute $(n_o,n_u)$ and the pairing of the ends by the lines, but not the isotopy type of $(\R^2,C)$, which also depends on the nesting of the ovals. The number $n_u$ is read off from the edge polynomials; the global problem is to count the ovals and to pair the ends of the lines.

Standard algorithms project $C$ to a line and analyse the fibres over the critical values of the projection \cite{Col75,GVEK96,EKW07,BEKS13}; the elimination polynomials can have degree $d(d-1)$ for a curve of degree $d$, and coefficients of very different sizes put the features of $C$ at exponentially different scales, even for sparse $f$. We follow instead a homotopy from the tropical limit, in logarithmic coordinates: for a generic lifting $h\colon\cA\to\R$ let $f_t=\sum_ac_at^{h(a)}x^a$ and $C_t=\{f_t=0\}$, $0<t\le1$. (i) For small $t$, $C_t$ has the topology of the tropical patchwork $T(f,h)$, glued along the axes from the positive tropical curves of the four quadrants \cite{Vir84,Vir06,IMS07} (\S\ref{sec:tropical}). (ii) The topology of $C_t$ changes only at finitely many parameters, the events, at singular points of $C_t$ and tangencies with the divisors at infinity; if these event points are Morse, as for almost every lifting except on edges of the Newton polygon detached from the rest of the support, $C_t$ changes by local surgeries (\S\ref{sec:events}). (iii) A saddle, or the creation of two points at infinity, needs the components that contain its local arcs. The descending separatrices of one gradient-like field land on a fixed \emph{bottom}, the curve $C_{t_0}$ at a level $t_0$ before the first event, read on $T(f,h)$, and the small curves born later; their landing points cut it into intervals, which label the arcs of all later curves, and a finite state listing every component as a sequence of intervals is updated by local rules at every event (\S\ref{sec:global}). The resulting algorithm is correct if the list of events is complete and correctly classified, $t_0$ is small enough and the landing points are correctly labelled and ordered (Theorem~\ref{thm:algorithm}); our implementation does not certify them.

For the dequantizing lifting $h=-\log|c|$ every event satisfies $s\,g\le r$, where $s=1+\log(1/t)$, $g\ge0$ is a gap of the coefficients and $r$ an explicit constant (Theorem~\ref{thm:window}). For coefficients $\varsigma_ae^{\sigma u_a}$ with fixed signs and independent standard Gaussian $u_a$, the expected numbers of events and of Morse event points are $O(1/\sigma)$ (Theorem~\ref{thm:prob}), whereas for other liftings they need not tend to zero (Proposition~\ref{prop:otherrays}). In our experiments (\S\ref{sec:algorithm}) the solver that localizes the events by dominant terms was the fastest of the tested methods for few terms and high degree, and slower than projection methods for dense polynomials of moderate degree; every answer of our two solvers contradicting an exact reference was flagged, an observation, not a guarantee.

\emph{Relation to previous work.} Following a coefficient path through the discriminant by Morse theory is classical: for pencils of real curves \cite[\S\S4.6.7--4.6.9]{DK00}, from a chamber of known topology to bound numbers of components \cite[\S3]{FNR17}, \cite[\S3]{DRR25}, and for isotopy types of exponential sums over circuits \cite{BCDPRR23}; completed discriminant contours control isotopy types in \cite{RR17}. Separatrices were used in polygonizations of implicit surfaces to find the pieces that take part in a change of topology \cite{SH97}. Our states describe the Reeb graph of the height on the swept surface, and the separatrices belong to a Morse--Smale complex, which are computed combinatorially for piecewise-linear functions on triangulated surfaces \cite{EHZ03,CEHNP04}; here the surface is only implicit. The real polyhedral homotopy of \cite{EdW23} also starts at the tropical limit, but on paths certified to avoid the discriminant, and for T-curves the real scheme is computed combinatorially \cite{GJKMPSWZ26}; we cross the discriminant and update the global attachment at each crossing. As far as we know, new are the finite combinatorial state given by ordered landing points on a fixed bottom, which also handles the ends at infinity and simultaneous event points, and, for the dequantization path, the event window and the probability bound.

\section{Setting}\label{sec:setting}

\subsection{Support, quadrants and the curve at infinity}\label{ssec:support}\label{ssec:infinity}

Let $f=\sum_{a\in\cA}c_ax^a\in\R[x_1,x_2]$ with finite support $\cA\subset\Z_{\ge0}^2$ and $c_a\neq0$, $C=\{f=0\}\subset\R^2$, and $\Delta=\conv(\cA)$. For $\epsilon\in\{\pm1\}^2$ let $\cO_\epsilon=\{x\in\R^2:\sgn x_i=\epsilon_i\}$. A \emph{circuit} of $\cA$ is a minimal affinely dependent subset: three collinear points, or four points no three of which are collinear. Throughout we assume:

\begin{assumption}\label{ass:H0}
(H0) $0\in \cA$, and $\cA$ contains points $(k,0)$ and $(0,l)$ with $k,l\ge1$.
\end{assumption}

Thus $f(0)\neq0$, and neither coordinate axis is contained in $C$. The edges $E_1=\Delta\cap\{a_2=0\}$ and $E_2=\Delta\cap\{a_1=0\}$ are the \emph{axis edges}, the other edges of $\Delta$ the \emph{outer edges}. For non-constant $f$, (H0) holds after a generic affine change of coordinates, which preserves the topology of $C$ but can destroy sparsity.

Let $X_\Delta$ be the real toric surface of $\Delta$ \cite{Ful93}. By (H0) its affine chart at the vertex $0$ is $\R^2$, and $\partial X=X_\Delta\setminus\R^2=\bigcup_eD_e$ is the union of the toric divisors of the outer edges $e$. Let $\nu_e\in\Z^2$ be the primitive inner normal of $e$, $m_e=\min_{a\in\Delta}\langle a,\nu_e\rangle$, attained exactly on $e$, and $\mu_e\in\Z^2$ with $\det(\mu_e,\nu_e)=1$. The monomial change $x_i=y^{\mu_{e,i}}z^{\nu_{e,i}}$ is an automorphism of $(\R^*)^2$, and
\[
f=z^{m_e}g^e(y,z),\qquad g^e(y,z)=\sum_{a\in\cA}c_a\,y^{\langle a,\mu_e\rangle}z^{\langle a,\nu_e\rangle-m_e},
\]
with $g^e$ polynomial in $z$. The set $U_e=\R^*_y\times\R_z$ is a chart of $X_\Delta$ in which $z\ne0$ is the torus and $D_e^\circ=\{z=0\}$ is $D_e$ minus the torus-fixed points of the vertices of $e$. Let $\overline C\subset X_\Delta$ be the curve defined by $f$ ($C$ in $\R^2$, $\{g^e=0\}$ in $U_e$), \emph{smooth} at a point if this local equation has a non-zero gradient there. Its trace on $D_e^\circ$ is the zero set of the \emph{edge polynomial} $\psi^e(y)=g^e(y,0)=\sum_{a\in\cA\cap e}c_a\,y^{\langle a,\mu_e\rangle}$, of degree $L_e$, the lattice length of $e$, after division by a monomial. Near a point $(y_0,0)$, $y_0\neq0$, the sides $z>0$ and $z<0$ lie in two different quadrants, since $\nu_e$ is primitive and so has an odd coordinate.

\begin{lemma}[no corner points]\label{lem:fixed}
$\overline{C}$ contains no torus-fixed point of $X_\Delta$. Hence $\overline{C}$ lies in the smooth locus of $X_\Delta$, and $\overline{C}\cap\partial X=\bigsqcup_{e\text{ outer}}\{y\in\R^*:\ \psi^e(y)=0\}$.
\end{lemma}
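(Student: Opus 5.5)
The plan is to work vertex by vertex. Each torus-fixed point $p_v$ of $X_\Delta$ corresponds to a vertex $v$ of $\Delta$. The affine toric chart at $v$ is given by the cone of $\Delta$ at $v$. In that chart I will show that the local equation of $\overline C$ does not vanish at $p_v$. The vertex $0$ needs a separate check: $p_0$ is the origin of $\R^2$, and $f(0)=c_0\neq0$ by (H0), so $p_0\notin\overline C$.

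For a vertex $v\neq0$, I use that $v$ is a vertex of $\Delta$ and $v\in\cA$, since every vertex of $\conv(\cA)$ lies in $\cA$. Hence $c_v\neq0$. In the chart $U_v$ the local equation of $\overline C$ is $x^{-v}f=\sum_{a\in\cA}c_a x^{a-v}$. This is written in the coordinates dual to the semigroup generated by $\Delta-v$, or by the cone $\R_{\ge0}(\Delta-v)\cap\Z^2$. The monomials $x^{a-v}$ with $a\ne v$ lie in the maximal ideal of $p_v$. So the local equation takes the value $c_v\neq0$ at $p_v$, and $p_v\notin\overline C$.

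The main obstacle is making this well defined when $X_\Delta$ is singular at $p_v$, i.e.\ when the cone at $v$ is not unimodular. There the chart is the real points of $\operatorname{Spec}\R[\sigma_v^\vee\cap\Z^2]$ rather than $\R^2$. I would argue directly with the coordinate ring. The function $x^{-v}f$ is a regular function in that ring. Its value at the distinguished point, which is the character sending every nontrivial semigroup element to $0$, equals the coefficient of the identity, namely $c_v$. This avoids choosing smooth coordinates.

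For the second assertion: $\overline C$ avoids all fixed points, and the singular locus of a toric surface is contained in the set of torus-fixed points. Hence $\overline C$ lies in the smooth locus. Also $\partial X\setminus\{\text{fixed points}\}=\bigsqcup_e D_e^\circ$, and the $D_e^\circ$ are disjoint, with the axis edges contributing no divisor in $\partial X$ because of the $\R^2$ chart. In $U_e$ the trace $\overline C\cap D_e^\circ$ is $\{g^e(y,0)=0\}=\{\psi^e(y)=0,\ y\in\R^*\}$, by the formula for $g^e$ above. Taking the union over outer edges gives the stated decomposition.
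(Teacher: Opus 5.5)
Your proposal is correct and is essentially the paper's proof. You evaluate the regular function $x^{-v}f$ at $p_v$, where every $x^{a-v}$ with $a\neq v$ vanishes because $a-v$ is a non-zero element of the pointed cone generated by $\Delta-v$, so the value is $c_v\neq0$; you then use that a normal toric surface is smooth off its fixed points and that $\partial X$ minus the fixed points is the disjoint union of the $D_e^\circ$. Your separate check at $v=0$ is just the case $f(0)=c_0$ of the general argument.
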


\begin{proof}
Let $p_v$ be the fixed point of a vertex $v$ of $\Delta$. On the affine chart of $p_v$ the function $x^{-v}f=\sum_ac_ax^{a-v}$ is regular, and $x^{a-v}$, $a\ne v$, vanishes at $p_v$, since $a-v$ is a non-zero element of the pointed cone generated by $\Delta-v$; so $x^{-v}f(p_v)=c_v\neq0$. A normal toric surface is smooth outside its fixed points \cite{Ful93}, and the other points of $\partial X$ are those of the $D_e^\circ$.
\end{proof}

\begin{assumption}\label{ass:A1}
(A1) $\overline C$ is smooth and meets each $D_e$ transversally.
\end{assumption}

\begin{corollary}[ends]\label{cor:ends}
$\overline{C}$ is smooth and transverse to $\partial X$ at $p\in D_e^\circ$ if and only if $y(p)$ is a simple root of $\psi^e$; then $p$ is the limit of exactly two ends of $C$, one in each of the two quadrants adjacent to $D_e^\circ$ at $p$. Under (A1), $C$ is a smooth curve whose components are ovals (embedded circles) and lines (properly embedded copies of $\R$), and the number of lines is $n_u=\#(\overline C\cap\partial X)=\sum_{e\text{ outer}}\#\{y\in\R^*:\ \psi^e(y)=0\}$.
\end{corollary}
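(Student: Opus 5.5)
The plan is to work entirely in the chart $U_e=\R^*_y\times\R_z$ near a point $p=(y_0,0)$ of $D_e^\circ$, where $\overline C$ is $\{g^e=0\}$. By Lemma~\ref{lem:fixed} every point of $\overline C\cap\partial X$ lies in some $D_e^\circ$, so it suffices to analyse these charts.

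\emph{Step 1: the equivalence.} The local equation at $p$ is $g^e$, and $g^e(y,0)=\psi^e(y)$ (up to the monomial factor, a unit on $\R^*$). Transversality to $D_e^\circ=\{z=0\}$ at a smooth point means that the tangent line of $\overline C$ is not $\{z=0\}$, i.e.\ $\partial_y g^e(p)\neq0$. If $\psi^e(y_0)=0$ and $\psi^{e\prime}(y_0)\ne0$, then $\partial_yg^e(p)\neq0$, so $p$ is smooth and transverse. Conversely, smoothness with transversality forces $\partial_yg^e(p)\ne0$, hence $y_0$ is a simple root. The two notions are thus the same condition $\partial_y g^e(p)\neq0$.

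\emph{Step 2: ends.} Next I apply the implicit function theorem. Near $p$, $\overline C$ is a graph $y=\phi(z)$ with $\phi(0)=y_0$, and removing $z=0$ leaves two arcs, $z>0$ and $z<0$. The map $(y,z)\mapsto x$ is a diffeomorphism of $\{z\ne0\}$ onto the torus, so these arcs are two ends of $C$ tending to $p$. By the remark before Lemma~\ref{lem:fixed}, they lie in the two different quadrants adjacent to $D_e^\circ$ at $p$. No other end of $C$ accumulates at $p$, because near $p$ the curve $\overline C$ is exactly this graph.

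\emph{Step 3: global structure and count.} Under (A1), $C=\overline C\cap\R^2$ is a smooth curve, since $f$ itself is the local equation there. $\overline C$ is a closed subset of the compact surface $X_\Delta$, hence compact. By (A1) and Step~1 it is a compact smooth $1$-manifold in the smooth locus, hence a finite disjoint union of circles, each meeting $\partial X$ transversally in finitely many points. The intersection is finite because each $\psi^e$ is a nonzero polynomial; it is nonzero since $e$ contains at least two points of $\cA$ with $c_a\ne0$.

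Removing these points, a circle of $\overline C$ containing no point of $\partial X$ is an oval. A circle containing $k\ge1$ points breaks into $k$ open arcs. Each arc is properly embedded in $\R^2$, because its closure adds only points of $\partial X$, so it is a line.

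For the count I plan to pair points with lines. By Step~2 each point at infinity is the limit of exactly two ends, and each line has two ends. Around a circle with $k$ points there are $k$ arcs and $k$ points, so counting over all circles gives $n_u=\#(\overline C\cap\partial X)$. The final formula then follows from Lemma~\ref{lem:fixed}.

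The main obstacle is Step~3, specifically justifying that $\overline C$ is a compact topological $1$-manifold near $\partial X$, which is what makes the arc decomposition valid. This relies on Lemma~\ref{lem:fixed}, which excludes fixed points where the charts $U_e$ do not apply. It also uses that $X_\Delta$ is compact, which I take as standard for projective toric surfaces \cite{Ful93}.
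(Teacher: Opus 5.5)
Your proposal is correct and follows essentially the same route as the paper: the identity $\partial_yg^e(p)=(\psi^e)'(y_0)$ gives the equivalence and, via the implicit function theorem, one end on each side of $D_e^\circ$; then compactness of $\overline C$ under (A1) splits $C$ into circles and arcs ending on $\partial X$. The only difference is cosmetic and lies in the count: the paper double-counts ends, $2n_u=2\,\#(\overline C\cap\partial X)$, whereas you count $k$ arcs for the $k$ points on each circle of $\overline C$; both are valid.
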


\begin{proof}
At a zero $p=(y_0,0)$ of $g^e$, $\partial_yg^e(p)=(\psi^e)'(y_0)$. If it is non-zero, $\overline{C}$ is smooth at $p$ and transverse to $\{z=0\}$, with one end on each side; otherwise it is singular or tangent to $D_e^\circ$ at $p$. Under (A1), $\overline C$ is a compact smooth curve, the closure of $C=\overline C\setminus\partial X$, so $C$ consists of circles and of arcs with ends at points of $\partial X$; each line has two ends and each point of $\overline C\cap\partial X$ is the limit of two, so $2n_u=2\,\#(\overline C\cap\partial X)$.
\end{proof}

\subsection{The data \texorpdfstring{$\Sigma$}{Sigma} and the homotopy}\label{ssec:sigma}\label{ssec:homotopy}

Under (A1) we orient $C$ so that $\{f>0\}$ lies on its left. An \emph{end} of $C$ is a pair $(p,Q)$ with $p\in\overline C\cap\partial X$ and $Q$ one of the two quadrants adjacent to $D_e^\circ$ at $p$, standing for the branch of $C$ in $Q$ that tends to $p$; each line runs from its \emph{initial} to its \emph{terminal} end. We compute the data $\Sigma(f)$: the components of $C$, as abstract ovals and lines, each line with its initial and its terminal end. They give $n_o$, $n_u$ and the pairing of the ends, but not the isotopy type of $(\R^2,C)$ (Remark~\ref{rem:isotopy}).

A \emph{lifting} is a function $h\colon\cA\to\R$, along which we deform the coefficients:
\begin{equation}\label{eq:deformation}
f_t(x)=\sum_{a\in \cA}c_a\,t^{h(a)}x^a,\qquad C_t=\{f_t=0\}\subset\R^2,\qquad 0<t\le1,
\end{equation}
so that $f_1=f$; we write $c(t)=(c_at^{h(a)})_{a\in\cA}$. All $f_t$ have the support $\cA$, hence the charts above, with local equations $g^e_t$ and edge polynomials $\psi^e_t$; where $f_t$ satisfies (A1) we write $\Sigma_t=\Sigma(f_t)$. Adding to $h$ an affine function $a\mapsto\lambda_0+\langle\lambda,a\rangle$ replaces $f_t(x)$ by $t^{\lambda_0}f_t(t^{\lambda_1}x_1,t^{\lambda_2}x_2)$; this changes neither the topology of any $C_t$, nor the events, nor the subdivision $\cS_h$ of \S\ref{sec:tropical}, but it changes the field of \S\ref{ssec:field} and hence the separatrices. The main lifting is the dequantizing one, $h=-\log|c|$, for which $f_t=f^{(s)}:=\sum_a\varsigma_a|c_a|^sx^a$ with $\varsigma=\sgn c$ and $s=1+\log(1/t)\ge1$ (Proposition~\ref{prop:dequant}).

Let $E_\cA=\sum_{\beta\in\cB}d_\beta c^\beta$, $\cB\subset\Z^{\cA}$ finite and all $d_\beta\neq0$, be the principal $\cA$-determinant \cite[Ch.~10]{GKZ94}. It vanishes exactly when, for some face $F$ of $\Delta$ (including $\Delta$), the truncation $f^F=\sum_{a\in F}c_ax^a$ has a singular point in $(\C^*)^2$, for an edge a multiple root in $\C^*$; up to a monomial it is a product of powers of the discriminants $\Delta_{\cA\cap F}$ of the faces \cite[Ch.~10, Thm.~1.2]{GKZ94}, and $\Delta_{\cA\cap e}$ is the discriminant of $\psi^e$. If $E_\cA(c)\neq0$, then (A1) holds: the face $\Delta$ excludes singular points of $C$ in the torus, the axis edges singular points on the axes, and the outer edges non-transverse points at infinity (Corollary~\ref{cor:ends}). The input is a polynomial with (H0) and $E_\cA(c)\neq0$, which makes the events finitely many (Lemma~\ref{lem:eventpoly}); the start needs a generic lifting (Definition~\ref{def:generic}), and the surgeries need Morse event points (Assumption~\ref{ass:morse}).

\section{The start: positive tropical curves and their gluing}\label{sec:tropical}

For small $t$ the topology of $C_t$ is determined by four positive tropical curves, one for each open quadrant, glued along the axes; only the explicit form of the gluing (\S\ref{ssec:gluing}) is new.

\subsection{Positive tropical curves}
For a quadrant $\cO_\epsilon$ put $C_{t,\epsilon}=C_t\cap\cO_\epsilon$. For $0<t<1$ the map $\Log_t\colon\cO_\epsilon\to\R^2$, $\Log_t(x)=(\log|x_1|,\log|x_2|)/\log(1/t)$, is a diffeomorphism with inverse $w\mapsto\epsilon\,t^{-w}:=(\epsilon_1t^{-w_1},\epsilon_2t^{-w_2})$; the axes correspond to $w_i\to-\infty$. With $\sigma_\epsilon(a)=\sgn(c_a)\,\epsilon_1^{a_1}\epsilon_2^{a_2}$,
\begin{equation}\label{eq:logform}
\varphi_t^\epsilon(w):=f_t(\epsilon\,t^{-w})=\sum_{a\in \cA}\sigma_\epsilon(a)\,|c_a|\,t^{-(\langle a,w\rangle-h(a))},
\end{equation}
so $C_{t,\epsilon}$ is the image of the zero set of $\varphi_t^\epsilon$, in which the terms with the largest $\langle a,w\rangle-h(a)$ dominate as $t\to0$ (the max-convention of \cite{Mik05,IMS07}). Let $F_h(w)=\max_{a\in \cA}(\langle a,w\rangle-h(a))$ and $\cA(w)$ the set of the $a$ attaining it; the tropical curve $\Trop(h)$ is the set of $w$ with $|\cA(w)|\ge2$. The regular subdivision $\cS_h$ of $\cA$ is the projection of the lower faces of $\conv\{(a,h(a)):a\in \cA\}$; the sets $\cA(w)$ are the sets of points of $\cA$ over the lower faces, and $w\mapsto\conv \cA(w)$ is the inclusion-reversing duality between the cells of $\Trop(h)$ (with its regions) and those of $\cS_h$ \cite[\S3]{MS15}. An edge of $\Trop(h)$ is orthogonal to its dual edge $e$ of $\cS_h$; it is a segment if $e$ is interior to $\Delta$, and otherwise a ray along the outer normal of the edge of $\Delta$ containing $e$.

\begin{definition}\label{def:generic}
The lifting $h$ is \emph{generic} if for every lower face the points $a\in \cA$ with $(a,h(a))$ on it are affinely independent. Then $\cS_h$ is a triangulation with vertices in $\cA$, the other points of $\cA$ being lifted strictly above the lower hull, $\Trop(h)$ is trivalent, and $|\cA(w)|\le3$ for all $w$.
\end{definition}

The non-generic liftings lie in finitely many hyperplanes of $\R^{\cA}$, since a non-generic lower face contains the lift of a circuit, a linear condition on $h$. A sign function $\sigma\colon \cA\to\{\pm1\}$ splits $\cA$ into $\cA^\pm_\sigma=\sigma^{-1}(\pm1)$; let $F_h^{\pm}(w)=\max_{a\in \cA^\pm_\sigma}(\langle a,w\rangle-h(a))$, with $\max\emptyset=-\infty$. For $\sigma=\sigma_\epsilon$, $F_h^+$ (resp.\ $F_h^-$) is the exponent of the largest positive (resp.\ negative) term of~\eqref{eq:logform}.

\begin{definition}\label{def:tropplus}
The \emph{positive tropical curve} of $(\sigma,h)$ is $\Trop_+(\sigma,h)=\{w:\ F_h^+(w)=F_h^-(w)\}$, the set of $w$ such that $\cA(w)$ contains points of both signs; we write $\Gamma_\epsilon=\Trop_+(\sigma_\epsilon,h)$. An edge $[a,a']$ of $\cS_h$ is \emph{bichromatic} if $\sigma(a)\neq\sigma(a')$.
\end{definition}

For generic $h$, $\Trop_+(\sigma,h)$ is the limit of the logarithmic images of $\{\sum_a\sigma(a)t^{h(a)}x^a=0\}$ (Proposition~\ref{prop:limit}; see \cite{SW05,Ale13,JSY22}).

\begin{lemma}[structure of $\Trop_+$]\label{lem:structure}
Let $h$ be generic. (i) $\Trop_+(\sigma,h)$ is the union of the closed edges of $\Trop(h)$ dual to the bichromatic edges of $\cS_h$. (ii) Every triangle of $\cS_h$ has no or exactly two bichromatic edges, so every vertex of $\Trop(h)$ on $\Trop_+(\sigma,h)$ has exactly two adjacent edges in it. (iii) $\Trop_+(\sigma,h)$ is a properly embedded piecewise-linear $1$-submanifold of $\R^2$, whose components are closed polygons and properly embedded polygonal lines whose ends are rays dual to bichromatic edges of $\cS_h$ on $\partial\Delta$.
\end{lemma}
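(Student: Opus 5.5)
The plan is to use the duality between cells of $\Trop(h)$ and cells of $\cS_h$ recalled above, and to reduce each part to local combinatorics at a single cell. For generic $h$, $\cS_h$ is a triangulation with vertex set contained in $\cA$, and every other point of $\cA$ is lifted strictly above the lower hull. Hence for each $w$ the set $\cA(w)$ is exactly the vertex set of the dual cell of $\cS_h$: a single point if $w$ lies in a region, the two endpoints of the dual edge if $w$ lies in the relative interior of an edge of $\Trop(h)$, and the three vertices of the dual triangle if $w$ is a vertex of $\Trop(h)$.

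\emph{Part (i).} By definition, $w\in\Trop_+(\sigma,h)$ exactly when $\cA(w)$ contains points of both signs. In a region this is impossible, since $\cA(w)$ is a single point. On an open edge it holds iff the dual edge is bichromatic. At a vertex it holds iff the three vertices of the dual triangle do not all have the same sign, which is equivalent to the triangle having a bichromatic edge. The vertices of $\Trop(h)$ on $\Trop_+$ are therefore endpoints of closed edges dual to bichromatic edges. So $\Trop_+$ is the union of those closed edges.

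\emph{Part (ii).} Two-colour the vertices of a triangle. Either all three vertices have the same colour, giving no bichromatic edge, or the colours split $2{+}1$. In the latter case the lone vertex is joined to each of the other two by a bichromatic edge, and the third edge is monochromatic, giving exactly two. Since $\Trop(h)$ is trivalent and the three edges at a vertex are dual to the three sides of its triangle, each vertex of $\Trop(h)$ lying on $\Trop_+$ has exactly two of its adjacent edges in $\Trop_+$.

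\emph{Part (iii).} By (i) and (ii), $\Trop_+$ is a locally finite union of segments and rays in which every vertex has degree $2$. Interior points of edges are manifold points automatically. Distinct edges of $\Trop(h)$ meet only at vertices, so there are no crossings. The set is closed, and local finiteness makes it properly embedded. A connected graph in which every vertex has degree $2$, with finitely many edges, is either a cycle or a path whose two ends are unbounded edges. Since $\Trop(h)$ has finitely many edges, each component is therefore a closed polygon or a polygonal line ending in two rays. Finally, by the duality statement quoted above, the rays of $\Trop(h)$ are exactly the edges dual to edges of $\cS_h$ lying on $\partial\Delta$. So the ends of the polygonal lines are rays dual to bichromatic boundary edges.

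The main point needing care is (iii): we must check that a component cannot be a single ray or a line with one bounded end. This follows from the degree-$2$ property, since every bounded edge has both endpoints at vertices, and each such vertex contributes a continuation. We must also confirm that $\Trop_+$ is nonsingular at vertices, meaning its two edges there are distinct and not collinear-overlapping. This holds because distinct sides of the dual triangle have distinct normal directions.
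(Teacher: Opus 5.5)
Your proof is correct and follows essentially the same route as the paper: the duality between cells of $\Trop(h)$ and cells of $\cS_h$ gives (i) cell by cell, the $2{+}1$ colouring of a triangle gives (ii), and (iii) follows from the locally finite, degree-two structure together with the fact that the rays are dual to edges of $\cS_h$ on $\partial\Delta$. Your extra remarks spell out points the paper leaves implicit: finiteness forces every non-closed component to end in two rays, and the two edges at a vertex leave it in distinct directions.
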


\begin{proof}
(i) On the relative interior of an edge dual to $e=[a,a']$, $\cA(w)=\{a,a'\}$, so it lies in $\Trop_+$ if and only if $e$ is bichromatic; at a vertex dual to a triangle $\tau$, $\cA(w)$ is the vertex set of $\tau$, so the vertex lies in $\Trop_+$ if and only if $\tau$ is not monochromatic, and it is then an endpoint of the edges dual to the bichromatic edges of $\tau$; in the open regions $|\cA(w)|=1$. (ii) Three vertices coloured with two colours are monochromatic or split $2+1$, and then exactly the two edges at the odd vertex are bichromatic. (iii) Hence $\Trop_+$ is a closed, locally finite union of segments and rays with all vertices of degree two, and its rays are dual to edges on $\partial\Delta$.
\end{proof}

Each edge of $\Gamma_\epsilon$ separates a region where $F_h^+>F_h^-$ from one where $F_h^->F_h^+$, the \emph{positive} and \emph{negative regions}.

\subsection{Convergence and isotopy in one quadrant}

The following two statements are the tropical form of Viro's patchworking theorem; we prove the first, which makes the role of the signs explicit.

\begin{proposition}[$\Gamma_\epsilon$ is the limit]\label{prop:limit}
Let $h$ be generic and $K\subset\R^2$ compact. For every $\delta>0$ there is $t_0>0$ such that for $0<t<t_0$: (i) $\Log_t(C_{t,\epsilon})\cap K$ lies in the $\delta$-neighbourhood of $\Gamma_\epsilon$, and $\Gamma_\epsilon\cap K$ in the $\delta$-neighbourhood of $\Log_t(C_{t,\epsilon})$; (ii) on $K$ minus the $\delta$-neighbourhood of $\Gamma_\epsilon$, $\varphi_t^\epsilon$ is positive on the positive and negative on the negative regions.
\end{proposition}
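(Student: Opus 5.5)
Write $\varphi_t^\epsilon(w)=t^{-F_h(w)}\,\Phi_t(w)$ with
\[
\Phi_t(w)=\sum_{a\in\cA}\sigma_\epsilon(a)\,|c_a|\,t^{F_h(w)-(\langle a,w\rangle-h(a))}.
\]
The exponents $F_h(w)-(\langle a,w\rangle-h(a))$ are $\ge0$. They vanish exactly for $a\in\cA(w)$. Since $t^{-F_h}>0$, $\varphi_t^\epsilon$ and $\Phi_t$ have the same sign and the same zeros. The plan is to show that, uniformly on compact sets away from $\Gamma_\epsilon$, $\Phi_t$ is dominated by the terms of one sign. Near $\Gamma_\epsilon$ we will exhibit sign changes of $\Phi_t$ at scale $\delta$. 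This proves (ii) and the first half of (i) together, and then the second half of (i) by the intermediate value theorem.

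\textbf{Step 1 (away from $\Gamma_\epsilon$).} Let $K'=K\setminus N_\delta(\Gamma_\epsilon)$, a compact set, with $N_\delta$ the open $\delta$-neighbourhood. On $K'$ we have $F_h^+\neq F_h^-$ everywhere, and the continuous function $|F_h^+-F_h^-|$ (with the convention $-\infty$ if one sign class is empty) has a positive minimum $\eta$ on $K'$. Suppose $w\in K'$ lies in the positive region, so $F_h^+(w)=F_h(w)\ge F_h^-(w)+\eta$. Then $\Phi_t(w)\ge\min_a|c_a|$ from one maximizing positive term. Every negative term has size at most $|c_a|t^{\eta}$, so $\Phi_t(w)\ge \min|c|-|\cA|\max|c|\,t^\eta>0$ for $t<t_0$. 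The negative region is symmetric. This gives (ii), and also shows that $\varphi_t^\epsilon$ has no zeros in $K'$, i.e.\ $\Log_t(C_{t,\epsilon})\cap K\subset N_\delta(\Gamma_\epsilon)$. Genericity is not even needed here.

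\textbf{Step 2 (points of $\Gamma_\epsilon$ are approached).} Let $w\in\Gamma_\epsilon\cap K$. By Lemma~\ref{lem:structure}, $w$ lies on a closed edge of $\Trop(h)$ dual to a bichromatic edge $[a,a']$ of $\cS_h$. We may choose points $w^\pm$ within distance $\delta$ of $w$, off $\Gamma_\epsilon$, on either side of that edge: near the relative interior the edge separates a region with $\cA=\{a\}$ from one with $\cA=\{a'\}$, and near a vertex we use Lemma~\ref{lem:structure}(ii), so that the positive and negative regions both touch $w$. Then $w^+$ lies in the positive region and $w^-$ in the negative region. By compactness, finitely many such pairs cover $\Gamma_\epsilon\cap K$ at scale $\delta$, say by covering $\Gamma_\epsilon\cap K$ with finitely many $\delta/2$-balls. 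Apply Step 1 to the finite set of all $w^\pm$, where $\eta$ is the minimum of $|F^+_h-F^-_h|$ over these points. For $t<t_0$, $\Phi_t(w^+)>0>\Phi_t(w^-)$. On the segment $[w^-,w^+]$, of length $<2\delta$, $\Phi_t$ vanishes, which gives a point of $\Log_t(C_{t,\epsilon})$ within $2\delta$ of $w$. Replacing $\delta$ by $\delta/2$ completes (i).

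\textbf{Expected obstacle.} The only delicate point is the uniformity in Step 1. Its justification is continuity of $F_h^\pm$ plus compactness of $K'$, which handles both the case $\cA^-_\sigma=\emptyset$ and the boundary of $K'$. The choice of sides at vertices in Step 2 is where genericity enters, through Lemma~\ref{lem:structure}(ii).
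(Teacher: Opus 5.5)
Your proof is correct and follows essentially the same route as the paper. The paper bounds the positive and negative parts by $m\,t^{-F_h^\pm}$ from below and $|\cA|M\,t^{-F_h^\pm}$ from above and uses a positive lower bound of $|F_h^+-F_h^-|$ on $K$ minus the open $\delta$-neighbourhood, which is your Step~1 after dividing by $t^{-F_h}$. It then proves the second half of (i) exactly as you do, with a finite cover by $\delta/2$-balls, points $w_\pm$ on the two sides of a bichromatic edge, and the intermediate value theorem; the slack in your constants ($3\delta/2$ rather than $\delta$) is absorbed by your final rescaling of $\delta$.
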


\begin{proof}
Let $m=\min_a|c_a|$, $M=\max_a|c_a|$, and $\varphi_t^\epsilon=P-N$, where $P$ (resp.\ $N$) is the sum of the terms of~\eqref{eq:logform} with $\sigma_\epsilon(a)=+1$ (resp.\ $-1$). Then $m\,t^{-F_h^+(w)}\le P(w)\le|\cA|M\,t^{-F_h^+(w)}$, and similarly for $N$ with $F_h^-$. Hence if $F_h^+(w)-F_h^-(w)\ge\eta>0$, then $P(w)/N(w)\ge\frac{m}{|\cA|M}t^{-\eta}>1$ for $t<(m/|\cA|M)^{1/\eta}$, and symmetrically (if a sign class is empty, $\Gamma_\epsilon=\emptyset$ and $\varphi_t^\epsilon$ has constant sign). As $|F_h^+-F_h^-|$ is continuous and positive on $K$ minus the open $\delta$-neighbourhood of $\Gamma_\epsilon$, it has a positive lower bound there; this proves (ii) and the first half of (i). For the second half, every $w_0\in\Gamma_\epsilon$ lies in a closed edge dual to a bichromatic edge (Lemma~\ref{lem:structure}), so arbitrarily close to $w_0$, on the two sides of the edge, there are points $w_\pm$ with $\pm(F_h^+-F_h^-)(w_\pm)>0$. Cover $\Gamma_\epsilon\cap K$ by finitely many balls $B(w_0^{(j)},\delta/2)$, $w_0^{(j)}\in\Gamma_\epsilon\cap K$, choose such $w^{(j)}_\pm$ in them, and let $\eta'$ be the minimum of $|F_h^+-F_h^-|$ at these points. For $t<(m/|\cA|M)^{1/\eta'}$, $\varphi_t^\epsilon$ has a zero on each segment $[w^{(j)}_-,w^{(j)}_+]\subset B(w_0^{(j)},\delta/2)$; so each point of $\Gamma_\epsilon\cap K$ is within $\delta$ of a zero.
\end{proof}

\begin{theorem}[Viro's patchworking, tropical form]\label{thm:viro-quadrant}
Let $h$ be generic. There is $t_0\in(0,1)$ such that for $0<t<t_0$ the curve $C_{t,\epsilon}$ is smooth for every $\epsilon$, and the pair $(\R^2,\Log_t(C_{t,\epsilon}))$ is isotopic to $(\R^2,\Gamma_\epsilon)$.
\end{theorem}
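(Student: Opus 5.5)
The plan is to work in logarithmic coordinates, where $C_{t,\epsilon}$ corresponds to the zero set $Z_t=\{\varphi_t^\epsilon=0\}$. Since $\Log_t$ is a diffeomorphism, it suffices to show two things for small $t$: that $0$ is a regular value of $\varphi_t^\epsilon$, and that $(\R^2,Z_t)$ is isotopic to $(\R^2,\Gamma_\epsilon)$. We do not try to control $Z_t$ directly in a fixed compact set. Instead we rescale: $\varphi_t^\epsilon$ has the form $\sum_a \sigma_\epsilon(a)|c_a|e^{\tau(\langle a,w\rangle-h(a))}$ with $\tau=\log(1/t)\to\infty$. We cover $\R^2$ by three kinds of pieces adapted to $\Trop(h)$: the complements of the $\delta$-neighbourhood of $\Trop(h)$, neighbourhoods of the relative interiors of edges, and small balls around vertices. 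The unbounded ends of the rays need separate care, addressed below. By Proposition~\ref{prop:limit}(ii), $Z_t$ is empty on the first kind of piece; in fact this holds outside the $\delta$-neighbourhood of $\Gamma_\epsilon$ on any compact set.

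On a tubular piece around an edge dual to $[a,a']$ we divide by the dominant monomial $t^{-(\langle a,w\rangle-h(a))}$. The quotient is $\pm|c_a|\pm|c_{a'}|e^{\tau\langle a'-a,w\rangle+\text{const}}$ plus terms that are $O(e^{-\tau\eta})$ in $C^1$ for some $\eta>0$ depending on $\delta$; here we use genericity, since all other points of $\cA$ lie strictly above the lower hull. If the edge is monochromatic, the quotient has no zero. If it is bichromatic, the main part is a strictly monotone function of the coordinate $u=\langle a'-a,w\rangle$, with derivative of order $\tau$ near its zero. The implicit function theorem then shows that $Z_t$ is, on this piece, a smooth graph over the edge direction, $C^0$-close to the edge, and transverse to $\nabla u$.

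Near a vertex dual to a triangle $\tau=\{a,a',a''\}$ we again divide by one monomial. By genericity the three points are affinely independent, so the change of variables $(u_1,u_2)=(\langle a'-a,w\rangle,\langle a''-a,w\rangle)$ is linear and invertible. In these variables the main part is the trinomial $\pm|c_a|\pm|c_{a'}|e^{\tau u_1}\pm|c_{a''}|e^{\tau u_2}$ plus exponentially small $C^1$ errors. If the triangle is monochromatic, there is no zero. If it is bichromatic, it has the $2+1$ sign pattern of Lemma~\ref{lem:structure}(ii). After the substitution $X_i=e^{\tau u_i}$, the zero set of a trinomial with two signs equal is a smooth, nonsingular arc (essentially a line in the positive quadrant), with gradient bounded away from zero in the rescaled coordinates. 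Hence it is a single smooth arc that enters and leaves through the two bichromatic edge tubes, and the perturbation keeps it so. Gluing the local descriptions shows that $Z_t$ meets each region in exactly the expected arcs and is smooth, so $C_{t,\epsilon}$ is smooth there.

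The main obstacle is uniformity at infinity: $\Trop(h)$ has rays, and Proposition~\ref{prop:limit} is stated only on compacta. We would handle this by noting that along a ray dual to an edge $[a,a']\subset\partial\Delta$ the same two-term domination persists. The gap $\eta$ to the third-largest term does not shrink along the ray; it grows linearly, because the remaining points lie strictly on one side of the supporting line. Likewise, in each unbounded region the single dominant term beats all others by a gap bounded below outside a $\delta$-neighbourhood of $\Trop(h)$. The estimates above are therefore uniform on all of $\R^2$, and the tube description extends along the rays to infinity. With smoothness and the arc-by-arc correspondence in hand, $Z_t$ is a properly embedded $1$-manifold lying in a tubular neighbourhood of $\Gamma_\epsilon$ and transverse to its fibres, each fibre meeting it once. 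An isotopy along the fibres, compactly supported in each tube and extended by the identity, carries $Z_t$ to $\Gamma_\epsilon$, suitably smoothed at the corners; this is the required isotopy of pairs. Finally we take $t_0$ as the minimum over the four quadrants $\epsilon$.
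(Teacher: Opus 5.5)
Your argument is correct in outline, but it takes a different route from the paper. The paper does not prove Theorem~\ref{thm:viro-quadrant} directly; its proof is a citation of Viro's Main Patchwork Theorem, and it adds three things:
\begin{itemize}
\item It checks the theorem's hypothesis: for a generic lifting every truncation to a cell of $\cS_h$ is nondegenerate, since a singular point would give a linear relation among the independent vectors $(1,a)$.
\item It identifies the patchwork of the charts with $\Gamma_\epsilon$ via \cite{Vir01,IMS07}.
\item It remarks that those proofs use only the asymptotics $t\to0$, so real $h$ is allowed.
\end{itemize}

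You instead give a self-contained tropical proof in the coordinates $w$. It rests on single-term dominance off $\Trop(h)$, on a monotone binomial in tubes around the edges, and on a $2+1$ trinomial with a curve of type $e^{U_1}+e^{U_2}=1$ in balls around the vertices, together with uniform gaps along the rays and in the unbounded regions. This is the same mechanism the paper uses later, in Proposition~\ref{prop:limit} and Lemma~\ref{lem:positions}, to locate $C_{t_0}$. There, however, the balls have radius $2R$ in $u=sw$, whereas your balls of fixed radius in $w$ make all errors exponentially small in $\log(1/t)$, which suits an isotopy statement.

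What your route buys:
\begin{itemize}
\item It is elementary.
\item It covers real liftings without appealing to the internals of Viro's proof.
\item It produces the explicit arc-by-arc description that Lemmas~\ref{lem:bottom} and~\ref{lem:positions} need anyway.
\end{itemize}

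What the citation buys: Viro's theorem is a statement on the compact surface $X_\Delta$. It therefore also gives (A1), transversality to the divisors and the crossings of the axes, which Theorem~\ref{thm:viro-plane} needs for the gluing. Your argument is confined to one open quadrant, and going further would require the matching one-dimensional analysis at the axes and at infinity.

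Two small corrections.
\begin{itemize}
\item Consider a ray dual to an edge $[a,a']$ of $\cS_h$ lying on an edge $E$ of $\Delta$. The other points of $\cA\cap E$ lie on the supporting line, not strictly on one side of it. Their gap is therefore constant along the ray rather than growing linearly. It is still positive by genericity, which is all you need.
\item At a vertex, the tube fibrations of the two adjacent edges intersect. They must be replaced near the vertex by one consistent family of transversals before the statement ``each fibre meets $Z_t$ once'' makes sense.
\end{itemize}
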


\begin{proof}[References]
Viro's Main Patchwork Theorem \cite{Vir84}, \cite[Thms.~1.7.A and 4.3.A]{Vir06}, see also \cite{Ris93}, \cite[Ch.~2]{IMS07}, applies to a regular subdivision of $\Delta$ with nondegenerate truncations $f^\tau$. For a generic lifting the cells are triangles and segments with affinely independent vertices, and such truncations are nondegenerate: at a singular point $x\in(\C^*)^2$ of $f^\tau$ the non-zero numbers $c_ax^a$ would give a linear relation $\sum_{a\in\tau}c_ax^a(1,a)=0$ among the linearly independent vectors $(1,a)$; their zero sets in the open quadrants are empty or smooth properly embedded copies of $\R$. The theorem identifies the piece of $\overline{C_t}$ over each quadrant with the patchwork of the charts of the truncations, which under $\Log_t$ is isotopic to $\Gamma_\epsilon$ \cite{Vir01}, \cite[Ch.~2]{IMS07}. The proofs assume an integer lifting but use only the asymptotics $t\to0$, and apply to real $h$.
\end{proof}

The threshold $t_0$ is not explicit; \S\ref{sec:events} avoids it, as the topology is constant up to the first event.

\subsection{Gluing the quadrants}\label{ssec:gluing}

We glue the four curves $\Gamma_\epsilon$ along the coordinate axes, compactifying each copy of $\R^2$ in the negative directions, which correspond to the axes; this is the real tropical plane of \cite{Mik05,Vir01} (see also \cite{RS23}).

\begin{definition}\label{def:RT2}
Let $\T=[-\infty,\infty)$. The \emph{real tropical plane} is $\RT^2=\bigl(\bigsqcup_{\epsilon}\T^2\times\{\epsilon\}\bigr)/\sim$, where $(w,\epsilon)\sim(w,\epsilon')$ if $w_i=-\infty$ for every $i$ with $\epsilon_i\neq\epsilon'_i$. The map $\mathcal E(w,\epsilon)=(\epsilon_1e^{w_1},\epsilon_2e^{w_2})$ ($e^{-\infty}=0$) is a homeomorphism $\RT^2\to\R^2$; it sends $\R^2\times\{\epsilon\}$ onto $\cO_\epsilon$, where $\mathcal E^{-1}$ is $\Log_t$ up to the factor $\log(1/t)$, the common boundary $\{w_2=-\infty\}$ of two adjacent copies onto a half-axis, and $(-\infty,-\infty)$ to the origin. With $\overline\Gamma_\epsilon$ the closure of $\Gamma_\epsilon\times\{\epsilon\}$ in $\T^2\times\{\epsilon\}$, the \emph{tropical patchwork} is $T(f,h)=\mathcal E\bigl(\bigcup_{\epsilon}\overline\Gamma_\epsilon\bigr)\subset\R^2$.
\end{definition}

\begin{lemma}[ends and gluing]\label{lem:ends}
Assume (H0) and let $h$ be generic.
(i) Let $\gamma\subset\Gamma_\epsilon$ be a ray dual to an edge $e=[a,a']$ of $\cS_h$ on $\partial\Delta$. If $e\subset E_1$, then $\gamma=\{(w_1^e,s):s\le s_0\}$ with $w_1^e=(h(a')-h(a))/(a'_1-a_1)$, and its closure in $\T^2$ adds the point $(w_1^e,-\infty)$; symmetrically for $e\subset E_2$. If $e$ lies on an outer edge, $\gamma$ has no limit point in $\T^2\setminus\R^2$. Hence $\overline\Gamma_\epsilon\setminus\Gamma_\epsilon$ is finite, and $(-\infty,-\infty)\notin\overline\Gamma_\epsilon$.
(ii) For $e\subset E_1$, $e$ is bichromatic for $\sigma_\epsilon$ if and only if it is for $\sigma_{\epsilon'}$, $\epsilon'=(\epsilon_1,-\epsilon_2)$; the rays dual to $e$ in $\Gamma_\epsilon$ and $\Gamma_{\epsilon'}$ coincide, and their endpoints at $w_2=-\infty$ are identified in $\RT^2$. Symmetrically for $E_2$.
(iii) $T(f,h)$ is a closed, properly embedded topological $1$-submanifold of $\R^2$ not containing the origin. It meets $\{x_2=0\}$ exactly in the points $(\pm e^{w_1^e},0)$, $e$ running over the edges of $\cS_h$ in $E_1$ that are bichromatic for $\sigma_{(\pm1,1)}$, and near each of them it is a segment crossing the axis orthogonally. Symmetrically for $\{x_1=0\}$.
\end{lemma}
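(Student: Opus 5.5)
The plan is to work directly from the duality between $\Trop(h)$ and $\cS_h$ and the structure result Lemma~\ref{lem:structure}. For (i), let $e=[a,a']\subset E_1$, so $a_2=a'_2=0$. The ray dual to $e$ is the set of $w$ with $\cA(w)\supseteq\{a,a'\}$ near its unbounded part. It is orthogonal to $e$, hence vertical, and it points along the outer normal $(0,-1)$ of $E_1$. The equation $\langle a,w\rangle-h(a)=\langle a',w\rangle-h(a')$ reduces to $a_1w_1-h(a)=a'_1w_1-h(a')$, which gives $w_1=w_1^e$. So $\gamma=\{(w_1^e,s):s\le s_0\}$, and its closure in $\T^2$ adds exactly $(w_1^e,-\infty)$.

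If instead $e$ lies on an outer edge with primitive inner normal $\nu$, the ray is $w_0-s\nu$ with $s\to\infty$. By (H0) the edge $E_1$ and the edge $E_2$ are both faces of $\Delta$ through $0$, so any outer edge normal $\nu$ satisfies $\nu_1<0$ or $\nu_2<0$, i.e.\ $-\nu$ has a positive coordinate. I would check this by noting that an outer edge's inner normal cannot lie in the closed positive quadrant: otherwise $\langle a,\nu\rangle$ would be minimised at $0$, so the edge would pass through $0$ and be an axis edge. Hence some coordinate of $w_0-s\nu$ tends to $+\infty$, and there is no limit point in $\T^2\setminus\R^2$, whose points have a coordinate $-\infty$ and the other bounded above. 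The set $\overline\Gamma_\epsilon\setminus\Gamma_\epsilon$ is therefore finite, since there are finitely many rays. The point $(-\infty,-\infty)$ is not attained, because every added point has a finite first or second coordinate.

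For (ii), note that $\sigma_{\epsilon'}(a)=\sigma_\epsilon(a)(-1)^{a_2}$. On $E_1$ we have $a_2=0$, so the two sign functions agree on $e$, and bichromaticity is the same for both. The rays coincide because $\Trop(h)$ and $\cS_h$ do not depend on $\epsilon$. The endpoints $(w_1^e,-\infty)$ are identified by the definition of $\sim$, since $\epsilon$ and $\epsilon'$ differ only in the second coordinate.

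For (iii), the main step is a local analysis at the glued points; this is where I expect the work to be. Away from the axes, $T(f,h)$ is the image under $\mathcal E$ of the manifolds $\Gamma_\epsilon$ of Lemma~\ref{lem:structure}(iii). The origin is excluded by (i). At an axis point $p=(\pm e^{w_1^e},0)$, the bichromatic edges of $\cS_h$ on $E_1$ are distinct, so their $w_1^e$ are distinct. Near $p$, therefore, the only pieces of $T(f,h)$ are the two vertical rays of $\overline\Gamma_{(\pm1,1)}$ and $\overline\Gamma_{(\pm1,-1)}$. Their images under $\mathcal E$ are the segments $\{x_1=\pm e^{w_1^e},\ 0\le \pm x_2<\delta\}$, and together they form a segment crossing the axis orthogonally. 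I need care to show that no other piece of $\Gamma_\epsilon$ accumulates at $p$. For this I use the fact that a neighbourhood of $p$ corresponds to $w_2\ll0$ with $w_1$ near $w_1^e$. In that region $\cA(w)\subset E_1$, because the terms with $a_2>0$ are dominated. There $\Trop(h)$ consists only of the vertical rays dual to the edges of $\cS_h$ in $E_1$.

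The remaining properties follow from this. Closedness and properness follow from local finiteness together with the fact that only finitely many points are added, and $\mathcal E$ is a homeomorphism. The statement for $\{x_1=0\}$ is symmetric.
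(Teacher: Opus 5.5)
Your proposal is correct and follows essentially the same route as the paper. Part (i) uses the outer normal of the edge of $\Delta$, together with the fact that an outer edge cannot have an inner normal in the closed positive quadrant. Part (ii) uses $\sigma_{\epsilon'}(a)=\sigma_\epsilon(a)(-1)^{a_2}$. Part (iii) glues the two vertical rays at each added point.

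One step needs a reason you did not state: the $w_1^e$ are pairwise distinct because they are the breakpoints of the convex function $w_1\mapsto\max_{a\in\cA\cap E_1}(a_1w_1-h(a))$, not merely because the edges are distinct. Your domination argument for $w_2\ll0$ already exhibits this function, and the paper gives the same reason as the slopes of the lower hull on $E_1$.
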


\begin{proof}
(i) The ray dual to an edge $e$ on an edge $E$ of $\Delta$ has the direction of the outer normal $n_E$. For $E_1$, $n_E=(0,-1)$, and on the ray $\langle a,w\rangle-h(a)=\langle a',w\rangle-h(a')$ with $a_2=a'_2=0$, i.e.\ $w_1=w_1^e$. If $E$ is an outer edge, then $0\notin E$; as $\Delta\subset\R^2_{\ge0}$ and $0\in\Delta$, a vector $n$ with $n_1,n_2\le0$ attains $\max_\Delta\langle n,\cdot\rangle=0$ on a face containing $0$, so $n_E$ has a positive coordinate and the ray leaves every compact subset of $\T^2$. (ii) For $a\in E_1$, $\sigma_\epsilon(a)=\sgn(c_a)\epsilon_1^{a_1}$ does not depend on $\epsilon_2$, and the ray depends only on $e$ and $h$. (iii) By (i) and (ii), each point added at $w_2=-\infty$ is the common endpoint of exactly two rays, in adjacent quadrants, which $\mathcal E$ maps to a segment of the line $x_1=\epsilon_1e^{w_1^e}$; the $w_1^e$ are the slopes of the lower hull of the lifting on $E_1$, hence pairwise distinct. Away from the axes $T(f,h)$ is homeomorphic to $\bigsqcup_\epsilon\Gamma_\epsilon$ (Lemma~\ref{lem:structure}), and by (i) it is closed and misses the origin.
\end{proof}

Thus the crossing points with $\{x_2=0\}$ are the patchwork of $f_t(x_1,0)$: a root of sign $\pm$ appears at $w_1^e$ if and only if $e$ is bichromatic for the signs $\sgn(c_a)(\pm1)^{a_1}$ (Descartes' rule in tropical form). Walking along the bichromatic edges of $\cS_h$ in each quadrant and crossing to the adjacent quadrant at bichromatic edges in $E_1\cup E_2$ gives the components of $T(f,h)$ and, at the rays dual to edges on outer edges, its ends.

\begin{theorem}[patchworking in $\R^2$]\label{thm:viro-plane}
Assume (H0) and let $h$ be generic. There is $t_0\in(0,1)$ such that for $0<t<t_0$ the curve $C_t$ satisfies (A1) and there is a homeomorphism of $\R^2$ mapping $C_t$ onto $T(f,h)$ and each open quadrant and each open half-axis onto itself.
\end{theorem}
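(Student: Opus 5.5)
We prove the statement quadrant by quadrant with Theorem~\ref{thm:viro-quadrant}, and separately control the curve near the axes and near infinity. The one-quadrant isotopies must then be made compatible along the half-axes before gluing them into a single homeomorphism of $\R^2$. We first take $t_0$ smaller than the thresholds of Theorem~\ref{thm:viro-quadrant} and Proposition~\ref{prop:limit}.

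\emph{Step 1: (A1) for small $t$.} We use the truncations of $f_t$ to the axis edges and outer edges. For the axis edge $E_1$, $f_t(x_1,0)=\sum_{a\in E_1}c_at^{h(a)}x_1^{a_1}$ is a univariate polynomial whose lifting restricted to $E_1$ is generic. Its real roots are therefore simple for small $t$, and they are located near $\pm t^{-w_1^e}$, one for each edge $e$ of $\cS_h$ in $E_1$ that is bichromatic for the relevant signs. This follows from the one-variable case of Proposition~\ref{prop:limit} together with the fact that the dominant binomial on each such edge has a simple root. Hence $C_t$ meets the axes transversally, by Descartes' rule in tropical form.

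The same argument applied to the edge polynomials $\psi^e_t$ gives simple roots, hence transversality at infinity by Corollary~\ref{cor:ends}. Smoothness in the open quadrants is given by Theorem~\ref{thm:viro-quadrant}. Smoothness at axis points follows because transversality to the axis means that $\partial_{x_1}f_t\ne0$ there.

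\emph{Step 2: local models near the axes.} Near a crossing point $x^*=(\pm t^{-w_1^e},0)$ we need a product structure. We rescale $x_1=\pm t^{-w_1^e}u$ and consider the region $|x_2|\le t^{-R}$ with $R\ll0$, which corresponds to $w_2\le R$. There the dominant terms are those of the edge $e$ plus small corrections. Writing $f_t$ as the binomial in $u$ plus $O(t^{\eta})$ uniformly, the implicit function theorem shows that $C_t$ is a graph $u=u(x_2)$ crossing the axis transversally.

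In logarithmic coordinates this region contains the part of the rays dual to $e$ in $\Gamma_\epsilon$ and $\Gamma_{\epsilon'}$ with $w_2\le R$; this is Lemma~\ref{lem:ends}(i),(ii). Away from the crossing points, $f_t$ has no zeros in the strip $w_2\le R$, by the sign argument of Proposition~\ref{prop:limit}(ii) applied to the truncation on $E_1$. The analogous statements hold for $E_2$.

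The neighbourhood of the origin contains no point of $C_t$, since $f_t(0)=c_0t^{h(0)}\ne0$ dominates there. In the same way, near infinity, Lemma~\ref{lem:ends}(i) says that the outer rays are the only parts of $\Gamma_\epsilon$ escaping every compact set, and the $\psi^e$ analysis shows that the curve follows them.

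\emph{Step 3: gluing, and the main obstacle.} We cut $\R^2$ into a compact core in each quadrant, namely $\Log_t^{-1}(K)$ with $K=[R,\infty)^2\cap\{\text{large ball}\}$, together with axis strips and ends. On each core we have the isotopy of Theorem~\ref{thm:viro-quadrant}. On each axis strip, the curve consists of finitely many transverse segments in both the real and the tropical picture, matched by Step 2. We then choose the core isotopies so that they agree with straightening homeomorphisms on the collars $w_2\in[R,R+1]$. This is possible because, by Step 2, the curve in these collars is a union of arcs monotone in $w_2$, close to the vertical rays.

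Finally we extend by a homeomorphism of each strip that preserves the half-axis and carries the real crossing points to those of $T(f,h)$; the endpoints of these arcs match because the crossing order on each half-axis is the order of the $w_1^e$ (Lemma~\ref{lem:ends}(iii)). Each quadrant and each half-axis is preserved by construction.

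The main obstacle is the compatibility on the collars. Theorem~\ref{thm:viro-quadrant} gives only an abstract isotopy of $(\R^2,\Gamma_\epsilon)$. I would therefore redo its conclusion on $K$ with a normalized boundary condition: use the Hausdorff closeness of Proposition~\ref{prop:limit} and the local graph description near each edge to build the isotopy edge by edge. Alternatively, invoke an isotopy-extension argument relative to the collar, in which the curve is already a product.
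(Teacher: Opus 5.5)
Your Steps 1 and 2 are acceptable in outline. Deriving (A1) from the simple roots of $f_t(x_1,0)$, $f_t(0,x_2)$ and $\psi^e_t$ is a legitimate alternative to reading it off Viro's theorem.

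Step 3, however, has a genuine gap. It is the one you flag, and neither of your remedies closes it as stated. Theorem~\ref{thm:viro-quadrant} gives only an abstract isotopy of pairs $(\R^2,\Log_t(C_{t,\epsilon}))\simeq(\R^2,\Gamma_\epsilon)$. It does not say which end of $\Log_t(C_{t,\epsilon})$ goes to which ray of $\Gamma_\epsilon$, and an isotopy of the plane can rotate the cyclic order of the ends. For example, take two disjoint lines whose ends, in cyclic order $1,2,3,4$, are paired $\{1,2\},\{3,4\}$; a quarter turn carries them onto the pairing $\{1,4\},\{2,3\}$.

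So that theorem, together with your product structure near the half-axes, does not even show the following: the arc of $C_t$ crossing the axis near $(\pm t^{-w^e_1},0)$ lies, inside the quadrant, on the component corresponding to the component of $\Gamma_\epsilon$ that contains the ray dual to $e$. Without this there need be no homeomorphism of the quadrant that restricts to your straightenings on the collars. An isotopy-extension argument relative to the collar presupposes exactly such an end-correct identification. The same problem recurs at the outer rays, where you also have to glue, in the charts $U_e$.

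What is missing is an identification that is natural in the ends. The paper takes it from Viro's theorem in its original form on $X_\Delta$. There the homeomorphism $(X_\Delta,\overline{C_t})\cong(X_\Delta,P)$ maps every torus orbit into itself and is assembled chart by chart over the triangles of $\cS_h$. Hence the arc of $C_t$ next to a half-axis is matched with the ray dual to the bichromatic edge of $\cS_h$ in $E_1$ or $E_2$, and this edge is the same for both adjacent quadrants (Lemma~\ref{lem:ends}(ii),(iii)). Only a modification in a thin collar of each half-axis is then needed, and (A1) comes for free.

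Your first remedy, building the identification cell by cell from local models, can be made to work, but it needs more than you list:
\begin{itemize}
\item the model at each vertex of $\Gamma_\epsilon$, where three terms dominate and the curve is a perturbation of $e^X+e^Y=1$ (cf.\ the proof of Lemma~\ref{lem:positions}(a));
\item an argument excluding small extra ovals near $\Gamma_\epsilon$, which the Hausdorff closeness of Proposition~\ref{prop:limit} does not rule out;
\item the same control along the outer rays.
\end{itemize}
That amounts to reproving Viro's theorem rather than applying Theorem~\ref{thm:viro-quadrant}.

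A smaller point: your strip $\{w_2\le R\}$ must also be bounded in $w_1$. Outer rays can run towards $w_2\to-\infty$, for instance for an outer edge $[(2,0),(3,1)]$ with outer normal $(1,-1)$. So ``no zeros away from the crossing points'' fails on the whole strip.
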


\begin{proof}
Viro's theorem \cite{Vir84,Vir06,Ris93} is a statement on $X_\Delta$: for small $t$ the pair $(X_\Delta,\overline{C_t})$ is homeomorphic to $(X_\Delta,P)$, where $P$ is the patchwork of the charts of the truncations to the triangles of $\cS_h$, by a homeomorphism that maps each torus orbit into itself; in particular (A1) holds. By (H0), $\R^2$ is the affine chart at the vertex $0$, whose orbits are the open quadrants, the open half-axes and the origin, so it remains to map $P\cap\R^2$ onto $T(f,h)$ by a homeomorphism of $\R^2$ preserving them. The identification of Theorem~\ref{thm:viro-quadrant} over $\cO_\epsilon$ is made triangle by triangle of $\cS_h$: the chart of a bichromatic triangle corresponds to the vertex of $\Gamma_\epsilon$ dual to it with the halves of its two edges, and the arc of $P$ next to a half-axis to the ray of $\Gamma_\epsilon$ dual to a bichromatic edge of $\cS_h$ in $E_1$, resp.\ $E_2$. This edge, its chart and the crossing point of $T(f,h)$ are the same for the two quadrants adjacent along the half-axis (Lemma~\ref{lem:ends}(ii),(iii)). Near the half-axis, $T(f,h)$ consists of segments orthogonal to it, a standard crossing model, and $P$ of arcs crossing it transversally at the corresponding points; in a thin collar we identify $P$ with this model by a homeomorphism supported in the collar that preserves the half-axis and the two quadrants, and $T(f,h)$ by the identity. Then the identifications of the two adjacent quadrants can be taken to agree near the half-axis, so they glue to a homeomorphism of $\R^2$ preserving the orbits.
\end{proof}

\begin{example}\label{ex:cubic}
Let $f=1+x_1^3+x_2^3-\lambda x_1x_2$, $\lambda>0$, with $h=0$ on the vertices of $\Delta$ and $h(1,1)=-1$. Then $f_t=1+x_1^3+x_2^3-\lambda t^{-1}x_1x_2$, $\cS_h$ is the star triangulation from $(1,1)$, and $\Trop(h)$ has the vertices $(0,-1)$, $(-1,0)$, $(1,1)$. For $\epsilon=(+,+)$ the signs on $\bigl((0,0),(3,0),(0,3),(1,1)\bigr)$ are $(+,+,+,-)$, so $\Gamma_{++}$ is the bounded triangle of $\Trop(h)$, an oval; in each other quadrant exactly one vertex of $\Delta$ has the odd sign, and after gluing the three curves form one unbounded component (Figure~\ref{fig:cubic}).
\end{example}

\begin{figure}[tb]
\centering
\begin{tabular}{@{}c@{\hspace{1.0cm}}c@{}}
\begin{tikzpicture}[scale=0.32]
\newcommand{\tropbase}{%
  \draw[gray!50,line width=1pt] (0,-1)--(0,-2.6); \draw[gray!50,line width=1pt] (-1,0)--(-2.6,0); \draw[gray!50,line width=1pt] (1,1)--(2.4,2.4);
  \draw[gray!50,line width=1pt] (0,-1)--(-1,0)--(1,1)--cycle;}
\begin{scope}[shift={(7,6.6)}]
  \tropbase \draw[line width=1.6pt] (0,-1)--(-1,0)--(1,1)--cycle; \node at (0,3) {$\Gamma_{++}$};
\end{scope}
\begin{scope}[shift={(0,6.6)}]
  \tropbase \draw[line width=1.6pt] (0,-2.6)--(0,-1)--(1,1)--(2.4,2.4); \node at (0,3) {$\Gamma_{-+}$};
\end{scope}
\begin{scope}[shift={(0,0)}]
  \tropbase \draw[line width=1.6pt] (0,-2.6)--(0,-1)--(-1,0)--(-2.6,0); \node at (0,3) {$\Gamma_{--}$};
\end{scope}
\begin{scope}[shift={(7,0)}]
  \tropbase \draw[line width=1.6pt] (-2.6,0)--(-1,0)--(1,1)--(2.4,2.4); \node at (0,3) {$\Gamma_{+-}$};
\end{scope}
\end{tikzpicture} &
\begin{tikzpicture}[scale=0.52]
\draw[->,gray,line width=1pt] (-3.8,0)--(3.8,0) node[right,text=black] {$x_1$};
\draw[->,gray,line width=1pt] (0,-3.8)--(0,3.8) node[above,text=black] {$x_2$};
\foreach \v in {-3,-2,-1,1,2,3} {\draw[gray,line width=1pt] (\v,0.1)--(\v,-0.1); \draw[gray,line width=1pt] (0.1,\v)--(-0.1,\v);}
\node[below] at (1,0) {\scriptsize $1$}; \node[left] at (0,1) {\scriptsize $1$};
\draw[very thick] (1.000,0.368) -- (0.975,0.377) -- (0.951,0.387) -- (0.928,0.397) -- (0.905,0.407) -- (0.882,0.417) -- (0.861,0.427) -- (0.839,0.438) -- (0.819,0.449) -- (0.799,0.461) -- (0.779,0.472) -- (0.760,0.484) -- (0.741,0.497) -- (0.723,0.509) -- (0.705,0.522) -- (0.687,0.535) -- (0.670,0.549) -- (0.654,0.563) -- (0.638,0.577) -- (0.622,0.592) -- (0.607,0.607) -- (0.592,0.622) -- (0.577,0.638) -- (0.563,0.654) -- (0.549,0.670) -- (0.535,0.687) -- (0.522,0.705) -- (0.509,0.723) -- (0.497,0.741) -- (0.484,0.760) -- (0.472,0.779) -- (0.461,0.799) -- (0.449,0.819) -- (0.438,0.839) -- (0.427,0.861) -- (0.417,0.882) -- (0.407,0.905) -- (0.397,0.928) -- (0.387,0.951) -- (0.377,0.975) -- (0.368,1.000) -- (0.368,1.000) -- (0.387,1.025) -- (0.407,1.051) -- (0.427,1.078) -- (0.449,1.105) -- (0.472,1.133) -- (0.497,1.162) -- (0.522,1.191) -- (0.549,1.221) -- (0.577,1.252) -- (0.607,1.284) -- (0.638,1.317) -- (0.670,1.350) -- (0.705,1.384) -- (0.741,1.419) -- (0.779,1.455) -- (0.819,1.492) -- (0.861,1.530) -- (0.905,1.568) -- (0.951,1.608) -- (1.000,1.649) -- (1.051,1.690) -- (1.105,1.733) -- (1.162,1.777) -- (1.221,1.822) -- (1.284,1.868) -- (1.350,1.916) -- (1.419,1.964) -- (1.492,2.014) -- (1.568,2.065) -- (1.649,2.117) -- (1.733,2.171) -- (1.822,2.226) -- (1.916,2.282) -- (2.014,2.340) -- (2.117,2.399) -- (2.226,2.460) -- (2.340,2.522) -- (2.460,2.586) -- (2.586,2.651) -- (2.718,2.718) -- (2.718,2.718) -- (2.651,2.586) -- (2.586,2.460) -- (2.522,2.340) -- (2.460,2.226) -- (2.399,2.117) -- (2.340,2.014) -- (2.282,1.916) -- (2.226,1.822) -- (2.171,1.733) -- (2.117,1.649) -- (2.065,1.568) -- (2.014,1.492) -- (1.964,1.419) -- (1.916,1.350) -- (1.868,1.284) -- (1.822,1.221) -- (1.777,1.162) -- (1.733,1.105) -- (1.690,1.051) -- (1.649,1.000) -- (1.608,0.951) -- (1.568,0.905) -- (1.530,0.861) -- (1.492,0.819) -- (1.455,0.779) -- (1.419,0.741) -- (1.384,0.705) -- (1.350,0.670) -- (1.317,0.638) -- (1.284,0.607) -- (1.252,0.577) -- (1.221,0.549) -- (1.191,0.522) -- (1.162,0.497) -- (1.133,0.472) -- (1.105,0.449) -- (1.078,0.427) -- (1.051,0.407) -- (1.025,0.387) -- (1.000,0.368) -- cycle;
\draw[very thick] (-3.490,3.490) -- (-2.718,2.718) -- (-2.651,2.586) -- (-2.586,2.460) -- (-2.522,2.340) -- (-2.460,2.226) -- (-2.399,2.117) -- (-2.340,2.014) -- (-2.282,1.916) -- (-2.226,1.822) -- (-2.171,1.733) -- (-2.117,1.649) -- (-2.065,1.568) -- (-2.014,1.492) -- (-1.964,1.419) -- (-1.916,1.350) -- (-1.868,1.284) -- (-1.822,1.221) -- (-1.777,1.162) -- (-1.733,1.105) -- (-1.690,1.051) -- (-1.649,1.000) -- (-1.608,0.951) -- (-1.568,0.905) -- (-1.530,0.861) -- (-1.492,0.819) -- (-1.455,0.779) -- (-1.419,0.741) -- (-1.384,0.705) -- (-1.350,0.670) -- (-1.317,0.638) -- (-1.284,0.607) -- (-1.252,0.577) -- (-1.221,0.549) -- (-1.191,0.522) -- (-1.162,0.497) -- (-1.133,0.472) -- (-1.105,0.449) -- (-1.078,0.427) -- (-1.051,0.407) -- (-1.025,0.387) -- (-1.000,0.368) -- (-1.000,0.000) -- (-1.000,-0.368) -- (-0.975,-0.377) -- (-0.951,-0.387) -- (-0.928,-0.397) -- (-0.905,-0.407) -- (-0.882,-0.417) -- (-0.861,-0.427) -- (-0.839,-0.438) -- (-0.819,-0.449) -- (-0.799,-0.461) -- (-0.779,-0.472) -- (-0.760,-0.484) -- (-0.741,-0.497) -- (-0.723,-0.509) -- (-0.705,-0.522) -- (-0.687,-0.535) -- (-0.670,-0.549) -- (-0.654,-0.563) -- (-0.638,-0.577) -- (-0.622,-0.592) -- (-0.607,-0.607) -- (-0.592,-0.622) -- (-0.577,-0.638) -- (-0.563,-0.654) -- (-0.549,-0.670) -- (-0.535,-0.687) -- (-0.522,-0.705) -- (-0.509,-0.723) -- (-0.497,-0.741) -- (-0.484,-0.760) -- (-0.472,-0.779) -- (-0.461,-0.799) -- (-0.449,-0.819) -- (-0.438,-0.839) -- (-0.427,-0.861) -- (-0.417,-0.882) -- (-0.407,-0.905) -- (-0.397,-0.928) -- (-0.387,-0.951) -- (-0.377,-0.975) -- (-0.368,-1.000) -- (0.000,-1.000) -- (0.368,-1.000) -- (0.387,-1.025) -- (0.407,-1.051) -- (0.427,-1.078) -- (0.449,-1.105) -- (0.472,-1.133) -- (0.497,-1.162) -- (0.522,-1.191) -- (0.549,-1.221) -- (0.577,-1.252) -- (0.607,-1.284) -- (0.638,-1.317) -- (0.670,-1.350) -- (0.705,-1.384) -- (0.741,-1.419) -- (0.779,-1.455) -- (0.819,-1.492) -- (0.861,-1.530) -- (0.905,-1.568) -- (0.951,-1.608) -- (1.000,-1.649) -- (1.051,-1.690) -- (1.105,-1.733) -- (1.162,-1.777) -- (1.221,-1.822) -- (1.284,-1.868) -- (1.350,-1.916) -- (1.419,-1.964) -- (1.492,-2.014) -- (1.568,-2.065) -- (1.649,-2.117) -- (1.733,-2.171) -- (1.822,-2.226) -- (1.916,-2.282) -- (2.014,-2.340) -- (2.117,-2.399) -- (2.226,-2.460) -- (2.340,-2.522) -- (2.460,-2.586) -- (2.586,-2.651) -- (2.718,-2.718) -- (3.490,-3.490);
\fill (-1,0) circle (1.4pt); \fill (0,-1) circle (1.4pt);
\node[below left] at (-1,0) {\scriptsize $(-1,0)$};
\node[below left] at (0,-1) {\scriptsize $(0,-1)$};
\node at (2.8,1.0) {$\mathcal O_{++}$}; \node at (-2.8,1.0) {$\mathcal O_{-+}$};
\node at (-2.8,-2.1) {$\mathcal O_{--}$}; \node at (2.8,-1.7) {$\mathcal O_{+-}$};
\end{tikzpicture}\\
\small (a) & \small (b)
\end{tabular}
\caption{Example~\ref{ex:cubic}. (a) $\Trop(h)$ (grey) and $\Gamma_\epsilon$ (black) in the four logarithmic charts, placed like the quadrants; rays towards $w_i=-\infty$ are glued with the ray of the same edge in the adjacent chart. (b) $T(f,h)\subset\R^2$: an oval in $\cO_{++}$ and an unbounded component crossing the axes at $(-1,0)$ and $(0,-1)$.}
\label{fig:cubic}
\end{figure}

\section{Events and local surgery}\label{sec:events}

We now let $t$ run from the patchwork regime to $t=1$. The topology of $C_t$ changes only at singular points and at multiple roots of the edge polynomials, both handled by Morse theory; the attachment of the local branches to the rest of the curve is the subject of \S\ref{sec:global}.

\subsection{Events}

\begin{definition}\label{def:event}
A parameter $t\in(0,1]$ is an \emph{event} if $\overline{C_t}$ violates (A1). By Lemma~\ref{lem:fixed} and Corollary~\ref{cor:ends} this happens at the following \emph{event points}: (A) \emph{interior event points}, the real singular points $p\in\R^2$ of $C_t$, i.e.\ $f_t(p)=0$ and $\nabla f_t(p)=0$, including those on the axes; (B) \emph{boundary event points}, the multiple roots $y_0\in\R^*$ of $\psi^e_t$, for an outer edge $e$.
\end{definition}

\begin{lemma}[finiteness of events]\label{lem:eventpoly}
Every event is a zero of $t\mapsto E_\cA(c(t))$. If $E_\cA(c)\neq0$, there are finitely many events in $(0,1]$, and none in some interval $(0,t_0)$.
\end{lemma}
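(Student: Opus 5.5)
The proof has two parts: every event is a zero of $E_\cA(c(t))$, and that function of $t$ has only finitely many zeros in $(0,1]$.

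\emph{Events are zeros.} Let $t$ be an event with event point $p$, and apply Definition~\ref{def:event} case by case.

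In case (A), $p$ is a real singular point of $C_t$.
\begin{itemize}[leftmargin=*]
\item If $p\in(\R^*)^2$, then $f_t=f_t^\Delta$ has a singular point in $(\C^*)^2$.
\item If $p$ lies on an open half-axis, say $p=(p_1,0)$ with $p_1\neq0$, then $f_t(x_1,0)=f_t^{E_1}(x_1)$. The conditions $f_t(p)=0$ and $\partial_1f_t(p)=0$ say that $p_1$ is a multiple root in $\C^*$ of the truncation to the axis edge $E_1$. The edge $E_1$ is a genuine edge of $\Delta$ by (H0).
\item The origin is excluded, since $f_t(0)=c_0t^{h(0)}\neq0$.
\end{itemize}
In case (B), $y_0$ is a multiple root in $\R^*$ of $\psi^e_t$. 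After division by a monomial and the unimodular change of exponents $a\mapsto\langle a,\mu_e\rangle$, this is the truncation $f_t^{e}$ having a multiple root in $\C^*$.

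In every case the characterization of the vanishing of the principal $\cA$-determinant \cite[Ch.~10]{GKZ94}, as recalled in \S\ref{ssec:sigma}, gives $E_\cA(c(t))=0$. The point to check is that the face-wise condition is stated for monomials on the face. For an edge, the one-variable truncation in the primitive lattice coordinate has a multiple root in $\C^*$ exactly when the two-variable truncation $f^F$ is singular in the torus. This holds because $f^F$ is a Laurent monomial times a polynomial in the single character $y$.

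\emph{Finiteness.} Write
\[
\phi(t)=E_\cA(c(t))=\sum_{\beta\in\cB}d_\beta\, c^\beta\, t^{\langle\beta,h\rangle},
\]
a real exponential sum (a generalized Dirichlet polynomial) in $\tau=\log t\in(-\infty,0]$. Group the terms by their distinct exponents $\lambda=\langle\beta,h\rangle$. This gives $\phi=\sum_\lambda b_\lambda e^{\lambda\tau}$ with finitely many real $\lambda$.

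The main obstacle is to rule out $\phi\equiv0$, since cancellation inside a group could in principle kill every coefficient $b_\lambda$. This is where the hypothesis enters: $\phi(1)=E_\cA(c)\neq0$, so $\phi\not\equiv0$. Consequently some $b_\lambda$ is nonzero, and after discarding zero groups $\phi$ is a nonzero exponential sum with $k$ distinct exponents.

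By the generalized Descartes rule (Laguerre; proved by induction on $k$ using Rolle's theorem after multiplying by $e^{-\lambda_{\min}\tau}$ and differentiating), such a sum has at most $k-1$ real zeros. So there are finitely many events in $(0,1]$.

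For the interval $(0,t_0)$, the finitely many zeros in $(0,1]$ have a positive minimum. Any $t_0$ below that minimum works, and if there are no events, $t_0=1$ works.
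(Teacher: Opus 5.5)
Your proof is correct and follows the paper's argument. You reduce each event point, face by face, to a singular point of a truncation (the torus, the axis edges, the outer edges), and you apply Descartes' rule for real exponents to $E_\cA(c(t))$, which is non-zero at $t=1$. The differences are cosmetic: you spell out the Rolle induction and the equivalence between an edge's multiple root and a singular point of its truncation in the torus, and you get the interval $(0,t_0)$ from the minimum of the finitely many zeros rather than from the dominance of the lowest-exponent term as $t\to0$.
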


\begin{proof}
A real singular point of $C_t$ in the torus is a singular point of $f_t=f_t^\Delta$; one on an axis, $(p_1,0)$ with $p_1\neq0$, is a multiple root of the truncation $f_t(x_1,0)$ to $E_1$, similarly for $E_2$, and the origin is not on $C_t$; a boundary event point is a multiple root of the truncation to $e$ in the coordinate $y$. So $E_\cA(c(t))=0$. Finally $E_\cA(c(t))=\sum_\beta d_\beta c^\beta t^{\langle\beta,h\rangle}$ is a sum of real powers of $t$, non-zero at $t=1$, so it has finitely many positive zeros by Descartes' rule for real exponents \cite[Part~V]{PS76}, and its term of lowest exponent dominates as $t\to0$.
\end{proof}

Not every zero of $E_\cA(c(t))$ is an event, because of complex singular points, non-real singular points of truncations and tangencies with the axes; an outer edge with only two points of $\cA$ produces no boundary event. We assume:

\begin{assumption}[Morse events]\label{ass:morse}
Every event point of an event $t^*$ is of one of the following kinds.
(A) An interior event point $p$ at which the Hessian $H_p$ of $f_{t^*}$ is non-degenerate and $\alpha:=\partial_tf_t(p)|_{t=t^*}\neq0$.
(B) A double root $y_0$ of $\psi^e_{t^*}$ with $\partial_zg^e_{t^*}(y_0,0)\neq0$ and $\alpha:=\partial_t\psi^e_t(y_0)|_{t=t^*}\neq0$.
\end{assumption}

Such points are isolated, and $\alpha$ is the speed at which the critical value of $f_t$ near $p$, resp.\ of $\psi^e_t$ near $y_0$, crosses zero (envelope theorem). The support can force several event points at one parameter (Example~\ref{ex:simultaneous}), and on the following edges, and only there, non-Morse event points for every lifting: the axis edge $E_1$ is \emph{detached} if it contains at least three points of $\cA$ and no $a\in\cA$ has $a_2\in\{1,2\}$, symmetrically for $E_2$, and an outer edge $e$ is \emph{detached} if it contains at least three points of $\cA$ and no point of $\cA$ has lattice distance one from $e$.

\begin{proposition}[Morse events for almost every lifting]\label{prop:genericmorse}
Let $f$ satisfy (H0) and $E_\cA(c)\neq0$.
(i) For almost every lifting $h\in\R^\cA$, every event point satisfies Assumption~\ref{ass:morse}, except the singular points on the axis of a detached axis edge and the boundary event points on a detached outer edge; and $C_t$ has no singular point on $\{x_2=0\}$ if some $a\in\cA$ has $a_2=1$, symmetrically for $\{x_1=0\}$.
(ii) For every lifting, the event points excepted in (i) violate Assumption~\ref{ass:morse}.
(iii) For a support $\cA$ with (H0) and signs $\varsigma\in\{\pm1\}^\cA$, the conclusion of (i) holds for the dequantization path of almost every $c$ with $\sgn c=\varsigma$.
\end{proposition}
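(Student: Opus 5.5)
Part (i) is a transversality statement, and I would prove it by parametric transversality on an incidence variety. Fix $f$ with $E_\cA(c)\neq0$. For the interior events consider the map
\[
\Phi\colon(\R^*)^2\times(0,1]\times\R^\cA\to\R^3,\qquad \Phi(x,t,h)=\bigl(f_t(x),\partial_1f_t(x),\partial_2f_t(x)\bigr),\quad f_t=\textstyle\sum_ac_at^{h(a)}x^a .
\]
The derivative in $h$ at $t<1$ is the matrix with columns $c_at^{h(a)}\log t\,(1,a_1/x_1,a_2/x_2)\,x^a$. Since $\cA$ is not contained in a line (H0), these columns span $\R^3$, so $\Phi$ is a submersion. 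The parametric transversality theorem then shows that for almost every $h$ the map $(x,t)\mapsto\Phi(x,t,h)$ is transverse to $0$. Its zero set is therefore a discrete set of points at which the $3\times3$ Jacobian $\bigl(\nabla f_t,\ \partial_t f_t;\ H,\ \partial_t\nabla f_t\bigr)$ is invertible. At a zero we have $\nabla f_t=0$, so the determinant equals $\pm\,\alpha\det H_p$. Both $\alpha\neq0$ and non-degeneracy of $H_p$ follow. The parameter $t=1$ is not an event because $E_\cA(c)\neq0$.

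For boundary events on an outer edge $e$ I would use the same argument with $(\psi^e_t,(\psi^e_t)')$ as functions of $(y,t,h|_{\cA\cap e})$. If $|\cA\cap e|\ge3$, the vectors $(y^{k},ky^{k-1})$ for the distinct exponents $k=\langle a,\mu_e\rangle$ span $\R^2$, and genericity gives a double root with $\alpha\neq0$. The condition $\partial_zg^e_t(y_0,0)\neq0$ involves the coefficients of $\cA$ at lattice distance one from $e$. These enter only through $h$ off $e$, so I would add one more equation and one more free parameter. If some point of $\cA$ has distance one from $e$, the codimension count shows that $\partial_z g^e=0$ simultaneously with a double root fails for almost every $h$. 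Points on the axes need the same argument with the truncation $f_t(x_1,0)$. A singular point $(p_1,0)$ requires $f_t(p_1,0)=\partial_1f_t=0$ and also $\partial_2f_t(p_1,0)=\sum_{a_2=1}c_at^{h(a)}p_1^{a_1}=0$. If some $a_2=1$ exists, this is three independent conditions in $(p_1,t)$, so it fails for a.e.\ $h$; this gives the last clause. If no $a_2=1$ exists but some $a_2=2$ does, the Hessian at such a point is $\operatorname{diag}(\partial_1^2,\ 2\sum_{a_2=2}\cdot)$. The usual genericity makes $\partial_1^2\neq0$ and $\alpha\neq0$, and one more condition with a free $h$-coordinate shows that $\partial_2^2\neq0$ generically. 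Countable unions of null sets, over the finitely many edges and cases, remain null.

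For (ii), on a detached axis edge every term of $f$ has $a_2=0$ or $a_2\ge3$. Any double root of $f_t(x_1,0)$ is then a singular point whose Hessian has vanishing second row, since $\partial_2^2f=0$ on the axis, so $H_p$ is degenerate. Such double roots do occur and are events, because the edge has at least three points; I expect them to be excepted by hypothesis. Similarly, on a detached outer edge $g^e=\psi^e(y)+O(z^2)$, so $\partial_zg^e(y_0,0)=0$ at every double root.

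For (iii) the lifting is $h=-\log|c|$, so the deformation is $f^{(s)}=\sum\varsigma_a|c_a|^sx^a$ and the parameter is $c\mapsto \log|c|\in\R^\cA$. The same submersion computation applies with $h$ replaced by $-\log|c|$ and $t$ by $s$. This follows because $\partial/\partial\log|c_a|$ of $|c_a|^s$ is $s|c_a|^s\neq0$, and the map $c\mapsto\log|c|$ is a diffeomorphism on each sign orthant, so it preserves null sets. The main obstacle is the distance-one boundary condition and the axis case with only $a_2=2$. There the extra equation must be shown independent of the first ones using only coordinates of $h$ off the edge, and the required rank must be checked to be achievable despite the dependence of $t^{h(a)}$ on $t$. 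I would handle this by first fixing $h|_e$, solving for the discrete set of $(y_0,t)$, and then varying the off-edge coordinates, which enter the extra equation linearly with nonzero coefficient.
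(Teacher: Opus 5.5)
Your proposal is correct and follows essentially the paper's proof. The structure matches: parametric transversality (Sard) for the interior system gives $\alpha\det H_p\neq0$, the same argument applies to the axis truncation and the edge polynomials, and an almost-everywhere argument in the coordinates of $h$ off the edge handles the coefficient at height, resp.\ lattice distance, one or two; (ii) comes from the vanishing of $\partial_2f_t,\partial_2^2f_t$, resp.\ $\partial_zg^e_t$, and (iii) from replacing $h,\log t$ by $\log|c|,s$.

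The differences are minor. You work in $x$ rather than $\log|x|$. The off-edge terms $c_at^{h(a)}y_0^{k(a)}$ are strictly monotone, not linear, in $h(a)$, which is all the argument needs. Your aside in (ii) that such double roots always occur is false for some liftings but is not used.
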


\begin{proof}
There are no event points at the corners of $\partial X$ (Lemma~\ref{lem:fixed}) or at the origin, and an edge with two points of $\cA$ has a binomial truncation, without multiple roots in $\R^*$. (i) On $\cO_\epsilon$ write $f_t=\sum_aT_a$ with $T_a=\sigma_\epsilon(a)|c_a|t^{h(a)}e^{\langle a,u\rangle}$, $u=(\log|x_1|,\log|x_2|)$. The interior event points in $\cO_\epsilon$ are the zeros $(u,t)$, $0<t<1$, of $\Phi=(\sum_aT_a,\sum_aa\,T_a)$ ($t=1$ is not an event). As $\partial\Phi/\partial h(a)=\log t\,T_a(1,a)$ and the vectors $(1,a)$ span $\R^3$ by (H0), $\Phi$ is a submersion on $\R^2\times(0,1)\times\R^\cA$, and by Sard's theorem almost every $h$ is a regular value of the projection of $\Phi^{-1}(0)$ to $\R^\cA$. Then the derivative of $\Phi$ in $(u,t)$ is invertible at every zero; since $\partial_u\sum_aT_a=0$ there, its determinant is $\partial_tf_t\cdot\det\operatorname{Hess}_uf_t$, and $\operatorname{Hess}_uf_t=DH_pD$ with $D=\operatorname{diag}(p_1,p_2)$: this is (A). The same argument for $\phi_t=f_t(x_1,0)$ in $(\log|x_1|,t)$, and for $\psi^e_t$ in $(\log|y|,t)$, shows that for almost every $h$ their real multiple roots $q$ are double roots with $\partial_t\phi_t(q)\neq0$, resp.\ $\partial_t\psi^e_t(q)\neq0$; they are finitely many, since $E_\cA(c(t))$ vanishes there (proof of Lemma~\ref{lem:eventpoly}), and depend only on $h$ on $E_1$, resp.\ $e$. Let $m\ge1$ be the smallest $a_2>0$, resp.\ lattice distance from $e$, of a point of $\cA$, and $b$ the coefficient of $x_2^m$ in $f_t$, resp.\ of $z^m$ in $g^e_t$, at such a root: a sum of terms $c_at^{h(a)}q^{(\cdot)}$, one for each $a$ at height, resp.\ distance, $m$, so for fixed $h$ on $E_1$, resp.\ $e$, it vanishes only for $h$ in a null set, and by Fubini's theorem $b\neq0$ for almost every $h$. On the axis, $m=1$ gives $\partial_2f_t(q,0)=b\neq0$, so $(q,0)$ is not singular, and $m=2$ gives the Hessian $\operatorname{diag}(\phi_t''(q),2b)$: (A). On an outer edge, $m=1$ gives $\partial_zg^e_t(q,0)=b\neq0$: (B). (ii) On the axis of a detached axis edge $\partial_2f_t$ and $\partial_2^2f_t$ vanish, so singular points there are degenerate; on a detached outer edge $\partial_zg^e_t$ vanishes on $D_e^\circ$. (iii) $E_\cA(c)\neq0$ for almost every $c$, and on the dequantization path $T_a=\sigma_\epsilon(a)e^{s\log|c_a|+\langle a,u\rangle}$, so the proof of (i) applies with $\log|c|$ and $s$ in place of $h$ and $\log t$.
\end{proof}

For example, $1+x_1^2-\sqrt2\,x_1x_2+x_2^2$ satisfies (H0) and $E_\cA(c)\neq0$, and its outer edge $e=[(2,0),(0,2)]$ is detached: for every lifting with $2h(1,1)-h(2,0)-h(0,2)<0$ the edge polynomial $t^{h(2,0)}-\sqrt2\,t^{h(1,1)}y+t^{h(0,2)}y^2$, $y=x_2/x_1$, has a double root at exactly one $t^*\in(0,1)$, where $\partial_zg^e_{t^*}=0$. At such points Algorithm~\ref{alg:main} returns \textsc{non-Morse}, and at a non-Morse point off the detached edges it takes a new lifting near $-\log|c|$; a lifting far from $-\log|c|$ can avoid such points, but it loses the window of \S\ref{sec:lifting} (Remark~\ref{rem:detached}).

\begin{example}[simultaneous events]\label{ex:simultaneous}
For $f=x_1^4-\frac32x_1^2+1+2x_2^2+\frac1{10}x_1x_2^2+x_2^3$, both points $(\pm1,0)$ are singular points of $\{f^{(s^*)}=0\}$, $s^*=\log2/\log\frac32$, with positive definite Hessians and $\partial_sf^{(s)}(\pm1,0)=-2\log\frac32$: two ovals die at the same parameter. For every lifting $f_t(x_1,0)$ is even and $\partial_2f_t$ vanishes on the axis, as no exponent has $a_2=1$; so the real double roots of $f_t(x_1,0)$ come in pairs $\pm p$ of singular points. Here $-\log|c|$ is generic and $E_\cA(c)\neq0$.
\end{example}

\subsection{Morse functions on the swept surface}

Let $S=\{(x,t)\in X_\Delta\times(0,1]:\ x\in\overline{C_t}\}$ and $\tau\colon S\to(0,1]$ the projection, so that $\overline{C_t}=\tau^{-1}(t)$. By Lemma~\ref{lem:fixed}, $S$ is the union of the \emph{interior stratum} $S^\circ=S\cap(\R^2\times(0,1])$ and the \emph{boundary strata} $B_e=S\cap(D_e^\circ\times(0,1])=\{(y,t):\ \psi^e_t(y)=0\}$, $e$ outer.

\begin{lemma}\label{lem:morse}
(i) At an event point $p$ of type (A) of $t^*$, $S^\circ$ is smooth near $(p,t^*)$ and is the graph of a function $t=T(x)$ with $\nabla T(p)=0$ and $\operatorname{Hess}T(p)=-H_p/\alpha$; so $(p,t^*)$ is a non-degenerate critical point of $\tau|_{S^\circ}$, of index $0$ (resp.\ $2$) if $H_p$ is definite of sign $\eta$ with $\eta\alpha<0$ (resp.\ $\eta\alpha>0$), and $1$ if $H_p$ is indefinite.
(ii) At an event point $p=(y_0,0)\in D_e^\circ$ of type (B) of $t^*$, with $\beta=(\psi^e_{t^*})''(y_0)$, $S$ is a smooth surface on which $\tau$ has no critical point, while $B_e$ is near $(y_0,t^*)$ the graph of a function $t=T(y)$ with $T'(y_0)=0$ and $T''(y_0)=-\beta/\alpha$; so $(y_0,t^*)$ is a non-degenerate critical point of $\tau|_{B_e}$, of index $0$ if $\alpha\beta<0$ and $1$ if $\alpha\beta>0$.
(iii) $\tau|_{S^\circ}$ and $\tau|_{B_e}$ have no other critical points.
\end{lemma}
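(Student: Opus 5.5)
The plan is to apply the implicit function theorem to the defining equations of $S$ in the two charts, and then read off the index from a second-order Taylor expansion.

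For (i), write $F(x,t)=f_t(x)$ near $(p,t^*)$, so that $S^\circ=\{F=0\}$. At the event point we have $\nabla_xF=0$ and $\partial_tF=\alpha\neq0$. Hence $dF\neq0$, $S^\circ$ is smooth there, and the implicit function theorem gives $S^\circ$ as the graph of a function $t=T(x)$ with $T(p)=t^*$. Differentiating $F(x,T(x))=0$ once gives $\nabla_xF+\partial_tF\,\nabla T=0$, so $\nabla T(p)=0$. Differentiating a second time and evaluating at $p$, every term containing $\nabla T(p)$ drops out, and we are left with $H_p+\alpha\operatorname{Hess}T(p)=0$. The coordinate $x$ is a chart of $S^\circ$ in which $\tau=T$, so $(p,t^*)$ is a non-degenerate critical point of $\tau|_{S^\circ}$. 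Its index is the number of negative eigenvalues of $-H_p/\alpha$. If $H_p$ is definite of sign $\eta$, then $-H_p/\alpha$ is definite of sign $-\eta\,\sgn\alpha$: it is positive definite, giving index $0$, when $\eta\alpha<0$, and negative definite, giving index $2$, when $\eta\alpha>0$. If $H_p$ is indefinite, so is $-H_p/\alpha$, giving index $1$.

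For (ii), work in $U_e\times(0,1]$ with $G(y,z,t)=g^e_t(y,z)$. At $(y_0,0,t^*)$ we have $\partial_zG\neq0$ by Assumption~\ref{ass:morse}(B). Hence $S$ is smooth there and is locally the graph $z=Z(y,t)$. The function $\tau$ restricted to $S$ has differential $dt$ in the coordinates $(y,t)$ on $S$, so it has no critical points; equivalently, $dt$ is not proportional to $dG$, because $\partial_zG\neq0$. The stratum $B_e$ is cut out by $\Psi(y,t)=\psi^e_t(y)=0$, with $\partial_y\Psi=0$ and $\partial_t\Psi=\alpha\neq0$ at the point. The same one-variable computation as in (i) gives $t=T(y)$ with $T'(y_0)=0$ and $\beta+\alpha T''(y_0)=0$, hence $T''(y_0)=-\beta/\alpha$. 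The index is therefore $0$ if $\alpha\beta<0$ and $1$ if $\alpha\beta>0$. Note that $\beta\neq0$ because $y_0$ is a double root.

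For (iii), a critical point of $\tau|_{S^\circ}$ at a smooth point $(x,t)$ of $S^\circ$ is a point where $dt$ is proportional to $dF$, that is, where $\nabla_xF=0$. Since also $F=0$, this makes $x$ a singular point of $C_t$, so $t$ is an event and $x$ is an event point of type (A). Points of $S^\circ$ where $dF=0$ are singular points of $S^\circ$; at these $\nabla_xF=0$ as well, so they too are event points of type (A). But Assumption~\ref{ass:morse}(A) requires $\alpha\neq0$ at every such point, so they do not occur. Likewise, a critical point of $\tau|_{B_e}$ is a point where $\partial_y\Psi=0$. There $y$ is a multiple root of $\psi^e_t$, so by Definition~\ref{def:event} it is a boundary event point and is covered by (ii).

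The only delicate point, which I expect to be the main obstacle, is what "no other critical points" means at singular points of $S^\circ$ or $B_e$. The plan is to show that Assumption~\ref{ass:morse} rules these out, so that all strata are smooth everywhere. Everything else is routine differentiation.
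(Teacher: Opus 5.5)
Your proposal is correct and follows the paper's route: the implicit function theorem with $\nabla T(p)=0$ and $H_p+\alpha\operatorname{Hess}T(p)=0$, the transversality $\partial_zg^e\neq0$ giving smoothness of $S$ and the submersion $\tau|_S$, and the one-variable version of (i) for $B_e$. You also note explicitly that Assumption~\ref{ass:morse} excludes points with $dF=0$, and hence gives smoothness of $S^\circ$ and $B_e$ in (iii); the paper's one-line argument for (iii) leaves this implicit.
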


\begin{proof}
(i) At $(p,t^*)$ the gradient of $(x,t)\mapsto f_t(x)$ is $(0,\alpha)\neq0$; the implicit function theorem gives $T$, and differentiating $f_{T(x)}(x)=0$ twice at $p$ gives $H_p+\alpha\operatorname{Hess}T(p)=0$. (ii) In $U_e$ the gradient of $(y,z,t)\mapsto g^e_t(y,z)$ at $(p,t^*)$ is $(0,\partial_zg,\alpha)$, not a multiple of $(0,0,1)$, so $S$ is smooth there and $\tau|_S$ is a submersion; $B_e=\{\psi^e_t(y)=0\}$ is the univariate case of (i). (iii) A critical point of $\tau|_{S^\circ}$ is a point where $\nabla_xf_t=0$, and one of $\tau|_{B_e}$ a point where $(\psi^e_t)'=0$.
\end{proof}

Thus boundary events are critical points of $\tau$ on the strata $B_e$ in the sense of stratified Morse theory \cite{GM88}; there only the intersection of $\overline{C_t}$ with $\partial X$ changes.

\begin{proposition}[no event, no change]\label{prop:isotopy}
If $[t_1,t_2]\subset(0,1]$ contains no event, there is an isotopy of $X_\Delta$ that preserves $\R^2$ and each $D_e$ and carries $\overline{C_{t_1}}$ onto $\overline{C_{t_2}}$.
\end{proposition}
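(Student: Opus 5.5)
The plan is Thom's isotopy lemma for the proper stratified submersion $\tau$, made concrete by integrating a vector field. Set $S_{[t_1,t_2]}=\tau^{-1}([t_1,t_2])\subset X_\Delta\times[t_1,t_2]$, which is compact because $X_\Delta$ is compact and $\overline{C_t}$ is closed. Since there is no event in $[t_1,t_2]$, every $\overline{C_t}$ satisfies (A1). It is smooth, so by Lemma~\ref{lem:fixed} it lies in the smooth locus of $X_\Delta$ and misses the torus-fixed points, and it meets each $D_e^\circ$ transversally.

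First I check that $S_{[t_1,t_2]}$ is a smooth surface on which $\tau$ is a submersion, and that each $B_e$ is a smooth curve on which $\tau$ is a submersion. At $(p,t)$ with $p\in\R^2$, $\nabla_xf_t(p)\neq0$, so $S$ is smooth there and $d\tau$ is onto $T S$; no critical points occur, as in Lemma~\ref{lem:morse}(iii). At $p=(y_0,0)\in D_e^\circ$, $y_0$ is a simple root of $\psi^e_t$ by Corollary~\ref{cor:ends}. Then $\partial_y g^e_t\neq0$, so $S$ is smooth there and transverse to the slices $\{t=\text{const}\}$. Moreover $B_e=\{\psi^e_t(y)=0\}$ is locally the graph $y=y(t)$, by the implicit function theorem.

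Next I construct a vector field $V$ on a neighbourhood of $S_{[t_1,t_2]}$ in $X_{\Delta,\mathrm{sm}}\times[t_1,t_2]$ with the following properties: $d\tau(V)=\partial_t$ on the ambient space (the $t$-component of $V$ equals $1$); $V$ is tangent to $S$; and $V$ is tangent to each $D_e\times[t_1,t_2]$. I build it locally and glue with a partition of unity; all three conditions are affine in $V$, so they survive convex combination.
\begin{itemize}
\item Near a boundary point $(y_0,0,t)$, in the chart $U_e$, I take $V=\partial_t+\dot y(t)\,\partial_y+w\,\partial_z$. Here $\dot y(t)$ is the velocity of the root, and $w$ is chosen with $w=0$ on $z=0$ so that $V(g^e)=0$. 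This uses $\partial_y g^e\neq0$ to solve for the $\partial_y$-component as a smooth function of $(y,z,t)$ that equals $\dot y$ on $z=0$. Concretely, I take $V=\partial_t-(\partial_tg^e/\partial_yg^e)\partial_y$, which is tangent to $\{z=0\}$ and to $\{g^e=0\}$.
\item Near interior points I take $V=\partial_t-\partial_tf_t\,\nabla_xf_t/|\nabla_xf_t|^2$.
\item Away from $S$ but near the divisors, I take lifts of $\partial_t$ that are tangent to $D_e$, which is trivial since the ambient space is a product. By Lemma~\ref{lem:fixed} no chart near a fixed point is needed for $S$; near fixed points I use $V=\partial_t$.
\end{itemize}

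Then I integrate $V$. Compactness of $X_\Delta\times[t_1,t_2]$ gives a flow $\Phi_s$ defined for $s\in[0,t_2-t_1]$ that maps slice $t_1$ to slice $t_1+s$. Because $V$ is tangent to $S$, the flow preserves $S$; because it is tangent to $D_e\times[t_1,t_2]$, it preserves $D_e$ and hence $\R^2$. The resulting maps $\Phi_s|_{X_\Delta\times\{t_1\}}$ form an isotopy of $X_\Delta$ carrying $\overline{C_{t_1}}$ onto $\overline{C_{t_1+s}}$.

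The step I expect to be the main obstacle is making $V$ globally defined on the singular toric surface $X_\Delta$, rather than only on its smooth locus. The fix is to take $V=\partial_t$ (zero $X_\Delta$-component) on a neighbourhood $N$ of the fixed points that is disjoint from all $\overline{C_t}$, $t\in[t_1,t_2]$; such an $N$ exists by compactness and Lemma~\ref{lem:fixed}. The partition of unity is then taken subordinate to $N$ and to a smooth neighbourhood of the rest, so the flow is the identity on $X_\Delta$-coordinates near the fixed points and is a homeomorphism everywhere.
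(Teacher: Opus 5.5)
Your proof is correct. It rests on the same principle as the paper's, but you prove the needed special case of Thom's first isotopy lemma by hand instead of citing it.

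The paper stratifies $X_\Delta\times[t_1,t_2]$ by the products of the fixed points, the $D_e^\circ$ and $\R^2$ with $[t_1,t_2]$, refined by $S^\circ$, the $B_e$ and their complements. It asserts that this is a Whitney stratification on whose strata the projection to $[t_1,t_2]$ is a proper submersion (Lemma~\ref{lem:morse}(iii)), and then invokes the lemma.

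You construct the integrating vector field directly. This is easy here for three reasons:
\begin{itemize}
\item $S$ and $D_e^\circ\times[t_1,t_2]$ are closed smooth submanifolds of the smooth locus that meet transversally.
\item $S$ stays away from the singular points of $X_\Delta$.
\item Your local fields $\partial_t-\partial_tf_t\,\nabla_xf_t/|\nabla_xf_t|^2$ and $\partial_t-(\partial_tg^e_t/\partial_yg^e_t)\,\partial_y$ annihilate $f_t$, resp.\ $g^e_t$, identically, so the tangency conditions are affine and survive the partition of unity.
\end{itemize}
So one smooth field suffices, instead of a controlled stratified one.

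Your route gives a self-contained argument with no Whitney conditions to check. It also gives a slightly stronger conclusion: the isotopy is smooth on the smooth locus and is the identity near the fixed points. Essentially the same normal-lift flow is what the paper later uses between events in Section~\ref{sec:global}. The paper's route buys brevity, and it handles the singular points of $X_\Delta$ through the stratification without any special care.

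When you write yours up, make three points explicit:
\begin{itemize}
\item Choose the cover so that the interior charts avoid the $D_e$ and the sets carrying $\partial_t$ avoid the closed set $S$.
\item On a closed neighbourhood of the fixed points the field is exactly $\partial_t$, so that neighbourhood and its complement are invariant and the flow exists up to time $t_2-t_1$.
\item Applying uniqueness of integral curves both forwards and backwards gives that $\overline{C_{t_1}}$ is carried \emph{onto} $\overline{C_{t_2}}$.
\end{itemize}
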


\begin{proof}
Stratify $X_\Delta\times[t_1,t_2]$ by the products of the fixed points, of the $D_e^\circ$ and of $\R^2$ with $[t_1,t_2]$, refined by $S^\circ$, the $B_e$ and their complements. As there are no events, $S$ is a smooth surface transverse to each $D_e^\circ\times[t_1,t_2]$ and disjoint from the fixed points, so this is a Whitney stratification. By Lemma~\ref{lem:morse}(iii) the projection to $[t_1,t_2]$ is a submersion on every stratum, and it is proper on the whole space, as $X_\Delta$ is compact. Thom's first isotopy lemma \cite[Prop.~11.1]{Mat12}, \cite{GM88} gives the isotopy.
\end{proof}

\subsection{Local surgery}

\begin{proposition}[interior events]\label{prop:interior}
Let $p$ be an event point of type (A) of $t^*$, and $\delta>0$ small.
(i) If $H_p$ is definite of sign $\eta$, then, as $t$ increases through $t^*$, a small oval around $p$ is born if $\eta\alpha<0$ and dies if $\eta\alpha>0$.
(ii) If $H_p$ is indefinite, then in a neighbourhood $N$ of $p$ the two arcs $a_1,a_2$ of $C_{t^*-\delta}\cap N$ are replaced by the two arcs of $C_{t^*+\delta}\cap N$ joining the four endpoints in the other pattern. Let $\gamma_i$ be the component of $C_{t^*-\delta}$ containing $a_i$. If $\gamma_1\neq\gamma_2$, they become one component, an oval if both are ovals and otherwise a line with the ends of the line, except that two lines $u_1\to v_1$, $u_2\to v_2$ become the lines $u_1\to v_2$ and $u_2\to v_1$. If $\gamma_1=\gamma_2$, it splits into two components, and the one without the ends of a line is an oval.
\end{proposition}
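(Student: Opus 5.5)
The plan is to reduce to the Morse normal form of Lemma~\ref{lem:morse}(i) and then argue globally with Proposition~\ref{prop:isotopy}. By Lemma~\ref{lem:morse}(i), near $(p,t^*)$ the surface $S^\circ$ is the graph $t=T(x)$ with $\operatorname{Hess}T(p)=-H_p/\alpha$. The Morse lemma gives local coordinates $\xi$ centred at $p$ with $T=t^*\pm\xi_1^2\pm\xi_2^2$. Since $C_t\cap N=\{x\in N:T(x)=t\}$, the local picture follows directly from this normal form.

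For (i): if $H_p$ is definite of sign $\eta$ and $\eta\alpha>0$, then $\operatorname{Hess}T$ is negative definite and $T$ has a local maximum at $p$. Hence $\{T=t\}\cap N$ is a small circle for $t<t^*$ and is empty for $t>t^*$, so an oval dies. If $\eta\alpha<0$, $T$ has a local minimum and an oval is born. The resulting closed curve is a full component of $C_t$: for $t$ near $t^*$, $C_t$ meets the annulus $N\setminus N'$ only near the level set, by continuity and compactness of $\overline{C_{t^*}}\setminus N'$ away from $p$.

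For (ii): $T=t^*+\xi_1^2-\xi_2^2$. Take $N$ to be a box $\{|\xi_1|,|\xi_2|\le r\}$ with $\delta\ll r^2$. For $t=t^*-\delta$ the level set is two arcs joining the endpoints on the sides $\xi_2=\pm r$ across $\xi_1$. For $t^*+\delta$ it is two arcs joining the endpoints on the sides $\xi_1=\pm r$. The four endpoints on $\partial N$ are identified between the two levels by an isotopy of $C_t\setminus N$. I would obtain this isotopy by applying Thom's isotopy lemma, as in Proposition~\ref{prop:isotopy}, to $S\setminus(N\times\{t^*\})$ over $[t^*-\delta,t^*+\delta]$. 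This requires $\tau$ to have no other critical points there, so I assume $p$ is the only event point of $t^*$, or I localise the other ones away. The isotopy preserves the boundary strata, so the ends $(p,Q)$ are carried along. This is the step I expect to need the most care: making the isotopy fix $\partial N$ compatibly. The plan is to use a collar of $\partial N$ on which $\tau$ restricted to $S\cap(\partial N\times I)$ is a submersion, which holds because $\nabla_x f_t\neq0$ there.

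The remainder is combinatorial. Write $\gamma_i\setminus a_i$ as arcs with endpoints on $\partial N$, labelled $a_1=[P_1,P_2]$ and $a_2=[P_3,P_4]$, and reconnect them as $P_1P_3$, $P_2P_4$ (or $P_1P_4$, $P_2P_3$).

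Case $\gamma_1\neq\gamma_2$. Each $\gamma_i\setminus a_i$ is a single arc, or two rays if $\gamma_i$ is a line. Gluing in the new arcs produces one connected curve. That curve is an oval if neither piece has ends. If exactly one $\gamma_i$ is a line, the result is a line with its ends. If both are lines $u_i\to v_i$, I fix the orientation with $\{f>0\}$ on the left, which is preserved locally since the sign regions of $f_t$ outside $N$ do not change. The new arcs then respect the orientation, so the outcomes are $u_1\to v_2$ and $u_2\to v_1$.

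Case $\gamma_1=\gamma_2$. Removing $a_1\cup a_2$ leaves two pieces, and reconnecting with the coherent orientation, which is forced by the sign of $f$ on the four quadrants of the saddle, closes each piece separately. The result is two components. At most one of them contains the ends of the line, and the other is compact, hence an oval.
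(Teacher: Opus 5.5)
Your proposal is correct and is in substance the paper's proof. It uses a local Morse normal form at $p$: you apply the ordinary Morse lemma to the graph function $T$ of Lemma~\ref{lem:morse}(i), so all levels $C_t\cap N=\{T=t\}$ sit in one fixed chart, whereas the paper applies the Morse lemma with parameters to $f_t$. It then uses the same orientation argument: with $\{f_t>0\}$ on the left, each new arc joins the point where one old arc enters $N$ to the point where the other leaves it. That gives $u_1\to v_2$, $u_2\to v_1$, and the splitting off of an oval when $\gamma_1=\gamma_2$; for a line, removing $a_1\cup a_2$ leaves two rays and a middle arc, not two pieces, and it is the two rays that get rejoined.

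For the identification outside $N$, which the paper leaves implicit, apply Thom's lemma to $S\cap\bigl((X_\Delta\setminus\operatorname{int}N)\times[t^*-\delta,t^*+\delta]\bigr)$, not to $S\setminus(N\times\{t^*\})$. Take $N$ a round disc in the $\xi$-chart rather than a box, because the level $T=t^*$ passes through the corners of the box. What you need on $\partial N$ is that $T|_{\partial N}$ has no critical value in $[t^*-\delta,t^*+\delta]$, i.e. that $C_t$ is transverse to $\partial N$; $\nabla_xf_t\neq0$ alone does not give this.
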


\begin{proof}
By the Morse lemma with parameters there are coordinates $u$ near $p$, depending smoothly on $t$, with $f_t=v(t)+\eta_1u_1^2+\eta_2u_2^2$, where $v(t)$ is the critical value near $p$, $v(t^*)=0$ and $v'(t^*)=\alpha$. Hence $C_t\cap N=\{\eta_1u_1^2+\eta_2u_2^2=-v(t)\}$ with $-v(t)=-\alpha(t-t^*)+o(t-t^*)$, which gives (i), and in case (ii) the two patterns of arcs. As the old and the new curve are level sets oriented by the same rule, $a_1$ and $a_2$ are traversed in opposite directions across $N$, and each new arc runs from the exit point of one old arc to the entry point of the other. If $\gamma_1\neq\gamma_2$, the two components are therefore joined, and lines $u_i\to v_i$ become $u_1\to v_2$ and $u_2\to v_1$. If $\gamma_1=\gamma_2=\cdots a_1P_1a_2P_2\cdots$, the new arcs close $P_1$ and $P_2$ separately, so $\gamma_1$ splits into two components, and the one not containing the ends of a line is an oval.
\end{proof}

\begin{proposition}[boundary events]\label{prop:boundary}
Let $p=(y_0,0)\in D_e^\circ$ be an event point of type (B) of $t^*$ and $\beta=(\psi^e_{t^*})''(y_0)$. As $t$ increases through $t^*$, two points $p_1,p_2$ of $\overline{C_t}\cap D_e^\circ$ near $p$ are created if $\alpha\beta<0$ and annihilated if $\alpha\beta>0$. While they exist, the arc of $\overline{C_t}$ between $p_1$ and $p_2$ near $p$ lies in one quadrant $Q_m$ adjacent to $p$, and the two outer pieces lie in the other one, $Q_o$.
(i) \emph{Annihilation.} The arc between $p_1$ and $p_2$ is a whole line $\lambda$ of $C_t$ with ends $(p_1,Q_m)$, $(p_2,Q_m)$; it disappears. The ends $(p_1,Q_o)$ and $(p_2,Q_o)$ are the terminal end of a line $L$ and the initial end of a line $L'$, and they are joined: if $L=L'$, $L$ closes up into an oval; otherwise $L$ and $L'$ merge into one line from the initial end of $L$ to the terminal end of $L'$.
(ii) \emph{Creation} is the inverse. An arc of a component $\gamma$ of $C_{t^*-\delta}$ near $p$, lying in $Q_o$, is cut at $p_1,p_2$, and a new line $\lambda$ in $Q_m$ appears. If $\gamma$ is an oval it becomes a line; if $\gamma$ is a line $u\to v$ it becomes two lines $u\to(p_a,Q_o)$ and $(p_b,Q_o)\to v$, where $p_a$ is the point met first along $\gamma$.
\end{proposition}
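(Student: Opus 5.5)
The plan is to reduce to a local normal form in the chart $U_e$, as for Proposition~\ref{prop:interior}, and then read off the global effect from the orientation convention ($\{f>0\}$ on the left).

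\emph{Local normal form.} At $(y_0,0,t^*)$ we have $g^e_{t^*}=0$, $\partial_yg^e_{t^*}=(\psi^e_{t^*})'(y_0)=0$ and $\partial_zg^e_{t^*}\neq0$ (Assumption~\ref{ass:morse}(B)). The implicit function theorem therefore writes $\overline{C_t}$ near $p$ as a graph $z=Z_t(y)$, smooth in $(y,t)$. Its zeros $Z_t(y)=0$ are exactly the roots of $\psi^e_t$ near $y_0$. By Lemma~\ref{lem:morse}(ii), or directly by the univariate Morse lemma with parameter, we may write $\psi^e_t(y)=w(t)+\tfrac{\beta}{2}u^2$ in a coordinate $u$ centred near $y_0$, with $w(t^*)=0$ and $w'(t^*)=\alpha$. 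Hence there are two simple roots near $y_0$ when $-w(t)/\beta>0$ and none otherwise. As $t$ increases through $t^*$, this gives creation if $\alpha\beta<0$ and annihilation if $\alpha\beta>0$.

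While the two roots $p_1,p_2$ exist, $Z_t$ is a small quadratic-like function with a single sign change at each root. So the graph lies on one side of $z=0$ between them and on the other side outside them. Since the two sides of $D_e^\circ$ lie in different quadrants (shown just before Lemma~\ref{lem:fixed}), this gives $Q_m$ and $Q_o$.

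\emph{Annihilation.} The arc of $\overline{C_t}$ between $p_1$ and $p_2$ is a compact arc whose interior lies in $C_t$ and whose endpoints lie on $\partial X$. It is therefore a whole component of $C_t$, namely a line $\lambda$ with ends $(p_1,Q_m)$ and $(p_2,Q_m)$, using Corollary~\ref{cor:ends}. After $t^*$ the graph $Z_t$ has no zero, so $\lambda$ is gone and the two outer pieces in $Q_o$ are joined by an arc of $C_t$ inside the neighbourhood.

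Orientation decides which outer end is initial and which is terminal. The set $\{g^e_t>0\}$ lies on one side of the graph everywhere, so the graph oriented with $f>0$ on its left runs monotonically in $y$ near $p$. One outer piece therefore arrives at $D_e^\circ$ (a terminal end of some line $L$) and the other leaves it (an initial end of some $L'$). Here I must remember that $f=z^{m_e}g^e$ and the quadrant changes the sign bookkeeping; I will check this via $g^e$, since $z^{m_e}$ has constant sign in each quadrant, so it cannot reverse orientation within $Q_o$.

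After the event, the joined arc glues $L$'s terminal end to $L'$'s initial end. If $L=L'$ the result is a closed curve, an oval. Otherwise it is a line from the initial end of $L$ to the terminal end of $L'$. Away from the neighbourhood nothing changes, by Proposition~\ref{prop:isotopy} applied on $[t^*-\delta,t^*-\delta']$ and $[t^*+\delta',t^*+\delta]$ together with the local model.

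\emph{Creation.} This is the time reversal of annihilation. An arc of $\gamma$ in $Q_o$ crossing near $p$ is cut at $p_1$ and $p_2$, and the middle piece becomes the new line $\lambda$ in $Q_m$. An oval cut once becomes a line. A line $u\to v$ splits into $u\to(p_a,Q_o)$ and $(p_b,Q_o)\to v$, where $p_a$ is the root met first along the orientation. Orientation is preserved because the level-set rule is unchanged.

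The main obstacle is making the orientation claim rigorous: that, of the two outer ends, exactly one is terminal and one is initial. I would verify it from the sign of $g^e_t$ on the two sides of the graph in the local model.
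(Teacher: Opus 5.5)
Your proposal is correct and follows essentially the same route as the paper: you take the graph $z=\phi_t(y)$ from $\partial_zg^e\neq0$, get the two roots from Lemma~\ref{lem:morse}(ii), use the sign change of the graph across $D_e^\circ$, and observe that the middle arc is a whole line. For the initial/terminal claim the paper argues more simply, from the fact that after the merge the two outer pieces form one oriented arc through $p$. Your monotonicity-in-$y$ argument also works, provided you note that the orientation sign $\sgn(x_1x_2yz)$ of the monomial chart is constant on $Q_o$, as well as $z^{m_e}$.
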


\begin{proof}
As $\partial_zg^e_{t^*}(p)\neq0$, near $p$ the curve $\overline{C_t}$ is a graph $z=\phi_t(y)$, smooth in $(y,t)$, with $\phi_t(y)=0$ if and only if $\psi^e_t(y)=0$. By Lemma~\ref{lem:morse}(ii), $\psi^e_t$ has two simple roots near $y_0$ for $t$ on one side of $t^*$, given by the sign of $\alpha\beta$, and none on the other. Between the two roots $\phi_t$ has one sign and outside them the other, and the two signs of $z$ are two different quadrants. Near $p$ the middle arc has both ends on $D_e^\circ$, so it is a whole component of $C_t$. When $p_1,p_2$ merge, the two outer pieces are joined by an oriented arc through $p$, so one of $(p_1,Q_o),(p_2,Q_o)$ is a terminal and the other an initial end. The rest is bookkeeping.
\end{proof}

\begin{remark}[what is local and what is not]\label{rem:local}
If $t^*$ has several event points, $C_{t^*+\delta}$ is obtained from $C_{t^*-\delta}$ by the replacements of Propositions~\ref{prop:interior} and~\ref{prop:boundary} at all of them, applied one after another, each to the curve produced by the previous ones, which need not be a level of $f_t$; $\gamma_i$, resp.\ $L,L',\gamma$, are components of that curve, and the order is irrelevant (Theorem~\ref{thm:update}). Births and deaths need only the local picture, and an annihilation only the current list of ends; a saddle and a creation need in addition the components $\gamma_1,\gamma_2$, resp.\ $\gamma$, that contain the local arcs, the only global information the method needs (\S\ref{sec:global}). Tangencies of $C_t$ with the axes are not events, since they do not change $\Sigma_t$.
\end{remark}

\begin{example}[Example~\ref{ex:cubic} continued]\label{ex:cubic2}
Here $f_t=1+x_1^3+x_2^3-s\,x_1x_2$ with $s=\lambda/t$. There are no boundary events, as the outer edge contains only two points of $\cA$, and $\nabla f_t=0$, $f_t=0$ has the single real solution $s=3$, $p=(1,1)$, where $H_p$ is positive definite and $\alpha=\lambda t^{*-2}>0$. So for $\lambda<3$ the oval dies at $t^*=\lambda/3$: $(n_o,n_u)=(1,1)$ for $\lambda>3$ and $(0,1)$ for $\lambda<3$.
\end{example}

\begin{example}[a boundary event]\label{ex:conic}
Let $f=x_1^2-b\,x_1x_2+x_2^2+x_1-1$, $0<b<2$, an ellipse, with $h(1,1)=-1$, $h(1,0)=1$ and $h=0$ elsewhere: $f_t=x_1^2-s\,x_1x_2+x_2^2+t\,x_1-1$ with $s=b/t$, and $\cS_h$ has the triangles $[(0,0),(2,0),(1,1)]$ and $[(0,0),(1,1),(0,2)]$. For $e=[(2,0),(0,2)]$ take $z=1/x_1$, $y=x_2/x_1$; then $\psi^e_t(y)=1-s\,y+y^2$ and $g^e_t=\psi^e_t(y)+t\,z-z^2$. For small $t$ the curve is a hyperbola with the lines $A$ from $e_1=(p_1,\cO_{++})$ to $e_4=(p_2,\cO_{--})$ and $B$ from $e_3=(p_1,\cO_{--})$ to $e_2=(p_2,\cO_{++})$, where $p_1<p_2$ are the roots of $\psi^e_t$ (Figure~\ref{fig:conic}(a)). At $t_1$ with $4t_1^2+t_1^4=b^2$ the critical value $-1+t^2/(s^2-4)$ of $f_t$ vanishes: a saddle in $\cO_{++}$ re-pairs the lines (Figure~\ref{fig:conic}(b)), and afterwards one line has both ends in $\cO_{++}$ (Figure~\ref{fig:conic}(c)). At $t^*=b/2$, $\psi^e_{t^*}=(y-1)^2$, with $\partial_zg^e=t^*$, $\alpha=b\,t^{*-2}>0$ and $\beta=2$: an annihilation (Figure~\ref{fig:conic}(d)) with $Q_m=\cO_{++}$, whose outer ends, in $Q_o=\cO_{--}$, belong to one line, which closes up: $(n_o,n_u)=(1,0)$ (Figure~\ref{fig:conic}(e)).
\end{example}

\begin{figure}[ht]
\centering
\begin{tikzpicture}[scale=1.12]
\begin{scope}[shift={(0.00,0)}]
\draw[gray!40,line width=1pt] (-1.1,0)--(1.1,0); \draw[gray!40,line width=1pt] (0,-1.1)--(0,1.1);
\draw[densely dashed,gray,line width=1pt] (0,0) circle (1);
\draw[very thick] (-0.946,-0.315) -- (-0.926,-0.308) -- (-0.907,-0.301) -- (-0.888,-0.294) -- (-0.868,-0.287) -- (-0.848,-0.280) -- (-0.830,-0.274) -- (-0.810,-0.266) -- (-0.790,-0.259) -- (-0.772,-0.253) -- (-0.753,-0.246) -- (-0.735,-0.239) -- (-0.715,-0.231) -- (-0.695,-0.224) -- (-0.677,-0.217) -- (-0.657,-0.210) -- (-0.639,-0.203) -- (-0.619,-0.196) -- (-0.599,-0.188) -- (-0.581,-0.181) -- (-0.561,-0.173) -- (-0.543,-0.166) -- (-0.524,-0.159) -- (-0.506,-0.152) -- (-0.486,-0.143) -- (-0.468,-0.136) -- (-0.448,-0.128) -- (-0.430,-0.121) -- (-0.410,-0.112) -- (-0.392,-0.105) -- (-0.372,-0.096) -- (-0.354,-0.088) -- (-0.337,-0.080) -- (-0.317,-0.070) -- (-0.299,-0.062) -- (-0.281,-0.053) -- (-0.263,-0.043) -- (-0.246,-0.034) -- (-0.228,-0.024) -- (-0.210,-0.013) -- (-0.192,-0.002) -- (-0.174,0.009) -- (-0.159,0.020) -- (-0.142,0.032) -- (-0.126,0.045) -- (-0.110,0.057) -- (-0.094,0.071) -- (-0.079,0.084) -- (-0.063,0.099) -- (-0.049,0.112) -- (-0.034,0.127) -- (-0.022,0.141) -- (-0.008,0.157) -- (0.006,0.174) -- (0.017,0.189) -- (0.028,0.206) -- (0.039,0.223) -- (0.050,0.241) -- (0.060,0.259) -- (0.069,0.277) -- (0.078,0.294) -- (0.088,0.314) -- (0.096,0.332) -- (0.104,0.350) -- (0.112,0.370) -- (0.120,0.388) -- (0.128,0.407) -- (0.135,0.426) -- (0.143,0.446) -- (0.150,0.463) -- (0.157,0.483) -- (0.165,0.503) -- (0.171,0.521) -- (0.178,0.541) -- (0.186,0.561) -- (0.192,0.579) -- (0.199,0.599) -- (0.206,0.618) -- (0.212,0.637) -- (0.219,0.656) -- (0.226,0.676) -- (0.232,0.695) -- (0.239,0.715) -- (0.245,0.735) -- (0.251,0.752) -- (0.258,0.772) -- (0.265,0.792) -- (0.271,0.810) -- (0.277,0.830) -- (0.284,0.850) -- (0.290,0.869) -- (0.296,0.888) -- (0.303,0.908) -- (0.309,0.928) -- (0.315,0.946);
\draw[very thick] (0.946,0.315) -- (0.926,0.309) -- (0.906,0.303) -- (0.886,0.297) -- (0.868,0.291) -- (0.848,0.285) -- (0.829,0.279) -- (0.810,0.273) -- (0.790,0.267) -- (0.770,0.260) -- (0.752,0.254) -- (0.732,0.248) -- (0.712,0.242) -- (0.692,0.235) -- (0.675,0.229) -- (0.655,0.223) -- (0.635,0.217) -- (0.617,0.210) -- (0.597,0.204) -- (0.577,0.197) -- (0.559,0.191) -- (0.539,0.184) -- (0.519,0.177) -- (0.501,0.171) -- (0.481,0.164) -- (0.463,0.158) -- (0.443,0.150) -- (0.425,0.143) -- (0.406,0.136) -- (0.386,0.128) -- (0.368,0.121) -- (0.348,0.113) -- (0.331,0.106) -- (0.312,0.098) -- (0.292,0.088) -- (0.274,0.080) -- (0.257,0.071) -- (0.239,0.061) -- (0.221,0.051) -- (0.204,0.041) -- (0.188,0.031) -- (0.170,0.018) -- (0.154,0.006) -- (0.139,-0.007) -- (0.123,-0.021) -- (0.109,-0.034) -- (0.094,-0.049) -- (0.081,-0.063) -- (0.068,-0.078) -- (0.054,-0.094) -- (0.041,-0.110) -- (0.029,-0.126) -- (0.017,-0.142) -- (0.006,-0.158) -- (-0.006,-0.175) -- (-0.017,-0.193) -- (-0.027,-0.210) -- (-0.037,-0.228) -- (-0.046,-0.246) -- (-0.057,-0.266) -- (-0.065,-0.283) -- (-0.074,-0.301) -- (-0.082,-0.319) -- (-0.091,-0.339) -- (-0.099,-0.357) -- (-0.106,-0.374) -- (-0.114,-0.394) -- (-0.122,-0.412) -- (-0.130,-0.432) -- (-0.137,-0.450) -- (-0.145,-0.470) -- (-0.152,-0.488) -- (-0.159,-0.506) -- (-0.166,-0.526) -- (-0.174,-0.546) -- (-0.180,-0.563) -- (-0.188,-0.583) -- (-0.194,-0.601) -- (-0.202,-0.621) -- (-0.209,-0.641) -- (-0.215,-0.659) -- (-0.222,-0.679) -- (-0.229,-0.697) -- (-0.236,-0.717) -- (-0.243,-0.737) -- (-0.249,-0.755) -- (-0.256,-0.775) -- (-0.262,-0.792) -- (-0.269,-0.812) -- (-0.276,-0.832) -- (-0.282,-0.850) -- (-0.289,-0.870) -- (-0.296,-0.890) -- (-0.302,-0.908) -- (-0.309,-0.928) -- (-0.315,-0.946);
\node[font=\scriptsize,right] at (0.946,0.315) {$e_1$}; \node[font=\scriptsize,above] at (0.315,0.946) {$e_2$};
\node[font=\scriptsize,left] at (-0.946,-0.315) {$e_3$}; \node[font=\scriptsize,left] at (-0.315,-0.946) {$e_4$};
\node[font=\scriptsize] at (0.42,-0.30) {$A$}; \node[font=\scriptsize] at (-0.40,0.24) {$B$};
\node at (0,-1.35) {\small (a) $t<t_1$};
\end{scope}
\begin{scope}[shift={(2.45,0)}]
\draw[gray!40,line width=1pt] (-1.1,0)--(1.1,0); \draw[gray!40,line width=1pt] (0,-1.1)--(0,1.1);
\draw[densely dashed,gray,line width=1pt] (0,0) circle (1);
\draw[very thick] (-0.617,-0.786) -- (-0.599,-0.767) -- (-0.580,-0.748) -- (-0.560,-0.728) -- (-0.541,-0.709) -- (-0.522,-0.691) -- (-0.508,-0.677) -- (-0.491,-0.660) -- (-0.471,-0.640) -- (-0.448,-0.617) -- (-0.435,-0.604) -- (-0.406,-0.576) -- (-0.390,-0.560) -- (-0.373,-0.542) -- (-0.354,-0.523) -- (-0.333,-0.503) -- (-0.311,-0.481) -- (-0.287,-0.457) -- (-0.287,-0.457) -- (-0.260,-0.431) -- (-0.232,-0.402) -- (-0.232,-0.402) -- (-0.201,-0.371) -- (-0.201,-0.371) -- (-0.167,-0.337) -- (-0.130,-0.300) -- (-0.130,-0.300) -- (-0.130,-0.300) -- (-0.090,-0.259) -- (-0.090,-0.259) -- (-0.046,-0.214) -- (-0.046,-0.214) -- (0.002,-0.165) -- (0.002,-0.165) -- (0.002,-0.165) -- (0.052,-0.110) -- (0.052,-0.110) -- (0.052,-0.110) -- (0.100,-0.051) -- (0.100,-0.051) -- (0.100,-0.051) -- (0.139,0.005) -- (0.139,0.005) -- (0.139,0.005) -- (0.171,0.053) -- (0.171,0.053) -- (0.197,0.094) -- (0.197,0.094) -- (0.218,0.129) -- (0.237,0.158) -- (0.237,0.158) -- (0.252,0.183) -- (0.266,0.205) -- (0.278,0.224) -- (0.288,0.241) -- (0.305,0.270) -- (0.319,0.293) -- (0.331,0.312) -- (0.345,0.335) -- (0.356,0.353) -- (0.368,0.373) -- (0.381,0.394) -- (0.396,0.418) -- (0.408,0.439) -- (0.422,0.463) -- (0.435,0.484) -- (0.449,0.507) -- (0.461,0.526) -- (0.473,0.546) -- (0.490,0.574) -- (0.502,0.595) -- (0.515,0.615) -- (0.527,0.636) -- (0.542,0.661) -- (0.553,0.678) -- (0.565,0.698) -- (0.580,0.724) -- (0.592,0.743) -- (0.605,0.765) -- (0.618,0.785);
\draw[very thick] (-0.785,-0.617) -- (-0.767,-0.598) -- (-0.751,-0.580) -- (-0.732,-0.560) -- (-0.714,-0.541) -- (-0.697,-0.523) -- (-0.676,-0.500) -- (-0.659,-0.482) -- (-0.640,-0.460) -- (-0.629,-0.449) -- (-0.604,-0.422) -- (-0.590,-0.407) -- (-0.575,-0.391) -- (-0.559,-0.374) -- (-0.541,-0.355) -- (-0.522,-0.335) -- (-0.502,-0.313) -- (-0.479,-0.289) -- (-0.455,-0.263) -- (-0.429,-0.235) -- (-0.429,-0.235) -- (-0.400,-0.205) -- (-0.400,-0.205) -- (-0.369,-0.171) -- (-0.335,-0.135) -- (-0.335,-0.135) -- (-0.297,-0.096) -- (-0.297,-0.096) -- (-0.256,-0.054) -- (-0.256,-0.054) -- (-0.256,-0.054) -- (-0.211,-0.008) -- (-0.211,-0.008) -- (-0.161,0.040) -- (-0.161,0.040) -- (-0.161,0.040) -- (-0.106,0.089) -- (-0.106,0.089) -- (-0.106,0.089) -- (-0.050,0.132) -- (-0.050,0.132) -- (-0.050,0.132) -- (0.002,0.168) -- (0.002,0.168) -- (0.047,0.197) -- (0.047,0.197) -- (0.086,0.220) -- (0.086,0.220) -- (0.119,0.240) -- (0.147,0.256) -- (0.147,0.256) -- (0.171,0.271) -- (0.192,0.283) -- (0.211,0.294) -- (0.242,0.312) -- (0.255,0.319) -- (0.278,0.332) -- (0.296,0.342) -- (0.324,0.358) -- (0.340,0.367) -- (0.365,0.382) -- (0.386,0.393) -- (0.409,0.406) -- (0.432,0.419) -- (0.454,0.432) -- (0.475,0.443) -- (0.498,0.456) -- (0.524,0.471) -- (0.544,0.482) -- (0.565,0.494) -- (0.587,0.506) -- (0.608,0.518) -- (0.635,0.534) -- (0.655,0.544) -- (0.678,0.558) -- (0.699,0.569) -- (0.720,0.581) -- (0.743,0.594) -- (0.763,0.605) -- (0.785,0.618);
\fill (0.377,0.388) circle (1.6pt);
\node at (0,-1.35) {\small (b) $t=t_1$};
\end{scope}
\begin{scope}[shift={(4.90,0)}]
\draw[gray!40,line width=1pt] (-1.1,0)--(1.1,0); \draw[gray!40,line width=1pt] (0,-1.1)--(0,1.1);
\draw[densely dashed,gray,line width=1pt] (0,0) circle (1);
\draw[very thick] (-0.775,-0.629) -- (-0.761,-0.614) -- (-0.748,-0.598) -- (-0.735,-0.583) -- (-0.722,-0.568) -- (-0.708,-0.552) -- (-0.695,-0.537) -- (-0.681,-0.521) -- (-0.668,-0.506) -- (-0.655,-0.491) -- (-0.642,-0.477) -- (-0.628,-0.461) -- (-0.615,-0.446) -- (-0.601,-0.430) -- (-0.588,-0.415) -- (-0.575,-0.401) -- (-0.561,-0.386) -- (-0.548,-0.370) -- (-0.535,-0.356) -- (-0.522,-0.341) -- (-0.508,-0.326) -- (-0.494,-0.311) -- (-0.481,-0.296) -- (-0.467,-0.281) -- (-0.453,-0.266) -- (-0.439,-0.250) -- (-0.427,-0.237) -- (-0.413,-0.221) -- (-0.399,-0.206) -- (-0.386,-0.192) -- (-0.372,-0.177) -- (-0.357,-0.161) -- (-0.345,-0.148) -- (-0.330,-0.132) -- (-0.317,-0.118) -- (-0.303,-0.103) -- (-0.289,-0.088) -- (-0.276,-0.074) -- (-0.261,-0.059) -- (-0.248,-0.045) -- (-0.233,-0.030) -- (-0.219,-0.016) -- (-0.204,-0.001) -- (-0.191,0.012) -- (-0.175,0.028) -- (-0.161,0.041) -- (-0.146,0.054) -- (-0.132,0.067) -- (-0.117,0.080) -- (-0.101,0.093) -- (-0.086,0.106) -- (-0.069,0.119) -- (-0.052,0.131) -- (-0.037,0.142) -- (-0.019,0.154) -- (-0.003,0.165) -- (0.014,0.176) -- (0.032,0.187) -- (0.048,0.197) -- (0.066,0.207) -- (0.083,0.218) -- (0.101,0.228) -- (0.119,0.238) -- (0.136,0.248) -- (0.154,0.258) -- (0.172,0.268) -- (0.190,0.277) -- (0.207,0.286) -- (0.225,0.294) -- (0.243,0.303) -- (0.261,0.310) -- (0.281,0.316) -- (0.301,0.314) -- (0.302,0.294) -- (0.297,0.277) -- (0.289,0.259) -- (0.279,0.239) -- (0.270,0.222) -- (0.260,0.203) -- (0.250,0.186) -- (0.241,0.170) -- (0.231,0.152) -- (0.220,0.134) -- (0.210,0.118) -- (0.199,0.099) -- (0.189,0.083) -- (0.178,0.066) -- (0.168,0.049) -- (0.157,0.032) -- (0.146,0.015) -- (0.134,-0.002) -- (0.123,-0.018) -- (0.112,-0.034) -- (0.099,-0.052) -- (0.087,-0.068) -- (0.074,-0.083) -- (0.061,-0.099) -- (0.048,-0.114) -- (0.034,-0.130) -- (0.022,-0.143) -- (0.008,-0.158) -- (-0.007,-0.174) -- (-0.020,-0.188) -- (-0.034,-0.202) -- (-0.048,-0.216) -- (-0.063,-0.232) -- (-0.077,-0.245) -- (-0.092,-0.261) -- (-0.106,-0.274) -- (-0.119,-0.288) -- (-0.134,-0.303) -- (-0.148,-0.316) -- (-0.162,-0.330) -- (-0.177,-0.345) -- (-0.191,-0.359) -- (-0.206,-0.373) -- (-0.221,-0.388) -- (-0.234,-0.401) -- (-0.250,-0.416) -- (-0.263,-0.429) -- (-0.278,-0.443) -- (-0.292,-0.457) -- (-0.308,-0.472) -- (-0.321,-0.485) -- (-0.336,-0.499) -- (-0.350,-0.513) -- (-0.366,-0.528) -- (-0.380,-0.541) -- (-0.394,-0.555) -- (-0.410,-0.570) -- (-0.423,-0.582) -- (-0.438,-0.597) -- (-0.452,-0.610) -- (-0.468,-0.624) -- (-0.483,-0.639) -- (-0.497,-0.652) -- (-0.512,-0.666) -- (-0.526,-0.679) -- (-0.541,-0.693) -- (-0.557,-0.707) -- (-0.572,-0.721) -- (-0.586,-0.734) -- (-0.600,-0.748) -- (-0.615,-0.761) -- (-0.629,-0.775);
\draw[very thick] (0.774,0.630) -- (0.755,0.619) -- (0.737,0.610) -- (0.719,0.600) -- (0.699,0.589) -- (0.681,0.580) -- (0.663,0.570) -- (0.643,0.560) -- (0.626,0.550) -- (0.608,0.541) -- (0.588,0.531) -- (0.568,0.522) -- (0.551,0.515) -- (0.530,0.507) -- (0.510,0.503) -- (0.496,0.515) -- (0.500,0.537) -- (0.508,0.555) -- (0.517,0.574) -- (0.526,0.592) -- (0.537,0.612) -- (0.546,0.630) -- (0.556,0.648) -- (0.568,0.668) -- (0.578,0.686) -- (0.588,0.703) -- (0.599,0.721) -- (0.609,0.739) -- (0.620,0.757) -- (0.630,0.774);
\node at (0,-1.35) {\small (c) $t_1<t<t^*$};
\end{scope}
\begin{scope}[shift={(7.35,0)}]
\draw[gray!40,line width=1pt] (-1.1,0)--(1.1,0); \draw[gray!40,line width=1pt] (0,-1.1)--(0,1.1);
\draw[densely dashed,gray,line width=1pt] (0,0) circle (1);
\draw[very thick] (-0.707,-0.707) -- (-0.712,-0.676) -- (-0.702,-0.643) -- (-0.688,-0.612) -- (-0.674,-0.587) -- (-0.656,-0.556) -- (-0.637,-0.526) -- (-0.622,-0.504) -- (-0.604,-0.478) -- (-0.583,-0.448) -- (-0.559,-0.415) -- (-0.545,-0.397) -- (-0.531,-0.378) -- (-0.498,-0.336) -- (-0.479,-0.313) -- (-0.460,-0.288) -- (-0.439,-0.263) -- (-0.416,-0.235) -- (-0.392,-0.207) -- (-0.366,-0.176) -- (-0.366,-0.176) -- (-0.338,-0.145) -- (-0.308,-0.111) -- (-0.277,-0.077) -- (-0.277,-0.077) -- (-0.243,-0.041) -- (-0.208,-0.004) -- (-0.208,-0.004) -- (-0.170,0.034) -- (-0.170,0.034) -- (-0.130,0.070) -- (-0.089,0.104) -- (-0.089,0.104) -- (-0.049,0.134) -- (-0.049,0.134) -- (-0.013,0.158) -- (0.020,0.179) -- (0.048,0.195) -- (0.073,0.209) -- (0.095,0.221) -- (0.115,0.231) -- (0.147,0.246) -- (0.173,0.257) -- (0.202,0.267) -- (0.233,0.272) -- (0.256,0.254) -- (0.252,0.221) -- (0.239,0.187) -- (0.222,0.150) -- (0.206,0.118) -- (0.195,0.100) -- (0.183,0.079) -- (0.153,0.029) -- (0.134,-0.002) -- (0.111,-0.035) -- (0.111,-0.035) -- (0.084,-0.072) -- (0.053,-0.109) -- (0.053,-0.109) -- (0.021,-0.145) -- (-0.013,-0.180) -- (-0.013,-0.180) -- (-0.045,-0.213) -- (-0.077,-0.244) -- (-0.077,-0.244) -- (-0.108,-0.274) -- (-0.137,-0.302) -- (-0.166,-0.328) -- (-0.193,-0.354) -- (-0.193,-0.354) -- (-0.219,-0.377) -- (-0.244,-0.400) -- (-0.268,-0.421) -- (-0.291,-0.441) -- (-0.312,-0.460) -- (-0.353,-0.494) -- (-0.371,-0.510) -- (-0.389,-0.524) -- (-0.422,-0.551) -- (-0.437,-0.563) -- (-0.465,-0.585) -- (-0.490,-0.604) -- (-0.513,-0.620) -- (-0.542,-0.641) -- (-0.567,-0.658) -- (-0.595,-0.675) -- (-0.621,-0.690) -- (-0.648,-0.702) -- (-0.678,-0.712) -- (-0.707,-0.707);
\draw[line width=1pt,fill=white] (-0.707,-0.707) circle (1.6pt); \draw[line width=1pt,fill=white] (0.707,0.707) circle (1.6pt);
\node at (0,-1.35) {\small (d) $t=t^*$};
\end{scope}
\begin{scope}[shift={(9.80,0)}]
\draw[gray!40,line width=1pt] (-1.1,0)--(1.1,0); \draw[gray!40,line width=1pt] (0,-1.1)--(0,1.1);
\draw[densely dashed,gray,line width=1pt] (0,0) circle (1);
\draw[very thick] (-0.237,-0.270) -- (-0.257,-0.264) -- (-0.273,-0.252) -- (-0.286,-0.236) -- (-0.293,-0.217) -- (-0.297,-0.197) -- (-0.297,-0.177) -- (-0.294,-0.157) -- (-0.290,-0.137) -- (-0.284,-0.117) -- (-0.276,-0.097) -- (-0.268,-0.079) -- (-0.259,-0.061) -- (-0.249,-0.043) -- (-0.238,-0.026) -- (-0.226,-0.008) -- (-0.214,0.007) -- (-0.202,0.023) -- (-0.188,0.039) -- (-0.174,0.054) -- (-0.160,0.068) -- (-0.143,0.083) -- (-0.128,0.096) -- (-0.113,0.108) -- (-0.097,0.119) -- (-0.079,0.131) -- (-0.061,0.141) -- (-0.043,0.150) -- (-0.023,0.159) -- (-0.006,0.165) -- (0.014,0.171) -- (0.034,0.174) -- (0.054,0.176) -- (0.074,0.176) -- (0.094,0.172) -- (0.112,0.163) -- (0.128,0.150) -- (0.138,0.132) -- (0.144,0.112) -- (0.146,0.092) -- (0.144,0.072) -- (0.141,0.052) -- (0.136,0.032) -- (0.129,0.012) -- (0.121,-0.007) -- (0.112,-0.026) -- (0.103,-0.042) -- (0.092,-0.061) -- (0.081,-0.077) -- (0.068,-0.094) -- (0.056,-0.110) -- (0.042,-0.126) -- (0.028,-0.140) -- (0.014,-0.153) -- (-0.001,-0.168) -- (-0.017,-0.181) -- (-0.032,-0.193) -- (-0.049,-0.206) -- (-0.065,-0.217) -- (-0.083,-0.228) -- (-0.101,-0.238) -- (-0.119,-0.247) -- (-0.139,-0.256) -- (-0.157,-0.262) -- (-0.177,-0.268) -- (-0.197,-0.271) -- (-0.217,-0.272) -- (-0.237,-0.270) -- cycle;
\node at (0,-1.35) {\small (e) $t>t^*$};
\end{scope}

\end{tikzpicture}
\caption{Example~\ref{ex:conic} with $b=1$, in the disc model $x\mapsto x/(5+|x|)$ of $X_\Delta=\RP^2$; the dashed circle is $\partial X$, with antipodal points identified. (a) Small $t$: the lines $A$ and $B$ and the ends $e_1,\dots,e_4$. (b) The saddle at $t_1\approx0.486$ (dot). (c) One line has both ends in $\cO_{++}$, near $y=x_2/x_1=1$. (d) At $t^*=0.5$ this line has gone to infinity: the parabola $\overline{C_{t^*}}$ is tangent to $\partial X$ at $y=1$ (circles). (e) After the annihilation: the ellipse.}
\label{fig:conic}
\end{figure}

\section{Global information}\label{sec:global}

We obtain the components needed at saddles and creations (Remark~\ref{rem:local}) by following a gradient-like field of $\tau$ downwards to a fixed bottom, the curve $C_{t_0}$ at a level $t_0$ before the first event together with the small curves born later; unlike in \cite[\S\S4.2--4.3]{SH97}, all separatrices are followed down to this common bottom. Throughout this section Assumption~\ref{ass:morse} holds, $E_\cA(c)\neq0$, and $t_0\in(0,t_1)$, where $t_1$ is the first event.

\subsection{The swept surface cut open at infinity}

Let $S$, $\tau$ and $B_e$ be as in \S\ref{sec:events}. $S$ is transverse to each $D_e^\circ\times(0,1]$: at a boundary event its normal vector $(0,\partial_zg^e_t,\alpha)$ in the coordinates $(y,z,t)$ of $U_e$ is not parallel to $(0,1,0)$, and elsewhere $\overline{C_t}$ is transverse to $D_e^\circ$. So $B_e$ is a smooth curve on $S$ separating the sides $z>0$ and $z<0$.

\begin{definition}\label{def:Shat}
$\hat S$ is the compact surface obtained from $S\cap\tau^{-1}[t_0,1]$ by cutting it open along the curves $B_e$. Its boundary consists of the \emph{bottom} $\hat C_{t_0}$, the \emph{top} $\hat C_1$ and the \emph{side} $\hat B$, which contains two copies of each $B_e$, one for each side of $D_e^\circ$. We put $\hat C_t=\tau^{-1}(t)\subset\hat S$.
\end{definition}

For a non-event $t$, $\hat C_t$ is $\overline{C_t}$ cut at its points on $\partial X$: ovals are circles and lines are arcs whose endpoints are their ends $(p,Q)$, so $\hat B\cap\hat C_t$ is the set of ends of $C_t$. We orient $\hat C_t$ as $C_t$; this is continuous on $\hat S$ off the critical points, since $z^{m_e}$ has constant sign on each side of $D_e^\circ$.

\begin{lemma}[critical points]\label{lem:crit}
The critical points of $\tau$ in $\hat S\setminus\hat B$ are the interior event points, of index $0$ (birth of an oval), $1$ (saddle) and $2$ (death of an oval) (Lemma~\ref{lem:morse}(i)). The critical points of $\tau|_{\hat B}$ are the two points over each boundary event point $(y_0,t^*)$, one on each copy of $B_e$; near each of them $(y,t)$ are coordinates on $\hat S$, in which $\hat B$ is the curve $t=T(y)$ of Lemma~\ref{lem:morse}(ii). (i) At a creation, $T$ has a minimum at $y_0$; on the side $Q_m$, $\hat S=\{t\ge T(y)\}$ and the point is a local minimum of $\tau$, the birth of $\lambda$; on the side $Q_o$, $\hat S=\{t\le T(y)\}$ and the point is a \emph{boundary saddle}: near it a level just below $t^*$ is one arc, and a level just above consists of two arcs ending on $\hat B$. (ii) At an annihilation, $T$ has a maximum at $y_0$; on the side $Q_m$, $\hat S=\{t\le T(y)\}$ and the point is a local maximum of $\tau$, the death of $\lambda$; on the side $Q_o$, $\hat S=\{t\ge T(y)\}$ and the point is a boundary saddle, joining two arcs ending on $\hat B$.
\end{lemma}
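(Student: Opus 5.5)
The plan is to treat the interior part and the side part separately, using Lemma~\ref{lem:morse} in both cases and only local coordinates. For the interior, a point of $\hat S\setminus\hat B$ lies either in $S^\circ$ or in $S$ near some $D_e^\circ$ off $B_e$. In the second case the point is in the torus, so it is in $S^\circ$ anyway. Cutting along $B_e$ does not change $\hat S$ away from $\hat B$. Hence the critical points of $\tau$ there are those of $\tau|_{S^\circ}$, which by Lemma~\ref{lem:morse}(i),(iii) are exactly the interior event points with the stated indices. Proposition~\ref{prop:interior} identifies index $0$ with the birth and index $2$ with the death of an oval, and index $1$ is a saddle. Event points with $t^*<t_0$ do not occur, since $t_0$ precedes the first event, so nothing is lost by restricting to $\tau^{-1}[t_0,1]$.

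For the side, the critical points of $\tau|_{\hat B}$ lie over critical points of $\tau|_{B_e}$, because each copy of $B_e$ in $\hat B$ maps diffeomorphically onto $B_e$. By Lemma~\ref{lem:morse}(ii),(iii) these are the boundary event points $(y_0,t^*)$, and each gives two points, one per copy. To get coordinates near such a point, use that $S$ is a smooth surface on which $\tau$ is a submersion (Lemma~\ref{lem:morse}(ii)). Since $\partial_zg^e\neq0$, the implicit function theorem writes $S$ near $(y_0,0,t^*)$ as a graph $z=\phi(y,t)$, as in the proof of Proposition~\ref{prop:boundary}. So $(y,t)$ are coordinates on $S$, and in them $B_e=\{\psi^e_t(y)=0\}=\{t=T(y)\}$. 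Cutting along this curve gives two half-discs $\{\pm\phi\ge0\}$, each with coordinates $(y,t)$ and boundary $t=T(y)$.

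It remains to decide, on each side, whether the half-disc is $\{t\ge T\}$ or $\{t\le T\}$. By Lemma~\ref{lem:morse}(ii), $T''(y_0)=-\beta/\alpha$. At a creation ($\alpha\beta<0$) this is positive, so $T$ has a minimum; at an annihilation ($\alpha\beta>0$) it is negative, so $T$ has a maximum. By Proposition~\ref{prop:boundary}, the side $Q_m$ is where $\phi$ has the sign it takes between the two roots.

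\begin{itemize}
\item \emph{Creation.} The roots exist only for $t>T(y_0)$. For fixed $t>T(y_0)$ the interval between the roots is exactly $\{y: T(y)<t\}$. So the $Q_m$ half is $\{t\ge T\}$ and the $Q_o$ half is $\{t\le T\}$. Then $(y_0,t^*)$ is a local minimum of $\tau$ on the $Q_m$ half. On the $Q_o$ half, the level $t^*-\delta$ is a full arc and the level $t^*+\delta$ is that arc minus the interval, i.e.\ two arcs ending on $\hat B$.
\item \emph{Annihilation.} This case is symmetric, with all inequalities reversed: the $Q_m$ half is $\{t\le T\}$ with a local maximum, and the $Q_o$ half is $\{t\ge T\}$, where two arcs join.
\end{itemize}

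The main obstacle is this sign bookkeeping: showing that the middle interval corresponds to $\{t\gtrless T\}$ with the correct orientation. I would settle it from the local form $\psi^e_t(y)\approx\alpha(t-t^*)+\tfrac{\beta}{2}(y-y_0)^2$. Between the roots, $\psi^e_t$ has the sign of $-\beta$, and the middle region is exactly where $\psi^e_t$ has that sign. A short sign check of this form against $T(y)$ identifies $\{t>T\}$ or $\{t<T\}$ in each case.
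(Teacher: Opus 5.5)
Your proposal is correct and follows essentially the paper's own argument: you take $(y,t)$ as coordinates on $S$ from the graph $z=\phi_t(y)$, write $B_e=\{t=T(y)\}$, and observe that the middle interval between the two roots of $\psi^e_t$ is $\{y:\ t>T(y)\}$ at a creation and $\{y:\ t<T(y)\}$ at an annihilation. The sign check you flag as the main obstacle is already settled by your bullet argument, because the roots at level $t$ are the solutions of $T(y)=t$ and $T$ has a strict extremum at $y_0$; the paper's proof says no more than this.
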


\begin{proof}
Near $p=(y_0,0)$ the curve $\overline{C_t}$ is the graph $z=\phi_t(y)$ (proof of Proposition~\ref{prop:boundary}), so $(y,t)$ are coordinates on $S$, in which $B_e=\{\psi^e_t(y)=0\}=\{t=T(y)\}$. The middle arc $\lambda$ lies over the interval between the two roots of $\psi^e_t$ near $y_0$, which is $\{y:\ t>T(y)\}$ at a creation and $\{y:\ t<T(y)\}$ at an annihilation, and the outer pieces on the other side.
\end{proof}

\subsection{A gradient-like vector field}\label{ssec:field}

All descending trajectories are taken for one vector field $W$, built from a normal lift. Let $g_t$ be a Riemannian metric on $\R^2$, smooth in $(x,t)$, which near each $D_e^\circ$ is, in the chart $U_e$, a positive definite quadratic form in $d\log|y|,d\log|z|$ with coefficients smooth in $(y,z,t)$ up to $z=0$. Such metrics exist, since $S$ stays away from the torus-fixed points (Lemma~\ref{lem:fixed}): interpolate between a flat metric in $\log|x|$ near $\partial X$ and the Euclidean metric near the axes; the metric of \S\ref{ssec:sepcomp} is of this kind if no $\nu_e$ has a positive coordinate. Away from the critical points let $V=\bigl(-\partial_tf_t\,\nabla_gf_t/|\nabla_gf_t|_g^2,\ 1\bigr)$ on $S^\circ$: points of $C_t$ move orthogonally to $C_t$, and $d\tau(V)=1$.

\begin{proposition}[an adapted field]\label{prop:W}
There is a smooth vector field $W$ on $\hat S$, tangent to $\hat B$, such that:
\begin{enumerate}[label=(\roman*),leftmargin=*,itemsep=1pt]
\item $W$ vanishes exactly at the critical points of Lemma~\ref{lem:crit}, and $d\tau(W)>0$ elsewhere;
\item near a boundary critical point, in the coordinates $u=y-y_0$, $v=t-T(y)$, $W=(T'(y),v)$ if $\hat S=\{v\ge0\}$ and $W=(T'(y),-v)$ if $\hat S=\{v\le0\}$;
\item away from the critical points, outside small neighbourhoods of the boundary critical points and outside thin bands $\tau^{-1}[a,b]$ of levels without events, one just below each event and disjoint from these neighbourhoods, $W$ is a positive multiple of $V$; on the band below $t^*$ it is a positive multiple of $V+\theta\chi'(t)Y$, where $\chi$ increases from $0$ near $a$ to $1$ near $b$, $Y$ is tangent to the levels and vanishes near $\hat B$, and $\theta=0$ unless, for $\theta=0$, a descending separatrix of an event point of $t^*$ converges to a lower saddle;
\item no trajectory of $W$ in $\hat S\setminus\hat B$ joins two saddles (interior or boundary); where $\theta>0$ is needed, all small $\theta>0$ outside a finite set will do.
\end{enumerate}
\end{proposition}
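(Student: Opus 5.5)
We will build $W$ by patching local models with a partition of unity subordinate to a finite cover of the compact surface $\hat S$, and then perturb it inside the thin bands below the events to break saddle connections.

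\emph{Step 1 (the base field away from critical points).} On $S^\circ$ minus small discs around the interior critical points, the field $V$ is smooth, since $\nabla_gf_t\neq0$ there, and satisfies $d\tau(V)=1$. Near $\hat B$, away from boundary critical points, the coordinates $(y,z,t)$ of $U_e$ are adapted. Because $g_t$ is a smooth positive form in $d\log|y|,d\log|z|$ up to $z=0$, we write $V$ in the coordinates $(\log|y|,\log|z|)$. Then $V$ extends smoothly to $\hat S$ up to each copy of $B_e$, where $\nabla_g$ of $z^{-m_e}f_t=g^e_t$ is used in place of $\nabla_gf_t$. On $B_e$ the component normal to $D_e^\circ$ is a multiple of $z\,\partial_z$, which vanishes at $z=0$, so $V$ is tangent to $\hat B$. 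This is the step where the choice of metric matters.

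\emph{Step 2 (local models).} Near an interior critical point, Lemma~\ref{lem:morse}(i) gives Morse coordinates $x\mapsto\tau=t^*+Q(x)$. Take $W=\nabla Q$ in these coordinates; it vanishes exactly at the critical point and satisfies $d\tau(W)=|\nabla Q|^2>0$ elsewhere. Near a boundary critical point, Lemma~\ref{lem:crit} gives coordinates $(y,t)$ with $\hat B=\{t=T(y)\}$. Take $W$ as in (ii). In the coordinates $(u,v)$ we have $d\tau(W)=T'(y)^2\pm v\cdot(\pm1)=T'^2+|v|$, which is positive off $u=v=0$ on $\hat S$ because $T''(y_0)\neq0$. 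The field is tangent to $\hat B=\{v=0\}$.

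Next glue these models to positive multiples of $V$ using bump functions. The positivity of $d\tau$ is convex, so $d\tau(W)>0$ is preserved, and tangency to $\hat B$ is preserved as well. This gives (i), (ii) and the first part of (iii).

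\emph{Step 3 (the bands and genericity, the main obstacle).} Each event level $t^*$ has finitely many critical points and therefore finitely many descending separatrices: two for each saddle and one for each boundary saddle, the latter running inside the interior. Fix a band $\tau^{-1}[a,b]$ just below $t^*$, containing no critical point. The flow of $V$ through the band defines a diffeomorphism from $\hat C_b$ to $\hat C_a$ preserving ends. Replacing $V$ by $V+\theta\chi'(t)Y$, with $Y$ tangent to the levels and supported away from $\hat B$, composes this map with the time-$\theta$ flow of $Y$ on the level. Choose $Y$ so that it moves the finitely many arrival points of the separatrices of $t^*$ transversally along $\hat C_a$; for instance, $Y$ is a nonzero tangent field near each of these points.

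The set of lower saddles is finite. Their ascending separatrices meet $\hat C_a$ in finitely many points, and these are unchanged by modifications above $a$. The arrival points, on the other hand, depend analytically and monotonically on $\theta$ locally. Hence coincidences occur only for $\theta$ in a finite set, and (iv) follows for all small $\theta$ outside it. We set $\theta=0$ whenever no coincidence occurs.

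We process the events in increasing order of $t^*$, so that each perturbation only changes trajectories above its band. The hardest point is justifying finiteness uniformly: an arrival point of a separatrix must avoid the ascending separatrices of lower saddles. These separatrices are determined below $a$, and the compactness of $\hat S$ ensures they are finitely many. Connections between two saddles at the same level cannot occur, because $d\tau(W)>0$ along trajectories.
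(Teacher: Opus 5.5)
\textbf{The gap is in Step~2, at the interior critical points.} There you take $\nabla Q$ in Morse coordinates and blend it with $V$. So in a punctured neighbourhood of each interior saddle, $W$ is not a positive multiple of $V$.

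But (iii) asks for this at every non-critical point outside the \emph{boundary} model neighbourhoods and the bands. Note the contrast in the statement between ``away from the critical points'' and ``outside small neighbourhoods of the boundary critical points''.

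This property is used later. With your field, the descending separatrices of an interior saddle are different curves from those of $V$, with different landing points. Remark~\ref{rem:numerics} and Lemma~\ref{lem:axisconn} identify the separatrices of $W$ with those of $V$ integrated from the saddle, so they would no longer be justified.

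The repair is to use, near the interior critical points, $\phi V$ with $\phi=|\nabla_gf_t|_g^2/(|\nabla_gf_t|_g^2+(\partial_tf_t)^2)$. This is the gradient of $\tau|_{S^\circ}$ for the metric induced by $g_t+dt^2$. Hence it is smooth, vanishes exactly at the interior critical points, and is a positive multiple of $V$ elsewhere. It is also still a gradient of $\tau$ for a smooth metric near $p$, which the proof of Theorem~\ref{thm:update}(a) needs. Gluing $\phi V$, $V$ and the boundary models (ii) by a partition of unity then gives (i)--(iii) as stated.

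\textbf{The rest is the paper's argument.} This covers the extension of $V$ to $\hat B$ in $(\log|y|,\log|z|)$ and the models (ii) with $d\tau(W)=T'(y)^2+|v|$. It also covers the inductive twisting in a band below each event: the points $r_j$ are fixed on $\hat C_a$, and each arrival point moves monotonically for $\theta\in[0,\theta_0]$, so each pair excludes at most one $\theta$.

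Three small corrections:
\begin{enumerate}
\item Analyticity in $\theta$ is neither available ($Y$ is built from bump functions) nor needed.
\item The holonomy is exactly the time-$\theta$ flow of $\mp Y$ only if $Y$ is taken constant in the product coordinates $\hat C_b\times[a,b]$ given by the flow of $W/d\tau(W)$.
\item Only the boundary saddle of a creation has a descending separatrix in $\hat S\setminus\hat B$. The boundary saddle of an annihilation has none; instead it contributes its ascending separatrix to the points $r_j$.
\end{enumerate}
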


\begin{proof}
With $\phi=|\nabla_gf_t|_g^2/(|\nabla_gf_t|_g^2+(\partial_tf_t)^2)$, $\phi V$ is the gradient of $\tau$ for the metric induced on $S^\circ$ by $g_t+dt^2$; it is smooth and vanishes exactly at the interior critical points. Near $D_e^\circ$ let $\omega=(\log|y|,\log|z|)$ and $G$ the matrix of $g_t$ in $\omega$. As $f_t=z^{m_e}g^e_t$, on $S$ the field $V$ is the normal lift of $g^e_t$, whose $\omega$-component is $-\partial_tg^e_t\,G^{-1}\gamma/(\gamma^TG^{-1}\gamma)$ with $\gamma=(y\partial_yg^e_t,z\partial_zg^e_t)$. At $z=0$, $\gamma=(y(\psi^e_t)',0)$, so the $z$-component $z\,\dot\omega_2$ of $V$ vanishes and its $y$-component is $-\partial_tg^e_t/\partial_yg^e_t$: away from the boundary events $V$ extends smoothly to $\hat B$, tangent to it. Near the boundary critical points take the models (ii), where $\tau=v+T(y_0+u)$ and $d\tau(W)=T'(y)^2+|v|$. Gluing $\phi V$, $V$ and the models by a partition of unity preserves $d\tau(W)>0$ and the tangency to $\hat B$.

Connections are removed inductively over the events in increasing order. Suppose that no trajectory joins two saddles below the event $t^*$, and let $[a,b]$ be a band of levels between $t^*$ and the previous event, disjoint from the model neighbourhoods. The flow of $W/d\tau(W)$ identifies $\tau^{-1}[a,b]$ with $\mathcal F\times[a,b]$, $\mathcal F=\hat C_b$, so that $W/d\tau(W)=\partial_t$. Let $q_1,\dots,q_n\in\mathcal F$ be the points where the descending separatrices of the event points of $t^*$ cross $\hat C_b$, and $r_1,\dots,r_m$ those where the ascending separatrices in $\hat S\setminus\hat B$ of the saddles below $a$ cross $\hat C_a$; a descending separatrix of $t^*$ converges to a lower saddle if and only if it passes through some $r_j$. On the band replace $\partial_t$ by $\partial_t+\theta\chi'(t)Y$, with $Y$ a vector field on $\mathcal F$ that vanishes near the endpoints of its arcs but not at the $q_i$. The holonomy from $\hat C_b$ to $\hat C_a$ is the flow $\varphi_\theta$ of $-Y$ at time $\theta$. For $\theta$ in some interval $[0,\theta_0]$ every $\varphi_\theta(q_i)$ moves monotonically in a coordinate of $\mathcal F$, so each equation $\varphi_\theta(q_i)=r_j$ has at most one solution $\theta\in(0,\theta_0]$, and every other $\theta$ removes the connections at $t^*$. The band lies above all earlier events, so the earlier descending separatrices do not change, and two points of one level are never joined, since $\tau$ increases strictly along non-constant trajectories. On the band the new field $d\tau(W)(\partial_t+\theta\chi'(t)Y)$ is smooth.
\end{proof}

In the models (ii) trajectories leave a minimum and enter a maximum; exactly one trajectory in $\hat S\setminus\hat B$ converges from below to the boundary saddle of a creation, its \emph{descending separatrix}, and the two branches of $\hat B$ above it, carrying the new ends, leave it; the two branches of $\hat B$ below the boundary saddle of an annihilation enter it, and exactly one trajectory in $\hat S\setminus\hat B$, its \emph{ascending separatrix}, leaves it upwards. An interior saddle has two descending and two ascending separatrices. The proof of (iv) is the argument of \cite{Sma61}, see \cite{Mil65}, made explicit; unlike in \cite{BNR16}, $\tau$ has no critical point at a boundary event. But (iv) is not automatic:

\begin{example}[a saddle connection]\label{ex:connection}
Let $f_t=P(x_1)+(\tfrac32-x_1)x_2^2+\tfrac1{10}x_2^3-t$ with $P(x_1)=\tfrac15x_1^3-\tfrac9{10}x_1^2+\tfrac65x_1$ ($h=1$ on the constant term and $0$ elsewhere). The points $p=(1,0)$ and $q=(2,0)$ are saddles of $t=\frac12$ and $t=\frac25$, with Hessians $\operatorname{diag}(-\frac35,1)$, $\operatorname{diag}(\frac35,-1)$ and $\partial_tf_t=-1$. As $\partial_2f_t$ vanishes on the axis $x_2=0$, the axis is invariant under $V$ for every metric diagonal in $(x_1,x_2)$, and on it $S$ is the graph $t=P(x_1)$, which decreases on $[1,2]$; so a descending separatrix of $p$ runs along the axis into $q$, also for every generic lifting close to this one, as no exponent has $a_2=1$; Lemma~\ref{lem:axisconn} gives the labels. The side from which it reaches $q$ is local: for the heights $h(a)-Ka_1$ (the rescaling $r_1=t^Kx_1$), $p$ becomes $(2^{-K},0)$, to the right of $q=(2(\frac25)^K,0)$ if $K\ge4$, but the separatrix still reaches $q$ from the left (\S\ref{app:regression}).
\end{example}

\subsection{Landing points}\label{ssec:landing}

Fix $\delta>0$ smaller than half the distance between any two of the numbers $t_0$, $1$ and the events; a level $t$ is \emph{regular} if its distance to every event exceeds $\delta$. Let $m_1,\dots,m_k$ be the local minima of $\tau$ on $\hat S$, i.e.\ the births of ovals and the points of Lemma~\ref{lem:crit}(i) on the sides $Q_m$, and $B_{m_j}$ the component of $\tau^{-1}(\tau(m_j)+\delta)$ near $m_j$: a small oval, resp.\ the line $\lambda$ just after its creation. Removing from $\hat S$ the discs, resp.\ half-discs, that the $B_{m_j}$ bound below, we obtain a surface $\hat S'$ with the \emph{bottom} $B=\hat C_{t_0}\sqcup B_{m_1}\sqcup\dots\sqcup B_{m_k}$, a finite union of oriented circles and arcs; $B_{<t}$ is the union of $\hat C_{t_0}$ and the $B_{m_j}$ with $\tau(m_j)<t$.

\begin{lemma}[descending trajectories]\label{lem:descend}
Let $x\in\hat S'$ with $W(x)\neq0$, and follow $-W$ from $x$. Either the trajectory reaches $B$, at a point $\Lambda(x)$, the \emph{landing point} of $x$; or it converges to a saddle, which happens if and only if $x$ lies on an ascending separatrix of that saddle, or on a branch of $\hat B$ above the boundary saddle of a creation. The descending separatrices in $\hat S\setminus\hat B$ of all saddles, i.e.\ the two of an interior saddle and the one of the boundary saddle of a creation, land on $B$, in finite time from any level below the saddle.
\end{lemma}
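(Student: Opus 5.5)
We follow $-W$ and use that $\tau$ strictly decreases along non-constant trajectories, by Proposition~\ref{prop:W}(i). This turns the question into one about the $\omega$-limit of a backward flow on a compact surface with boundary.

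First, the trajectory stays in $\hat S'$ and does not escape through the side. $W$ is tangent to $\hat B$ (Proposition~\ref{prop:W}), so $\hat B$ is invariant: a trajectory starting in $\hat S\setminus\hat B$ never meets $\hat B$, and one starting on $\hat B$ stays on it. The top is never reached going down. Hence the backward trajectory either hits $B$ or is defined for all negative time in the compact set $\hat S'\cap\tau^{-1}[t_0,\tau(x)]$. In the latter case $\tau$ is monotone and bounded, so its $\alpha$-limit set is a nonempty compact invariant set on which $\tau$ is constant. Since $d\tau(W)>0$ off the zeros, that set consists of zeros of $W$, and as these are isolated it is a single critical point $c$. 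The candidates for $c$ are:
\begin{itemize}
\item a local minimum: excluded, because the discs or half-discs below each $B_{m_j}$ were removed, so the trajectory crosses $B_{m_j}$ first;
\item a maximum: excluded, because $\tau$ would have to decrease towards a maximum;
\item an interior saddle or a boundary saddle.
\end{itemize}
By the hyperbolic normal form of $W$ at an interior saddle (Morse lemma, $W$ a positive multiple of a gradient-like field), converging to the saddle from above means lying on its ascending separatrix. At a boundary saddle we use the explicit models of Proposition~\ref{prop:W}(ii). For a creation ($\hat S=\{v\le0\}$ on $Q_o$), $W=(T'(y),-v)$ with $T'(u)\approx T''(y_0)u$, $T''>0$. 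The points flowing backward into it are those on the two branches of $\hat B$ above it. For an annihilation ($\hat S=\{v\ge0\}$), $W=(T'(y),v)$ with $T''<0$, and the backward-stable set is the single ascending separatrix. This gives the stated dichotomy. The converse, that points on those sets do converge, follows from the same local models.

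For the descending separatrices, take a descending separatrix $\gamma$ of an interior saddle or a creation boundary saddle at level $t^*$. It lies in $\hat S\setminus\hat B$. By the argument above it either lands on $B$ or converges to a lower saddle $c'$ from above. In the second case it would be a trajectory in $\hat S\setminus\hat B$ joining two saddles: if $c'$ is interior, $\gamma$ is its ascending separatrix; if $c'$ is boundary, $\gamma$ is its ascending separatrix, since the other backward-stable sets lie in $\hat B$. This contradicts Proposition~\ref{prop:W}(iv). Hence $\gamma$ lands on $B$.

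For finite time from any level $t'<t^*$, take the piece of $\gamma$ in $\tau^{-1}[t_0,t']$. It is a compact trajectory segment reaching $B$ and avoiding all zeros of $W$, because convergence to one was excluded. So $d\tau(W)$ is bounded below along it and the descent time is finite.

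\textbf{Main obstacle.} The hardest step is the local analysis at boundary critical points. There one must check, in the model coordinates $(u,v)$, which points flow backward into the saddle. One must also check that at the creation saddle exactly one interior trajectory arrives from below, that $B_{m_j}$ on the side $Q_m$ cuts off the half-disc correctly, and that the gluing by partition of unity in Proposition~\ref{prop:W} does not change the topology of the stable sets. We would first verify the stable and unstable manifolds in the linear models and then appeal to structural stability of hyperbolic zeros under the small perturbation introduced by the gluing.
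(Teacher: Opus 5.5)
Your proposal is correct and follows the paper's argument: since $W$ is tangent to $\hat B$, a trajectory can leave $\hat S'$ only through $B$; monotonicity of $\tau$ forces the limit set of a non-landing trajectory to be a single zero of $W$, neither a removed minimum nor a maximum, hence a saddle; and Proposition~\ref{prop:W}(iv) rules out a descending separatrix converging to a lower saddle. Your explicit reading of the boundary models of Proposition~\ref{prop:W}(ii) usefully spells out the clause on the branches of $\hat B$ above a creation, which the paper's proof leaves implicit. The structural-stability concern in your last paragraph is unnecessary: $W$ coincides exactly with these models near the boundary critical points, and near interior saddles it is a genuine gradient for a smooth metric.
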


\begin{proof}
As $W$ is tangent to $\hat B$, the trajectory can leave the compact surface $\hat S'$ only through $B$. Otherwise its limit set is a non-empty connected set of zeros of $W$, as $\tau$ is monotone along it; as the zeros are finitely many, it converges to one of them, neither a maximum, since $\tau$ decreases, nor a minimum, since these were removed: a saddle, with $x$ on an ascending separatrix. A descending separatrix in $\hat S\setminus\hat B$ stays there and does not converge to another saddle (Proposition~\ref{prop:W}(iv)), so it reaches the transverse section $B$ in finite time.
\end{proof}

\begin{definition}\label{def:arcs}
Let $t$ be a regular level. A point of $\hat C_t$ is \emph{special} if it does not land, i.e.\ if it lies on an ascending separatrix of a saddle below $t$ (or is an end on $Q_o$ created at a creation below $t$). The \emph{arcs} of $\hat C_t$ are the components of $\hat C_t$ minus its special points and its ends. Let $L_t\subset B$ be the set of landing points of the descending separatrices in $\hat S\setminus\hat B$ of the saddles below $t$. The \emph{bottom intervals} at level $t$ are the components of $B_{<t}$ minus $L_t$ and the endpoints of its arcs.
\end{definition}

An ascending separatrix meets every level at most once, so $\hat C_t$ has finitely many special points, and arcs and bottom intervals are open intervals or circles.

\begin{proposition}[arcs and bottom intervals]\label{prop:landing}
Let $t$ be a regular level. Then $\Lambda$ maps each arc of $\hat C_t$ homeomorphically, preserving the orientation, onto a bottom interval at level $t$, different arcs onto different intervals. A bottom interval is not the image of an arc if and only if the trajectories of $W$ from it converge to a maximum of $\tau$ below $t$, and is then called \emph{dead}.
\end{proposition}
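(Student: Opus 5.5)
We prove the statement by induction over the events below $t$, using that the flow of $W$ between two consecutive critical levels is a product. Between regular levels $t<t'$ with no event in $[t,t']$, Proposition~\ref{prop:W}(i) gives $d\tau(W)>0$ and $W$ tangent to $\hat B$. So the flow of $W/d\tau(W)$ is a diffeomorphism $\hat C_t\to\hat C_{t'}$. It preserves orientation, ends and special points, because ascending separatrices are trajectories. Hence it maps arcs onto arcs, and $\Lambda$ on $\hat C_{t'}$ is $\Lambda$ on $\hat C_t$ composed with this map. The bottom intervals do not change either, since $L_t$ and $B_{<t}$ only change at events. The base case is $t$ just above $t_0$: there are no special points, $L_t=\emptyset$, and $B_{<t}=\hat C_{t_0}$. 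The descending flow is then a diffeomorphism onto $\hat C_{t_0}$, so the claim holds with no dead intervals.

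For the inductive step we cross one event $t^*$ from the regular level $t^*-\delta$ to $t^*+\delta$. Away from small neighbourhoods of the event points the flow again identifies the two levels, so only the local models of Lemma~\ref{lem:crit} need checking, and we treat the event points one by one.

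\emph{Birth, interior or on $Q_m$.} A new component $B_{m_j}$ is added to $B_{<t}$. The new oval or line $\lambda$ is carried onto it by $\Lambda$, since it flows down to $B_{m_j}$ itself.

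\emph{Death, interior or on $Q_m$ of an annihilation.} A component made of arcs disappears from $\hat C_t$. Its images under $\Lambda$ remain as intervals whose trajectories converge to the maximum, so they become dead; conversely, a point of $B$ whose upward trajectory enters a maximum lies in such an image.

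\emph{Interior saddle.} The two descending separatrices cut the bottom at two new points of $L_t$. In the Morse model, the old arcs through $N$ are cut by the points on the two ascending separatrices into four pieces, which are the new arcs near $p$. By the inductive hypothesis, each old arc $a_i$ containing a landing point of a descending separatrix is mapped by $\Lambda$ onto an interval. Removing the landing point splits that interval exactly as the special point splits the new level. Orientation is preserved because $\Lambda$ is a limit of the orientation-preserving level flow.

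\emph{Boundary saddle of a creation.} The one descending separatrix adds one landing point. The two new ends on $Q_o$ are special points, and they cut the old arc in the same pattern.

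\emph{Boundary saddle of an annihilation.} Two arcs ending on $\hat B$ are glued along the ascending separatrix, which is special. No landing point is added, and the two old images stay separate intervals, now mapped by the two sides of the special point.

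Injectivity follows from uniqueness of trajectories: two points with the same landing point would lie on one trajectory, but each trajectory meets a level at most once. Surjectivity onto the non-dead intervals follows by running the ascending flow from a bottom interval. Such a trajectory either reaches level $t$ or converges to a critical point. A saddle is excluded, because points on descending separatrices are exactly $L_t$ and were removed; so the critical point is a maximum, and the interval is dead.

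The main obstacle is the saddle step: showing that the landing point of each descending separatrix lies inside the image interval of the arc $a_i$, and not at an endpoint of a bottom interval. This needs the absence of saddle connections from Proposition~\ref{prop:W}(iv) and the finite-time landing of Lemma~\ref{lem:descend}. We would argue that $\Lambda$ restricted to a half-level near $p$ is continuous up to the separatrix, by continuity of the flow on compacta avoiding other critical points, so that the limit from both sides gives the new cut point. Simultaneous event points are handled by applying these local modifications one after another, since their neighbourhoods are disjoint.
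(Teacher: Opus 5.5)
Your argument is sound in outline, but it takes a different and heavier route than the paper.

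\emph{Your route.} You induct over the events and track arcs and bottom intervals through the local model of each critical point. This is essentially the local analysis that the paper carries out only later, in the proof of Theorem~\ref{thm:update}. It rests on the reduction ``away from the Morse blocks the flow identifies the two levels'', together with the fact that no special point of the lower level lies inside a saddle block (Proposition~\ref{prop:W}(iv)). You assert this reduction rather than prove it.

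\emph{The paper's route.} The paper proves the statement directly at the fixed level $t$, with no induction and no local models. A point of a bottom interval $J$ either reaches $\hat C_t$ or converges to a zero of $W$ below $t$. That zero is not a saddle, because $J$ avoids $L_t$ and $\hat B$, so it is a maximum. Both alternatives are open conditions, so by connectedness one of them holds on all of $J$. In the first case the flow image $\alpha$ of $J$ is closed in the complement of the special points and ends: a limit point would land in the closure of $J$ but not in $L_t\cup\hat B$, hence in $J$. So $\alpha$ is a whole arc.

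\emph{Comparison.} Your last paragraph is close to the paper's argument, but it lacks exactly these two ingredients: the open/open dichotomy on the connected $J$, and the closedness of $\alpha$. With them the induction becomes unnecessary. Conversely, your route yields at the same time how arcs and intervals change at each event, which is the content of Theorem~\ref{thm:update}.

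\emph{Two smaller points.} First, the ``main obstacle'' you name is immediate. The descending separatrix meets $\hat C_{t^*-\delta}$ at a point that is neither special (there are no saddle connections) nor an end, so it is interior to an arc $\alpha$. By the inductive hypothesis its landing point is then interior to the open interval $\Lambda(\alpha)$. The real work in your inductive step is matching the halves of the new arcs with those of the old ones inside the Morse block. Second, in the saddle step you have the roles reversed: the descending separatrices cross the old arcs, and the ascending ones cut the new arcs.
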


\begin{proof}
Let $J$ be a bottom interval. Following $W$, a point of $J$ either reaches $\hat C_t$ or converges to a zero of $W$ below $t$ (as in the proof of Lemma~\ref{lem:descend}), which is not a saddle, since $J$ avoids $\hat B$ and $L_t$, hence a maximum. Both alternatives are open conditions, so one of them holds on all of $J$. In the first case the flow carries $J$ homeomorphically onto a connected subset $\alpha$ of $\hat C_t$ without special points and ends. If $x\notin\alpha$ is a limit of points of $\alpha$ and is neither special nor an end, then $\Lambda(x)$ lies in the closure of $J$ by continuity of the flow at $x$, and not in $L_t$ or $\hat B$, hence in $J$, a contradiction. So $\alpha$ is an arc and $\Lambda|_\alpha$ is the inverse of the flow. Every point of an arc lands on a bottom interval (Lemma~\ref{lem:descend}), and trajectories are disjoint; this gives the bijection. The flow preserves the orientation of the levels, which is continuous.
\end{proof}

The bottom is fixed once and for all; the events are recorded by the points $L_t$ at which it is cut. For small $t_0$, $\hat C_{t_0}$ can be read on $T(f,h)$. For a quadrant $\epsilon$ let $D_\epsilon=F^+_h-F^-_h$ for the signs $\sigma_\epsilon$, so that $\Gamma_\epsilon=D_\epsilon^{-1}(0)$. For generic $h$ there is a polyhedral error bound: a constant $c\in(0,1]$ with
\begin{equation}\label{eq:errorbound}
|D_\epsilon(w)|\ge c\,\min\{1,\operatorname{dist}(w,\Gamma_\epsilon)\}\qquad\text{for all }w\in\R^2\text{ and all }\epsilon.
\end{equation}
Indeed, if a sign class is empty, $\Gamma_\epsilon=\emptyset$ and $D_\epsilon=\pm\infty$. Otherwise the domains of linearity of $F^+_h$ and $F^-_h$ have a common refinement into finitely many polyhedra $\sigma$ covering $\R^2$, on each of which $D_\epsilon$ is a non-constant affine function $\ell_\sigma=\langle a-b,w\rangle-h(a)+h(b)$, $a\in\cA^+$, $b\in\cA^-$. If $\ell_\sigma$ has no zero on $\sigma$, say $\ell_\sigma>0$, it attains a positive minimum on the polyhedron $\sigma$; otherwise Hoffman's error bound \cite{Hof52} gives $C_\sigma$ with $\operatorname{dist}(w,\sigma\cap\ell_\sigma^{-1}(0))\le C_\sigma|\ell_\sigma(w)|$ for $w\in\sigma$, and $\sigma\cap\ell_\sigma^{-1}(0)\subset\Gamma_\epsilon$. Let $c$ be the minimum of $1$, these minima and the $1/C_\sigma$.

\begin{lemma}[the bottom]\label{lem:bottom}
Let $h$ be generic, $c$ as in \eqref{eq:errorbound}, $K_0=\log(|\cA|M/m)$, where $M$ and $m$ are the largest and the smallest $|c_a|$, and for a component $\Gamma'$ of $T(f,h)$ let $\Gamma'_\epsilon=\{w\in\Gamma_\epsilon:\mathcal E(w,\epsilon)\in\Gamma'\}$. Let $\eta\in(0,1)$ be smaller than half the distance between $\Gamma'_\epsilon$ and $\Gamma''_\epsilon$ for any components $\Gamma'\neq\Gamma''$ and any $\epsilon$, and than half of $|w^e_1-w^{e'}_1|$ for any two crossing points $(\epsilon_1e^{w^e_1},0)\neq(\epsilon_1e^{w^{e'}_1},0)$ of $T(f,h)$ on one half-axis, and symmetrically on the other axis (Lemma~\ref{lem:ends}). If $t_0$ is small enough, then:
\begin{enumerate}[label=(\roman*),leftmargin=*,itemsep=1pt]
\item every $b\in C_{t_0}\cap\cO_\epsilon$ satisfies $\operatorname{dist}(\Log_{t_0}(b),\Gamma_\epsilon)\le K_0/(c\log(1/t_0))<\eta$;
\item every component $Z$ of $C_{t_0}$ lies in the neighbourhood $U(\Gamma_Z)$, defined in the proof, of exactly one component $\Gamma_Z$ of $T(f,h)$, and $Z\mapsto\Gamma_Z$ is a bijection preserving the types and the ends; in particular the component through $b\in C_{t_0}\cap\cO_\epsilon$ corresponds to the $\Gamma'$ for which $\Gamma'_\epsilon$ contains the point of $\Gamma_\epsilon$ nearest to $\Log_{t_0}(b)$;
\item no saddle lies below $t_0$, so every component of $\hat C_{t_0}$ is one arc and its own bottom interval.
\end{enumerate}
\end{lemma}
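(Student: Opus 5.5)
For (i) I will use the decomposition $\varphi^\epsilon_{t_0}=P-N$ from the proof of Proposition~\ref{prop:limit}. At a zero $w=\Log_{t_0}(b)$ we have $P=N$, and the bounds $m\,t^{-F^+_h}\le P\le|\cA|M\,t^{-F^+_h}$ (and the same for $N$ with $F^-_h$) give $t_0^{-|D_\epsilon(w)|}\le|\cA|M/m$. Hence $|D_\epsilon(w)|\le K_0/\log(1/t_0)$. The error bound \eqref{eq:errorbound} then yields $c\min\{1,\operatorname{dist}(w,\Gamma_\epsilon)\}\le K_0/\log(1/t_0)$. Once $K_0/(c\log(1/t_0))<\eta<1$, the minimum is the distance itself, and (i) follows. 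This bound is uniform in $w\in\R^2$, with no compact $K$ needed, which is the point of \eqref{eq:errorbound}.

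For (ii) I will define $U(\Gamma')$ as $\mathcal E$ of the union over $\epsilon$ of the open $\eta$-neighbourhoods of $\Gamma'_\epsilon$ in the chart $\R^2\times\{\epsilon\}$ (scaled by $\log(1/t_0)$), together with small discs around the axis crossing points of $\Gamma'$. By the choice of $\eta$, the sets $U(\Gamma')$ are pairwise disjoint away from the axes. Near the axes, the crossing points on one half-axis are separated in $w_1$ by more than $2\eta$; on the axis itself $C_{t_0}$ consists of the roots of $f_{t_0}(x_1,0)$, and the one-variable version of (i) places these roots within $\eta$ of the $w^e_1$. So every point of $C_{t_0}$ lies in exactly one $U(\Gamma')$. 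A component $Z$ is connected, so it lies in a single $U(\Gamma_Z)$.

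Bijectivity and the preservation of types and ends I will take from Theorem~\ref{thm:viro-plane}. It gives a homeomorphism $C_{t_0}\to T(f,h)$ preserving quadrants and half-axes for small $t_0$, and the ends are preserved because ends correspond to rays dual to outer edges (Lemma~\ref{lem:ends}). The step I expect to be the main obstacle is showing that this homeomorphism sends $Z$ to $\Gamma_Z$, i.e.\ is compatible with the neighbourhoods. I plan to handle it by counting. Each $U(\Gamma')$ contains some component, by the second half of Proposition~\ref{prop:limit}(i) applied on a compact piece of $\Gamma'_\epsilon$ with sign changes across it. The numbers of components of each type agree by the homeomorphism. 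An oval $Z$ cannot sit in the neighbourhood of a line, and a line must sit in the neighbourhood of the line carrying the same ends, since its ends are roots of the edge polynomials located near the corresponding rays. A pigeonhole argument on each type then gives the bijection. The last clause of (ii), identifying the component through $b$, is immediate from (i) and the disjointness of the neighbourhoods.

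For (iii), $t_0<t_1$ and saddles are event points (Lemma~\ref{lem:crit}), so there is no saddle below $t_0$. No creation occurs below $t_0$ either, so no point of $\hat C_{t_0}$ is special and $L_{t_0}=\emptyset$. Each component of $\hat C_{t_0}$ is therefore a single arc. It lies on $B$, so it lands on itself, and it is its own bottom interval, consistent with Proposition~\ref{prop:landing}.
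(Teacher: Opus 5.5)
Your parts (i) and (iii) are the paper's arguments, and so is the skeleton of (ii). That skeleton consists of pairwise disjoint neighbourhoods $U(\Gamma')$ covering $C_{t_0}$, surjectivity of $Z\mapsto\Gamma_Z$ from Proposition~\ref{prop:limit}(i), and counting via Theorem~\ref{thm:viro-plane}. One difference: on the axes the paper takes only the points of the axis with $\bigl|\log|x_1|/s-w^e_1\bigr|<\eta$, not discs. This makes disjointness automatic. Discs would have to be checked against the quadrant neighbourhoods of the other components, and they must sit at the rescaled points $(\epsilon_1e^{sw^e_1},0)$, not at the crossing points of $T(f,h)$.

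The step that fails as stated is ``an oval $Z$ cannot sit in the neighbourhood of a line.'' You give no reason for it, and it is not a local fact. For a line $\Gamma'$, $U(\Gamma')$ is an unbounded strip, and a priori a small oval of $C_{t_0}$ could lie in it. Excluding this is part of what the lemma asserts.

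The paper argues in the opposite direction. If $\Gamma'$ is an oval, $\Gamma'_\epsilon$ has no rays dual to outer edges, only segments and rays towards the axes. So it is bounded above in both coordinates, $U(\Gamma')$ is bounded, and every $Z$ with $\Gamma_Z=\Gamma'$ is an oval. Since $Z\mapsto\Gamma_Z$ is surjective and the total numbers of components agree, the map is bijective. Since the numbers of ovals agree, ovals correspond to ovals and lines to lines.

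Alternatively, your own ingredients suffice if you order them differently.
\begin{itemize}
\item The ends argument shows that lines map injectively to lines. Distinct ends of $C_{t_0}$ give distinct rays, a line of $T(f,h)$ carries exactly two such rays, and an oval carries none.
\item With equal numbers of lines, this injection exhausts the lines of $T(f,h)$.
\item Surjectivity then forces every oval of $T(f,h)$ to be hit by an oval of $C_{t_0}$.
\item Equal numbers of ovals then give the bijection.
\end{itemize}
Your claim about ovals becomes a consequence of this counting rather than an input to it.
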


\begin{proof}
Let $s=\log(1/t_0)$. (i) For $b\in C_{t_0}\cap\cO_\epsilon$ and $w=\Log_{t_0}(b)$, in the notation of the proof of Proposition~\ref{prop:limit} $P(w)=N(w)$, and the bounds given there, with $t=e^{-s}$, yield $|D_\epsilon(w)|\le K_0/s$. By \eqref{eq:errorbound}, $\min\{1,\operatorname{dist}(w,\Gamma_\epsilon)\}\le K_0/(cs)<\eta<1$ for small $t_0$.

(ii) Let $U(\Gamma')$ consist of the points $x$ of the open quadrants with $\operatorname{dist}(\Log_{t_0}(x),\Gamma'_{\sgn x})<\eta$, the points $(x_1,0)$ with $\sgn x_1=\epsilon_1$ and $\bigl|\log|x_1|/s-w^e_1\bigr|<\eta$ for a crossing point $(\epsilon_1e^{w^e_1},0)$ of $\Gamma'$, and symmetrically on $\{x_1=0\}$. These open sets are pairwise disjoint by the choice of $\eta$ and Lemma~\ref{lem:ends}, and they cover $C_{t_0}$: off the axes by (i), on the axes by the same argument for $f_{t_0}(x_1,0)$ and $f_{t_0}(0,x_2)$, whose positive tropical curves are the crossing points. So every component $Z$ lies in exactly one $U(\Gamma_Z)$. Every $U(\Gamma')$ meets $C_{t_0}$ (Proposition~\ref{prop:limit}(i)), and $C_{t_0}$ and $T(f,h)$ have the same number of components (Theorem~\ref{thm:viro-plane}); so $Z\mapsto\Gamma_Z$ is bijective. If $\Gamma'$ is an oval, $U(\Gamma')$ is bounded, so $\Gamma_Z=\Gamma'$ forces $Z$ to be an oval; as the numbers of ovals agree, ovals correspond to ovals and lines to lines. By one-dimensional patchworking for the edge polynomials, distinct ends $(p,Q)$ of $C_{t_0}$ correspond to distinct rays of the $\Gamma_Q$ dual to bichromatic edges of $\cS_h$ on outer edges, and far out the branch at $(p,Q)$ lies within $\eta$ of its ray; so the two ends of a line $Z$ correspond to the two such rays of $\Gamma_Z$. The last assertion follows from (i) and the choice of $\eta$. (iii) A saddle below $t_0$ would be an event below $t_1$.
\end{proof}

\begin{lemma}[positions]\label{lem:positions}
Let $h$ be generic, $s=\log(1/t_0)$, and on $\cO_\epsilon$ use the coordinates $u=(\log|x_1|,\allowbreak\log|x_2|)$, so that $\Log_{t_0}=u/s$. There are $R,\kappa>0$ such that for $t_0$ small enough and every component $Z$ of $C_{t_0}$:
\begin{enumerate}[label=(\alph*),leftmargin=*,itemsep=1pt]
\item if $v$ is a vertex of $(\Gamma_Z)_\epsilon$, dual to the triangle with vertices $a,a',a''$, $a$ the vertex of the odd sign, then $u(Z)\cap B(sv,2R)$ is one arc, along which $\langle a'-a'',u\rangle$ is strictly monotone;
\item if $e$ is an edge of $(\Gamma_Z)_\epsilon$, then the part of $u(Z)$ in the $\kappa$-neighbourhood of $se$ and outside the balls $B(sv,R)$ around the endpoints $v$ of $e$ is one arc, the graph of a function over $se$;
\item these arcs cover $u(Z\cap\cO_\epsilon)$ and follow each other along $Z$ in the order of their vertices and edges along $\Gamma_Z$, where $Z$ is oriented as in \S\ref{sec:events} and $(\Gamma_Z)_\epsilon$ with the positive region on its left if $\epsilon_1\epsilon_2=1$ and on its right if $\epsilon_1\epsilon_2=-1$ (the chart $u$ has orientation sign $\epsilon_1\epsilon_2$); across an axis, the arcs of two rays glued at a crossing point (Lemma~\ref{lem:ends}) join there.
\end{enumerate}
\end{lemma}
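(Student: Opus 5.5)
We work in the unscaled log coordinates $u$ on $\cO_\epsilon$, where $C_{t_0}\cap\cO_\epsilon$ is the zero set of
\[
\varphi(u)=\sum_a\sigma_\epsilon(a)|c_a|e^{\langle a,u\rangle-sh(a)}.
\]
The idea is to rescale locally around $sv$ and along $se$, so that after dividing by the dominant monomial the local equation converges, uniformly with all derivatives, to a trinomial resp. binomial model with a fixed constant coefficient pattern. Smoothness and monotonicity then pass from the model to the curve by the implicit function theorem.

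\emph{Vertex model (a).} Near a vertex $v$ dual to $\tau=\{a,a',a''\}$, write $u=sv+\xi$ and divide $\varphi$ by $e^{s F_h(v)}$. The terms of $\tau$ become $\sigma(b)|c_b|e^{\langle b,\xi\rangle}$, independent of $s$. Every other term carries a factor $e^{-s\,\gamma}$ with $\gamma>0$ its height gap at $v$, which is positive by genericity, so the other terms tend to $0$ in $C^1$ on $B(0,2R)$ for fixed $R$. The model $\sigma(a)|c_a|e^{\langle a,\xi\rangle}+\sigma(a')|c_{a'}|e^{\langle a',\xi\rangle}+\sigma(a'')|c_{a''}|e^{\langle a'',\xi\rangle}$ has $\sigma(a')=\sigma(a'')=-\sigma(a)$. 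After dividing by $e^{\langle a,\xi\rangle}$ it becomes $1-\alpha e^{\langle a'-a,\xi\rangle}-\beta e^{\langle a''-a,\xi\rangle}$, whose zero set is a smooth curve asymptotic to the two legs. Its gradient is never orthogonal to $a'-a''$, since $a'-a$ and $a''-a$ are independent and both coefficients are negative. Hence $\langle a'-a'',\xi\rangle$ is a coordinate along it, strictly monotone. Choose $R$ so that $\partial B(0,2R)$ meets the model curve transversally in exactly two points, far out on the legs. Then $C^1$-closeness gives a single arc with the same monotonicity for small $t_0$.

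\emph{Edge model (b).} Along an edge $e$ dual to $[a,a']$, away from the balls $B(sv,R)$, the ratio of all other terms to the two dominant ones is at most $e^{-\kappa' R}$ or $e^{-c s}$. This uses the distance to the other edges of $\Trop$, enlarging $R$ as needed. Dividing by the $a$-term gives $1-\rho e^{\langle a'-a,u\rangle-s(\cdot)}+o(1)$ in $C^1$. Its derivative in the direction $a'-a$, orthogonal to $e$, is bounded away from zero on the $\kappa$-neighbourhood. The implicit function theorem along each normal segment then yields a graph over $se$, and the uniqueness of the root on each normal segment gives a single arc. For rays that are unbounded towards the axes or $\partial X$, the same estimate holds uniformly along the whole ray. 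This works because the error terms only decay further out, by Lemma~\ref{lem:ends}(i).

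\emph{Covering and order (c).} Covering follows from Lemma~\ref{lem:bottom}(i): every point of $u(Z)$ lies within $K_0/c$ of $s\Gamma_\epsilon$. Choosing $R\gg\kappa\gg K_0/c$ appropriately places it in some ball $B(sv,R)$ or in a $\kappa$-tube around an edge. Consecutive pieces overlap in the annuli $B(sv,2R)\setminus B(sv,R)$, where both descriptions apply, so the arcs concatenate along $\Gamma_Z$. Lemma~\ref{lem:bottom}(ii) ensures that only arcs of $\Gamma_Z$ occur on $Z$.

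The orientation statement is a sign check. $\{f>0\}$ on the left of $Z$ corresponds in the $u$-chart to the positive region of $D_\epsilon$, where $\varphi>0$ by Proposition~\ref{prop:limit}(ii). The left side is reversed when the chart $x\mapsto u$ has orientation sign $\epsilon_1\epsilon_2=-1$.

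Across the axes, the arcs of the two glued rays approach the crossing point from the two sides. There the equation $f_{t_0}(x_1,x_2)=0$ near $(x_1^*,0)$ is a smooth curve transverse to the axis, since the root of $f_{t_0}(\cdot,0)$ is simple by one-dimensional patchworking. So the two arcs join.

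\textbf{Main obstacle.} The main difficulty is uniformity: making $R,\kappa$ independent of $t_0$ while the error terms are controlled uniformly on unbounded rays. This is also where the nontrivial choice $R\gg\kappa\gg K_0/c$ must be made consistently.
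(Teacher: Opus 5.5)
Your proposal is correct and follows the paper's own proof. Like the paper, you use the trinomial model $1-e^X-e^Y$ on $B(sv,2R)$ and the binomial model $1-e^X$ along edges, each as a $C^1$-small perturbation; you fix $\kappa$ from Lemma~\ref{lem:bottom}(i), then take $R$ large and $s$ large compared with $R$, glue in the annuli $R<|u-sv|<2R$, get the orientation from Proposition~\ref{prop:limit}(ii), and use the transversal crossing at a simple root of $f_{t_0}$ on the axis.

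There is one slip in (a). Monotonicity of $\langle a'-a'',\xi\rangle$ along the model curve needs its gradient $-Ap-Bq$ ($A,B>0$, $p=a'-a$, $q=a''-a$) never to be \emph{parallel} to $a'-a''=p-q$, which is exactly what your sign argument proves. It is not "never orthogonal": for $p=(1,0)$, $q=(0,1)$ the gradient is orthogonal to $p-q$ where $e^X=e^Y$.

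Also, the uniform decay of the other terms along unbounded rays does not come from Lemma~\ref{lem:ends}(i). It holds because $F_h-\langle b,\cdot\rangle+h(b)$ is affine and positive on the open ray, hence non-decreasing along it.
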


\begin{proof}
On $\cO_\epsilon$, $f_{t_0}=\sum_a\sigma_\epsilon(a)e^{\ell_a(u)}$ with $\ell_a(u)=\langle a,u\rangle-sh(a)+\log|c_a|$, and by Lemma~\ref{lem:bottom}, $u(Z\cap\cO_\epsilon)$ lies in the $\kappa$-neighbourhood of $s(\Gamma_Z)_\epsilon$ for $\kappa=K_0/c+1$. Constants are chosen in this order: $c$, $K_0$, $\kappa$ and $G,\gamma>0$, $C$ below depend only on $\cA$, $h$ and the $c_a$, uniformly over the quadrants, vertices and edges; then $R$ is taken large and finally $s$ large compared with $R$.

(a) Let $\tau$ be the triangle. As $h$ is generic, the points of $\cA\setminus\tau$ lie strictly above the plane through the lifted triangle, so there is $G>0$ with $\ell_b(u)\le\max_{\tau}\ell(u)-sG+C|u-sv|+C$ for $b\notin\tau$. Hence on $B(sv,2R)$, for $s$ large compared with $R$, $\sigma_\epsilon(a)e^{-\ell_a}f_{t_0}=1-e^X-e^Y+\rho$ in the affine coordinates $X=\ell_{a'}-\ell_a$, $Y=\ell_{a''}-\ell_a$, where $\rho$ and its first derivatives are $O(e^{-sG/2})$. The curve $e^X+e^Y=1$ bounds a convex region and is asymptotic to the legs $\{Y=0,\,X<0\}$ and $\{X=0,\,Y<0\}$, which are, up to translations of size $O(1)$, the dilated edges of $\Gamma_\epsilon$ at $v$ dual to $[a,a'']$ and $[a,a']$; along it, $X-Y=\langle a'-a'',u\rangle+\mathrm{const}$ has derivative $e^X+e^Y=1$ in the direction of the tangent vector $(e^Y,-e^X)$, of length at most $1$. As the gradient of $1-e^X-e^Y$ has length at least $1/2$ near the curve, the zero set in the ball is a $C^1$-small perturbation of this curve; for $R$ large this gives (a).

(b) Let $e$ be dual to $[a,a']$. For $b\neq a,a'$ the function $F_h(w)-\langle b,w\rangle+h(b)$ is affine along $e$ and positive on its relative interior, hence at least $\gamma\min\{1,\operatorname{dist}(w,\partial e)\}$ on $e$. So in the region of (b), for $s\ge R$, every term $e^{\ell_b}$, $b\neq a,a'$, is at most $e^{-\gamma R+C}\max(e^{\ell_a},e^{\ell_{a'}})$, and these two are comparable; thus $\sigma_\epsilon(a)e^{-\ell_a}f_{t_0}=1-e^X+\rho$ with $X=\ell_{a'}-\ell_a$ and $\rho=O(e^{-\gamma R})$ in the $C^1$ norm, and the zero set is a graph $X=g$ over the direction of $e$ with $g,g'=O(e^{-\gamma R})$.

(c) For $R$ large, the balls $B(sv,2R)$ and the regions of (b) cover the $\kappa$-neighbourhood of $s(\Gamma_Z)_\epsilon$ and overlap in annuli $R<|u-sv|<2R$ where both descriptions apply; the orientations agree by Proposition~\ref{prop:limit}(ii), since $\{f_{t_0}>0\}$ lies on the left of $Z$ in the coordinates $x$, hence on the left, resp.\ right, in the coordinates $u$. Near an axis, the arc of a ray ending on it is a graph over the axis direction, and $C_{t_0}$ crosses the axis transversally at the corresponding root of $f_{t_0}(x_1,0)$, resp.\ $f_{t_0}(0,x_2)$.
\end{proof}

By Lemmas~\ref{lem:bottom} and~\ref{lem:positions}, landing points on $\hat C_{t_0}$ can be compared without tracing $C_{t_0}$:

\begin{definition}[labels]\label{def:labels}
A landing point $b\in\hat C_{t_0}\cap\cO_\epsilon$ on the component $Z$ is labelled by a \emph{cell}, the vertex $v$ of $(\Gamma_Z)_\epsilon$ if $|u(b)-sv|<3R/2$ and otherwise the edge $e$ whose region in Lemma~\ref{lem:positions}(b) contains $u(b)$, and a \emph{coordinate}, $\pm\langle a'-a'',u(b)\rangle$, resp.\ the projection of $u(b)$ to $se$, with the sign for which it increases along $Z$. A landing point $b=(b_1,0)$ is labelled by the crossing point $(\sgn(b_1)e^{w^e_1},0)$ of $T(f,h)$ with $\bigl|\log|b_1|/s-w^e_1\bigr|<\eta$, symmetrically on the other axis, and a landing point on a slice $B_m$ by its angle around $m$, resp.\ its coordinate $y$. Labels on one component $\Gamma_Z$ are ordered by cell along $\Gamma_Z$, an axis label lying between the cells of the two rays glued there, and then by coordinate.
\end{definition}

This order is the order of the landing points along $Z$: outside $B(sv,R)$ the branches of $e^X+e^Y=1$ in the proof of Lemma~\ref{lem:positions}(a) have tangents exponentially close in $R$ to the directions $d$ of the legs and lie within $O(1)$ of the rays $sv+\R_{\ge0}d$, so $|u-sv|$ increases strictly along them, also after a $C^1$-small perturbation; hence $u(Z)$ crosses the circle of radius $3R/2$ around $sv$ once, transversally, on each edge at $v$, and the regions of the cells cut $u(Z\cap\cO_\epsilon)$ into consecutive arcs, ordered as in Lemma~\ref{lem:positions}(c). The cell determines the component (Lemma~\ref{lem:bottom}(ii)); an axis label exists for small $t_0$, is unique by the choice of $\eta$, and lies between the cells of the two rays glued at the crossing point (Lemma~\ref{lem:positions}(c)). Thus the bottom intervals of $\hat C_{t_0}$ become intervals of $T(f,h)$ between labels.

\subsection{The state and its update}

\begin{definition}[state]\label{def:state}
The \emph{state} at a regular level $t$ lists the components of $\hat C_t$, each as the sequence of the bottom intervals $\Lambda(\alpha)$ of its arcs $\alpha$, in the order of the orientation, cyclic for an oval and linear for a line, together with the initial and the terminal end of every line.
\end{definition}

The state determines $\Sigma_t$ (Proposition~\ref{prop:landing}); at $t_0$ it consists of the components of $\hat C_{t_0}$, each a sequence with one term (Lemma~\ref{lem:bottom}(iii)). The \emph{successor} of a term is the next term, cyclically for an oval, or the terminal end after the last term of a line. For a bottom interval $J$ and $b\in J$, \emph{cutting $J$ at $b$} replaces $J$ in its sequence by the components of $J\setminus\{b\}$: if $J$ is an interval, by the intervals $J^-_b$ and $J^+_b$ that end, resp.\ start, at $b$, in this order, with $J^+_b$ the successor of $J^-_b$; if $J$ is a circle, by the single interval $J\setminus\{b\}$, which ends and starts at $b$, is denoted by $J^-_b$ and by $J^+_b$, is its own successor and occurs once.

\begin{theorem}[update rules]\label{thm:update}
Let $t^*$ be an event, and $t_-<t^*<t_+$ regular levels such that $t^*$ is the only event in $[t_-,t_+]$. The state at $t_+$ is obtained from the state at $t_-$ by applying, at each event point of $t^*$, the following rule for its type, one after another in any order, each to the current state; the landing points $b,b_1,b_2$ do not depend on the state, and all other components are unchanged. A creation is treated by (b) and (d), an annihilation by (e).
\begin{enumerate}[label=(\alph*),leftmargin=*,itemsep=2pt]
\item \emph{Saddle at $p$.} Let $b_1,b_2$ be the landing points of the two descending separatrices of $(p,t^*)$. Cut at $b_1$ and at $b_2$, and reconnect: $J^+_{b_2}$ becomes the successor of $J^-_{b_1}$, and $J^+_{b_1}$ that of $J^-_{b_2}$, where $J^\mp_{b_i}$ are the intervals that end, resp.\ start, at $b_i$ after both cuts.
\item \emph{Minimum $m$.} Add $B_m$ as a new oval, resp., on the side $Q_m$ of a creation, as a new line with ends $(p_1,Q_m),(p_2,Q_m)$, oriented as in \S\ref{sec:events}.
\item \emph{Death at a maximum $M$.} Delete the oval that contains the bottom interval of $\Lambda(x)$, where $x$ is any non-special point near $M$ with $\tau(x)<t^*$.
\item \emph{Creation at $p$, side $Q_o$.} Let $b$ be the landing point of the descending separatrix of the boundary saddle. Cut at $b$ and separate there: the successor of $J^-_b$ becomes the new terminal end $(p_a,Q_o)$, and $J^+_b$ follows the new initial end $(p_b,Q_o)$, with $p_a,p_b$ as in Proposition~\ref{prop:boundary}; an oval thus becomes a line, and a line two lines.
\item \emph{Annihilation at $p$.} Delete the line whose ends are $(p_1,Q_m)$ and $(p_2,Q_m)$. The ends $(p_1,Q_o),(p_2,Q_o)$ are the terminal end of a line $L$ and the initial end of a line $L'$; join them, i.e.\ let the first term of $L'$ be the successor of the last term of $L$. If $L\neq L'$ they become one line; if $L=L'$ its sequence becomes cyclic, an oval.
\end{enumerate}
In (a) and (d), the components $\gamma_1,\gamma_2$, resp.\ $\gamma$, of Propositions~\ref{prop:interior} and~\ref{prop:boundary}, for the curve produced by the previous replacements (Remark~\ref{rem:local}), are those that contain the bottom intervals of $b_1,b_2$, resp.\ $b$, in the current state.
\end{theorem}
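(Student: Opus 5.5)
The plan is to compare, via the flow of $W$, the arcs of $\hat C_{t_-}$ and of $\hat C_{t_+}$, and to use the fact that landing points are defined by a single field that is independent of the state. First, on the band $\tau^{-1}[t_-,t_+]$ minus small neighbourhoods $N_p$ of the critical points over the event points of $t^*$, $W$ has no zeros and $d\tau(W)>0$. Following $-W$ from $\hat C_{t_+}$ down to $\hat C_{t_-}$ is therefore defined off the trajectories that enter the $N_p$. It preserves orientation and, by construction, landing points. So every arc of $\hat C_{t_+}$ outside the $N_p$ is carried to a sub-arc of $\hat C_{t_-}$ with the same bottom interval, except where new special points cut it. The new special points at level $t_+$ are exactly the points where the ascending separatrices of the saddles of $t^*$ cross $\hat C_{t_+}$, together with the new ends on $Q_o$ at creations. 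Likewise, the new points of $L_{t_+}\setminus L_{t_-}$ are the landing points $b_1,b_2$, resp.\ $b$, of the descending separatrices. These land on $B$ by Lemma~\ref{lem:descend} and do not depend on the state. By Proposition~\ref{prop:W}(iv) they are distinct from the older points of $L_{t_-}$ and from each other. Hence each lies in the interior of a bottom interval at level $t_-$, which is the interval of the arc through which the separatrix passes just below $t^*$.

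Second, I treat each event point locally, using the models. For an interior saddle, Proposition~\ref{prop:interior}(ii) gives the Morse chart. Just below $t^*$, the two descending separatrices cross the two local arcs $a_1,a_2$ at points $q_1,q_2$, which lie on arcs with bottom intervals $J_1\ni b_1$ and $J_2\ni b_2$. Cutting at $q_i$ corresponds, via $\Lambda$ (Proposition~\ref{prop:landing}), to cutting $J_i$ at $b_i$. Above $t^*$, each new local arc starts at the exit side of one old arc and ends at the entry side of the other, and it contains one ascending-separatrix point, which is special. Thus the piece flowing down to $J^-_{b_1}$ is followed along $C_{t_+}$ by the piece flowing down to $J^+_{b_2}$, and symmetrically; this is rule (a). When $J_1=J_2$ (the case $\gamma_1=\gamma_2$), or when an interval is a circle, the same bookkeeping with the convention $J^-_b=J^+_b$ gives the splitting or joining of Proposition~\ref{prop:interior}. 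Rules (b) and (c) follow directly from Lemma~\ref{lem:crit}: a new bottom component $B_m$ appears, or an oval shrinks into a maximum, and all of its non-special nearby points land in one dead interval. For rule (d), the boundary saddle of Lemma~\ref{lem:crit}(i) has a single descending separatrix, landing at $b$. Above $t^*$ the level is cut at the new ends on the two branches of $\hat B$. Comparing with Proposition~\ref{prop:boundary}(ii) gives the separation with ends $p_a,p_b$. Rule (e) involves no cut: the ascending separatrix of the boundary saddle becomes a special point on the joined arc, and Proposition~\ref{prop:boundary}(i) gives the reconnection of ends.

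Third, for several event points at $t^*$, the neighbourhoods $N_p$ are disjoint, and the modifications of the level sets are supported in them. The landing points $b_i$ are determined by $W$ alone. So the total change is the composition of the local replacements, as in Remark~\ref{rem:local}. Cuts at distinct points of $B$ commute, and reconnections at disjoint sites commute, so the order is irrelevant. The final claim, that $\gamma_1,\gamma_2$ resp.\ $\gamma$ are the components containing the intervals of $b_1,b_2$ resp.\ $b$, follows from the first step applied to the intermediate curve.

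\textbf{Main obstacle.} I expect the main obstacle to be this intermediate curve, which is not a level set of $\tau$. To handle it, I would isotope the levels inside each $N_p$ one event point at a time: define an intermediate surface equal to $\hat C_{t_+}$ inside the first $k$ neighbourhoods and to the flow image of $\hat C_{t_-}$ elsewhere. I would then check that $\Lambda$ still restricts to arcs, as in Proposition~\ref{prop:landing}, because it is computed by the same flow outside the $N_p$. The other delicate point is the orientation bookkeeping in (a), which I would verify in the Morse chart $f_t=v(t)+\eta_1u_1^2+\eta_2u_2^2$.
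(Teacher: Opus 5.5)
Your plan is essentially the paper's proof: the flow of $W$ identifies $\hat C_{t_-}$ and $\hat C_{t_+}$ outside the Morse blocks with unchanged landing points, each rule is read off its local model, and the order is irrelevant because each rule changes only the successors of the intervals ending at its own landing points or ends. The paper argues directly with these successor data and builds no intermediate sections, so your ``main obstacle'' does not arise there.

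Three slips need fixing:
\begin{itemize}
\item $J_1=J_2$ is only a subcase of $\gamma_1=\gamma_2$. Rule (a) needs no case split, since $J^\mp_{b_i}$ are taken after both cuts.
\item A dying oval may carry several dead intervals, because ascending separatrices of lower saddles can converge to $M$.
\item A new local arc runs from the entry point of one old arc to the exit point of the other, not the reverse. This is what puts $J^-_{b_1}$ before $J^+_{b_2}$.
\end{itemize}
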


\begin{proof}
Between events the flow of $W/d\tau(W)$ carries $\hat C_t$ onto $\hat C_{t'}$, preserving special points, ends, orientations and landing points, so the state is constant. At $t^*$ choose disjoint \emph{Morse blocks} $N_1,\dots,N_k$ around the critical points of level $t^*$ other than the local minima (which are not in $\hat S'$): compact neighbourhoods in which the models of \S\ref{sec:events} and Proposition~\ref{prop:W}(ii) hold, bounded by arcs of trajectories and by parts of the levels $t^*\mp c$, $c>0$ small, through which $W$ enters, resp.\ exits. The blocks of the saddles contain no special point of the level $t^*-c$, which is possible since only finitely many ascending separatrices of lower saddles cross that level, none converging to a saddle of level $t^*$ (Proposition~\ref{prop:W}(iv)). Let $E_i$ be the union of the trajectory segments of $W$ in $\hat S'\cap\tau^{-1}[t_-,t_+]$ that meet $N_i$; for small blocks the $E_i$ are disjoint, since $\tau$ increases along non-constant trajectories. A segment that meets no $N_i$ contains no zero of $W$, so it runs from $\hat C_{t_-}$, or from a slice $B_m$ of a minimum of level $t^*$, to $\hat C_{t_+}$, with a constant landing point. Hence outside the $E_i$ the arcs of $\hat C_{t_+}$ are the flow images of those of $\hat C_{t_-}$ and of the new slices, with the same bottom intervals, while $\hat C_{t_\mp}\cap E_i$ flow to the entrance, resp.\ exit, of $N_i$, which we compare in the models below. (A point on a descending separatrix does not flow up to $t_+$; it lies in an $E_i$.)

The landing points of the descending separatrices of different event points are different points of $B_{<t^*}$, not in $L_{t_-}$, since they lie on different trajectories, which do not meet the saddles below $t^*$ (Proposition~\ref{prop:W}(iv)); cutting the state at $t_-$ at all of them changes no component. We show that the rule of an event point changes only the successors of the intervals that end at its own landing points, resp.\ at its ends, and adds the flow image of a new slice $B_m$ (rule (b)) or deletes a component in its $E_i$ (the oval of (c), the line $\lambda$ of (e)). These changes concern disjoint data, so they commute, and together they give the successors at $t_+$, whose cycles and maximal chains are the components of $\hat C_{t_+}$; hence the rules can be applied in any order.

(a) Near $p$, $W$ is the gradient of $\tau$ for a smooth metric (proof of Proposition~\ref{prop:W}), so its two descending and two ascending separatrices are smooth arcs from $p$, alternating around $p$ (stable manifold theorem), which cut $N_p$ into four sectors that trajectories do not leave while in $N_p$. The level $t^*-c$ meets $N_p$ in two arcs $a_1,a_2$ without special points, and $a_i$ crosses the descending branch landing at $b_i$ once, its two halves landing on $J^-_{b_i}$ and $J^+_{b_i}$. The level $t^*+c$ meets $N_p$ in two arcs, each crossing one ascending branch once, with its halves in the two sectors next to it, which are next to different descending branches; backwards along $W$ each half flows into the half of $a_1$ or $a_2$ in its sector. So each new arc consists of a half of $a_1$, a special point and a half of $a_2$. Since $a_1$ and $a_2$ run through $N_p$ in opposite directions (proof of Proposition~\ref{prop:interior}), one new arc consists of the arcs landing on $J^-_{b_1}$ and $J^+_{b_2}$, and the other of those landing on $J^-_{b_2}$ and $J^+_{b_1}$, in this order, which is rule (a).

(d) In the model of Proposition~\ref{prop:W}(ii), $\hat S=\{v\le0\}$, $T''(y_0)>0$ and $W=(T'(y),-v)$. The level $t^*-c$ is one arc in the block, without special points, crossed by the descending separatrix $\{u=0\}$. The level $t^*+c$ consists of the pieces with $u>0$ and $u<0$, which end on $\hat B$ at the new ends; backwards, $|u|$ decreases, so each piece flows to the half of the old arc on the same side of $\{u=0\}$. Hence the two pieces land on $J^-_b$ and $J^+_b$, and the first ends, the second starts, at a new end; the new ends flow along $\hat B$ into the saddle.

(e) On the side $Q_o$, $\hat S=\{v\ge0\}$, $T''(y_0)<0$ and $W=(T'(y),v)$: the two pieces of the level $t^*-c$ ending on $\hat B$ form one arc at level $t^*+c$, crossed by the ascending separatrix $\{u=0\}$ in a new special point; backwards its two halves flow to the two pieces. On the side $Q_m$ the line $\lambda$, identified by its ends, flows into the maximum; its bottom intervals become dead.

(b), (c) A new oval or line near a minimum $m$ is one arc, landing on $B_m$. At a death the dying oval of $\hat C_{t_-}$ lies in the set $E_i$ of the block of $M$, as all its points converge to $M$; so its bottom intervals become dead, and any point near $M$ identifies it through its landing point.
\end{proof}

\begin{corollary}\label{cor:procedure}
Starting from the state at $t_0$ (Lemma~\ref{lem:bottom}) and applying Theorem~\ref{thm:update} at the events in increasing order, one obtains the state, hence $\Sigma_t$, at every regular level $t\in[t_0,1]$, in particular $\Sigma_1$. The trajectories followed are the two descending separatrices of every saddle, the descending separatrix of every creation and one trajectory at every death, at most $2N$ for $N$ event points; the ends are followed by continuation of the roots of the edge polynomials.
\end{corollary}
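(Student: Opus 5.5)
The proof is an induction over the events $t_1<\dots<t_n$ in $(0,1]$, which are finitely many by Lemma~\ref{lem:eventpoly}. Choose regular levels $t_0<s_1<t_1<r_1<s_2<\dots<t_n<r_n\le 1$ with $[s_i,r_i]$ containing only $t_i$, and put $r_0=t_0$. If $t_n=1$, replace the last level by $1$; or note that $t=1$ is not an event since $E_\cA(c)\neq0$, so every $t_i<1$ and $1$ is regular for $\delta$ small.

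The base case is the state at $t_0$. There is no saddle below $t_0$ (Lemma~\ref{lem:bottom}(iii)), so every component of $\hat C_{t_0}$ is a single arc landing on itself. The state therefore lists the components of $\hat C_{t_0}$, each with one term and with its ends. By Lemma~\ref{lem:bottom}(ii) these correspond bijectively, with types and ends, to the components of $T(f,h)$.

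The induction alternates two steps. For the step from $r_{i-1}$ to $s_i$, the interval contains no event. By the first paragraph of the proof of Theorem~\ref{thm:update}, the flow of $W/d\tau(W)$ identifies the levels and preserves special points, ends, orientation and landing points, so the state is unchanged. For the step from $s_i$ to $r_i$, Theorem~\ref{thm:update} gives the state at $r_i$ from that at $s_i$ by applying the local rules at the event points of $t_i$ in any order. At the last level the state determines $\Sigma_t$ by Proposition~\ref{prop:landing} and Definition~\ref{def:state}: components are the cycles and maximal chains of the successor relation, with ends attached. The same holds at any regular $t$, since such a $t$ lies in one of the event-free intervals or equals one of the chosen levels.

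For the count, read off which trajectories the rules use. Rule (a) uses the two descending separatrices of each saddle. Rule (d) uses the one descending separatrix of each creation. Rule (c) uses one trajectory from a point near each maximum. Rules (b) and (e) use no trajectory, only the new slice or the ends. Each event point therefore costs at most two trajectories, which gives at most $2N$. The ends $(p,Q)$ needed in (b), (d) and (e) are roots of the edge polynomials $\psi^e_t$; they are tracked by continuation in $t$, which is possible because between events these roots are simple (Corollary~\ref{cor:ends}).

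The main obstacle is checking that the hypotheses of Theorem~\ref{thm:update} hold at every step. Proposition~\ref{prop:W} must apply to the whole interval $[t_0,1]$ with a single field $W$, and the landing points used at later events must lie on the fixed bottom $B$. Both follow from the construction of $W$ by induction over events (Proposition~\ref{prop:W}(iv)) and from Lemma~\ref{lem:descend}. The only remaining detail is choosing $\delta$ uniformly so that each $[s_i,r_i]$ contains exactly one event.
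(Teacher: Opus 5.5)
Your proposal is correct and is essentially the argument the paper intends: the corollary is stated without a separate proof, as the assembly of Lemma~\ref{lem:bottom}(iii) for the initial state, the constancy of the state between events (first lines of the proof of Theorem~\ref{thm:update}), the update rules at each event taken in increasing order, and a count of the trajectories each rule uses. The case $t_n=1$ that you allow for cannot occur, since $E_\cA(c)\neq0$ means $t=1$ is not an event, as you note yourself.
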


\subsection{Examples and remarks}

\begin{example}[Example~\ref{ex:conic} continued]\label{ex:conic2}
Let $b=1$, so $t_1\approx0.486$, and $t_0=0.05$. The bottom consists of the lines $A$ and $B$ of Figure~\ref{fig:conic}(a). For the Euclidean metric (admissible here, as the separatrices stay in a compact set) the descending separatrices of the saddle land at $b_A\in A$ and $b_B\in B$ near the rays of $\Gamma_{++}$ dual to $[(2,0),(1,1)]$ and $[(1,1),(0,2)]$, which belong to $A$ and $B$. So $\gamma_1=A\neq\gamma_2=B$, and rule (a) gives the lines $\lambda=e_1A^-B^+e_2$ and $\mu=e_3B^-A^+e_4$, where $A^-\subset A$ runs from $e_1$ to $b_A$ and $B^+\subset B$ from $b_B$ to $e_2$. At the annihilation at $t^*=1/2$, with $Q_m=\cO_{++}$, rule (e) deletes $\lambda$, whose ends are $e_1,e_2$, and joins the terminal end $e_4$ and the initial end $e_3$ of $\mu$ into the oval $(B^-A^+)$: $(n_o,n_u)=(1,0)$.
\end{example}

\begin{example}[events at infinity]\label{ex:infinity}
Let $f=-2+4x_2+3x_1^2+2x_2^2+2x_1x_2^2-4x_1^2x_2+x_1^2x_2^2$, with the generic lifting $h=1$ on $(0,0)$ and $(2,1)$, $h=2$ on $(0,1)$, $h=-1$ on $(1,2)$ and $h=0$ elsewhere, and $D_{e_R}=\{x_1=\infty\}$, $D_{e_T}=\{x_2=\infty\}$ for the right and top edges of $\Delta=[0,2]^2$. For $t_0=\frac15$ and the metric $\sum_id\xi_i^2$, $\xi_i=\operatorname{asinh}x_i$, which is of the kind required in \S\ref{ssec:field}, the bottom consists of two lines, $A$ from $e_1$ to $e_2$ and $B$ from $e_3$ to $e_4$, with ends on $D_{e_T}$ (\S\ref{app:worked}). With $(u\,v)$ the bottom interval from $u$ to $v$, the events and states are:
\begin{center}\small
\begin{tabular}{@{}llll@{}}
\toprule
$t$ & event & landing points & lines $L$ and ovals $O$ after the event\\
\midrule
$0.401$ & saddle $q_1$ & $c_1\in A$, $c_2\in B$ & $L_1=(e_3\,c_2)(c_1\,e_2)$, $L_2=(e_1\,c_1)(c_2\,e_4)$\\
$0.564$ & saddle $q_2$ & $d_1\in(e_1\,c_1)$, $d_2\in(c_2\,e_4)$ & $L_1$, $L_2'=(e_1\,d_1)(d_2\,e_4)$, $O_1=(d_1\,c_1)(c_2\,d_2)$\\
$1/\sqrt2$ & annihilation on $D_{e_T}$ & & $O_1$, $O_2=(e_3\,c_2)(c_1\,e_2)$\\
$\sqrt3/2$ & creation on $D_{e_R}$ & $b\in(c_1\,e_2)$ & $L_3=(b\,e_2)(e_3\,c_2)(c_1\,b)$, $\lambda$, $O_1$\\
\bottomrule
\end{tabular}
\end{center}
Hence $(n_o,n_u)=(1,2)$ for $t=1$, from five separatrices: the oval $O_1$ and two lines ending at $x_2=1$ and $x_2=3$ on $D_{e_R}$. Section~\ref{app:worked} gives the positions and pictures of all levels.
\end{example}

\begin{remark}[branches are not enough]\label{rem:branches}
The tropical branch on which a separatrix lands does not suffice: if an oval of $\hat C_{t_0}$ splits at a saddle $q_1$ into two ovals, which meet again at a saddle $q_2$, the two arcs at $q_2$ lie on different components because the landing points of $q_2$ lie in the two intervals cut by those of $q_1$, which is decided by their order. For the same reason landing points are computed at a level $t_0>0$, not as limits: as $t_0\to0$ the points $\Log_{t_0}\Lambda(x)$ converge to $\Gamma_\epsilon$, but different trajectories may have the same limit: in Example~\ref{ex:conic2}, numerically, the images of $b_A$ and of all tested points of $A\cap\cO_{++}$ tend to the vertex $(0,-1)$.
\end{remark}

Saddle connections that persist for all liftings near a given one occur along invariant parts of the axes (Example~\ref{ex:connection}); their labels are determined by the order of the landing points:

\begin{lemma}[connections along an invariant part of an axis]\label{lem:axisconn}
Let $\{i,j\}=\{1,2\}$, let $g_t$ be diagonal in $(x_1,x_2)$ near the axis $\{x_j=0\}$, and let $Z$ be a component of $S\cap\{x_j=0\}$ on which $\partial_jf_t$ vanishes, an \emph{invariant part} of the axis (this holds for every component if no exponent of $\cA$ has $a_j=1$, and can hold by an identity of the coefficients otherwise, \S\ref{ssec:eventcomp}). Suppose that a descending separatrix $\gamma$ of $V$ of a saddle $(p,t^*)$ runs along $Z$ into a lower saddle $(q,t_q)$, and that no other descending separatrix of $V$ converges to a saddle. Let $\iota=\sgn(x_i-q_i)$ for the points $x$ of $\gamma$ just above $t_q$ (the \emph{side} from which $\gamma$ reaches $q$), let $\gamma_q^+$ be the descending separatrix of $q$ that leaves $q$ into $\{x_j>0\}$, $b_q$ its landing point, and $T$ the oriented tangent of a level just below $t_q$ at its point on $\gamma_q^+$ (for levels close to $t_q$ the signs of $\iota$ and $T_i$ do not change). Let $W$ satisfy (i)--(iv) of Proposition~\ref{prop:W}, with model neighbourhoods that miss the descending separatrices of $V$ except near the starts of those of the creations, and with $\theta=0$ except on the band below $t^*$, where $Y$ is supported in a small arc of its upper level around the point of $\gamma$ and $-Y$ points into $\{x_j>0\}$. Then for all small $\theta>0$: (i) the other descending separatrices of $W$ follow those of $V$ away from the starts of the creations, with the same landing points; (ii) $\gamma$ is replaced by a separatrix that lands next to $b_q$, with no landing point in between, after $b_q$ if $\iota T_i>0$ and before $b_q$ otherwise. The same holds for several such connections at once, if no other descending separatrix of $V$ converges to a saddle; two connections into one $q$ come from opposite sides and land on opposite sides of $b_q$.
\end{lemma}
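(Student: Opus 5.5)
The plan is to treat the perturbation on the band below $t^*$ as a small holonomy push of the point $q_\gamma$ where $\gamma$ crosses the upper level $\hat C_b$. We then follow the perturbed trajectory down through the Morse block of $q$, which we analyse in the local model of a non-degenerate saddle.

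First, statement (i). Since $\theta=0$ off the band and $Y$ is supported in a small arc of $\hat C_b$ around the point of $\gamma$, the other descending separatrices of $V$ cross $\hat C_b$ outside $\operatorname{supp}Y$, for a small enough support. Above the band they are unchanged, because the separatrices of the events at or above $t^*$ are determined by $W$ there. The model neighbourhoods miss them except near the starts of creations, where $W$ and $V$ are both gradient-like for $\tau$ and the separatrix of the model exits into the region where $W$ is a multiple of $V$. So these trajectories are reparametrizations of trajectories of $V$ and have the same landing points. This includes the separatrices of $q$ itself, which lie below the band. By hypothesis none of them converges to a saddle, so they land in finite time, and this persists for all small $\theta$.

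Second, the local analysis at $q$ for (ii). With $\theta>0$ the holonomy from $\hat C_b$ to $\hat C_a$ moves the crossing point of $\gamma$ by the flow of $-Y$ for time $\theta$, off the invariant axis and into $\{x_j>0\}$. Since $-Y$ points into $\{x_j>0\}$, the displacement is of order $\theta$ and transverse to $Z$. Below $a$ the field is a multiple of $V$, and $Z$ is invariant under $V$, because $\partial_jf_t=0$ on $Z$ and $g_t$ is diagonal there. So the perturbed trajectory shadows $\gamma$ at distance $O(\theta)$ on the side $x_j>0$, down to a level just above $t_q$. It arrives near $q$ on the side $\iota$ in the $x_i$-direction, inside the sector bounded by the ascending separatrix along $Z$ (which $\gamma$ is) and by the other ascending separatrix.

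In the Morse block of $q$ we linearize, using the Hartman--Grobman theorem or the $C^1$ stable manifold theorem, since $W$ is gradient-like near $q$. A trajectory entering close to an ascending branch on one side exits close to the descending branch of that sector, here $\gamma_q^+$, which leaves into $\{x_j>0\}$. It exits on the side of $\gamma_q^+$ that corresponds to the side from which it came. So on a level just below $t_q$ it crosses at distance $o(1)$ from the point of $\gamma_q^+$. Its sign relative to that point, measured along the oriented tangent $T$, is the sign of $\iota T_i$. This is because the sector adjacent to $\gamma_q^+$ on the side $x_i-q_i$ of sign $\iota$ lies, along the level, in the direction $\operatorname{sgn}(\iota T_i)\,T$.

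From that level the trajectory flows down close to $\gamma_q^+$. It therefore lands on $B$ arbitrarily close to $b_q$, on the corresponding side, since $\Lambda$ preserves orientation (Proposition~\ref{prop:landing}). There is no landing point between them, because the finitely many other landing points are fixed and at positive distance from $b_q$, while the new point tends to $b_q$ as $\theta\to0$. The case of several connections follows because the supports of the $Y$ can be chosen disjoint and the analysis is local. Two connections into the same $q$ run along $Z$ on the two sides of $q$, so they have opposite $\iota$ and land on opposite sides of $b_q$.

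The main obstacle is the sign bookkeeping at the saddle. One must show rigorously that a trajectory entering near the ascending branch along $Z$, displaced into $\{x_j>0\}$ from side $\iota$, exits next to $\gamma_q^+$ on the side $\operatorname{sgn}(\iota T_i)$. I would first try to do this directly in the quadratic model $\tau=t_q+\lambda_1u_1^2-\lambda_2u_2^2$, using coordinates adapted to $Z$: $Z$ is invariant and the metric is diagonal, so $u_1$ can be taken along $Z$. One then checks that the smooth conjugacy preserves the quadrant in which a trajectory travels.
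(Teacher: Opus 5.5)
Your proposal is correct and follows the paper's proof. The holonomy of the band is the flow of $-Y$, and the invariance of $Z$ under $V$ keeps $\gamma_\theta$ in $\{x_j>0\}$. The trajectory then passes the Morse block of $q$ next to $\gamma_q^+$ on the side $\iota e_i$ and lands within $o(1)$ of $b_q$, with its side read off via Proposition~\ref{prop:landing}. Several connections are handled by disjoint arcs.

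The step you call the main obstacle needs no linearization in the paper. It shows that the Hessian of $f_t$ at $q$ is diagonal, so both ascending separatrices lie on the axis and the descending ones leave it. It then uses that the four separatrices cut the Morse block into sectors that trajectories do not leave. Note that the relevant sector is the one between the ascending branch on the side $\iota e_i$ and $\gamma_q^+$. It is not one bounded by the two ascending branches, as you wrote: those are collinear along the axis and bound the whole half-plane $\{x_j>0\}$.
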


\begin{proof}
At a point of $Z$ the gradient of $f_t$ for $g_t$ has no $x_j$-component, so $V$ is tangent to $Z$. (i) The other separatrices that cross the band cross its upper level away from $\gamma$, so a small enough arc misses them, and those of the saddles below $t^*$ do not reach the band; so none of them changes, and by the choice of the neighbourhoods they follow $V$ away from the starts of the creations. (ii) At $q$ the Hessian is diagonal: in the plane $\{x_j=0\}$, with coordinates $(x_i,t)$, $Z$ is near $q$ the smooth curve $\{\phi=0\}$, $\phi=f_t(x_i,0)$, since $\partial_t\phi(q)=\alpha\neq0$; so $\partial_jf_t(x_i,0)=\lambda\phi$ with $\lambda$ smooth, and $\partial_i\partial_jf_t=\lambda\,\partial_i\phi=0$ at $q$. As $\gamma$ converges to $q$ along the axis, the axis carries the ascending separatrices of $q$, and the descending ones leave it. For $\theta>0$ the descending holonomy across the band is the flow of $-Y$ at time $\theta$, so the new separatrix $\gamma_\theta$ leaves the axis in the band, by $O(\theta)$, into $\{x_j>0\}$, where it stays, since near $\gamma$ the axis meets $S$ only in the invariant set $Z$; below the band it follows $V$. By continuous dependence on the initial point it enters the Morse block of $q$ (proof of Theorem~\ref{thm:update}) in the sector between the ascending separatrix of $q$ containing $\gamma$, on the side $\iota e_i$, and $\gamma_q^+$, leaves it next to $\gamma_q^+$ on the side $\iota e_i$, and lands within $o(1)$ of $b_q$ as $\theta\to0$. As $b_q$ is the landing point of no other separatrix, for small $\theta$ no landing point lies in between, and since the flow preserves the order on the levels (Proposition~\ref{prop:landing}), the side is that of $\gamma_\theta$ on a level just below $t_q$, given by the sign of $\iota T_i$. For several connections take disjoint arcs, each so small that it misses the separatrices of $V$ other than its own; for small $\theta$ the twisted separatrices stay close to those of $V$ and miss the other arcs, and the argument applies to each; two connections into $q$ arrive along the two ascending separatrices of $q$ on the axis, hence from opposite sides.
\end{proof}

\begin{remark}[from $W$ to $V$]\label{rem:numerics}
Outside the bands and neighbourhoods of the boundary critical points, $W$ has the trajectories of $V$. If no descending separatrix of $V$ converges to an interior saddle or accumulates at the boundary saddle of an annihilation, take $\theta=0$ on every band and model neighbourhoods that miss the separatrices except near the starts of those of the creations; then $W$ has the labels of $V$. Lemma~\ref{lem:axisconn} treats connections along invariant parts of the axes, but not those forced by other symmetries (e.g.\ along the diagonal).
\end{remark}

\begin{remark}[isotopy type]\label{rem:isotopy}
The state also locates points: a non-special point $y\in\hat C_t$ that is not an end lies on the component whose sequence contains the bottom interval of $\Lambda(y)$ (Proposition~\ref{prop:landing}). This could give the isotopy type of $(\R^2,C)$, which by the Schoenflies theorem is determined by the cyclic order of the ends, their pairing and the tree of the regions of $\R^2\setminus C$, found by locating, for each oval, the first point of $C$ on a horizontal half-line from its rightmost point; we have not implemented this.
\end{remark}

\section{The choice of the lifting}\label{sec:lifting}

Write $\varsigma=\sgn c$, $\ell=\log|c|\in\R^{\cA}$ and $s=1+\log(1/t)\in[1,\infty)$. Along~\eqref{eq:deformation}, $\log|c(t)|=\ell-(s-1)h$ moves on a half-line, whose intersections with the real discriminants are the events. These stay near the walls of the secondary fan of $\cA$, and for $h=-\ell$ the half-line lies on a ray from the origin, which crosses no wall; hence its events lie in an explicit window (\S\ref{ssec:window}), and for coefficients $\varsigma e^{\sigma u}$ with Gaussian $u$ an event has probability $O(1/\sigma)$, unlike for other liftings (\S\ref{ssec:spread}).

\subsection{Reduced coordinates and dequantization}\label{ssec:reduced}

Let $\hat A$ be the $3\times\cA$ matrix with the columns $(1,a)$, of rank $3$ by (H0), $m=|\cA|-3$, and $B$ a \emph{Gale dual} of $\cA$: an integer $\cA\times m$ matrix whose columns form a basis of the lattice $\ker\hat A\cap\Z^{\cA}$. Its rows $b_a\in\Z^m$ satisfy $\sum_ab_a=0$ and $\sum_aa_ib_a=0$, and $\ker B^T$ is the row space of $\hat A$, the space of affine functions of $a$. The rescalings $x\mapsto(\rho_1x_1,\rho_2x_2)$ and $f\mapsto\rho_0f$, $\rho_i>0$, preserve the quadrants and the topology of $C$ and add an affine function to $\ell$. Hence the topology of $C$ depends only on $\varsigma$ and the \emph{reduced coordinates} $\zeta=B^T\ell\in\R^m$, which along the path of $h$ move on the half-line $\zeta-(s-1)B^Th$.

\begin{proposition}[dequantization]\label{prop:dequant}
For $h=-\ell$, $c_at^{h(a)}=\varsigma_a|c_a|^s$ with $s=1+\log(1/t)$, i.e.\ $f_t=f^{(s)}:=\sum_{a}\varsigma_a|c_a|^sx^a$, and the reduced coordinates move on the ray $s\mapsto s\zeta$. This is Viro's dequantization of $f$ \cite{Vir01}, a real form of Maslov dequantization \cite{Lit07}; its start is the patchwork of $\max_a(\langle a,w\rangle+\log|c_a|)$, and $-\ell$ is generic for $\ell$ outside finitely many hyperplanes.
\end{proposition}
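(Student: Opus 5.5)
The plan is to check the four assertions of Proposition~\ref{prop:dequant} one after another: the formula for $f_t$, the ray in reduced coordinates, the starting patchwork, and genericity of $-\ell$. Each reduces to a short computation or to an earlier statement.

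\emph{The formula.} I substitute $h(a)=-\ell_a=-\log|c_a|$ into~\eqref{eq:deformation}. Then
\[
c_at^{h(a)}=\varsigma_a|c_a|\,e^{-\log|c_a|\log t}=\varsigma_a|c_a|^{1+\log(1/t)}=\varsigma_a|c_a|^{s},
\]
so $f_t=f^{(s)}$.

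\emph{The ray.} Along the path $\log|c(t)|=s\ell$. Applying $B^T$ gives $\zeta(s)=sB^T\ell=s\zeta$. This agrees with the general half-line $\zeta-(s-1)B^Th$ with $h=-\ell$.

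\emph{The start.} I apply the max-convention of~\eqref{eq:logform} with $h=-\ell$. The exponents become $\langle a,w\rangle-h(a)=\langle a,w\rangle+\log|c_a|$, so $F_h(w)=\max_a(\langle a,w\rangle+\log|c_a|)$. The positive tropical curves $\Gamma_\epsilon$, and hence the patchwork $T(f,h)$ that is the start by Theorem~\ref{thm:viro-plane}, are those of this tropical polynomial with signs $\sigma_\epsilon$. The identification with Viro's dequantization is terminological, from \cite{Vir01}: the family $\sum_a\varsigma_a|c_a|^sx^a$ is his dequantizing family, with $s$ playing the role of $1/\hbar$.

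\emph{Genericity.} This is the only step with content, and it follows from the remark after Definition~\ref{def:generic}. A lifting is non-generic exactly when some lower face contains the lift of a circuit $Z\subset\cA$. In that case $(a,h(a))$, $a\in Z$, lie on a common plane (a common line if $Z$ is collinear). Write the affine dependence $\sum_{a\in Z}\lambda_a(1,a)=0$ with $\lambda\neq0$; coplanarity then forces the linear condition $\sum_{a\in Z}\lambda_ah(a)=0$. This is a nontrivial equation in $h$, since the $\lambda_a$ are not all zero, and there are finitely many circuits. With $h=-\ell$ each condition is a hyperplane in $\ell$, so $-\ell$ is generic off finitely many hyperplanes.

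The main obstacle, mild as it is, lies in this last step. One must make sure that coplanarity of a circuit's lifts is captured by a single nonzero linear form for each circuit, namely its unique (up to scale) affine dependence. I would state that form explicitly rather than pass through the secondary fan.
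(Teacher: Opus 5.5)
Your proof is correct and follows the paper's route. The paper uses the direct computation of $c_at^{-\log|c_a|}$, the identity $F_{-\ell}(w)=\max_a(\langle a,w\rangle+\ell_a)$, and the remark after Definition~\ref{def:generic} for genericity, which you expand through the affine relation of a circuit on a non-vertical supporting plane. Your explicit check that $\log|c(t)|=s\ell$, hence $B^T\log|c(t)|=s\zeta$, only spells out a step the paper leaves implicit.
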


\begin{proof}
$c_at^{-\log|c_a|}=\varsigma_a|c_a|e^{\log|c_a|\log(1/t)}=\varsigma_a|c_a|^{1+\log(1/t)}$, and $F_{-\ell}(w)=\max_a(\langle a,w\rangle+\ell_a)$; the last claim is the remark after Definition~\ref{def:generic}.
\end{proof}

We call $s\mapsto f^{(s)}$, $s\ge1$, the \emph{dequantization path} of $f$; as $t$ increases, $s$ decreases from $\infty$ (the patchwork) to $1$ (the polynomial $f$).

\begin{remark}[nesting]\label{prop:nesting}
For $R\ge1$, $(|c_a|^R)^{s'}=|c_a|^{Rs'}$, so the dequantization path of $f^{(R)}$ is the tail $\{f^{(s)}:s\ge R\}$ of the path of $f$, and the events of $f^{(R)}$ are those of $f$ with $s\ge R$; their number is non-increasing in $R$.
\end{remark}

\subsection{Events and the reduced discriminant}\label{ssec:hk}

By Descartes' rule there are at most $|\cB|-1$ events (proof of Lemma~\ref{lem:eventpoly}), whatever $h$ is. Among the factors of $E_\cA$, the $\cA$-discriminant $\Delta_\cA$ detects singular points in $(\C^*)^2$; up to a monomial it is a Laurent polynomial in the reduced coefficients $c^B=(\prod_ac_a^{b_{aj}})_{j\le m}$ \cite[Ch.~9]{GKZ94}, whose zero set is the closure of the image of the Horn--Kapranov map $\xi\mapsto(\prod_a\langle b_a,\xi\rangle^{b_{aj}})_{j\le m}$ \cite{Kap91}, \cite[Thm.~2.10]{DRRS07}.

\begin{lemma}[real Horn--Kapranov parametrization]\label{lem:hk}
Let $K_\epsilon$ be the open cone of the $\xi\in\R^m$ with $\sgn\langle b_a,\xi\rangle=\sigma_\epsilon(a)$ for all $a$, and $\Phi(\xi)=\sum_a\log|\langle b_a,\xi\rangle|\,b_a$ for $\xi\in K_\epsilon$. The singular points of $C$ in $\cO_\epsilon$ correspond bijectively to the rays $\R_{>0}\xi\subset K_\epsilon$ with $\Phi(\xi)=\zeta$; the singular point of such a ray is $(\epsilon_1e^{\omega_1},\epsilon_2e^{\omega_2})$, where $(\alpha,\omega)$ is the unique solution of $\alpha+\langle a,\omega\rangle=\log|\langle b_a,\xi\rangle|-\ell_a$, $a\in\cA$.
\end{lemma}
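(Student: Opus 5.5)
The plan is to write the singular-point conditions in logarithmic coordinates on $\cO_\epsilon$ and recognise them as a linear dependence among the columns $(1,a)$ of $\hat A$, whose kernel is the column space of $B$. On $\cO_\epsilon$ put $x=(\epsilon_1e^{\omega_1},\epsilon_2e^{\omega_2})$. Then $f(x)=\sum_a\sigma_\epsilon(a)e^{\ell_a+\langle a,\omega\rangle}$. Since $x_i\partial_if=\partial_{\omega_i}f$ and $x_i\neq0$, the point $x$ is a singular point of $C$ if and only if the vector $\lambda=(\sigma_\epsilon(a)e^{\ell_a+\langle a,\omega\rangle})_{a\in\cA}$ satisfies $\sum_a\lambda_a(1,a)=0$, i.e.\ $\hat A\lambda=0$. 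The columns of $B$ span $\ker\hat A$ over $\R$, because they are a lattice basis of the rational kernel. So this holds if and only if $\lambda=B\xi$ for a unique $\xi\in\R^m$, that is $\lambda_a=\langle b_a,\xi\rangle$.

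Every coordinate of $\lambda$ is non-zero with sign $\sigma_\epsilon(a)$, so $\xi\in K_\epsilon$. Taking logarithms of absolute values gives
\[
\log|\langle b_a,\xi\rangle|=\ell_a+\langle a,\omega\rangle\qquad(a\in\cA).
\]
I will absorb a common positive factor by letting $\xi$ range over the ray. This gives $\log|\langle b_a,\xi\rangle|-\ell_a=\alpha+\langle a,\omega\rangle$ for some $\alpha\in\R$, where $\alpha$ accounts for scaling $\xi$ by $e^{\alpha}$.

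To eliminate $(\alpha,\omega)$, apply $B^T$. Since $\sum_ab_a=0$ and $\sum_aa_ib_a=0$, the vector $B^T(\alpha+\langle a,\omega\rangle)_a$ vanishes, so the system implies $\Phi(\xi)=B^T\ell=\zeta$. Conversely, suppose $\xi\in K_\epsilon$ and $\Phi(\xi)=\zeta$. Then the vector $(\log|\langle b_a,\xi\rangle|-\ell_a)_a$ lies in $\ker B^T$, which is the row space of $\hat A$, the affine functions of $a$. Hence it equals $\alpha+\langle a,\omega\rangle$ for some $(\alpha,\omega)$. This pair is unique because $\hat A$ has rank $3$ by (H0), so an affine function on $\cA$ determines its coefficients.

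Reversing the computation, $\lambda=e^{-\alpha}B\xi$ then lies in $\ker\hat A$, so $x$ is singular. The maps are inverse to each other on rays. Replacing $\xi$ by $r\xi$ shifts $\alpha$ by $\log r$ and leaves $\omega$ and $\Phi$ unchanged: $\Phi$ changes by $\log r\sum_ab_a=0$. Injectivity on rays follows from the uniqueness of $\xi$ given $\lambda$, up to the positive factor $e^{\alpha}$.

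The main point needing care is the identification $\ker B^T=\operatorname{row}(\hat A)$ over $\R$, together with uniqueness of $(\alpha,\omega)$. Both follow from (H0) and the choice of $B$ as a basis of $\ker\hat A\cap\Z^{\cA}$. Everything else is bookkeeping.
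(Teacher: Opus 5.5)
Your proposal is correct and follows essentially the same route as the paper: you identify the term vector $(c_ax^a)_a$ with an element $B\xi$ of $\ker\hat A$ and apply $B^T$ to get $\Phi(\xi)=\zeta$. For the converse you use $\ker B^T=\operatorname{row}(\hat A)$ and the rank of $\hat A$ to recover a unique $(\alpha,\omega)$, with rescaling of $\xi$ only shifting $\alpha$, exactly as in the paper's argument.
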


\begin{proof}
A point $x\in\cO_\epsilon$ is a singular point of $C$ if and only if the vector $(c_ax^a)_a$ of its terms satisfies $\sum_ac_ax^a=f(x)=0$ and $\sum_aa_ic_ax^a=x_i\partial_if(x)=0$, i.e.\ lies in $\ker\hat A$, i.e.\ equals $B\xi$ for some $\xi\in\R^m$. Then $\sgn\langle b_a,\xi\rangle=\sgn(c_ax^a)=\sigma_\epsilon(a)$, and with $\omega_i=\log|x_i|$, $\Phi(\xi)=\sum_a(\ell_a+\langle a,\omega\rangle)b_a=\zeta$, as $\sum_aa_ib_a=0$. Conversely, if $\xi\in K_\epsilon$ and $\Phi(\xi)=\zeta$, the vector $(\log|\langle b_a,\xi\rangle|-\ell_a)_a$ lies in $\ker B^T$, the row space of $\hat A$, which gives $(\alpha,\omega)$, unique since $\hat A$ has rank $3$; then $x=(\epsilon_1e^{\omega_1},\epsilon_2e^{\omega_2})$ satisfies $c_ax^a=\sigma_\epsilon(a)e^{\ell_a+\langle a,\omega\rangle}=e^{-\alpha}\langle b_a,\xi\rangle$, so $(c_ax^a)_a\in\ker\hat A$ and $x$ is singular. Replacing $\xi$ by $\rho\xi$, $\rho>0$, only changes $\alpha$.
\end{proof}

Thus the interior events in $\cO_\epsilon$ are the intersections of the half-line $\zeta-(s-1)B^Th$ with the \emph{signed contour} $\Phi(K_\epsilon)$ (cf.\ \cite[\S3]{RR17}, \cite[Prop.~2.10]{DRT26}), and there are none if $K_\epsilon=\emptyset$, i.e.\ by the theorem of the alternative if a non-zero affine function $\chi$ satisfies $\sigma_\epsilon(a)\chi(a)\ge0$ for all $a$; the boundary events are described in the same way, in dimension one, by the edge polynomials. The contours are explicit for a circuit $Z\subset\cA$. Its \emph{relation} is the primitive integer vector $\theta_Z\in\Z^Z$, unique up to sign, with $\sum_a\theta_a=0$ and $\sum_a\theta_aa=0$; put $\kappa_\theta=\sum_{a\in Z}\theta_a\log|\theta_a|$.

\begin{proposition}[circuits]\label{prop:circuits}
Let $Z\subset\cA$ be a circuit with relation $\theta$ and $f^Z=\sum_{a\in Z}c_ax^a$. If $|Z|=4$, $f^Z$ has a singular point in $\cO_\epsilon$ if and only if $\sigma_\epsilon|_Z=\pm\sgn\theta$ and $\langle\theta,\ell\rangle=\kappa_\theta$, and then exactly one. If $Z$ consists of three points of an edge with lattice coordinate $k$, the same holds for the multiple roots in $\R_{>0}$, resp.\ $\R_{<0}$, of $\sum_{a\in Z}c_ay^{k(a)}$, with $\sigma_\epsilon(a)$ replaced by $\sgn c_a$, resp.\ $(-1)^{k(a)}\sgn c_a$. Hence $(f^Z)^{(s)}$ has such a singular point if and only if the sign condition holds and $s\langle\theta,\ell\rangle=\kappa_\theta$: if $\langle\theta,\ell\rangle\neq0$, exactly at the \emph{singular parameter} $s^*=\kappa_\theta/\langle\theta,\ell\rangle$ of the circuit family; if $\langle\theta,\ell\rangle=0$, for no $s$ if $\kappa_\theta\neq0$ and for all $s$ if $\kappa_\theta=0$. If $Z=\cA$, resp.\ $Z=\cA\cap e$ for an outer edge $e$, the sign condition holds and $s^*$ exists, then $s^*\neq1$, as $E_\cA(c)\neq0$, and if $s^*>1$ it is an event of the dequantization path of $f$.
\end{proposition}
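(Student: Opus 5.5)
The plan is to specialize Lemma~\ref{lem:hk} to the support $Z$ itself, for which the Gale dual has rank $m=|Z|-3=1$. For $|Z|=4$ the lattice $\ker\hat A\cap\Z^Z$ is spanned by the primitive relation $\theta$, so we may take $B=\theta$ as a one-column matrix with rows $b_a=\theta_a$. Then $\xi\in\R$, and the cone $K_\epsilon$ is $\R_{>0}$ or $\R_{<0}$ precisely when $\sigma_\epsilon|_Z=\sgn\theta$, resp.\ $-\sgn\theta$; otherwise it is empty. Because all $\theta_a\ne0$ (a circuit is minimally dependent), each of these cones is a single ray, so at most one singular point occurs.

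On the ray $\xi=\pm1$ we compute $\Phi(\xi)=\sum_a\log|\theta_a|\,\theta_a=\kappa_\theta$, and $\zeta=B^T\ell=\langle\theta,\ell\rangle$. Lemma~\ref{lem:hk} then shows that $f^Z$ has a singular point in $\cO_\epsilon$ if and only if the sign condition holds and $\langle\theta,\ell\rangle=\kappa_\theta$, and that this point is unique. For three collinear points I would repeat the argument for one variable. After the monomial change the edge becomes a line with lattice coordinate $k$, and a multiple root $y$ of $\sum c_ay^{k(a)}$ means the vector $(c_ay^{k(a)})_a$ lies in the kernel of the $2\times3$ matrix with columns $(1,k(a))$. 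That kernel is spanned by the relation $\theta$, which is also the relation in $\Z^2$ because the points are affinely equivalent. The same computation goes through, with signs $\sgn c_a$ for $y>0$ and $(-1)^{k(a)}\sgn c_a$ for $y<0$. Uniqueness of $(\alpha,\omega)$ holds because the $2\times3$ matrix has rank $2$.

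For the dequantized family, replace $\ell$ by $s\ell$. The signs do not change and $\kappa_\theta$ does not depend on $\ell$, so the condition becomes $s\langle\theta,\ell\rangle=\kappa_\theta$, and the three cases follow directly.

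Now suppose $Z=\cA$, or $Z=\cA\cap e$ for an outer edge $e$. If $s^*=1$, then $f$ itself would have a singular point in the torus, or $\psi^e$ a multiple root in $\R^*$. Either makes a face discriminant vanish, hence $E_\cA(c)=0$, which is a contradiction. If $s^*>1$, the parameter $t^*=e^{1-s^*}\in(0,1)$ and $f_{t^*}=f^{(s^*)}$. For $Z=\cA$ the point is an interior event point of type (A); for an edge it is a multiple root of $\psi^e_{t^*}$, a boundary event point of type (B). By Definition~\ref{def:event} this $t^*$ is an event.

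The main obstacle is bookkeeping rather than substance. One must check that $\theta$ really generates $\ker\hat A\cap\Z^Z$, which holds because it is primitive and the kernel has rank one. One must also track the sign convention for negative $y$ on an edge.
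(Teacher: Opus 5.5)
Your proposal is correct and follows the paper's proof. Like the paper, you specialize Lemma~\ref{lem:hk} to the support $Z$ with $m=1$ and $b_a=\theta_a$, read off $K_\epsilon$, $\Phi\equiv\kappa_\theta$ and $\zeta=\langle\theta,\ell\rangle$, repeat with the columns $(1,k(a))$ for an edge, replace $\ell$ by $s\ell$, and use $E_\cA(c)\neq0$ to exclude $s^*=1$; the one point you leave implicit is that $\Phi$ is constant on rays because $\sum_a\theta_a=0$, which is what justifies evaluating it at $\xi=\pm1$.
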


\begin{proof}
Let $|Z|=4$. Then $\ker\hat A_Z=\R\theta$, where $\hat A_Z$ has the columns $(1,a)$, $a\in Z$, so Lemma~\ref{lem:hk} applies to $f^Z$ with $m=1$ and $b_a=\theta_a$: $K_\epsilon$ is $\R_{>0}$, resp.\ $\R_{<0}$, if $\sigma_\epsilon|_Z=\sgn\theta$, resp.\ $-\sgn\theta$, and empty otherwise, $\Phi(\xi)=\sum_a\theta_a\log|\theta_a\xi|=\kappa_\theta$ as $\sum_a\theta_a=0$, and the reduced coordinate of $f^Z$ is $\langle\theta,\ell\rangle$. So $f^Z$ has a singular point in $\cO_\epsilon$ if and only if $\sigma_\epsilon|_Z=\pm\sgn\theta$ and $\langle\theta,\ell\rangle=\kappa_\theta$, and then exactly one, as $K_\epsilon$ is one ray. For an edge the same argument applies with $(1,k(a))$ in place of $(1,a)$. Finally, $(f^Z)^{(s)}$ has the log-coefficients $s\ell$; for $Z=\cA$, resp.\ $\cA\cap e$, its singular points in $(\R^*)^2$, resp.\ the real multiple roots of its edge polynomial, are event points of the dequantization path, and none occurs at $s=1$, as $E_\cA(c)\neq0$.
\end{proof}

In general the singular points of $f^Z$ are not those of $f$: for $1+x_1+x_2+x_1x_2+2x_1^2+3x_2^2$ the truncation $(1+x_1)(1+x_2)$ to a parallelogram ($\kappa_\theta=0$) is singular for every $s$, although $E_\cA(c)\neq0$ and $-\ell$ is generic. For three consecutive points of an outer edge, $\theta=(1,-2,1)$ and $\kappa_\theta=-\log4$: the dequantization path of an edge trinomial $c_0+c_1y+c_2y^2$ has a boundary event if and only if $c_0c_2>0$ and $0<2\ell_1-\ell_0-\ell_2<\log4$, at $s^*=\log4/(2\ell_1-\ell_0-\ell_2)$ (for the conic of Example~\ref{ex:conic}, if $1<b<2$). For the cubic of Example~\ref{ex:cubic}, $\theta=\pm(1,1,1,-3)$ on $\bigl((0,0),(3,0),(0,3),(1,1)\bigr)$, and there is an event if and only if $1<\lambda<3$, at $s^*=\log3/\log\lambda$, the death of the oval of the start; the lifting of Example~\ref{ex:cubic2} has one for every $\lambda<3$.

\subsection{The event window}\label{ssec:window}

For a general support the signed contours are not explicit, but their tropical limit is: the tropical $\cA$-discriminant is $\operatorname{trop}(\ker\hat A)+\operatorname{rowspace}(\hat A)$ \cite[Thm.~1.1]{DFS07}. A quantitative real version confines the events of the dequantization path to a window.

Let $F$ be $\Delta$ or one of its edges and $\cA_F=\cA\cap F$, which spans $F$ affinely. Let $\Pi$ be a hyperplane of the affine span of $F$ spanned by points of $\cA_F$ (a line through two points of $\cA$ if $F=\Delta$, a point of $\cA_F$ if $F$ is an edge), and $\chi_\Pi$ the primitive integral affine function on the span of $F$ that vanishes on $\Pi$, unique up to sign, so that $|\chi_\Pi(a)|$ is the lattice distance of $a$ from $\Pi$ (for $\Pi=\{a'\}$, $\chi_\Pi(a)=k(a)-k(a')$ for a lattice coordinate $k$ on $F$: $k=a_1$ on $E_1$, $k=a_2$ on $E_2$, $k=\langle a,\mu_e\rangle$ on an outer edge $e$). Put
\[
r_\Pi=\log\Bigl((|\cA_F\setminus\Pi|-1)\max_{a\in\cA_F\setminus\Pi}|\chi_\Pi(a)|\Bigr),\qquad r_F=\max_\Pi r_\Pi,
\]
the maximum over the $\Pi$ with $|\cA_F\setminus\Pi|\ge2$ ($r_F=-\infty$ if there is none). For a circuit $Z$ of $\cA_F$ let $g_Z(\ell)=\min_\gamma\max_{a\in Z}(\gamma(a)-\ell_a)$, the minimum over the affine functions $\gamma$ on the span of $F$ with $\gamma\ge\ell$ on $\cA_F$, a linear program in at most four variables, and let $g_F(\ell)$ be the minimum of $g_Z(\ell)$ over the circuits of $\cA_F$ ($g_F=\infty$ if there are none). Thus $g_Z(\ell)=0$ if and only if the points $(a,\ell_a)$, $a\in Z$, lie on a common face of the upper hull of $\{(a,\ell_a):a\in\cA_F\}$, and the \emph{gap} $g(\ell):=g_\Delta(\ell)$ is positive if and only if $-\ell$ is generic \cite[Ch.~7]{GKZ94}. For a circuit of an edge $F$, computing $g_Z$ in $F$ or in $\Delta$ gives the same value (add to a majorant a large multiple of the affine function vanishing on $F$), so $g_F\ge g$; moreover $g_Z$ depends only on $\zeta$, and $g_Z(s\ell)=s\,g_Z(\ell)$ for $s>0$. If $\cA$ has no circuit, then $\cA=\{0,(k,0),(0,l)\}$, $f$ and its truncations have no singular points, and there are no events; otherwise $g(\ell)<\infty$, and $r\ge0$ below, since a line through two points of $\cA$ misses two points of a circuit.

\begin{lemma}\label{lem:cocircuit}
Let $v\in\R^{\cA_F}$ have no zero entry and satisfy $\sum_av_a=0$ and $\sum_av_aa=0$, and put $w_a=\log|v_a|$.
(i) Let $\Pi$ be as above, and split $\cA_F\setminus\Pi$ into the sets $P$ and $N$ on which $\chi_\Pi(a)v_a$ is positive, resp.\ negative. Then both are non-empty, and $|\max_Pw-\max_Nw|\le r_\Pi\le r_F$.
(ii) The set $T=\{a\in\cA_F:w_a\ge\max w-r_F\}$ contains a circuit of $\cA_F$.
\end{lemma}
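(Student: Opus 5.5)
For (i), I will use that $\chi_\Pi$ is affine on the span of $F$, so the two relations give $\sum_a v_a\chi_\Pi(a)=0$. The terms with $a\in\Pi$ vanish, and so $\sum_{a\in P}\chi_\Pi(a)v_a=\sum_{a\in N}|\chi_\Pi(a)v_a|$. Every $a\in\cA_F\setminus\Pi$ has $\chi_\Pi(a)\neq0$ and $v_a\neq0$, so $\cA_F\setminus\Pi=P\sqcup N$, and this set is non-empty. Hence both sides of the identity are positive, and $P,N\neq\emptyset$. Let $D=\max_{a\in\cA_F\setminus\Pi}|\chi_\Pi(a)|$ and suppose $\max_Pw\ge\max_Nw$. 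Choose $a_0\in P$ with $w_{a_0}=\max_Pw$. Then
\[
e^{\max_Pw}\le\chi_\Pi(a_0)v_{a_0}\le\sum_P\chi_\Pi v=\sum_N|\chi_\Pi v|\le|N|\,D\,e^{\max_Nw}.
\]
Since $|N|\le|\cA_F\setminus\Pi|-1$ because $P\neq\emptyset$, this gives $\max_Pw-\max_Nw\le r_\Pi$. The other case is symmetric, with $|P|\le|\cA_F\setminus\Pi|-1$. Here $|\cA_F\setminus\Pi|\ge2$ holds automatically, since $P$ and $N$ are both non-empty, so $r_\Pi$ is one of the terms in the maximum defining $r_F$, and $r_\Pi\le r_F$.

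For (ii), let $a^*$ attain $\max w$ and put $T=\{a:w_a\ge\max w-r_F\}$. I will show that $T$ is affinely dependent; then it contains a minimal affinely dependent subset, which is a circuit of $\cA_F$. Suppose instead that $T$ is affinely independent. Then $|T|\le\dim F+1$, and $T$ lies in some hyperplane $\Pi$ of the span of $F$ spanned by points of $\cA_F$, namely a line through two points when $F=\Delta$, or a point when $F$ is an edge.

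The choice of $\Pi$ is the main obstacle. I want $\Pi\supset T\setminus\{a^*\}$ with $a^*\notin\Pi$, so that $a^*\in\cA_F\setminus\Pi$ and the maximum of $w$ over one of $P,N$ equals $\max w$. The other class $N'$ of $P,N$ is non-empty by (i) and satisfies $N'\cap T=\emptyset$, so $\max_{N'}w<\max w-r_F$. This contradicts (i). To construct such a $\Pi$, I take $\Pi$ to be the affine span of $T\setminus\{a^*\}$ and extend it by further points of $\cA_F$ if it has too small dimension, taking care to keep $a^*$ off it. This is possible because $\cA_F$ spans $F$ and $T$ is affinely independent.

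The degenerate cases need separate handling. If $T=\{a^*\}$ and $F=\Delta$, I will pick a line through two points of $\cA$ avoiding $a^*$; this exists because $\cA$ is not contained in two lines through $a^*$, which would be the collinear case. If $F$ is an edge and $T=\{a^*\}$, I take $\Pi=\{a'\}$ for any $a'\neq a^*$. In each case I also need $T\setminus\{a^*\}\subset\Pi$, so that the other sign class misses $T$. For $F$ an edge with $|T|=2$, I take $\Pi=\{a''\}$, where $a''$ is the point of $T$ other than $a^*$. Finally, I must check the case where $\cA_F\setminus\Pi=\{a^*\}$, which (i) excludes.
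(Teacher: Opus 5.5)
Your proof is correct and follows the paper's argument. For (i) you use $\sum_{a\notin\Pi}\chi_\Pi(a)v_a=0$ together with $|\chi_\Pi|\ge1$ off $\Pi$. For (ii) you extend an affinely independent $T$ to an affine basis of the span of $F$ by points of $\cA_F$; the basis points other than $a^*$ then span a $\Pi$ whose class not containing $a^*$ misses $T$, which contradicts (i).

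Your separate degenerate cases are already covered by this basis extension, so you can drop them. In the first of them, the relevant fact is that $\cA$ is not contained in a single line through $a^*$, not two lines.
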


\begin{proof}
(i) As $\chi_\Pi$ is affine, $\sum_{a\notin\Pi}\chi_\Pi(a)v_a=0$, and $|\chi_\Pi(a)|\ge1$ for $a\notin\Pi$. Hence both sets are non-empty and $e^{\max_Pw}\le\sum_P|\chi_\Pi(a)v_a|=\sum_N|\chi_\Pi(a)v_a|\le|N|\max|\chi_\Pi|\,e^{\max_Nw}$, where $|N|\le|\cA_F\setminus\Pi|-1$; symmetrically for $N$. (ii) Let $a_1$ attain $\max w$. If a hyperplane $\Pi$ spanned by points of $\cA_F$ does not contain $a_1$, then by (i) the class of $\cA_F\setminus\Pi$ not containing $a_1$ has a point in $T$. If $T$ were affinely independent, we could extend it by points of $\cA_F$ to an affine basis of the span of $F$, and the basis points other than $a_1$ would span such a $\Pi$ with $T\setminus\Pi=\{a_1\}$, a contradiction.
\end{proof}

\begin{theorem}[event window]\label{thm:window}
Let $s>0$. If $f^{(s)}$ has a singular point in $(\R^*)^2$, then $s\,g(\ell)\le r_\Delta$. If the truncation of $f^{(s)}$ to an edge $F$ of $\Delta$ has a multiple root in $\R^*$, as at a boundary event or a singular point on an axis, then $s\,g_F(\ell)\le r_F$. In particular every event of the dequantization path satisfies $s\,g(\ell)\le r$, where $r$ is the maximum of the $r_F$ over $F=\Delta$ and the edges of $\Delta$; if $0<g(\ell)<\infty$, the events lie in the window $1\le s\le r/g(\ell)$.
\end{theorem}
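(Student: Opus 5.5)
We translate a singular point into a vector $v$ satisfying the hypotheses of Lemma~\ref{lem:cocircuit}, and then read off a majorant $\gamma$ that bounds $g_Z$ for some circuit $Z$. Let $x\in(\R^*)^2$ be a singular point of $f^{(s)}$. As in the proof of Lemma~\ref{lem:hk}, the vector of terms $v_a=\varsigma_a|c_a|^sx^a$, $a\in\cA$, has no zero entry and satisfies $\sum_av_a=0$ and $\sum_av_aa=0$, since these are $f^{(s)}(x)=0$ and $x_i\partial_if^{(s)}(x)=0$. Its logarithms are $w_a=s\ell_a+\langle a,\omega\rangle$ with $\omega=(\log|x_1|,\log|x_2|)$. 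So $w-s\ell$ is the restriction of a linear, hence affine, function of $a$.

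Apply Lemma~\ref{lem:cocircuit}(ii) with $F=\Delta$. It gives a circuit $Z\subset T=\{a:w_a\ge\max w-r_\Delta\}$. Define $\gamma(a)=\max w-\langle a,\omega\rangle$, an affine function. Then $\gamma(a)-s\ell_a=\max w-w_a\ge0$ for all $a\in\cA$, so $\gamma$ is an admissible majorant of $s\ell$. On $Z$ we have $\gamma(a)-s\ell_a\le r_\Delta$. Hence $g_Z(s\ell)\le r_\Delta$. By the homogeneity $g_Z(s\ell)=s\,g_Z(\ell)$ noted before Lemma~\ref{lem:cocircuit},
\[
s\,g(\ell)\le s\,g_Z(\ell)\le r_\Delta .
\]

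For an edge $F$ with lattice coordinate $k$, suppose the truncation has a multiple root $y\in\R^*$ of $\sum_{a\in\cA_F}\varsigma_a|c_a|^sy^{k(a)}$ (up to the monomial factor). Take $v_a=\varsigma_a|c_a|^sy^{k(a)}$. The conditions $\psi(y)=0$ and $y\psi'(y)=0$ give $\sum v_a=0$ and $\sum k(a)v_a=0$, which is the affine relation on the span of $F$. Repeat the argument with $w_a=s\ell_a+k(a)\log|y|$ and the majorant $\gamma=\max w-k(\cdot)\log|y|$, an affine function on the span of $F$. This yields a circuit $Z$ of $\cA_F$ with $s\,g_Z(\ell)\le r_F$, hence $s\,g_F(\ell)\le r_F$.

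We then cover the event cases. A singular point on an axis is a multiple root of the truncation to $E_1$ or $E_2$ (proof of Lemma~\ref{lem:eventpoly}). A boundary event is a multiple root of $\psi^e$. In the edge cases $g\le g_F$ gives $s\,g\le r_F\le r$. In the interior case $s\,g\le r_\Delta\le r$. Together with $s\ge1$ on the path, and dividing by $g>0$, this gives the window.

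The main obstacle is bookkeeping, not analysis. One has to confirm that $g_Z$ is computed with majorants over all of $\cA_F$, which the constructed $\gamma$ satisfies since $\max w\ge w_a$ for every $a$. One also has to check that the degenerate case where $\cA_F$ has no circuit is consistent. There Lemma~\ref{lem:cocircuit}(ii) cannot hold, so no singular point exists, and indeed none can occur: the vectors $(1,a)$, respectively $(1,k(a))$, are independent and force $v=0$, contradicting that $v$ has no zero entry.
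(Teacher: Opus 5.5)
Your proposal is correct and follows the paper's proof. The vector of terms satisfies the hypotheses of Lemma~\ref{lem:cocircuit}, part (ii) yields a circuit $Z\subset T$, and the affine function built from $\max w$ and $\omega$ (resp.\ $\log|y_0|$ on an edge) is the majorant that bounds $g_Z$; the only cosmetic difference is that you majorize $s\ell$ and invoke $g_Z(s\ell)=s\,g_Z(\ell)$, whereas the paper divides $\gamma$ by $s$ and majorizes $\ell$ directly.
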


\begin{proof}
At a singular point $x\in\cO_\epsilon$ of $f^{(s)}$, the vector $v$ of the terms $\varsigma_a|c_a|^sx^a$ satisfies the hypotheses of Lemma~\ref{lem:cocircuit} for $F=\Delta$ (\S\ref{ssec:hk}), with $w_a=s\ell_a+\langle a,\omega\rangle$, $\omega_i=\log|x_i|$. Let $Z\subset T$ be a circuit as in Lemma~\ref{lem:cocircuit}(ii). The affine function $\gamma(a)=(\max w-\langle a,\omega\rangle)/s$ satisfies $\gamma\ge\ell$ on $\cA$ and $s(\gamma(a)-\ell_a)=\max w-w_a\le r_\Delta$ for $a\in Z$, hence $s\,g(\ell)\le s\,g_Z(\ell)\le r_\Delta$. A multiple root $y_0\in\R^*$ of the truncation $\sum_{a\in\cA_F}\varsigma_a|c_a|^sy^{k(a)}$ gives $v_a=\varsigma_a|c_a|^sy_0^{k(a)}$ with $\sum_av_a=\sum_ak(a)v_a=0$, and the argument applies in dimension one. The rest follows from $g_F\ge g$.
\end{proof}

\begin{corollary}\label{cor:window}
(i) If $g(\ell)>r$, the dequantization path has no events, and the topology of $C$ is that of the patchwork of $-\ell$ (Theorem~\ref{thm:viro-plane} and Proposition~\ref{prop:isotopy}). (ii) If $0<g(\ell)<\infty$, then, since $g(R\ell)=Rg(\ell)$, $f^{(R)}$ has no events for $R>r/g(\ell)$. If $g(\ell)=0$, the algorithm uses the perturbed lifting of Remark~\ref{rem:freedom}(ii).
\end{corollary}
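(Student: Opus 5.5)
The plan is to read both parts of Corollary~\ref{cor:window} off Theorem~\ref{thm:window}, using the homogeneity $g(R\ell)=Rg(\ell)$ and the nesting of Remark~\ref{prop:nesting}; the topological conclusion of (i) then follows from Theorem~\ref{thm:viro-plane} and Proposition~\ref{prop:isotopy}.

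\emph{Part (i).} Suppose $g(\ell)>r$. Since $r\ge0$ whenever $\cA$ has a circuit, this forces $g(\ell)>0$, so $-\ell$ is generic. If $\cA$ has no circuit, there are no events, and genericity holds trivially because every lower face has affinely independent points. When $\cA$ does have a circuit, any event at parameter $s\ge1$ satisfies $s\,g(\ell)\le r$ by Theorem~\ref{thm:window}. But $s\,g(\ell)\ge g(\ell)>r$, a contradiction, so the dequantization path has no events for $s\in[1,\infty)$, i.e.\ for $t\in(0,1]$. Theorem~\ref{thm:viro-plane} applied to the generic lifting $-\ell$ shows that $C_t$ is homeomorphic to $T(f,-\ell)$ for $t<t_0$, with (A1) holding there. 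Since $[t,1]$ contains no event, Proposition~\ref{prop:isotopy} carries $\overline{C_t}$ isotopically onto $\overline{C_1}=\overline C$, preserving $\R^2$. Hence $(\R^2,C)\cong(\R^2,T(f,-\ell))$. The hypothesis $E_\cA(c)\neq0$ from the setting guarantees that $t=1$ itself is not an event.

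\emph{Part (ii).} Here $0<g(\ell)<\infty$. The log-coefficients of $f^{(R)}$ are $R\ell$, and $g(R\ell)=Rg(\ell)$ because $g$ is defined by a min--max over affine majorants, and scaling $\gamma$ by $R$ preserves the constraint $\gamma\ge\ell$. So if $R>r/g(\ell)$, then $g(R\ell)>r$, and part (i) applied to $f^{(R)}$ gives no events. Equivalently, by Remark~\ref{prop:nesting}, the events of $f^{(R)}$ are those of $f$ with $s\ge R$, and all of these lie in $s\le r/g(\ell)<R$. One point to check is that $f^{(R)}$ still satisfies $E_\cA\neq0$. This follows because $s=R$ is not an event of $f$, by Theorem~\ref{thm:window}, though a complex discriminant zero could in principle occur there; since the statement is about events only, Theorem~\ref{thm:window} suffices. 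The final sentence about $g(\ell)=0$ is a pointer to Remark~\ref{rem:freedom}(ii) and needs no proof.

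The only mildly delicate step is the edge case in (i) where $\cA$ has no circuit, so that $g=\infty$ and $r=-\infty$; this is handled by the remark preceding Lemma~\ref{lem:cocircuit}.
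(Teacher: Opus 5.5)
Your proof is correct and follows the paper's own justification: (i) is Theorem~\ref{thm:window} with $s\ge1$, then Theorem~\ref{thm:viro-plane} for the generic lifting $-\ell$ together with Proposition~\ref{prop:isotopy} on the event-free interval; (ii) is (i) applied to the log-coefficients $R\ell$, using $g(R\ell)=Rg(\ell)$ and the fact that $r$ depends only on $\cA$. The one blemish is your aside that $E_\cA\neq0$ for $f^{(R)}$ ``follows because $s=R$ is not an event'': not being an event does not imply $E_\cA\neq0$, because of complex singular points. As you go on to note, however, neither Theorem~\ref{thm:window} nor the no-event claim needs this hypothesis.
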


Part (i) is a criterion on the sizes of the coefficients, as in effective patchworking; its closest predecessor that we know is \cite[Prop.~3.1]{EdW23}, whose linear inequalities on $\log|c|$ certify that the real polyhedral homotopy avoids the discriminants, while the emptiness criterion of \cite[Thm.~3.3]{DRT26} depends only on the support and the signs. For an outer edge, a criterion of Kurtz type on the ratios $|c_k|^2/|c_{k-1}c_{k+1}|$ of the coefficients of $\psi^e$ excludes boundary events (\S\ref{app:kurtzcrit}).

\begin{remark}\label{rem:window}
Since $|\chi_\Pi(a)|\le\operatorname{diam}(\Delta)^2$, $r\le\log(|\cA|\operatorname{diam}(\Delta)^2)$. For $s=1$, Theorem~\ref{thm:window} says that the signed contours and their edge analogues lie in the neighbourhood $\{g\le r\}$ of the set $\{g=0\}$ of the $\ell$ with $-\ell$ on a wall of the secondary fan. The bound is sharp: for $1+x_1^3+x_2^3-\lambda x_1x_2$, $\lambda>1$, $r=\log3$ (every line through two points of $\cA$ leaves two points with $\max|\chi_\Pi|=3$) and $g(\ell)=\log\lambda$, so $r/g=s^*$ when there is an event (Proposition~\ref{prop:circuits}).
\end{remark}

\begin{remark}[free parameters]\label{rem:freedom}
(i) Adding $\delta=\|\ell'-\ell\|_\infty$ to a majorant of $\ell$ gives a majorant of $\ell'$, so $|g(\ell)-g(\ell')|\le2\delta$. (ii) If $-\ell$ is not generic, as in Example~\ref{ex:conic}, we use $h=-\ell+\varepsilon\eta$ with $\varepsilon=2^{-k}$, $k\ge1$ the smallest such that $\cS_h$ refines the subdivision induced by $-\ell$. Its path is $s(\ell-\varepsilon\eta)+\varepsilon\eta$, so by (i) and Theorem~\ref{thm:window} its events satisfy $s\le(r+2\varepsilon\|\eta\|_\infty)/g(\ell-\varepsilon\eta)$. A circuit $Z$ with $g_Z(\ell)=0$ has $\langle\theta_Z,\ell\rangle=0$, so on this path $\langle\theta_Z,\log|c|\rangle=-(s-1)\varepsilon\langle\theta_Z,\eta\rangle$, and Proposition~\ref{prop:circuits} tells whether $f^Z$ becomes singular; we take, among a few random $\eta$, one with the fewest circuits that produce an event (\S\ref{app:inputs}). (iii) For exact computations one may round $-M\ell$, $M\in\Z_{>0}$, to the nearest integers: by (i) with $\delta\le\frac12$, the segment from $-M\ell$ to the rounded lifting crosses no wall once $Mg(\ell)>1$, so the rounded lifting is generic, with the same triangulation. (iv) A translation achieving (H0) changes the support and the gap but need not make the gap positive, as circuits on an outer edge can obstruct this: for $1+x_1^2+x_1x_2+x_2^2$ the coefficients on the outer edge, hence $g=0$, are translation invariant; positive rescalings leave $g$ unchanged.
\end{remark}

\subsection{Spread of the coefficients}\label{ssec:spread}

Write $c=\varsigma e^{\sigma u}$, $u$ of unit scale, with \emph{spread} $\sigma$.

\begin{theorem}[probabilistic bound]\label{thm:prob}
Fix $\varsigma\in\{\pm1\}^{\cA}$ and $\sigma>0$, let $u$ be a standard Gaussian vector in $\R^{\cA}$, and for the dequantization path of $c=\varsigma e^{\sigma u}$ let $K(\sigma)$ be its number of events and $N(\sigma)$ its number of event points of the kinds (A) and (B) of Assumption~\ref{ass:morse}, a boundary event point counted once. Then
\[
\Pr[K(\sigma)>0]\le\frac{C_1}{\sigma},\qquad \E[K(\sigma)]\le\frac{(|\cB|-1)\,C_1}{\sigma},\qquad \E[N(\sigma)]\le\frac{B_\cA(|\cB|-1)\,C_1}{\sigma},
\]
with $|\cB|$ the number of terms of $E_\cA$, $C_1=\frac{r}{\sqrt{2\pi}}\sum_Z\|\theta_Z\|_1/\|\theta_Z\|_2$ over the circuits $Z$ of $\cA$ ($C_1=0$ if $\cA$ has no circuit; then $f$ has no events), and $B_\cA=(d-1)^2+\sum_e\lfloor L_e/2\rfloor$ over the outer edges, $d=\deg f$. Since $\|\theta_Z\|_1\le2\|\theta_Z\|_2$, $C_1=O(|\cA|^4\log(|\cA|\operatorname{diam}\Delta))$.
\end{theorem}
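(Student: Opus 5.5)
The plan is to reduce everything to the event window of Theorem~\ref{thm:window}. That theorem turns ``some event exists'' into a condition on the gap $g(\ell)$, and a small gap in turn forces some circuit to be nearly tropically degenerate. The Gaussian probability of that can be bounded circuit by circuit.

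\emph{Step 1 (reduction to a gap event).} For $c=\varsigma e^{\sigma u}$ we have $\ell=\sigma u$ and $g(\ell)=\sigma g(u)$ by homogeneity. Every event has $s\ge1$, so Theorem~\ref{thm:window} gives
\[
K(\sigma)>0\ \Longrightarrow\ \sigma g(u)\le s\,\sigma g(u)\le r .
\]
Almost surely $E_\cA(c)\neq0$ and $-\ell$ is generic, so null sets can be ignored. Hence $\Pr[K>0]\le\Pr[g(u)\le r/\sigma]$. By definition $g(u)=\min_Z g_Z(u)$, so a union bound gives $\Pr[K>0]\le\sum_Z\Pr[g_Z(u)\le r/\sigma]$.

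\emph{Step 2 (one circuit).} I will show that $g_Z(u)\le\rho$ implies $|\langle\theta_Z,u\rangle|\le\rho\|\theta_Z\|_1$. Take an affine $\gamma\ge u$ on $\cA$ with $0\le\gamma(a)-u_a\le\rho$ for $a\in Z$. Since $\theta_Z$ annihilates affine functions,
\[
\langle\theta_Z,u\rangle=-\sum_{a\in Z}\theta_a(\gamma(a)-u_a),
\]
and this has absolute value at most $\rho\|\theta\|_1$.

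The variable $\langle\theta,u\rangle$ is Gaussian with variance $\|\theta\|_2^2$. Its density is at most $1/(\sqrt{2\pi}\|\theta\|_2)$, so
\[
\Pr\bigl[|\langle\theta,u\rangle|\le\rho\|\theta\|_1\bigr]\le\frac{2\rho\|\theta\|_1}{\sqrt{2\pi}\|\theta\|_2}.
\]
This has an extra factor $2$ compared with the stated $C_1$. I expect to remove it with a one-sided version: after the choice of signs of $\theta$, the quantity $\sum\theta_a(\gamma(a)-u_a)$ is controlled by the positive part alone, so $\langle\theta,u\rangle$ lies in an interval of length $\rho\|\theta\|_1/2$ on each side. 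Alternatively one may use that $\sum_{\theta_a>0}\theta_a=\sum_{\theta_a<0}|\theta_a|=\|\theta\|_1/2$, since $\sum\theta_a=0$. Then $\langle\theta,u\rangle$ equals a difference of two nonnegative weighted sums, each bounded by $\rho\|\theta\|_1/2$, and so lies in an interval of total length $\rho\|\theta\|_1$. Summing over $Z$ gives $\Pr[K>0]\le C_1/\sigma$. This constant bookkeeping is the only delicate point.

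\emph{Step 3 (expectations).} By the proof of Lemma~\ref{lem:eventpoly}, $K\le|\cB|-1$ always, since Descartes' rule applies to the sum of $|\cB|$ real powers. Therefore $\E[K]\le(|\cB|-1)\Pr[K>0]$.

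For $N$ I will bound the number of event points at one event by $B_\cA$. The interior event points are singular points of $f_{t^*}$, and Assumption~\ref{ass:morse} holds almost surely by Proposition~\ref{prop:genericmorse}(iii). These are isolated solutions of $\partial_1f=\partial_2f=0$, whose number is at most $(d-1)^2$ by B\'ezout. This includes points on the axes, so no separate axis count is needed. The boundary event points on $D_e^\circ$ are double roots of $\psi^e$, a polynomial of degree $L_e$, so there are at most $\lfloor L_e/2\rfloor$ of them. Hence $N\le B_\cA K$ almost surely, and $\E[N]\le B_\cA(|\cB|-1)C_1/\sigma$.

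Finally, the estimate $C_1=O(|\cA|^4\log(|\cA|\operatorname{diam}\Delta))$ follows from three facts: the number of circuits is at most $\binom{|\cA|}{4}+\binom{|\cA|}{3}$, each term satisfies $\|\theta\|_1/\|\theta\|_2\le2$, and Remark~\ref{rem:window} gives $r\le\log(|\cA|\operatorname{diam}(\Delta)^2)$.
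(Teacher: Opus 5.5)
Your proposal is correct and is essentially the paper's proof: the reduction through Theorem~\ref{thm:window} to a union bound over circuits, the bound $|\langle\theta_Z,u\rangle|\le\frac12\|\theta_Z\|_1g_Z(u)$ from $\sum_a\theta_a=0$ (your second variant, which is the one the paper uses), and the counts $K\le|\cB|-1$ and $N\le B_\cA K$. The one step the paper spells out that you leave implicit is B\'ezout when $\partial_1f,\partial_2f$ share a factor $H$: at a zero of $H$ the Hessian rows $P_i\nabla H$ are dependent, so points of kind (A) are common zeros of the coprime cofactors $P_1,P_2$. Also, you need not claim that Assumption~\ref{ass:morse} holds almost surely, which can fail with positive probability on a detached edge, since $N$ counts only points of kinds (A) and (B).
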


\begin{proof}
If $K(\sigma)>0$, Theorem~\ref{thm:window} gives $s\ge1$ and a circuit $Z$ of $\cA$ with $s\sigma g_Z(u)\le r$. For an affine $\gamma$ attaining $g_Z(u)$ we have $\langle\theta_Z,u\rangle=\langle\theta_Z,u-\gamma\rangle$ and $0\le\gamma-u\le g_Z(u)$ on $Z$, so $\sum_a\theta_a=0$ gives $|\langle\theta_Z,u\rangle|\le\frac12\|\theta_Z\|_1g_Z(u)\le r\|\theta_Z\|_1/(2\sigma)$. Since $\langle\theta_Z,u\rangle\sim N(0,\|\theta_Z\|_2^2)$ has density at most $(\sqrt{2\pi}\|\theta_Z\|_2)^{-1}$, a union bound gives the first inequality, and $K(\sigma)\le|\cB|-1$ (proof of Lemma~\ref{lem:eventpoly}; $E_\cA(c)\neq0$ almost surely) the second. At one parameter a point of kind (A) is a non-degenerate common zero of $\partial_1f$ and $\partial_2f$. Let $H=\gcd(\partial_1f,\partial_2f)$ and $\partial_if=HP_i$, so that $P_1$ and $P_2$ are coprime. At a zero of $H$ the rows $P_i\nabla H$ of the Hessian are dependent, so the points of kind (A) are common zeros of $P_1$ and $P_2$, at most $\deg P_1\deg P_2\le(d-1)^2$ by B\'ezout's theorem. An edge polynomial $\psi^e$, of degree $L_e$ after division by a monomial, has at most $\lfloor L_e/2\rfloor$ double roots. So $N(\sigma)\le B_\cA K(\sigma)$.
\end{proof}

The constant $C_1$ is pessimistic (\S\ref{ssec:performance}), but the rate $1/\sigma$ is attained: for the support and signs $(+,+,+,-)$ of Example~\ref{ex:cubic}, Proposition~\ref{prop:circuits} gives $\E[K(\sigma)]=\E[N(\sigma)]=\Pr[0<\sigma X<\log3]\sim\frac{\sqrt3\log3}{2\sqrt{2\pi}\,\sigma}$, with $X=u_{11}-\frac13(u_{00}+u_{30}+u_{03})\sim N(0,\frac43)$. Non-Morse event points have probability zero unless an edge is detached (Proposition~\ref{prop:genericmorse}(iii)), and then they can have positive probability: for $\cA=\{(0,0),(2,0),(1,1),(0,2)\}$ and $\varsigma_{20}=\varsigma_{02}$ the edge polynomial of $f^{(s)}$ has a real double root at $s^*=\log4/Y$, $Y=\sigma(2u_{11}-u_{20}-u_{02})\sim N(0,6\sigma^2)$, an event if $0<Y<\log4$. $K(\sigma)$ also counts non-Morse events with non-isolated event points, as for $f=1+x_1^4+x_2^4+\sqrt2(x_1^2x_2^2-x_1^2-x_2^2)$, with $E_\cA(c)\neq0$ and $f^{(2)}=(x_1^2+x_2^2-1)^2$.

Other liftings need not have this decay: their half-lines cross the walls of the normal fan of $\operatorname{Newt}(D)$, $D=\Delta_\cA\prod_{e\text{ outer}}\Delta_{\cA\cap e}$, at distance proportional to $\sigma$, and a change of sign of the dominant term of $D$ produces a zero of $D(c(t))$, an event unless all its singular points are non-real. By Proposition~\ref{prop:otherrays}, if $\cA$ contains a circuit, $v$ is independent of $u$ with a density and the paths of $h=-v$ are almost surely \emph{real-generic}, i.e.\ every zero of $D(c(t))$ in $(0,1]$ is an event, for all large $\sigma$, then $\liminf_{\sigma\to\infty}\E[K_{-v}(\varsigma e^{\sigma u})]$, with $K_{-v}(c)$ the number of events of the path of $h=-v$, is at least the expected number of sign changes of the dominant term of $D$ along the ray $u+\varrho v$, whereas $\E[K(\sigma)]\to0$; for the cubic of Example~\ref{ex:cubic} with signs $(+,+,+,-)$ and standard Gaussian $v$ this number is $\frac12$.

\section{Algorithm and numerical experiments}\label{sec:algorithm}\label{sec:experiments}

We state the algorithm and report on its implementation; details are in \S\S\ref{app:impl}--\ref{app:special}.

\subsection{Algorithm and conditional correctness}\label{ssec:algcorrect}

Algorithm~\ref{alg:main} assembles Sections~\ref{sec:tropical}--\ref{sec:lifting}; its numerical data are the events and the labels of the landing points.

\begin{theorem}[conditional correctness]\label{thm:algorithm}
Let $f$ satisfy (H0) and $E_\cA(c)\neq0$. Suppose that, for every lifting $h$ that Algorithm~\ref{alg:main} uses, the events are computed with all their event points and with their correct types, that $t_0$ is as in Lemmas~\ref{lem:bottom} and~\ref{lem:positions}, and that, for one field $W$ as in Proposition~\ref{prop:W}, the landing point of every descending separatrix in $\hat S\setminus\hat B$ of a saddle (Lemma~\ref{lem:descend}) is computed accurately enough to determine its label (Definition~\ref{def:labels}), including its order relative to the other landing points on the same component of the bottom and to the crossing points with the axes, or, for a connection along an invariant part of an axis, is given by Lemma~\ref{lem:axisconn}. If Algorithm~\ref{alg:main} terminates, it returns \textup{\textsc{non-Morse}} only at an event point on a detached edge, and otherwise the numbers $n_o$ and $n_u$ of ovals and lines of $C$, and the pairing of the ends of $C$ by its lines. If $\eta$ is drawn from a distribution with a density, then with probability one the algorithm takes a new lifting in line~\ref{alg:restart} at most once; it then terminates if the computations in lines~\ref{alg:trop}, \ref{alg:events} and~\ref{alg:labels} terminate.
\end{theorem}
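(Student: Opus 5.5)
The plan is to assemble the conditional correctness from the results already proved, one lifting at a time, and then to handle the restart and termination claims separately. First fix a lifting $h$ used by Algorithm~\ref{alg:main}. Assume it is generic: this is either $-\ell$ with $g(\ell)>0$, or the perturbed lifting of Remark~\ref{rem:freedom}(ii), which is generic by construction since $\cS_h$ is a triangulation refining the induced subdivision. By Lemma~\ref{lem:eventpoly} the events are finitely many and none lies in $(0,t_0)$. The hypothesis on $t_0$ together with Lemmas~\ref{lem:bottom} and~\ref{lem:positions} shows that the initial state computed from $T(f,h)$ (line~\ref{alg:trop}) is the state of Definition~\ref{def:state} at $t_0$. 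Here each component $Z$ of $C_{t_0}$ corresponds to $\Gamma_Z$ with its type and its ends, and it is a single bottom interval.

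Next, if no \textsc{non-Morse} is returned for $h$, every event point satisfies Assumption~\ref{ass:morse}, since the types are assumed correctly classified. In that case Proposition~\ref{prop:W} supplies a field $W$, and Lemma~\ref{lem:descend} shows that all descending separatrices of saddles land on the bottom. Because the labels determine the order of landing points along each component of $\hat C_{t_0}$ (the discussion after Definition~\ref{def:labels}), and the slices $B_m$ are ordered by angle or by $y$, the cut operations of Theorem~\ref{thm:update} are computed exactly from the labels. For connections along invariant axes we use Lemma~\ref{lem:axisconn} instead. Corollary~\ref{cor:procedure} then gives the state at $t=1$. Since $1$ is not an event because $E_\cA(c)\ne0$, the level $1$ is regular up to choosing $\delta$, so this state yields $\Sigma_1=\Sigma(f)$. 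Proposition~\ref{prop:landing} converts it into $(n_o,n_u)$ and the pairing of ends, and Corollary~\ref{cor:ends} confirms the value of $n_u$.

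For the \textsc{non-Morse} output, the algorithm distinguishes a non-Morse event point on a detached edge, where it returns \textsc{non-Morse}, from one elsewhere, where it restarts in line~\ref{alg:restart}. With correct classification, the former output therefore occurs only at a detached edge. Proposition~\ref{prop:genericmorse}(ii) shows that such points are non-Morse for every lifting, so returning there is legitimate.

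The main obstacle is the probability-one claim that at most one restart occurs. For the new lifting $h=-\ell+\varepsilon\eta$, I would apply Proposition~\ref{prop:genericmorse}(i). The set of bad liftings is null in $\R^\cA$. For each fixed dyadic $\varepsilon=2^{-k}$, the map $\eta\mapsto-\ell+\varepsilon\eta$ is an affine bijection, so the set of bad $\eta$ is null; a countable union over $k$ remains null. Hence, since $\eta$ has a density, almost surely the second lifting has only Morse event points off the detached edges, and it is generic, since the non-generic liftings lie in finitely many hyperplanes. Consequently no second restart occurs. The delicate point is that the choice of $k$ depends on $\eta$, which is why the union over all $k$ is needed. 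Termination then follows because the number of events is bounded by $|\cB|-1$ and the number of separatrices followed is at most $2N$ (Corollary~\ref{cor:procedure}); so, given termination of lines~\ref{alg:trop}, \ref{alg:events} and~\ref{alg:labels}, the loop over events and the state updates are finite.
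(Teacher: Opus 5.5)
\paragraph{The missing step.} Your outline follows the paper's own proof. It covers the genericity of $h$, the correct initial state by Lemmas~\ref{lem:bottom} and~\ref{lem:positions}, the updates by Theorem~\ref{thm:update}, the fact that $t=1$ is not an event, and the union over the countably many $\varepsilon=2^{-k}$, which is the paper's remark that $\varepsilon\eta$ has a density. One step is missing, however. Algorithm~\ref{alg:main} does not compute the state of Definition~\ref{def:state}. At a death it does not apply rule~(c): it increments $n_\dagger$, leaves the dying oval in the state, and at the end returns $n_o=\#\{\text{cyclic sequences}\}-n_\dagger$. So your sentence ``Corollary~\ref{cor:procedure} then gives the state at $t=1$'' does not justify the output. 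A priori a later rule could cut one of these retained sequences and splice it into a live component, for instance rule~(a) or~(d) cutting at a landing point in one of its intervals. The returned $n_o$ would then be wrong.

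\paragraph{How to close it.} You need to show that the algorithm's final state is the true state at $t=1$ plus exactly $n_\dagger$ cyclic sequences that no later rule has touched.
\begin{enumerate}
\item After a death at a maximum $M$, the bottom intervals of the dying oval are dead: the trajectories of $W$ from them converge upwards to $M$ (Proposition~\ref{prop:landing}).
\item A landing point of a descending separatrix of a later saddle or creation lies on a trajectory that rises to that event point. Since trajectories are unique, it does not lie in a dead interval. Hence rules~(a) and~(d) never cut a retained sequence.
\item Rule~(e) only joins ends of lines and deletes the line $\lambda$, and rule~(b) only adds components.
\end{enumerate}
The retained sequences therefore stay $n_\dagger$ cyclic sequences, separate from the live components. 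This makes $n_o=\#\{\text{cyclic sequences}\}-n_\dagger$ correct, while $n_u$ and the pairing are read off the linear sequences as you say.

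\paragraph{A minor correction.} Genericity of the lifting comes from the test ``until it is generic'' in lines~1 and~\ref{alg:restart}. It does not follow from $\cS_h$ refining the subdivision induced by $-\ell$, as your first paragraph suggests.
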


\begin{proof}
Lines 1 and~\ref{alg:restart} give a generic $h$. Since $\varepsilon$ takes the countably many values $2^{-k}$, $\varepsilon\eta$ has a density if $\eta$ has one, so by Proposition~\ref{prop:genericmorse}(i) the path of the lifting drawn in line~\ref{alg:restart} satisfies Assumption~\ref{ass:morse} off the detached edges with probability one; this gives the last statement. If the algorithm terminates, it returns \textsc{non-Morse} only as stated, or it reaches the choice of $t_0$ with a generic $h$ whose path satisfies Assumption~\ref{ass:morse}; then the initial state is correct by Lemma~\ref{lem:bottom}, the labels determine the bottom intervals and their order by Lemma~\ref{lem:positions} and Definition~\ref{def:labels}, and the updates are correct by Theorem~\ref{thm:update}. Counting a dying oval in $n_\dagger$ instead of deleting it leaves $n_\dagger$ cyclic sequences of dead bottom intervals in the state, which the later rules do not touch, since no later landing point lies in a dead interval (its trajectories converge upwards to a maximum, Proposition~\ref{prop:landing}); so $n_o$ is the number of cyclic sequences minus $n_\dagger$. As $t=1$ is not an event, the final state is the state at $t=1$ (Corollary~\ref{cor:procedure}).
\end{proof}

\begin{algorithm}[bp]
\caption{Topology of a real plane curve}\label{alg:main}
\begin{algorithmic}[1]
\Require $f=\sum_{a\in\cA}c_ax^a$ satisfying (H0) and $E_\cA(c)\neq0$
\Ensure $(n_o,n_u)$ and the pairing of the ends of $C$ by its lines, or \textsc{non-Morse}
\State $h\gets-\log|c|$, perturbed as in Remark~\ref{rem:freedom}(ii) until it is generic
\State compute $\cS_h$, the curves $\Gamma_\epsilon$ and $T(f,h)$; label its components, cells and ends \Comment{\S\ref{sec:tropical}}\label{alg:trop}
\State compute the events $t_1<\dots<t_K$, all their event points and their types \Comment{\S\ref{ssec:implsummary}}\label{alg:events}
\If{some event points violate Assumption~\ref{ass:morse}} \Comment{Proposition~\ref{prop:genericmorse}}
  \State \textbf{if} one of them lies on a detached edge \textbf{then return} \textsc{non-Morse}
  \State $h\gets-\log|c|+\varepsilon\eta$ for a new random $\eta$ (Remark~\ref{rem:freedom}(ii)), until $h$ is generic; \textbf{go to} line~\ref{alg:trop}\label{alg:restart}
\EndIf
\State choose $t_0\in(0,t_1)$ ($t_1:=1$ if $K=0$); label the landing points of the descending separatrices\label{alg:labels}
\State state $\gets$ the components of $T(f,h)$, each a sequence with one term; $n_\dagger\gets0$ \Comment{Lemma~\ref{lem:bottom}}
\For{$i=1,\dots,K$}
  \State apply Theorem~\ref{thm:update} at the event points of $t_i$; at a death $n_\dagger\gets n_\dagger+1$, not rule (c)
\EndFor
\State \Return $n_o=\#$cyclic sequences $-\,n_\dagger$, $n_u=\#$linear sequences, and their ends
\end{algorithmic}
\end{algorithm}

\subsection{Implementation and scope}\label{ssec:implsummary}

We implemented Algorithm~\ref{alg:main} for $h=-\log|c|$ in Python, with kernels in C. The events are computed by one of two solvers. The \emph{real solver} searches the regions where three or four terms of $f^{(s)}$ dominate (Lemma~\ref{lem:cocircuit}) within the window of Theorem~\ref{thm:window}, by adaptive subdivision with interval bounds and the Krawczyk test in floating point; the \emph{complex solver} follows the complex critical points of $f$ from a total-degree start along a slightly non-real path. Both obtain the boundary events and the events on the axes from the real multiple roots of the edge polynomials and of the restrictions of $f_t$ to the axes. A descending separatrix is computed by integrating $V$, i.e.\ $\dot x=-\partial_tf_t\,\nabla_gf_t/|\nabla_gf_t|_g^2$ for decreasing $t$, with a projection onto the levels, from a start near the event point on the eigenvector of $H_p$ (for $g_{t^*}$) whose eigenvalue has the sign of $\alpha$ (Lemma~\ref{lem:morse}), resp.\ over $y_0$ just below a creation. By Remark~\ref{rem:numerics} the trajectories of $V$ have the labels of $W$ unless one converges to a saddle; near passes are tested heuristically and flagged if unresolved (\S\ref{ssec:sepcomp}), and connections along invariant parts of the axes are labelled by Lemma~\ref{lem:axisconn}. Labels are read on $T(f,h)$ at a heuristically chosen level $t_0$, confirmed at a lower level, and their order is checked separately. The answer is tested in exact arithmetic against necessary conditions, such as $n_u$ from the edge polynomials and the parities of the axis crossings of the lines.

None of the hypotheses of Theorem~\ref{thm:algorithm} is certified; this would require a certified complete list of events (certifying each computed event \cite{HS12} does not give it), a certified $t_0$, and validated enclosures of the separatrices \cite{Tuc11} that separate the labels and decide every near pass. A run returns $(n_o,n_u)$ and the pairing (in rare cases only the counts), possibly with \emph{flags}, or stops with an error. A flag is a heuristic diagnostic of a step that may have failed, such as an undecided box, a lost path, a separatrix that passes a saddle on an unresolved side, or an event point that fails a numerical test of Assumption~\ref{ass:morse} (Table~\ref{tab:states}). A flagged answer is unresolved and should be passed to an exact method; the absence of a flag is no proof of correctness. Unlike Algorithm~\ref{alg:main}, the implementation does not take a new lifting at an event point that fails a test: it flags the answer and drops the point or applies the rule of the type given by the computed signs, which is not justified in general (\S\ref{app:flags}). The boundary charts needed when an outer edge has an inner normal with a positive coordinate, such as the edge $[(2,0),(3,1)]$, are not implemented; for such supports the metric lacks the property required in \S\ref{ssec:field}, without a flag, so they are outside the scope of the implementation. The test families have triangular Newton polygons, the inputs of Table~\ref{tab:regression} triangles and quadrilaterals; only input 8 there has such an edge.

\subsection{Experimental design and performance}\label{ssec:performance}\label{ssec:setup}

We compare both solvers with the patchwork of $-\log|c|$ and with two implementations of the projection method \cite{Col75,GVEK96,EKW07}, written for this purpose and not tuned: an exact one (a resultant modulo primes and the Descartes method \cite{CA76}) and a numerical one (a total-degree homotopy in double precision, without error control). We did not compare with a mature certified implementation such as \cite{BEKS13}, so the times show how the costs grow, not that our method is faster than such implementations. The main inputs are random polynomials $\sum_a\varsigma_ae^{\sigma u_a}x^a$ (supports in $\conv\{0,de_1,de_2\}$ containing its vertices, uniform random signs, standard Gaussian $u_a$): a benchmark of $228$ polynomials with $\sigma=1$, $d=6,10,15,20$ and $6$ terms up to dense supports, polynomials with $8$ terms up to $d=80$, a spread family with $\sigma=1,4,16,64$, and $120$ held-out polynomials fixed before the final runs; further families are Harnack curves, near-singular cubics and polynomials with equal coefficients (\S\ref{app:inputs}). The reference for $(n_o,n_u)$ is the exact method on the binary coefficients; where it did not finish, only $n_u$ is known exactly. All runs used one frozen version of the code, with $20$ minutes of wall-clock time per run; two runs at a time shared $6.3$\,GB of memory, and a run killed for lack of memory counts as out of memory (\S\ref{app:runs}).

\begin{figure}[htbp]
\centering
\definecolor{mExact}{HTML}{0072B2}\definecolor{mNum}{HTML}{E69F00}\definecolor{mCplx}{HTML}{009E73}\definecolor{mReal}{HTML}{D55E00}
\pgfplotsset{every axis/.append style={width=0.5\textwidth,height=0.26\textwidth,font=\scriptsize,
  log basis y=10,grid=major,grid style={gray!20,line width=1pt},axis line style={line width=1pt},tick style={line width=1pt,black},title style={yshift=-3pt},
  legend style={font=\scriptsize,draw=none,fill=white,fill opacity=0.8,text opacity=1}},
  pExact/.style={mExact,mark=square,line width=1.2pt,mark options={solid}},
  pNum/.style={mNum,mark=o,line width=1.2pt,densely dashed,mark options={solid}},
  pCplx/.style={mCplx,mark=triangle,line width=1.2pt,densely dotted,mark options={solid}},
  pReal/.style={mReal,mark=diamond*,line width=1.2pt},
  pWrong/.style={only marks,mark=x,mark size=3.2pt,ultra thick,black},
  pFlag/.style={only marks,mark=o,mark size=4pt,line width=1pt,black}}
\begin{tikzpicture}[baseline=(current axis.north)]
\begin{loglogaxis}[xlabel={$d$},ylabel={seconds},title={(a) $|\cA|=8$},xmin=8,xmax=100,xtick={10,20,40,80},xticklabels={10,20,40,80},ymin=0.01,ymax=2000,
  legend to name=timesleg,legend columns=6,legend style={draw=none,fill=none,font=\small,
  /tikz/every even column/.append style={column sep=5pt}}]
\addplot[pExact] coordinates {(10,0.409) (15,2.44) (20,15.3) (30,144)};
\addplot[pNum] coordinates {(10,0.0193) (15,0.0384) (20,0.0887) (30,0.186) (40,0.324) (80,70.5)};
\addplot[pCplx] coordinates {(10,0.0715) (15,0.0972) (20,0.175) (30,0.578) (40,0.763) (80,30.9)};
\addplot[pReal] coordinates {(10,0.0273) (15,0.0337) (20,0.028) (30,0.0667) (40,0.115) (80,0.0755)};
\addplot[pFlag] coordinates {(30,0.578) (80,30.9)};
\addlegendimage{pWrong}
\legend{exact method,numerical method,complex solver,real solver,\hspace{2pt}flagged,\hspace{2pt}wrong}
\end{loglogaxis}
\end{tikzpicture}\hfill
\begin{tikzpicture}[baseline=(current axis.north)]
\begin{loglogaxis}[xlabel={$\sigma$},ylabel={seconds},title={(b) spread, twelve polynomials},log basis x=2,xmin=0.8,xmax=80,xtick={1,4,16,64},xticklabels={1,4,16,64},ymin=0.01,ymax=2000]
\addplot[pExact] coordinates {(1,0.489) (4,0.865) (16,4.76) (64,81.8)};
\addplot[pNum] coordinates {(1,0.0255) (4,0.0413) (16,0.165) (64,10.6)};
\addplot[pCplx] coordinates {(1,0.183) (4,0.155) (16,0.219) (64,2.48)};
\addplot[pReal] coordinates {(1,1.01) (4,0.238) (16,0.0593) (64,0.0507)};
\addplot[pFlag] coordinates {(16,0.219) (64,2.48)};
\addplot[pWrong] coordinates {(16,0.165) (64,10.6)};
\end{loglogaxis}
\end{tikzpicture}\\[4pt]
\pgfplotslegendfromname{timesleg}
\caption{CPU times in seconds (log scales); circles: medians over sets of runs with a flagged answer, crosses: with a wrong answer (against the exact reference) or, without one, with a wrong number $n_u$. (a) Medians over three polynomials with $|\cA|=8$ for each $d$; the exact method exceeded the time limit for $d=40$ and was not run for $d=80$. (b) Medians over the twelve polynomials of the spread family; for $\sigma=64$ the exact method ran out of memory on three. Figure~\ref{fig:timesSM} shows the dense benchmark and Harnack curves.}\label{fig:times}
\end{figure}

On the tested inputs with $8$ terms, the median time of the real solver remained below $0.12$\,s up to $d=80$, consistent with its search over triples of terms, whereas the exact method needed $144$\,s for $d=30$ and exceeded the time limit for $d=40$ (Figure~\ref{fig:times}(a)); with three inputs per degree, this is an observation, not a complexity bound. The advantage disappears as the number of terms grows: the real solver beats the exact method only up to $8$, $16$, $32$ and $64$ terms for $d=6,10,15,20$, and on dense polynomials it is the slowest method ($968$\,s for $d=20$, against $25$\,s and $0.28$\,s for the exact and the numerical method; Figure~\ref{fig:timesSM}(a)). A larger spread shrinks the window of Theorem~\ref{thm:window}: from $\sigma=1$ to $\sigma=64$ the median time of the real solver drops from $1.0$\,s to $0.051$\,s, whereas that of the exact method grows from $0.48$\,s to $59$\,s for $d=10$, and for $d=15$, $\sigma=64$ it runs out of memory (Figure~\ref{fig:times}(b)). So the method has an advantage for sparse supports of high degree, for large spreads and for curves close to their patchwork (\S\ref{app:perf}), and none for dense supports with small spread, for cubics whose oval dies along the path, or for polynomials with equal coefficients.

The number of events falls with the spread as well: by Remark~\ref{prop:nesting} the event lists of $226$ benchmark paths give the events of $f^{(\sigma)}$ for all $\sigma\ge1$, as far as the lists are complete; the fraction with a detected event drops from $65\%$ for $\sigma=1$ to $3.5\%$ for $\sigma=64$, and $\sigma$ times the mean number of detected event points stays between about $2$ and $4$, consistent with Theorem~\ref{thm:prob}, whose constant $C_1$ is pessimistic (\S\ref{app:reliability}).

\subsection{Validation and limitations}\label{ssec:correctness}

\begin{table}[htbp]
\centering\small
\caption{Correctness of the answers. (a) Counts $(n_o,n_u)$ on the inputs with an exact reference (six families of \S\ref{app:inputs}, by family in Table~\ref{tab:outcomesfull}; held-out family): correct answers of the projection methods and of the patchwork (not computed for equal coefficients), and outcomes of our solvers (c: correct without a flag, cf: correct with a flag, wf: wrong with a flag, w: wrong without a flag, --: no answer). (b) Pairing on the inputs with lines on which both solvers answered: answers that agree with an exact reference (first row) or with an uncertified tracing (second row; where it was inconclusive, a check by hand, resp.\ the pairing of input 12 carried over by an exact change of coordinates), and answers that pass the tests of \S\ref{app:checks} (third row). Only the first row is an exact reference.}\label{tab:outcomes}
\setlength{\tabcolsep}{2.6pt}
\begin{tabular}{@{}lrrrrrrrrrrrrrr@{}}
\multicolumn{15}{@{}l}{(a) counts}\\
\toprule
 & & \multicolumn{2}{c}{projection} & \multicolumn{5}{c}{complex solver} & \multicolumn{5}{c}{real solver} & \\
\cmidrule(lr){3-4}\cmidrule(lr){5-9}\cmidrule(lr){10-14}
inputs & number & exact & numerical & c & cf & wf & w & -- & c & cf & wf & w & -- & patchwork \\
\midrule
six families & 403 & 403 & 380 & 370 & 24 & 7 & 0 & 2 & 392 & 7 & 2 & 0 & 2 & 134 \\
held-out & 118 & 118 & 118 & 115 & 3 & 0 & 0 & 0 & 117 & 0 & 1 & 0 & 0 & 55 \\
\bottomrule
\end{tabular}

\bigskip
\begin{tabular}{@{}llrrr@{}}
\multicolumn{5}{@{}l}{(b) pairing}\\
\toprule
compared with & inputs & with lines & complex & real \\
\midrule
exact smoothing & line arrangements (\S\ref{app:lines}) & 72 & 70 & 70 \\
numerical tracing & Table~\ref{tab:regression}, axis stress test, rescaled connections & 186 & 186 & 186 \\
necessary conditions & six families of (a), Table~\ref{tab:noref} & 310 & 310 & 310 \\
\bottomrule
\end{tabular}
\end{table}

\emph{Counts.} On the $403$ inputs of the six families with an exact reference, every wrong answer of our solvers was flagged ($7$ of the complex and $2$ of the real solver), and $31$ of the $40$ flagged answers were correct; each solver stopped with an error on two benchmark polynomials with $d=20$ and $181$ terms, and the numerical method returned $21$ wrong answers without a warning. This is an observation, not a guarantee; \S\ref{app:reliability} lists the causes of the wrong answers. On the $118$ held-out inputs with an exact reference, the complex solver was correct on all inputs and the real solver on all but one, which was flagged; on the other two ($d=30$) the four other methods agreed, with the number of unbounded components checked exactly. On $13$ inputs of the six families the exact method did not finish, and on $8$ of them no independent method confirms the counts (Table~\ref{tab:noref}).

\emph{Pairing.} An exact reference for the pairing exists only for the $72$ perturbed line arrangements, where it follows from the arrangement for every perturbation of the given sign (\S\ref{app:lines}); both solvers paired the ends correctly on $70$, and their answers on the other two are flagged. A numerical tracing of $C$, attempted on every input of three further test families, was inconclusive on two inputs, and wherever it was conclusive it agreed with both solvers; as it is not certified (on the line arrangements it disagrees with the exact pairing on $43$ inputs), this is agreement, not an independent reference. On the other families only necessary conditions and the agreement of the two solvers are checked.

\emph{Special configurations and limitations.} Saddle connections, several event points at one parameter, event points on the axes, events within the rounding of $s=1$ and unusual pairings are rare in the random families; the test families of \S\ref{app:special} produce them, and there neither solver returned a wrong answer without a flag (Table~\ref{tab:coverage}). Most of these families entered the suite because an earlier version of the code failed on them, often without a flag; so they are not a random sample, and the flags of one version say little about errors that it does not anticipate. Changing the tolerances of the integration, the starts of the separatrices or the level $t_0$, one at a time in eight variants, for $218$ solver--input pairs ($1962$ runs) changed no answer, pairing or set of flags (\S\ref{app:reliability}). Besides the missing certificates and the scope of \S\ref{ssec:implsummary}, separatrices can be stiff and passes close to a saddle are only flagged, connections forced by symmetries other than an invariant axis are not treated, the start of the complex solver loses critical points for large spreads, and the isotopy type is not implemented.

\section*{Acknowledgements}
The author is grateful to Xiaohong Jia and J.~Maurice Rojas for many helpful discussions and valuable suggestions. Their insights helped clarify the ideas and improve the presentation of this work. AI tools were used to assist with literature searches, algorithm implementation, data processing, figure generation, and the drafting and revision of the manuscript. The main ideas of this work originated with the author, who checked and verified all content, including the mathematical arguments, computational results, figures, and references. The author assumes responsibility for all content.

\FloatBarrier

\clearpage
\setcounter{section}{0}\setcounter{table}{0}\setcounter{figure}{0}\setcounter{equation}{0}
\renewcommand{\thesection}{SM\arabic{section}}\renewcommand{\theHsection}{SM\arabic{section}}
\renewcommand{\thetable}{SM\arabic{table}}\renewcommand{\theHtable}{SM\arabic{table}}
\renewcommand{\thefigure}{SM\arabic{figure}}\renewcommand{\theHfigure}{SM\arabic{figure}}
\renewcommand{\theequation}{SM\arabic{equation}}\renewcommand{\theHequation}{SM\arabic{equation}}
\begin{center}
{\large\bfseries Supplementary materials}
\end{center}
\medskip
\noindent These supplementary materials describe the implementation (\S\ref{app:impl}), report in full the numerical evaluation summarized in Section~\ref{sec:experiments} (\S\ref{app:eval}), describe the tests of special configurations (\S\ref{app:special}), give the details of Example~\ref{ex:infinity} (\S\ref{app:worked}), and give complements to Section~\ref{sec:lifting} (\S\ref{app:kurtz}). Sections, statements and tables numbered with the prefix SM are those of the supplementary materials; all other numbers refer to the main text.

\section{Implementation}\label{app:impl}

Besides the events, Algorithm~\ref{alg:main} integrates at most $2N$ trajectories for $N$ event points, $N\le B_\cA K$ (proof of Theorem~\ref{thm:prob}), and its state has $O(n_0+N)$ entries, $n_0$ the number of components of $T(f,h)$; stiff separatrices, near connections and close events can need much more work than these counts suggest. Our implementation certifies none of the hypotheses of Theorem~\ref{thm:algorithm} (\S\ref{ssec:implsummary}); validated enclosures of the separatrices \cite{Tuc11} would also have to separate the labels and decide every near pass. This section describes what the implementation computes, which decisions it takes numerically and what it does when they fail.

\subsection{Inputs, return states and flags}\label{app:flags}

\emph{Inputs.} The implementation takes the support, the coefficients as binary doubles and $h=-\log|c|$, perturbed as in Remark~\ref{rem:freedom}(ii) if it is not generic, and assumes (H0). A lifting given with the input (inputs 1, 5c, 5d of \S\ref{app:regression}, Example~\ref{ex:infinity}, the rescaled connections) is used without a test of genericity. $E_\cA(c)\neq0$ is not tested beforehand, but a critical value that vanishes to the precision of the endpoint test is flagged (\S\ref{ssec:eventcomp}). The charts $U_e$ for normals $\nu_e$ with a positive coordinate are not implemented; for such a support (input 8) the metric of \S\ref{ssec:sepcomp} lacks the property required in \S\ref{ssec:field}, without a flag.

\emph{Return states and flags} (Table~\ref{tab:states}). A flag is a heuristic diagnostic of a step that may have failed. A flagged answer is unresolved and should be passed to an exact method such as \cite{BEKS13}, which is not done automatically; an answer without a flag is not certified either. The flag classes separate a property of the support (a point on a detached edge is not Morse for any lifting: \emph{structural}), a point that fails a numerical test, which may be non-Morse for this lifting, as happens for almost no lifting (Proposition~\ref{prop:genericmorse}(i)), or Morse but badly conditioned (\emph{unclassified}), and inputs that violate (A1) (\emph{input}). The tests of Assumption~\ref{ass:morse} compare the determinant of the Hessian in $\log|x|$ with $10^{-12}$ times the square of its largest entry, and $\alpha$, resp.\ $(\psi^e)''$, $\partial_zg^e$ and $\alpha$ at a boundary event point, with $10^{-12}$ of the absolute values of their terms; on an invariant part of an axis and next to the supporting lines $\alpha$ is not tested.

\begin{table}[htbp]
\centering\small
\caption{Return states of a run, and the classes of flags.}\label{tab:states}
\begin{tabular}{@{}l>{\raggedright\arraybackslash}p{11.4cm}@{}}
\toprule
return state & output \\
\midrule
complete & $(n_o,n_u)$ and the pairing of the ends; no flag \\
flagged & $(n_o,n_u)$ and the pairing, with at least one flag \\
counts only & $(n_o,n_u)$ without the pairing (the roots of the edge polynomials could not be continued to $s=1$); flagged \\
error & no answer: a step or a test of \S\ref{app:checks} failed \\
\midrule
flag class & raised for \\
\midrule
input & a critical value of $f$ that is zero to $40$ digits of the terms (computed with $50$): (A1) not decided \\
structural & event points on a detached edge, which violate Assumption~\ref{ass:morse} for every lifting (Proposition~\ref{prop:genericmorse}(ii)) \\
unclassified & event points that fail a numerical test of Assumption~\ref{ass:morse}; clusters; event points whose identity with, or order relative to, another one is not resolved (\S\ref{ssec:eventcomp}) \\
numerical & other steps that may have failed: unresolved boxes, full buffers, lost paths, missed start points and unrefined candidates of the complex solver, boundary events not followed in their chart, isolated collinear circuits near their discriminant, events at the endpoint not resolved or their search incomplete, unresolved passes near saddles, landings on births not confirmed, landing points not separated or reordered, pairing not computed \\
\bottomrule
\end{tabular}
\end{table}

\emph{Differences from Algorithm~\ref{alg:main}.} (i) At a point that fails a test the implementation neither takes a new lifting (line~\ref{alg:restart}) nor returns \textsc{non-Morse}: it flags the answer and drops the point or applies the rule of the type given by the computed signs. On a detached axis edge this reproduces Remark~\ref{rem:detached}; on a detached outer edge the sides $Q_m,Q_o$ of a boundary event are not defined, and an annihilation or a creation there stops the computation with an error; elsewhere the rule is not justified. (ii) If the computation or a test fails on a perturbed lifting, it is repeated with another direction $\eta$, up to three attempts; all attempts are recorded, and the return state is that of the last one. An unexpected error is recorded but not retried. (iii) Event points at numerically equal parameters are processed as one event (\S\ref{ssec:eventcomp}). (iv) The answer can lack the pairing.

\begin{remark}[detached edges]\label{rem:detached}
Let $q$ be a singular point on the axis $\{x_2=0\}$ of a detached $E_1$, resp.\ a boundary event point on a detached outer edge $e$; let $z=x_2$, resp.\ $z$ as in $U_e$, $\phi_t=f_t(x_1,0)$, resp.\ $\psi^e_t$, and $m\ge3$, resp.\ $m\ge2$, as in the proof of Proposition~\ref{prop:genericmorse}. Then $f_t$, resp.\ $g^e_t$, is $\phi_t+z^mG_t$ with $G_t$ smooth, and for almost every $h$ the root $q$ of $\phi_{t^*}$ is double with $\alpha=\partial_t\phi_t(q)$, $\beta=\phi_{t^*}''(q)$ and $G_{t^*}(q,0)$ non-zero. The substitution $Z=z|G_t|^{1/m}$, followed by $Z\mapsto Z|Z|^{m/2-1}$ for even and $Z\mapsto Z^m$ for odd $m$, is a homeomorphism near $(q,0)$, continuous in $t$, that preserves $\{z=0\}$ and its sides and maps the curves to $\{\phi_t\pm Z^2=0\}$, resp.\ $\{\phi_t\pm Z=0\}$. So for odd $m$ the topology changes as at a tangency with the axis ($\Sigma_t$ does not change), resp.\ as at a point of type (B) (Proposition~\ref{prop:boundary}); for even $m$ as at a Morse point with the signs of $\alpha$, $\beta$ and $G_{t^*}(q,0)$, which for $e$ is a singular point of $\overline{C_{t^*}}$ on $D_e$. Section~\ref{sec:global} does not cover these points; the implementation applies these rules on the axes, with a flag. If $\Delta=\conv\{0,de_1,de_2\}$, a generic translation $x\mapsto x+p$ removes the detached edges and keeps (H0) and $E_\cA(c)\neq0$, at the cost of sparsity.
\end{remark}

\subsection{Event search and the event list}\label{ssec:eventcomp}

By Theorem~\ref{thm:window}, if $g(\ell)>0$ every event satisfies $1<s\le r/g(\ell)$, a singular point in $(\R^*)^2$ even $s\le r_\Delta/g(\ell)$, and if $g(\ell)>r$ there are none (Corollary~\ref{cor:window}). The implementation searches for interior events up to $(r_\Delta+2d_0)/g$ and for boundary events and events on the axes up to $(r+2d_0)/g$, where $g$ is the gap of the lifting used, $d_0=0$ for $-\log|c|$, and $d_0$ is half the range of $\varepsilon\eta$ for the perturbed lifting; the complex solver follows its paths to the smaller of this bound and a second bound for interior events, but at least to $s=2$. Each bound $W$ is computed in double precision and attained by circuits (Remark~\ref{rem:window}), so it is enlarged by $10^{-9}\max(1,W)$, and Corollary~\ref{cor:window} is applied only if the enlarged bound is at most $1$. Neither solver uses directed rounding or validated tracking, so neither proves that its list of events is complete; certifying the points found would not prove it either (\S\ref{ssec:implsummary}).

\emph{Interior events} solve $f^{(s)}=\partial_1f^{(s)}=\partial_2f^{(s)}=0$ in $(\R^*)^2$; their type follows from the Hessian and $\alpha$ (Lemma~\ref{lem:morse}(i)). The \emph{real solver} uses the dominant terms. In $u=(\log|x_1|,\log|x_2|)$ on $\cO_\epsilon$ the system reads $\sum_aT_a=\sum_aa\,T_a=0$, $T_a=\sigma_\epsilon(a)|c_a|^se^{\langle a,u\rangle}$, so if $|T_{a_1}|\ge|T_{a_2}|\ge\dots$, then $|T_{a_1}|\le(|\cA|-1)|T_{a_2}|$ and $|T_{a_2}|\le(|\cA|-2)d\,|T_{a_3}|$; by Lemma~\ref{lem:cocircuit}(ii) the three largest terms, and a fourth one if their exponents are affinely independent, also lie within the factor $e^{r_\Delta}$ of each other. For affinely independent $\tau=\{a_1,a_2,a_3\}$ the \emph{tube} of $\tau$ is the set where $X=\log|T_{a_2}/T_{a_1}|$, $Y=\log|T_{a_3}/T_{a_1}|$ satisfy $|X|,|Y|\le\lambda_3+0.1$, $\lambda_3=\min(\log((|\cA|-1)(|\cA|-2)d),r_\Delta)$ with $r_\Delta$ enlarged like the window bounds, restricted to the intervals of $s$ on which, for $|X|,|Y|\le\lambda_3$, no other term can exceed the largest term of $\tau$ and some fourth term can lie within the factor $e^{\lambda_4}$ of it, $\lambda_4=\min(\log(\|M_\tau^{-1}\|_\infty(|\cA|-3)\max_a\|a\|_\infty),r_\Delta)$, $M_\tau$ with the columns $(1,a_i)$. These bound differences of logarithms of terms; in $u$ the tube is wide for thin triangles, up to $(2\lambda_3+0.2)\|D_\tau^{-1}\|$, $D_\tau$ with the rows $a_2-a_1$, $a_3-a_1$. A collinear triple uses coordinates along and across its line; on an edge of $\Delta$ its tube is cut where the terms off the line are below $e^{-30}$ of those on it, and zeros beyond the cut are found in a chart at the line (below). The tubes are searched in the window, in the quadrants with $K_\epsilon\neq\emptyset$ (\S\ref{ssec:hk}), by adaptive subdivision with interval bounds and the Krawczyk test \cite{Tuc11}, in floating point without directed rounding; boxes that remain undecided (Table~\ref{tab:params}) are unresolved and flagged, and so is a full buffer of roots.

The \emph{complex solver} computes the critical points of $f$ in $(\C^*)^2$ by a total-degree homotopy \cite{SoWa05}, each equation rescaled by a power of two if its largest coefficient lies outside $[2^{-16},2^{16}]$, and follows them in $\log x$ along $s+i\,10^{-8}\sqrt s\,w(s)$, $w$ rising smoothly from $0$ to $1$ on $[1,1.02]$. This path avoids, by about $10^{-8}$, the branch points of the critical locus over the real line, collisions of critical points that are not events but where the critical points move with unbounded speed (input 13). Paths that run into a divisor at a boundary event are continued through it in the chart of the edge; if another path is lost, all are followed again with $10^{-7}$ in place of $10^{-8}$. A sign change of the real part of a critical value along a nearly real path is refined by Newton's method on the real system. Flagged are a failed continuation, losses in both passes, a candidate that is not refined (also in the chart of a face or an edge, unless it lies at the parameter of a boundary event, which it then is), a full buffer of sign changes, and a start with fewer points than the mixed volume of $x_1\partial_1f$, $x_2\partial_2f$ (Bernstein's bound, as in \cite{HuSt95}); the last happens for large spreads.

\emph{Boundary events} on an outer edge $e$ with at least three points of $\cA$ are the parameters at which $\psi^e_t$ has a real multiple root $y_0\neq0$. Not every zero of $\Delta_{\cA\cap e}(c(t))$ is one (\S\ref{app:otherrays}; input 10), so both solvers compute the real double roots directly, by subdivision in $(\log|y|,s)$. The type follows from $\alpha\beta$ (Lemma~\ref{lem:morse}(ii)), and the sides $Q_m$, $Q_o$ from the signs of $\partial_zg^e(y_0,0)$ and of $\psi^e_t$ near $y_0$. The ends are labelled by $e$, the side $Q$ and the rank of their root among the real roots of $\psi^e_t$, which changes only at boundary events on $e$.

\emph{Events on the axes.} A point $(p,0)$ is singular if and only if $p$ is a multiple root of $f_t(x_1,0)$ and $\partial_2f_t(p,0)=0$. If no exponent has $a_2=1$, the support certifies that the axis is invariant under $V$ for a metric diagonal in $(x_1,x_2)$, and every real double root of $f_t(x_1,0)$ is singular; if the first term $b\,x_2^k$ of the expansion of $f_t$ in $x_2$ has $k=2$ (and $\partial_1^2f_t(p,0)\neq0$), it is a Morse point, and otherwise $E_1$ is detached or, for almost no lifting, a coefficient vanishes at $p$ (Proposition~\ref{prop:genericmorse}). A point with odd $k\ge3$ is dropped, since $\Sigma_t$ does not change there (Remark~\ref{rem:detached}), and one with even $k\ge4$ is kept with the type given by the signs of $\partial_1^2f_t$, $b$ and $\alpha$, which assumes that $k$ is the smallest exponent $a_2>0$; both are flagged. If some exponent has $a_2=1$, a multiple root, computed like the boundary events, is an event point if the terms of $\partial_2f_t(p,0)$ cancel to within $10^{-8}$ of their absolute values (input 8), and it is classified by its affine Hessian with a test that does not depend on a scaling of the coordinates (input 15); a point that fails it is dropped, with a flag. An axis can also be invariant by an identity of the coefficients (input 12: $\partial_2f^{(s)}(x_1,0)=-f^{(s)}(x_1,0)$). This is not recognized; instead the two roots of $f_t(x_1,0)$ that meet at the point are continued a little in $s$, and if the terms of $\partial_2f_t$ cancel at the points reached, the component through the point is treated as invariant, a heuristic that raises no flag. A critical zero exponentially close to the line $z=0$ of an axis or a divisor, with $f=y^kz^mg$, which occurs for large spreads, is found by Newton's method on $g=y\partial_yg=z\partial_zg=0$ in $(\log|y|,\log|z|)$, each equation scaled by its terms, started from the double roots of $g(y,0)$; the solutions at which the terms on the line are the largest are kept and classified in the same chart, and a degenerate one is dropped with a flag.

\emph{The event list.} Each event point is recorded with its type, its parameter (also as a hexadecimal double), its chart and position, and its event. Copies of a point found in several tubes, along several paths or in adjacent boxes are one point if they lie in one chart and agree to $10^{-8}$ in $s$ and $10^{-6}$ in position, relative. A critical zero in $\log|x|$ next to an event point on an axis with exponents $a_2=1$ is that point seen from the torus, and is dropped, if the two agree to $10^{-6}$ in $s$ and $10^{-5}$ in the affine position; if they agree only to $10^{-3}$, they are neither identified nor separated: the zero is kept, and the answer is flagged. Closeness alone is not taken as identity: the two saddles of input 7a lie at different places and $1.3\cdot10^{-10}$ apart in $s$. Two critical zeros in one quadrant within $10^{-9}$ in $s$ and $10^{-3}$ in $u$, or a zero in the chart of a supporting line and an event point on that line at that distance, form a \emph{cluster}, and the answer is flagged (input 7b); other pairs, such as two event points on one line, are not tested for this. Event points whose parameters agree to $10^{-9}$, relative, are processed as one event (Theorem~\ref{thm:update}), annihilations first and creations last, as the continuation of the edge roots needs. This is harmless for points whose rules commute. Their order matters if a separatrix of one of them could land on the slice of a birth or creation among them, or if a creation and an annihilation lie on one supporting line; such pairs that are not images of each other under a symmetry forced by the support (\S\ref{app:otherrays}) have no resolved order, and the answer is flagged.

\emph{Events near $s=1$.} An event within the rounding of $s=1$ cannot be put on either side in double precision (input 14: $s^*-1\approx1.3\cdot10^{-16}$). If $v(s)$ is the critical value of $f^{(s)}$ at the critical point that becomes singular at $s^*$, then $dv/ds=\alpha_s:=\partial_sf^{(s)}$ there, so $v(1)\approx\alpha_s(1-s^*)$, and the event lies on the path if and only if $v(1)\alpha_s<0$. This assumes that the critical branch exists on $[1,s^*]$ and that $\alpha_s$ keeps its sign there, which holds if $|s^*-1|$ is small but is not verified. For every event within $10^{-9}$ of $s=1$, and for the candidates of a missed event (the real critical points of the polynomials of the outer edges and the invariant axes, and the critical points of the start of the complex solver with a critical value below $10^{-8}$ of the terms), $v(1)$ is computed with $50$ significant digits, and $s^*-1=-v(1)/\alpha_s$. An event on the path gets the parameter $\max(s,s_+)$, where $s_+$ is the double after $1$, and keeps $s^*-1$ for its order; an event off the path is removed. If $|v(1)|$ is below $10^{-40}$ of the terms, (A1) is not decided (flag of class input); an unresolved side or an unrefined candidate is flagged too.

\subsection{Separatrices}\label{ssec:sepcomp}

Theorem~\ref{thm:update} needs, at each saddle and creation, the labels of the landing points of its descending separatrices and their order on the bottom. The landing points are those of the field $W$ of Proposition~\ref{prop:W}; we integrate the normal lift $V$, which has the same labels if no descending separatrix converges to a saddle (Remark~\ref{rem:numerics}), and a connection along an invariant part of an axis is labelled by Lemma~\ref{lem:axisconn}. We use the metric $\sum_id\xi_i^2$, $\xi_i=\operatorname{asinh}(x_i/\rho_i)$, $\rho_i=e^{s\bar r_i}$, where $\bar r_i$ is smaller than the $i$-th coordinates of all vertices of $\Trop(h)$. It is flat in $\log|x|$ far from the axes, hence of the kind required in \S\ref{ssec:field} if no $\nu_e$ has a positive coordinate, and diagonal in $(x_1,x_2)$ near the axes, which trajectories can cross. In the coordinates $\xi$ the field is
\[
\frac{d\xi}{ds}=-\frac{\partial_sf^{(s)}}{|\nabla_\xi f^{(s)}|^2}\,\nabla_\xi f^{(s)}-(\bar r_i\tanh\xi_i)_i,
\]
the last term coming from the dependence of $\xi$ on $s$. Saddles on a part of an axis that is not invariant are treated like interior saddles (input 8).

\emph{Starts and integration.} A descending separatrix of an interior saddle starts on the eigenvector of the Hessian in $\xi$ whose eigenvalue $\lambda$ has the sign of $\alpha$, at the distance $(2|\alpha_s|\delta/|\lambda|)^{1/2}$ of the level $s+\delta$ in the quadratic model, $\delta=10^{-8}\max(1,s)$. The separatrix of a creation over $y_0$ starts on the level $s^*+\delta_c$, on the side $Q_o$. Near the boundary saddle the level curves in $(\log|y|,\log|z|)$ are U-shaped, with curvature $|\beta|/(|\alpha|\,|s-s^*|)$ at the bottom, so $V$ contracts across the separatrix at the rate $|\beta|/(|\alpha|(s-s^*)^2)$, and an explicit method started at $\delta_c$ needs about $|\beta|/(|\alpha|\delta_c)$ steps; we take $\delta_c$ for about $4000$ steps, within the bounds of Table~\ref{tab:params}. Both starts rely on the contraction of the flow towards the separatrix to damp the $O(\delta)$ error of the start, which we do not bound; \S\ref{app:reliability} varies $\delta$ and $\delta_c$. The trajectories are integrated by the Dormand--Prince method, and a step is accepted only if a damped Newton projection onto $\{f^{(s)}=0\}$ brings the residual, relative to the terms, below $10^{-10}$ with a small move. A failed integration is restarted further from its event, at most three times; then the computation stops with an error.

\emph{Near passes.} A connection that is not forced shows as a separatrix that passes a lower saddle, or the boundary saddle of an annihilation, within the accuracy of the computation. If on a level just above a lower interior saddle $q$ a separatrix comes within $10^{-3}$ of $q$ in $\xi$, its component along the descending eigenvector of $q$ gives the side, and below $10^{-6}$, relative, the side is not resolved and the answer is flagged; near the boundary saddle of an annihilation the same test uses $\log|y|$ in the chart of the edge and the ascending separatrix of the model of Proposition~\ref{prop:W}(ii).

\emph{Separatrices on an invariant axis.} If a saddle on an invariant part of an axis has its descending eigenvector along the axis, both descending separatrices lie on the axis. They are followed as the two roots of $f_t(x_1,0)$ born at $p$ down to $t_0$, where they land at crossing points of the bottom with the axis, labelled by those of $T(f,h)$. If such a root merges with another one at a lower event point $q$ on the axis, the separatrix lands on the slice of a birth, or it runs into a saddle whose separatrices leave the axis and gets the label of Lemma~\ref{lem:axisconn}.

\subsection{Common bottom, landing order and state updates}\label{ssec:labels}

\emph{Slices and the bottom.} A separatrix is integrated down to the level $t_0$, or until it meets the slice $B_m$ of a birth or creation $m$, at the distance $\delta_m$ from it in $s$ (at most a twentieth of the distance $\gamma_m$ to the nearest other event); there its label is its angle around $m$, resp.\ its coordinate $y$ on $\lambda$, in the quadratic model of $m$. A landing on the slice of a birth counts only if the trajectory is still on the shrinking oval at $\delta_m/8$, or, if this check cannot be integrated, with a flag. On $\hat C_{t_0}$ the label is read on $T(f,h)$ as in Definition~\ref{def:labels}: the component, the nearest piece of $\Gamma_\epsilon$ to $u(b)/s_0$ and the coordinate along it, or the crossing point of $T(f,h)$ with an axis. Since $T(f,h)$ is computed once, all labels are comparable, and inserting a label into the ordered list of its component gives the intervals $J^\pm_b$ of Theorem~\ref{thm:update}. Lemmas~\ref{lem:bottom} and~\ref{lem:positions} justify these labels only for $t_0$ small enough, with constants that we do not compute, so $t_0$ is chosen heuristically: $s_0=1+\log(1/t_0)=\max(1.25s_1+1,30/\phi,10)$, where $s_1$ is the parameter of the first event and $\phi$ the smallest distance between non-adjacent pieces of a $\Gamma_\epsilon$ and length of a bounded piece. A label is accepted only if its piece is clearly the nearest (Table~\ref{tab:params}) and continuing the separatrix to $2s_0$ gives the same component and the same or an adjacent piece. Otherwise $s_0\mapsto4s_0$, and all separatrices that land off the axes are continued to the new level, so that their labels refer to one bottom; after five increases the computation stops with an error. A separatrix on an invariant axis lands at the nearest crossing point of $T(f,h)$ once this is clear in the same sense. Passing these tests does not prove that $t_0$ is small enough.

\emph{The order of the landing points} is checked separately, since keeping the component and the piece from $s_0$ to $2s_0$ does not confirm the order of several points. Landing points on one component whose coordinates agree to $10^{-6}$ are ordered at the largest common level $s=1.04^g$ at which they are separated, by the side of one from the other along the level curve, and a pair separated at no common level is flagged; then the order of all points of the component at $s_0$ is compared with that at $2s_0$, cyclically on an oval, leaving out points whose labels at $2s_0$ agree to $10^{-6}$, and a change is flagged. On the slice of a birth the separatrices approach the minimum along the slow direction of its Hessian, so two that arrive from the same side can have angles that agree to the accuracy of the integration, whereas the flow preserves their order on the oval. So each landing point on a slice is also recorded at the distances $4^j\delta_m$, $j\le12$, below $\gamma_m/4$, and the points of a slice are ordered at the highest of these levels at which all of them were recorded; neighbours closer than $10^{-6}$ there (of a turn, resp.\ relative to $\max(1,|y|)$) and changes of the order against that on the slice itself, for points more than $10^{-4}$ apart there, are flagged. The checks are heuristic: an inaccurate trajectory can be wrong at all levels.

\emph{Connections along an axis and state updates.} In Lemma~\ref{lem:axisconn}, $\iota$ is the sign of $x_i-q_i$ for the root at two levels just above $q$: the two signs must agree, and $|\log|x_i|-\log|q_i||$ must follow the square-root law of two merging roots; $T$ is the tangent of the level at the start of $\gamma_q^+$, whose component along the axis must be at least a tenth of its length. Otherwise the computation stops with an error; if the root runs into a flagged point with even $k\ge4$, the rule of its sign type is applied. The state of Theorem~\ref{thm:update} is a list of components, each a sequence of bottom intervals and, for a line, its two ends; at a death the oval is counted (Algorithm~\ref{alg:main}). A landing point is used only through its label and its order, so the integration has to be accurate only up to the separation of the labels, which we do not certify.

\subsection{Tests of the answer, and the parameters}\label{app:checks}

The answer is checked, in exact rational arithmetic on the binary coefficients, against conditions that every smooth curve satisfies: $n_o\ge0$; $n_u$ is the number of real points of $\overline C$ at infinity, i.e.\ of non-zero real roots of the polynomials of the outer edges (Descartes' method after a squarefree test modulo primes); every end lies on exactly one line (Corollary~\ref{cor:ends}); a line whose ends lie on opposite sides of an axis $\{x_j=0\}$ crosses it an odd number of times and every other component an even number, so, if the real roots of $f|_{x_j=0}$ are simple, the number of such lines is at most theirs and has its parity; and if $\Delta=\conv\{0,de_1,de_2\}$, gluing the ends at infinity gives exactly $d\bmod2$ one-sided circles in $\RP^2$, a circle being one-sided if and only if it meets the line at infinity an odd number of times. A test whose polynomial is not certified squarefree is skipped, and this is recorded. A failed test stops the computation with an error; passing the tests does not certify the answer.

\begin{table}[htbp]
\centering\scriptsize
\caption{Numerical parameters (relative unless stated; in brackets the normalization) and their effect. Flag: the answer is flagged; error: no answer.}\label{tab:params}
\setlength{\tabcolsep}{3pt}
\begin{tabular}{@{}>{\raggedright\arraybackslash}p{2.6cm}>{\raggedright\arraybackslash}p{8.6cm}>{\raggedright\arraybackslash}p{4.1cm}@{}}
\toprule
quantity & value & effect \\
\midrule
Morse tests & $10^{-12}$ [Hessian determinant: square of the largest entry; else terms]; next to a supporting line $10^{-8}$ [terms] & flag; point dropped or rule by signs \\
singular point on an axis with $a_2=1$ & $10^{-8}$ [terms of $\partial_2f_t$] & accepted as event point \\
invariance of an axis & roots continued to $10^{-3},10^{-2},10^{-1}$ in $s$; cancellation $10^{-8}$ & treated as invariant, no flag \\
window, endpoint & enlargement $10^{-9}$ [$\max(1,W)$]; events within $10^{-9}$ of $s=1$ and candidates below $10^{-8}$ [terms] refined with $50$ digits; $|v(1)|$ below $10^{-40}$ [terms] & Corollary~\ref{cor:window} only if $\le1$; side of the event; (A1) undecided: flag \\
real solver & tube margin $0.1$, cut $e^{-30}$; boxes to width $10^{-10}$ [$\max(1,s)$ in $s$], $4\cdot10^5$ per tube and quadrant; $64$ roots per quadrant & unresolved boxes, full buffer: flag \\
complex solver & offset $10^{-8}\sqrt s$, ramp on $[1,1.02]$, retry $10^{-7}$; $64$ sign changes per step; chart candidates within $10^{-6}$ of a boundary event & lost in both passes, full buffer, candidate not refined: flag \\
identity & copies: $10^{-8}$ in $s$ [$1+s$], $10^{-6}$ in position [$1+|u|$]; torus zero at an axis event: $10^{-6}$ in $s$, $10^{-5}$ in $x$, ambiguous up to $10^{-3}$ & merged; dropped; ambiguous: flag \\
events, clusters & one event: $10^{-9}$ in $s$ [$\max(1,s)$]; cluster: also $10^{-3}$ in position & processed together, order checked; cluster: flag \\
integration & rtol $10^{-8}$, atol $10^{-10}$; residual $10^{-10}$ [terms], projection exit $10^{-12}$, move $\le10^4\times$ step tolerance, $6$ halvings; $2\cdot10^5$ steps per call & step rejected; failure: restart \\
starts & saddle: $\delta=10^{-8}$ [$\max(1,s)$], distance $\le10^{-3}$ in $\xi$, on an invariant axis $10^{-6}$; creation: about $4000$ steps, $\delta_c\ge10^{-8}$, $\le0.05$ [$\max(1,s^*)$], at most $\frac1{20}$ of the distance to the next event; restarts with $100\delta$, $10\delta_c$, at most $3$ & then error \\
near passes & $10^{-3}$ in $\xi$ (boundary saddles: in $\log|y|$, $|z|$ within $e^5$); side below $10^{-6}$ & flag \\
bottom & $s_0=\max(1.25s_1+1,30/\phi,10)$; nearest piece closer than $\frac14$ of the distance to non-adjacent ones and within $\max(10,s_0/20)$; confirmation at $2s_0$ with at most $\max(2\cdot10^5,5ks_0)$ steps, $k$ the steps per unit of $s$ before $s_0$; $s_0\mapsto4s_0$ at most $5$ times & label rejected; then error \\
order of landings & bottom: ties $10^{-6}$, levels $1.04^g$, separation $10^{-8}$; slices: $\delta_m=10^{-7}$ (births), $10^{-4}$ (creations) [$\max(1,s_m)$], $\le\gamma_m/20$, check at $\delta_m/8$, levels $4^j\delta_m\le\gamma_m/4$, $j\le12$, ties $10^{-6}$, compared beyond $10^{-4}$ & unresolved or changed: flag \\
attempts & $3$ on a perturbed lifting & then error \\
\bottomrule
\end{tabular}
\end{table}

\section{Numerical evaluation}\label{app:eval}

This section reports in full the evaluation summarized in Section~\ref{sec:experiments}: of both solvers of \S\ref{ssec:eventcomp}, of the patchwork and of two implementations of the projection method \cite{Col75,GVEK96,EKW07}, written for this purpose and not tuned. The \emph{exact} one computes $\operatorname{Res}_{x_2}(f,\partial_2f)$ modulo primes up to the Hadamard bound, after a shear if needed, and isolates its real roots by the Descartes method \cite{CA76}; the \emph{numerical} one computes the $d(d-1)$ critical points of the projection by a total-degree homotopy in double precision, without error control or warnings, so its wrong answers say nothing about projection methods in general. We did not compare with a mature certified implementation such as \cite{BEKS13}: the comparisons show how the times grow with $d$, $|\cA|$ and $\sigma$, not that our method is faster than such implementations.

\subsection{Inputs and references}\label{app:inputs}
For a degree $d$ and $n\ge3$, $\cA$ consists of $(0,0),(d,0),(0,d)$ and the first $n-3$ other lattice points of their convex hull in a random order, such that an edge of $\Delta$ with a further point of $\cA$ has one at lattice distance one from it (so singular points on the axes are not generic, \S\ref{ssec:eventcomp}), and $c_a=\varsigma_ae^{\sigma u_a}$ with independent uniform signs and standard Gaussian $u_a$. For each $d$ six draws (seeds $0$--$5$) are shared by all $n$ and $\sigma$, so that the polynomial with spread $\sigma$ is $f^{(\sigma)}$ for the one with spread $1$. The \emph{benchmark} consists of the $228$ polynomials with $\sigma=1$, $d=6,10,15,20$ and $n=6,8,11,16,23,32,45,64,90,128,181$ below the dense size $(d+1)(d+2)/2$, and the dense ones; the family with $|\cA|=8$ of the first three draws for $d=10,15,20,30,40,80$; the spread family of three draws each with $d=10$, $|\cA|=11,23,45$ and $d=15$, $|\cA|=23$, with $\sigma=1,4,16,64$; and the \emph{held-out} family of ten further draws (seeds $7001$--$7010$), fixed before the final runs and not used in the development, with $d=6$, $|\cA|=8,16,28$, with $d=10,15$, $|\cA|=8,16,32$ and dense, and with $d=30$, $|\cA|=8$: $120$ polynomials. \emph{Harnack curves}: $f_{d,t}=\sum_{i+j\le d}\varsigma_{ij}t^{2(i^2+ij+j^2)}x_1^ix_2^j$ with $\varsigma_{ij}=1$ if $i$ and $j$ are even and $-1$ otherwise; its dequantization path is the patchworking family $f_{d,t^s}$, whose patchwork is Harnack's M-curve \cite{Vir84,IMS07,Mik00}, with $(d-1)(d-2)/2$ ovals and $d$ unbounded components; $d=6,8,10,12$, $t=0.7,0.5,0.3,0.2$ (coefficients down to $10^{-201}$). \emph{Near-singular cubics}: $1+x_1^3+x_2^3-\lambda x_1x_2$, $\lambda=3(1\pm\varepsilon)$, $\varepsilon=10^{-3},10^{-6},10^{-9},10^{-12},10^{-14}$; for $\lambda=3(1+\varepsilon)$, $C$ has an oval of diameter about $\sqrt\varepsilon$, already in the patchwork, and for $\lambda=3(1-\varepsilon)$ the oval of the patchwork dies at $s-1\approx\varepsilon/\log3$. \emph{Equal coefficients}: $120$ polynomials with $d=4,6,8$, $|\cA|\le16$ and coefficients $\pm1$, resp.\ in $\{\pm1,\pm2,\pm3\}$, for which $-\log|c|$ is often not generic; the implementation takes, among a few random directions $\eta$, one with the fewest circuits that produce an event. Three of these $120$ polynomials violate (A1).

\emph{References.} The inputs are the binary doubles of the coefficients (polynomials printed with three digits are roundings). The reference for $(n_o,n_u)$ is the answer of the exact method on these doubles, computed with the final code for every input, also where it had been recorded before (none changed). Where it did not finish, only $n_u$ is known exactly, from the points at infinity (\S\ref{app:checks}). For the pairing there are three kinds of evidence: an exact reference for perturbed line arrangements (\S\ref{app:lines}); a numerical tracing of $C$ by marching squares on three test families of \S\ref{app:special}, which is not certified (on the line arrangements it disagrees with the exact pairing on $43$ inputs), so that agreement with it is not an independent reference; and, on the families above, only the tests of \S\ref{app:checks}. We call an answer correct or wrong only against the exact reference and report flags separately; without a reference we report agreement, and agreement of our two solvers, which share Sections~\ref{sec:tropical}--\ref{sec:global} and the integration, is not a confirmation.

\subsection{Run protocol and denominators}\label{app:runs}
All runs were made with one frozen version of the code, in one session, on two cores of an Intel Xeon processor at $2.1$\,GHz; the tests of two families of \S\ref{app:special}, the comparison of the event logs and the reference of the line arrangements, were corrected afterwards, and these families were run again with the same code. On the families of \S\ref{app:inputs} every run of a method on an input was a separate process, two at a time, with $20$ minutes of wall-clock time and $6$\,GB of address space; the two shared $6.3$\,GB of memory, and a run killed for lack of memory counts as out of memory (all seven such runs of the exact method were killed by the shared limit, with return code $-9$, and were not repeated alone). The software was Python 3.13.16, NumPy 2.5.3, SciPy 1.18.1 and gcc 13.3.0. Within a chain of inputs (the growing supports of a draw, the cubics, the polynomials with equal coefficients of one degree), a method that exceeded the time limit was not run on the later inputs (\emph{skipped}). The test families of \S\ref{app:special} were run one family at a time, each input and both solvers in one process, without limits, and the sensitivity runs one process per variant. Times are CPU seconds of the whole computation, including the genericity test, the perturbation of the lifting and failed attempts, and excluding the import of the modules; in medians, runs without an answer count as infinitely long.

\emph{Denominators.} Without the held-out family the families contain $419$ distinct polynomials: $416$ are counted in the six families of Table~\ref{tab:outcomes}(a) and in Table~\ref{tab:noref}, $3$ violate (A1), and $21$ benchmark polynomials also belong to the family with $|\cA|=8$ ($d\le20$) or to the spread family ($\sigma=1$). Each solver had $440$ scheduled runs and $443$ attempts. The complex solver returned $397$ complete, $40$ flagged and $1$ counts-only answers, the real solver $426$, $11$ and $1$, in each case $2$ of them after a failed attempt; each solver stopped with an error, with partial output, on $2$ inputs, and $5$ of its answers skipped a test. Every method ran twice on the $21$ shared polynomials: the times of these $105$ pairs of runs differ by a median of $5\%$ and at most a factor of $2.2$ (runs of at least $0.05$\,s), and their outcomes, answers, sets of flags and pairings agree.

\subsection{Performance}\label{app:perf}

\emph{Sparse inputs of high degree} (Figures~\ref{fig:times}(a) and~\ref{fig:timesSM}(a)). The event search of the real solver runs over the $O(|\cA|^3)$ triples of terms (\S\ref{ssec:eventcomp}), whereas the projection methods handle a resultant of degree $d(d-1)$, resp.\ $d(d-1)$ paths; other costs of the real solver, such as the depth of the subdivision, the conditioning and the root finding for the edge polynomials, can grow with $d$. For $|\cA|=8$, with at most five detected event points, its median time stayed below $0.12$\,s up to $d=80$ (three inputs per degree: an observation, not a complexity bound), while the exact method needs $144$\,s for $d=30$ and exceeded the time limit for $d=40$, and the numerical method and the complex solver need $71$\,s and $31$\,s for $d=80$. The advantage disappears as $|\cA|$ grows: the real solver beats the exact method only up to $|\cA|=8,16,32,64$ for $d=6,10,15,20$, and it is the slowest method on dense polynomials ($968$\,s for $d=20$, against $25$\,s and $0.28$\,s for the exact and the numerical method); the numerical method had the smallest median time in $33$ of the $38$ cells $(d,|\cA|)$, the real solver in the other five, all with $|\cA|\le8$.

\emph{Spread} (Figure~\ref{fig:times}(b)). In $\log|x|$ the tropical curve of $f^{(\sigma)}$ is $\sigma$ times that of $f$, whereas the exact method handles coefficients of length $O(\sigma)$. On the twelve polynomials of the spread family the solvers detect $30$, $8$, $1$ and $0$ event points for $\sigma=1,4,16,64$; the patchwork is correct on $6$, $8$ and $11$ of them for $\sigma\le16$ and on the $9$ with an exact reference for $\sigma=64$. On the inputs with a reference the real solver is always correct without a flag, the complex solver is correct and flags $9$ and $7$ answers for $\sigma=16$ and $64$, and the numerical method is wrong on $3$ and fails on $2$ for $\sigma=16$ and is wrong on $6$ for $\sigma=64$; it also violates the exact $n_u$ on the other three (Table~\ref{tab:noref}). The median time of the real solver drops from $1.0$\,s for $\sigma=1$ to $0.059$\,s for $\sigma=16$, as the window shrinks, and stays at $0.051$\,s for $\sigma=64$; for three polynomials with $\sigma=64$ Corollary~\ref{cor:window} alone sufficed. That of the exact method grows from $0.48$\,s to $59$\,s for $d=10$, and for $d=15$, $\sigma=64$ it ran out of memory; the start of the complex solver found $28\%$ and $74\%$ fewer critical points than the mixed volume for $\sigma=16$ and $64$ (for $\sigma=64$ on the nine inputs not settled by Corollary~\ref{cor:window}).

\emph{Harnack curves, cubics and equal coefficients} (Figure~\ref{fig:timesSM}(b)). For $t\le0.3$ the real solver detects no events, so, if its search is complete, $C$ has the topology of the patchwork, which the exact method confirms where it finished; for $t\ge0.5$ all ovals die along the path. The real solver took at most $29$\,s, and $10$\,s for $t=0.2$; the exact method ran out of memory for $d=10$, $t=0.2$ and for $d=12$, $t\le0.5$, and was faster than the real solver only for $d=6$, $t\ge0.5$ and $d=8$, $t=0.7$. For the cubics with $\lambda=3(1+\varepsilon)$, $g(\ell)=\log\lambda>\log3=r$, and Corollary~\ref{cor:window}, applied for $\varepsilon\ge10^{-6}$ (outside the enlargement of the bounds), shows that there are no events; both solvers take at most $13$\,ms, whereas the numerical method slows from $4.5$\,ms to $0.56$\,s as $\varepsilon$ decreases and is wrong for $\varepsilon=10^{-14}$. Our method has no advantage for $\lambda=3(1-\varepsilon)$, nor for equal coefficients (median times $0.015$, $0.0073$, $0.045$ and $0.066$\,s for the exact method, the numerical method, the complex and the real solver).

\subsection{Counts, pairing and flags}\label{app:reliability}

\begin{table}[htbp]
\centering\small
\caption{The inputs on which the exact method did not answer (t: time limit, m: out of memory, s: skipped): the exact number $n_u$ of lines, from the points at infinity, and all answers $(n_o,n_u)$; f: flagged, e: no answer.}\label{tab:noref}
\setlength{\tabcolsep}{4pt}
\begin{tabular}{@{}llcrcccc@{}}
\toprule
family & input & exact & $n_u$ & numerical & complex & real & patchwork \\
\midrule
$|\cA|=8$ & $d=40$, draw 0 & t & 0 & (1,0) & (1,0) & (1,0) & (1,0) \\
 & $d=40$, draw 1 & t & 2 & (0,2) & (0,2) & (0,2) & (1,2) \\
 & $d=40$, draw 2 & t & 2 & (0,2) & (0,2) & (0,2) & (0,2) \\
 & $d=80$, draw 0 & s & 0 & (1,0) & (1,0) & (1,0) & (4,0) \\
 & $d=80$, draw 1 & s & 0 & (0,0) & (0,0)\,f & (0,0) & (4,0) \\
 & $d=80$, draw 2 & s & 2 & e & (1,2) & (1,2) & (0,2) \\
spread & $d=15$, $\sigma=64$, draw 0 & m & 1 & (0,3) & (2,1)\,f & (2,1) & (2,1) \\
 & $d=15$, $\sigma=64$, draw 1 & m & 1 & (0,3) & (2,1)\,f & (2,1) & (2,1) \\
 & $d=15$, $\sigma=64$, draw 2 & m & 1 & (0,3) & (0,1) & (0,1) & (0,1) \\
Harnack & $d=10$, $t=0.2$ & m & 10 & (0,2) & (36,10)\,f & (36,10) & (36,10) \\
 & $d=12$, $t=0.2$ & m & 12 & (0,2) & (55,12)\,f & (55,12) & (55,12) \\
 & $d=12$, $t=0.3$ & m & 12 & (0,2) & (55,12)\,f & (55,12) & (55,12) \\
 & $d=12$, $t=0.5$ & m & 12 & (0,4) & (54,12)\,f & (0,12) & (55,12) \\
\bottomrule
\end{tabular}
\end{table}

\begin{table}[htbp]
\centering\small
\caption{Outcomes on the inputs with an exact reference for $(n_o,n_u)$, by family (totals in Table~\ref{tab:outcomes}(a)). Projection methods and patchwork: numbers of correct answers (w: wrong, e: no answer; patchwork not computed for equal coefficients). Our solvers: c correct without a flag, cf correct with a flag, wf wrong with a flag, w wrong without a flag, -- no answer.}\label{tab:outcomesfull}
\setlength{\tabcolsep}{2.6pt}
\begin{tabular}{@{}lrrrrrrrrrrrrrr@{}}
\toprule
 & & \multicolumn{2}{c}{projection} & \multicolumn{5}{c}{complex solver} & \multicolumn{5}{c}{real solver} & \\
\cmidrule(lr){3-4}\cmidrule(lr){5-9}\cmidrule(lr){10-14}
family & inputs & exact & numerical & c & cf & wf & w & -- & c & cf & wf & w & -- & patchwork \\
\midrule
benchmark, $\sigma=1$ & 228 & 228 & 228 & 224 & 1 & 1 & 0 & 2 & 217 & 7 & 2 & 0 & 2 & 95 \\
$|\cA|=8$, $d=30$ & 3 & 3 & 3 & 2 & 1 & 0 & 0 & 0 & 3 & 0 & 0 & 0 & 0 & 1 \\
spread, $\sigma=4,16,64$ & 33 & 33 & 22 (9w, 2e) & 17 & 16 & 0 & 0 & 0 & 33 & 0 & 0 & 0 & 0 & 28 \\
Harnack curves & 12 & 12 & 1 (11w) & 1 & 5 & 6 & 0 & 0 & 12 & 0 & 0 & 0 & 0 & 5 \\
near-singular cubics & 10 & 10 & 9 (1w) & 10 & 0 & 0 & 0 & 0 & 10 & 0 & 0 & 0 & 0 & 5 \\
equal coefficients & 117 & 117 & 117 & 116 & 1 & 0 & 0 & 0 & 117 & 0 & 0 & 0 & 0 & -- \\
\midrule
all six families & 403 & 403 & 380 & 370 & 24 & 7 & 0 & 2 & 392 & 7 & 2 & 0 & 2 & 134 \\
\bottomrule
\end{tabular}
\end{table}

\begin{figure}[htbp]
\centering
\definecolor{mExact}{HTML}{0072B2}\definecolor{mNum}{HTML}{E69F00}\definecolor{mCplx}{HTML}{009E73}\definecolor{mReal}{HTML}{D55E00}
\pgfplotsset{every axis/.append style={width=0.5\textwidth,height=0.3\textwidth,font=\scriptsize,
  log basis y=10,grid=major,grid style={gray!20,line width=1pt},axis line style={line width=1pt},tick style={line width=1pt,black},title style={yshift=-3pt},
  legend style={font=\scriptsize,draw=none,fill=white,fill opacity=0.8,text opacity=1}},
  pExact/.style={mExact,mark=square,line width=1.2pt,mark options={solid}},
  pNum/.style={mNum,mark=o,line width=1.2pt,densely dashed,mark options={solid}},
  pCplx/.style={mCplx,mark=triangle,line width=1.2pt,densely dotted,mark options={solid}},
  pReal/.style={mReal,mark=diamond*,line width=1.2pt},
  pWrong/.style={only marks,mark=x,mark size=3.2pt,ultra thick,black},
  pFlag/.style={only marks,mark=o,mark size=4pt,line width=1pt,black}}
\begin{tikzpicture}[baseline=(current axis.north)]
\begin{loglogaxis}[xlabel={$|\cA|$},ylabel={seconds},title={(a) benchmark, $d=20$},xmin=5,xmax=260,ymin=0.01,ymax=2000,
  legend to name=timeslegSM,legend columns=6,legend style={draw=none,fill=none,font=\small,
  /tikz/every even column/.append style={column sep=3pt}}]
\addplot[pExact] coordinates {(6,12.7) (8,10.6) (11,13) (16,15) (23,18.6) (32,24.7) (45,25.5) (64,30.8) (90,29.9) (128,33.9) (181,27.4) (231,25.3)};
\addplot[pNum] coordinates {(6,0.0778) (8,0.0727) (11,0.0818) (16,0.0717) (23,0.0763) (32,0.0986) (45,0.128) (64,0.137) (90,0.143) (128,0.216) (181,0.221) (231,0.278)};
\addplot[pCplx] coordinates {(6,0.178) (8,0.173) (11,0.254) (16,0.286) (23,0.346) (32,0.521) (45,0.687) (64,0.85) (90,1.23) (128,1.85) (181,3.15) (231,3.32)};
\addplot[pReal] coordinates {(6,0.0147) (8,0.0298) (11,0.252) (16,0.808) (23,2.56) (32,4.27) (45,12.5) (64,27.6) (90,62.7) (128,157) (181,455) (231,968)};
\addplot[pFlag] coordinates {(16,0.808) (23,2.56) (32,4.27) (45,12.5) (64,27.6) (90,62.7) (231,968)};
\addplot[pWrong] coordinates {(231,968)};
\legend{exact method,numerical method,complex solver,real solver,\hspace{2pt}flagged,\hspace{2pt}wrong}
\end{loglogaxis}
\end{tikzpicture}\hfill
\begin{tikzpicture}[baseline=(current axis.north)]
\begin{semilogyaxis}[xlabel={$d$},title={(b) Harnack curves, $t=0.2$},xtick={6,8,10,12},xmin=5.5,xmax=12.5,ymin=0.01,ymax=2000]
\addplot[pExact] coordinates {(6,2.49) (8,39.9)};
\addplot[pNum] coordinates {(6,3.66) (8,9.06) (10,15.9) (12,27)};
\addplot[pCplx] coordinates {(6,0.963) (8,2.49) (10,4.84) (12,8.76)};
\addplot[pReal] coordinates {(6,0.409) (8,1.45) (10,4.24) (12,10.3)};
\addplot[pFlag] coordinates {(6,0.963) (8,2.49) (10,4.84) (12,8.76)};
\addplot[pWrong] coordinates {(6,3.66) (8,9.06) (10,15.9) (12,27)};
\node[font=\scriptsize,anchor=north east] at (axis cs:12.4,1500) {exact: out of memory for $d\ge10$};
\end{semilogyaxis}
\end{tikzpicture}\\[4pt]
\pgfplotslegendfromname{timeslegSM}
\caption{CPU times in seconds (log scales), with the marks of Figure~\ref{fig:times}. (a) Medians over the six benchmark polynomials of degree $20$ for each $|\cA|$. (b) Single runs on Harnack's curves $f_{d,0.2}$ (\S\ref{app:inputs}).}\label{fig:timesSM}
\end{figure}

\emph{Counts} (Tables~\ref{tab:outcomes}(a), \ref{tab:outcomesfull} and~\ref{tab:noref}). On the $403$ inputs of the six families with an exact reference every wrong answer of our solvers was flagged ($7$ of the complex and $2$ of the real solver), each solver stopped with an error in the integration of a separatrix on two benchmark polynomials with $d=20$, $|\cA|=181$, and the numerical method returned $21$ wrong answers without a warning; $31$ of the $40$ flagged answers were correct. Both solvers are wrong on the benchmark polynomial with $d=15$, $|\cA|=128$, where a collinear circuit dominates near its discriminant and the events are not resolved in double precision, and the real solver on a dense one with $d=20$, where it missed a saddle in $15$ unresolved boxes; the complex solver is wrong on six Harnack curves with $t\ge0.5$, where its start missed most critical points of $f$ in the torus. On the $13$ inputs of these families without an exact reference the numerical method violates the exact $n_u$ on $7$; the real solver agrees with it on $5$, only with the complex solver on $7$ ($5$ of them flagged by the latter), and with no other method on Harnack's curve with $d=12$, $t=0.5$, so on $8$ inputs the counts are not independently confirmed.

\emph{Pairing} (Table~\ref{tab:outcomes}(b)). An exact reference for the pairing exists only for the $72$ line arrangements (\S\ref{app:lines}). The tracing was attempted on all inputs of Table~\ref{tab:regression}, of the axis stress test and of the rescaled connections; it was inconclusive on inputs 8 and 15 of Table~\ref{tab:regression}, whose pairing was checked by hand, resp.\ inherited from input 12 by an exact change of coordinates, and conclusive on all other inputs, where it agreed with both solvers on all $184$ answers with lines; this is agreement, not an independent reference. None of the $414$ answers of each solver on the six families of Table~\ref{tab:outcomes}(a) and on the inputs of Table~\ref{tab:noref} violated an executed test of \S\ref{app:checks} ($2$ skipped a test), and on all $303$ inputs where both solvers returned the same counts with lines they paired the ends in the same way; this checks necessary conditions and agreement, not the pairing itself.

\emph{Flags.} All $47$ flagged answers on the $416$ inputs of the six families and of Table~\ref{tab:noref} ($38$ of the complex and $9$ of the real solver) carry only numerical flags: missed start points ($37$), an isolated collinear circuit near its discriminant ($1$) and unresolved boxes ($9$); the three polynomials that violate (A1) are flagged by both solvers with the input class. The flags are heuristic: $31$ of the $40$ flagged answers on the six families of Table~\ref{tab:outcomes}(a) are correct, earlier versions of the code gave wrong answers without a flag (\S\ref{app:special}), and some numerical decisions on the axes raise no flag.

\emph{Spread and the number of events.} By Remark~\ref{prop:nesting} the events of $f^{(\sigma)}$ are those of $f$ with $s\ge\sigma$, so the event lists of the benchmark paths give the numbers $K(\sigma)$ and $N(\sigma)$ of Theorem~\ref{thm:prob} for all $\sigma\ge1$, as far as the lists are complete; we call them the \emph{detected} events. On the $226$ paths on which the complex solver answered ($2$ with a flag; its $2$ errors are left out), the fraction of paths with an event drops from $65\%$ for $\sigma=1$ to $12\%$ for $\sigma=16$ and $3.5\%$ for $\sigma=64$, the mean number of detected event points from $1.92$ to $0.15$ and $0.062$, and $\sigma$ times this mean stays between about $2$ and $4$ for $\sigma=1,2,4,\dots,64$. The real solver has the same event points on $225$ of these paths; on the dense polynomial with $d=20$ where its answer is wrong it missed a saddle with $1<s<2$. This is consistent with the bound $O(1/\sigma)$ of Theorem~\ref{thm:prob}, whose constant $C_1$ ranges from about $50$ for $|\cA|=6$ to $4\cdot10^6$ for $|\cA|=64$; it does not measure a rate. (Event lists computed on 15 October with an earlier complex solver have the same event points on all $226$ paths.)

\emph{Held-out inputs} (last row of Table~\ref{tab:outcomes}(a)). On the $120$ held-out polynomials both solvers answered every input at the first attempt, and no test of \S\ref{app:checks} failed or was skipped. On the $118$ on which the exact method finished, the numerical method and the complex solver were always correct, the latter with a flag on $3$, and the real solver on all but one: on the polynomial with $d=15$, $|\cA|=16$ of seed $7009$ two event points are numerically equal in parameter and position, and it returned $(2,1)$ instead of $(1,1)$, with a flag of class unclassified, while the complex solver flagged the same cluster and answered correctly. On the other two ($d=30$, exact method over the time limit) the four other methods agree, with the exact $n_u$. The median times grew from $0.069$\,s for $d=6$ to $221$\,s for $d=30$ for the exact method, and were at most $0.17$\,s for the numerical method, $0.39$\,s for the complex and $1.2$\,s for the real solver.

\emph{Sensitivity to the parameters.} For the $218$ pairs of a solver and one of the $90$ benchmark polynomials with $d=6,10$ or the $19$ inputs of Table~\ref{tab:regression}, we repeated the run with each of eight changes of one parameter ($1744$ runs, besides the $218$ with the frozen values): the tolerances of the integration by the factors $10^{\pm1}$, the start offset $\delta$ at saddles by $10^{\pm2}$, the start offset $\delta_c$ at creations by about $4^{\pm1}$, and $s_0$ by the factors $2$ and $4$. Under none of these changes did an answer, a pairing or a set of flags change: every run answered correctly, with the same $6$ flagged answers (those of Table~\ref{tab:regression}), and the total CPU time changed by at most $9\%$, although the labels of the landing points changed in their last digits. So the frozen values are not at the edge of a failure on these inputs; this does not show that they are optimal.

\subsection{Limitations}\label{ssec:limits}

The implementation is not certified, and its flags are heuristic. A failed step or test returns an error and no answer (on a perturbed lifting after up to three attempts), a flagged answer is unresolved, and no exact method is called automatically. Separatrices can be stiff, and only the start at creations is adapted to this; a near pass is only flagged; connections forced by a symmetry other than an invariant axis, such as the diagonal for $f(x_1,x_2)=f(x_2,x_1)$ with $\bar r_1=\bar r_2$, are not treated. The real solver is not competitive for dense polynomials with small spread, the start of the complex solver loses critical points for large spreads, which a polyhedral start \cite{HuSt95} at the tropical end of the path might reduce (we have not tried this), and an event near $s=1$ has to be resolved like a near-singularity of $C$. Our projection methods are not tuned, so the growth of their times matters more than their size. The test families have triangular Newton polygons, and the inputs of Table~\ref{tab:regression} triangles and quadrilaterals. The output is $(n_o,n_u)$ and the pairing of the ends; the isotopy type of $(\R^2,C)$ would need the region tree of Remark~\ref{rem:isotopy}, which is not implemented.

\FloatBarrier
\section{Special configurations}\label{app:special}

The inputs of Table~\ref{tab:outcomes}(a) rarely or never produce the situations of Sections~\ref{sec:events}--\ref{sec:global} that need care: saddle connections, several event points at one parameter, event points on an axis, events at almost the same parameter or within the rounding of $s=1$, and pairings of the ends that differ from the obvious one. The test families of Table~\ref{tab:coverage} were made to produce them and served as regression and stress tests during the development. Most entered the suite because an earlier version failed on them, often without a flag: on the inputs of Table~\ref{tab:regression} (the complex solver flagged none of these answers, the real solver all but one), on the endpoint families ($171$ to $412$ wrong answers of a solver on a family before the treatment of the endpoint was added, all but $5$ unflagged) and on the rescaled connections (an intermediate rule for $\iota$ paired $16$ of $48$ runs wrongly, $12$ of them unflagged; these $12$ pairings fail two tests of \S\ref{app:checks}). So the flags of one version say little about errors that it does not anticipate. The results below are those of the frozen code; every family has its own reference, and the runs without an answer are counted.

\begin{table}[htbp]
\centering\footnotesize
\caption{Event-state regression, with $h=-\log|c|$ unless a lifting is given (for 1, height $1$ on the constant term and standard Gaussian heights times $0.01$ on the others; for 5c, height $1$ on $x_1^2x_2^2$ plus such noise); random coefficients for 3, 4, 5, 7b. Event points as designed and found; $(n_o,n_u)$ of the exact method; reference of the pairing (T: tracing, H: by hand, I: from 12); answers of the real and the complex solver: \checkmark/\checkmark{} if the counts agree with the exact reference and the pairing with the listed comparison, \checkmark{} if the pairing has no comparison, f: flagged.}\label{tab:regression}
\setlength{\tabcolsep}{3pt}
\begin{tabular}{@{}l>{\raggedright\arraybackslash}p{5.6cm}>{\raggedright\arraybackslash}p{4.5cm}cccc@{}}
\toprule
& input & event points & $(n_o,n_u)$ & ref. & real & complex\\
\midrule
1 & Example~\ref{ex:connection}, lifting & 2 saddles connected along the axis, 1 creation & $(0,3)$ & T & \checkmark/\checkmark & \checkmark/\checkmark\\
1d & the same, $h=-\log|c|$ & 1 saddle on the axis & $(0,3)$ & T & \checkmark/\checkmark & \checkmark/\checkmark\\
2 & Example~\ref{ex:simultaneous} & 2 deaths on the axis at one parameter & $(0,1)$ & T & \checkmark/\checkmark & \checkmark/\checkmark\\
3 & even exponents, $d=6$ & 4 saddles at one parameter & $(2,2)$ & T & \checkmark/\checkmark & \checkmark/\checkmark\\
4 & even in $x_1$, $d=4$ & 2 saddles on one oval at one parameter, 2 deaths & $(1,0)$ & -- & \checkmark & \checkmark\\
5 & even edge polynomial, $d=4$ & 2 annihilations at one parameter & $(1,0)$ & -- & \checkmark & \checkmark\\
5c & $1+\frac12x_1^3+x_1^4-3x_1^2x_2^2+x_2^4$, lifting & 2 births, then 2 creations, pairwise simultaneous & $(0,4)$ & T & \checkmark/\checkmark & \checkmark/\checkmark\\
5d & ten terms on $[0,4]^2$, lifting & 2 creations and 2 annihilations at one parameter, 4 saddles & $(0,4)$ & T & \checkmark/\checkmark & \checkmark/\checkmark\\
6 & $x_1^2-x_1+2x_2^2+3x_2^3+\frac12$ & 1 death on the axis, 1 saddle & $(0,1)$ & T & \checkmark/\checkmark & \checkmark/\checkmark\\
7a & input 4 $+\,10^{-4}x_1^3$ & 2 saddles $1.3\cdot10^{-10}$ apart, 2 deaths & $(1,0)$ & -- & \checkmark & \checkmark\\
7b & $d=6$ & 2 annihilations, 2 deaths next to the divisor & $(0,0)$ & -- & \checkmark, f & \checkmark, f\\
8 & $-x_1^2+\sqrt2\,x_1-1+(x_1-1)^2(x_1+1)x_2+8x_2^2$ & 1 saddle on the axis, separatrices leave it & $(0,2)$ & H & \checkmark/\checkmark, f & \checkmark/\checkmark\\
9 & $x_1-x_1^2+x_2^2-\frac12$ & 1 saddle on the axis, separatrices land on it & $(0,2)$ & T & \checkmark/\checkmark & \checkmark/\checkmark\\
10 & $1+x_1^4+\frac32x_1^2x_2^2+x_2^4$ & none (non-real double roots) & $(0,0)$ & -- & \checkmark & \checkmark\\
11 & $1+x_1+x_2+x_1x_2+2x_1^2+3x_2^2$ & none & $(0,0)$ & -- & \checkmark & \checkmark\\
12 & $-x_1^2+\sqrt2\,x_1-1+(x_1^2-\sqrt2\,x_1+1)x_2+8x_2^2$ & 1 saddle on the axis, separatrices on it & $(0,2)$ & T & \checkmark/\checkmark, f & \checkmark/\checkmark\\
13 & $\frac13x_1^3-x_1^2+\frac32x_1+x_2^2-2x_2+8$ & 1 death (a collision is not an event) & $(0,1)$ & T & \checkmark/\checkmark & \checkmark/\checkmark\\
14 & $1+x_1^3+x_2^3-\lambda x_1x_2$, $\lambda=3-2^{-51}$ & 1 death at $s-1\approx1.3\cdot10^{-16}$ & $(0,1)$ & T & \checkmark/\checkmark & \checkmark/\checkmark\\
15 & input 12 with $x\mapsto(2^{16}x_1,2^{-16}x_2)$ & as 12 & $(0,2)$ & I & \checkmark/\checkmark, f & \checkmark/\checkmark, f\\
\bottomrule
\end{tabular}
\end{table}

\begin{table}[htbp]
\centering\footnotesize
\caption{Test families of the special configurations: what they test, inputs and how often the configuration occurred, references for the counts and the pairing (T: tracing of $C$, H: by hand, I: inherited by an exact change of coordinates, S: smoothing of a line arrangement), and outcomes of the real and the complex solver: the numbers of answers whose counts agree with the exact reference and whose pairing agrees with the listed comparison, f: of them flagged, wf: wrong and flagged, e: no answer.}\label{tab:coverage}
\setlength{\tabcolsep}{3pt}
\begin{tabular}{@{}>{\raggedright\arraybackslash}p{4.6cm}>{\raggedright\arraybackslash}p{3.6cm}>{\raggedright\arraybackslash}p{2.6cm}>{\raggedright\arraybackslash}p{4.0cm}@{}}
\toprule
family (\S): what it tests & inputs; occurrences & reference & outcome (real; complex) \\
\midrule
Table~\ref{tab:regression} (\ref{app:regression}): one configuration each & $19$ & exact; T, H for 8, I for 15 & 19 (4 f); 19 (2 f) \\
event logs (\ref{app:regression}): order within an event, collisions & $10$; $8$ orders on each with several points at one event & Theorem~\ref{thm:update} & 56; 56 orders: same answer, pairing and counts \\
connections (\ref{app:regression}): side $\iota$ of Lemma~\ref{lem:axisconn} & $4$ mirror images, $24$ rescalings; input 1 under $6$ rescalings & exact; T & 28; 24, 4 wf; input 1: 1, 5 e each \\
rescalings (\ref{app:regression}): exact changes of coordinates and scale & $19$ & exact; same pairing & 19 (5 f); 19 (1 f) \\
axis stress (\ref{app:axisstress}): event points on the axes & $240$; 19 paths, 26 points & exact; T & 237 (7 f), 3 e; 237 (15 f), 3 e \\
new liftings (\ref{app:axisstress}): line~\ref{alg:restart} of Algorithm~\ref{alg:main} & $3$, $3$ directions each & exact & all correct; 2 of 3 inputs without flag \\
endpoint circuits (\ref{app:endpoint}): events within $6$ ulp of $s=1$ & $3900$ & exact sign of $\lambda^3-27c_0c_1c_2$ & 3900; 3900 \\
endpoint, random (\ref{app:endpoint}): events within $2\cdot10^{-14}$ of $s=1$ & $511$: $7$ perturbations of each of $73$ & exact & 511; 504, 7 wf \\
endpoint cubics (\ref{app:endpoint}): $\lambda=3+k$ ulp & $10$, and $\lambda=3$ & exact & 10; 10; $\lambda=3$ flagged by both \\
line arrangements (\ref{app:lines}): pairing & $72$; $2d$ ends each & exact; S & 70 (9 f), 2 wf; 64 (13 f), 8 wf \\
\bottomrule
\end{tabular}
\end{table}

\subsection{Event-state regression}\label{app:regression}

Table~\ref{tab:regression} lists small inputs, each made to produce one configuration, and eight that test statements of \S\ref{app:impl} and Proposition~\ref{prop:circuits}. The pairing is compared with a numerical tracing of $C$, by hand for 8, where the tracing does not resolve the ends next to the axis, and for 15 by the change of coordinates from 12, which preserves the labels of the ends. On all inputs the counts of both solvers agree with the exact reference and their pairings with the listed comparison. They flag 7b, where two deaths next to a divisor and two annihilations on it are closer than double precision resolves; the real solver flags 8, 12 and 15, where its subdivision leaves boxes next to the axis unresolved, and the complex solver 15, where its start misses a critical point. The answers and the pairing do not change under the exact rescalings $x\mapsto(2^kx_1,2^{-k}x_2)$, $k=0,8,12,16$, of input 12, under the exchange of its axes, and under multiplication of the coefficients of inputs 1d, 3, 4, 5, 6, 9 and 13 by $2^{\pm200}$ ($19$ runs of each solver; the resultant of the exact method also passed a test against its closed form).

\emph{Event logs.} For the inputs 1, 2, 3, 4, 5, 5c, 5d, 7a, 13 and Example~\ref{ex:infinity} every event point was logged (type, parameter, chart and position, the labels used by its rule, the states before and after it), since the final counts could hide two errors that cancel. On the inputs with several event points at one parameter the computation was repeated with the points of each event in eight random admissible orders; as Theorem~\ref{thm:update} says, the answer, the pairing and the counts after every event did not depend on the order (all $56$ repetitions of each solver), also on 5d, where two creations and two annihilations are exactly simultaneous. On 13 no event point lies within $10^{-3}$ of the collision at $s=\log3/\log2$, so the collision triggers no rule.

\emph{Connections along an axis.} On 1, 1d, 9, 12 and 15, ten separatrices lie on an invariant part of the axis $x_2=0$, and on 1 one of them runs into the lower saddle and is labelled by Lemma~\ref{lem:axisconn}. Reversing the side of that label, on 1 and its three mirror images in the axes, makes the pairing wrong, and the tests of \S\ref{app:checks} reject each of these pairings. On the rescalings $K=-4,-2,\dots,6$ (Example~\ref{ex:connection}) of input 1 with the opposite perturbation and their mirror images, $24$ inputs, the pairings of both solvers agree with the tracing; for $K=6$ the complex solver misses a saddle on all four and returns a wrong count, flagged because an event candidate was not refined. On input 1 itself, whose creation at $s\approx292$ is too deep for the rescaled coordinates, both solvers stop with an error for every $K\neq0$.

\subsection{Events on the axes}\label{app:axisstress}

Since the supports of the families of \S\ref{app:inputs} make singular points on the axes non-generic, both solvers were also run on $240$ random polynomials of degree $4$, $6$ and $8$ whose supports have further points on an axis edge and no point off the axes at lattice distance one from such an edge, with coefficients $\pm e^{u}$, $u$ standard Gaussian (replayed from the recorded doubles). $19$ of their paths have event points on the axes ($26$ in all), and $2$ have critical zeros exponentially close to an axis or a divisor. Neither solver returned a wrong answer; on all $237$ inputs they answered, $149$ of them with lines, the pairing agrees with the tracing, and $22$ separatrices of each solver lie on an invariant axis. Both stopped with an error, after raising flags, on three inputs: two have boundary events on a detached outer edge with $m=2$, singular points of $\overline{C_t}$ on $D_e$ (Remark~\ref{rem:detached}), where the rule of an annihilation found no line to delete, and one has a cluster like input 7b. Besides, the real solver flagged $7$ correct answers and the complex solver $15$, $6$ of them for lost paths. On $5$ inputs the flagged points lie on a detached axis edge. On $3$ others they are Morse points next to a double root of $f_t(x_1,0)$ at almost the same parameter (within $10^{-24}$ in $s$ on two of them), where the second derivative of $f_t$ in $\log|x_2|$ is below $10^{-23}$ of that in $\log|x_1|$ and the test takes this anisotropy for a degeneracy; the complex solver flags all three, the real solver one, and on another it finds two event points that it cannot separate. With three random liftings $-\log|c|+\varepsilon\eta$ each these answers were correct again, without a flag on two of the inputs; on the third the flags remained, so an implementation of line~\ref{alg:restart} needs a bound on the number of new liftings.

\subsection{Events near the endpoint}\label{app:endpoint}

The first endpoint family consists of $3900$ circuits $c_0+c_1x_1^3+c_2x_2^3-\lambda x_1x_2$, $c_i=e^{2u_i}$ with standard Gaussian $u_i$, $\lambda$ within $6$ units in the last place of $(27c_0c_1c_2)^{1/3}$, and random reflections in the axes ($300$ draws with $13$ values of $\lambda$ each). Their topology follows from the sign of $\lambda^3-27c_0c_1c_2$, computed exactly from the binary values, and the oval dies at $s^*=\log3/\log\rho$, $\rho^3=\lambda^3/(c_0c_1c_2)$, which equals the bound of Theorem~\ref{thm:window}, so these inputs also test the rounding of the window. Both solvers answered all of them correctly, without a flag; without the enlargement of the window bounds, Corollary~\ref{cor:window} would have been applied to $206$ circuits with an event above $s=1$. The second family consists of $511$ inputs: for each of $73$ random polynomials of degree $4$ and $6$ with $3$ to $6$ interior exponents, the constant term is set to the nearest double at which a real critical point becomes singular and moved by up to $3$ units in the last place, so that the event lies within $2\cdot10^{-14}$ of $s=1$ (median $10^{-16}$), on either side, and the topology changes within these seven inputs; the reference is the exact method. The real solver answered all correctly, without a flag; the complex solver all but the $7$ inputs of one polynomial, on which it failed to refine an event candidate away from the endpoint and returned flagged answers with one oval too many. On the cubics $1+x_1^3+x_2^3-\lambda x_1x_2$ with $\lambda=3+k$ ulp, $k=\pm1,\pm2,\pm4,\pm8,\pm16$, both solvers are correct without a flag, and both flag the singular control $\lambda=3$.

\subsection{Line arrangements}\label{app:lines}

The pairing has an exact reference for $f=L_1\cdots L_d+\varepsilon$, $L_i=a_ix_1+b_ix_2+c_i$ with small non-zero integers, distinct slopes and no three lines through a point, so that $\Delta=\conv\{0,de_1,de_2\}$, (H0) holds and $C$ has $d$ lines for small $|\varepsilon|$. Then $C$ is the smoothing of the arrangement: near the crossing $p$ of $L_i$ and $L_j$, $f=0$ is $L_iL_j=-\varepsilon/P_{ij}$ with $P_{ij}=\prod_{k\neq i,j}L_k(p)$, whose two branches lie in the two sectors where $L_iL_j$ has the sign of $-\varepsilon P_{ij}$; walking along the lines and switching at every crossing gives the components and the pairing of the $2d$ ends in rational arithmetic. The pairing does not depend on $|\varepsilon|$. Indeed, $P=L_1\cdots L_d$ vanishes on the lines, so every component of $C=\{P=-\varepsilon\}$ lies in one cell of the arrangement, on which $P$ has the sign of $-\varepsilon$, and $\log|P|$ is strictly concave on each cell, its Hessian $-\sum_i\nabla L_i\nabla L_i^T/L_i^2$ being negative definite since the slopes are distinct. In a bounded cell, $C$ is empty or one oval, the boundary of a compact convex superlevel set of $\log|P|$. In an unbounded cell $U$, along every ray contained in $U$ all factors but at most one grow linearly, so $|P|\to\infty$, and $C\cap U$ is the boundary of an unbounded closed convex set that contains no line, i.e.\ one line, whose ends lie along the two unbounded edges of $U$, since there $|L_i|=|\varepsilon|/\prod_{k\neq i}|L_k|\to0$. Hence for every $\varepsilon\neq0$ of the given sign for which $C$ is smooth, each unbounded cell on which $P$ has the sign of $-\varepsilon$ pairs the ends at its two unbounded edges, as the walk above does. The smoothing count of ovals, the number of bounded cells with this sign, holds if $|\varepsilon|$ is below the smallest critical value of $|L_1\cdots L_d|$ off the lines. Since $\log|L_1\cdots L_d|$ is strictly concave on every cell, these critical points are one maximum per bounded cell, which we find by Newton's method from a grid and from the centroids of all triples of crossings, check that there are $(d-1)(d-2)/2$ of them, and take $\varepsilon=\pm2^{-k}$ below $10^{-3}\min(1,v)$, $v$ the smallest value. We took $12$ arrangements for each $d=2,3,4$ and both signs of $\varepsilon$, $72$ inputs; the reference for the counts is the exact method, which agrees with the counts of the smoothing on all of them. The tracing is not reliable here: the curve comes within about $|\varepsilon/P_{ij}|^{1/2}$ of itself near the crossings, and on $43$ inputs the traced pairing differs from the exact one. Both solvers answered all inputs. The real solver returned the exact counts and pairing on $70$, $9$ of them with a flag, and flagged wrong ones on two arrangements with $d=4$, where it could not separate the landing points on a slice. The complex solver was correct on $64$, $13$ of them with a flag; its $8$ wrong answers are flagged: on $4$ its start missed a critical point, on $2$ a candidate was not refined, and on $2$ the landing points on a slice were not separated, which also made the pairing wrong.

This family was added after the code had been prepared for the final runs, and it exposed two defects, which were corrected before them; it is therefore not a held-out test of these corrections. The exact method lost a root of the resultant when an isolating interval ended at an exact dyadic root of the resultant, which gave wrong references on $2$ inputs with $d=2$; and both solvers ordered the landing points on the slice of a birth by their angles at the slice itself, where two separatrices arriving from the same side along the slow direction of the Hessian were only about $10^{-8}$ apart (\S\ref{ssec:labels}), which on the first six inputs gave one wrong answer without a flag and one pairing that the tests rejected.

\FloatBarrier
\section{Details of Example~\ref{ex:infinity}}\label{app:worked}

Here $f_t=-2t+4t^2x_2+3x_1^2+2x_2^2+\tfrac2t\,x_1x_2^2-4t\,x_1^2x_2+x_1^2x_2^2$ and $\Delta=[0,2]^2$. The outer edges are $e_R=[(2,0),(2,2)]$, with $D_{e_R}=\{x_1=\infty\}$, $y=x_2$ and $\psi^{e_R}_t=y^2-4ty+3$, and $e_T=[(0,2),(2,2)]$, with $D_{e_T}=\{x_2=\infty\}$ and, in the coordinate $x_1=1/y$, $\psi^{e_T}_t=x_1^2+\frac2tx_1+2$. The metric $\sum_id\xi_i^2$, $\xi_i=\operatorname{asinh}x_i$, is admissible (\S\ref{ssec:sepcomp}). For $t_0=\frac15$ the bottom $C_{t_0}$ consists of two lines, graphs over the $x_2$-axis: $A$ from $e_1=(r_1,\cO_{--})$ to $e_2=(r_1,\cO_{-+})$ and $B$ from $e_3=(r_2,\cO_{-+})$ to $e_4=(r_2,\cO_{--})$, where $r_1<r_2<0$ are the roots $-5\mp\sqrt{23}$ of $\psi^{e_T}_{t_0}$; landing points are labelled by their positions along $A$ and $B$.

The descending separatrices of the saddle $q_1\approx(-0.488,0.681)$ land at $c_1\approx(-0.680,0.454)$ and $c_2\approx(-0.174,0.828)$, on $A$ and $B$, so rule (a) of Theorem~\ref{thm:update} joins them into $L_1$ and $L_2$. Those of $q_2\approx(-0.679,-2.235)$ land at $d_1\approx(-1.757,-0.940)$ and $d_2\approx(-0.193,-2.412)$, in two intervals of $L_2$, which splits into $L_2'$ and $O_1$. At $t=1/\sqrt2$, $\psi^{e_T}_t=(x_1+\sqrt2)^2$: the point $p$, $x_1=-\sqrt2$, of $D_{e_T}$ is of type (B) with $\alpha\beta>0$ and $Q_m=\cO_{--}$, so rule (e) deletes $L_2'$, whose ends are in $\cO_{--}$, and joins the ends $e_2,e_3$ of $L_1$ into the oval $O_2$. At $t=\sqrt3/2$, $\psi^{e_R}_t=(y-\sqrt3)^2$: the point $p'$, $x_2=\sqrt3$, of $D_{e_R}$ has $\alpha\beta<0$, $Q_m=\cO_{++}$ and $Q_o=\cO_{-+}$, and the separatrix of its boundary saddle lands at $b\approx(-6.594,1.813)$, in the interval $(c_1\,e_2)$ of $O_2$; rule (d) cuts $O_2$ at $b$ into the line $L_3$, and rule (b) adds the line $\lambda$ in $\cO_{++}$. At $t=1$ the lines $L_3$ and $\lambda$ end at the points $x_2=1$ and $x_2=3$ of $D_{e_R}$, $L_3$ on the side $\cO_{-+}$ and $\lambda$ on the side $\cO_{++}$. Figure~\ref{fig:sep} shows the levels and the bottom intervals; the events and the landing points were computed with both solvers.

\begin{figure}[tp]
\centering
\definecolor{cL}{HTML}{1D9E75}\definecolor{cO}{HTML}{D85A30}\definecolor{cM}{HTML}{C2378A}\definecolor{cP}{HTML}{7F77DD}\definecolor{sepc}{HTML}{0072B2}
\tikzset{ffr/.style={gray!70,line width=1pt},fax/.style={gray!25,line width=1pt},
  band/.style={line width=3.2pt,opacity=0.35,line cap=butt,line join=round},
  bandin/.style={line width=4.4pt,opacity=0.35,line cap=butt,line join=round},
  cur/.style={line width=1.0pt},curev/.style={black,line width=1.0pt},sep/.style={sepc,line width=1pt},
  asc/.style={sepc,line width=1pt,dash pattern=on 0.01pt off 2.4pt,line cap=round},
  land/.style={sepc,fill=white,line width=1pt},evo/.style={black,fill=white,line width=1pt},
  evf/.style={black,fill=black,line width=1pt},lab/.style={font=\scriptsize,inner sep=1.2pt}}
\begin{tikzpicture}[x=1.43cm,y=1.43cm]
\begin{scope}[shift={(0.00,0.00)}]
\draw[fax] (-1,0) -- (1,0); \draw[fax] (0,-1) -- (0,1);
\draw[cur,cL] (-0.830,-1.000) -- (-0.823,-0.950) -- (-0.813,-0.903) -- (-0.801,-0.859) -- (-0.786,-0.819) -- (-0.768,-0.779) -- (-0.747,-0.742) -- (-0.722,-0.706) -- (-0.694,-0.671) -- (-0.667,-0.641) -- (-0.637,-0.612) -- (-0.603,-0.583) -- (-0.565,-0.553) -- (-0.491,-0.500) -- (-0.336,-0.398) -- (-0.282,-0.359) -- (-0.260,-0.340) -- (-0.240,-0.321) -- (-0.224,-0.302) -- (-0.210,-0.283) -- (-0.196,-0.259) -- (-0.184,-0.232) -- (-0.175,-0.203) -- (-0.167,-0.170) -- (-0.162,-0.132) -- (-0.158,-0.089) -- (-0.155,-0.038) -- (-0.154,0.018) -- (-0.155,0.071) -- (-0.158,0.117) -- (-0.162,0.155) -- (-0.169,0.188) -- (-0.177,0.216) -- (-0.188,0.240) -- (-0.202,0.262) -- (-0.220,0.282) -- (-0.236,0.298) -- (-0.255,0.313) -- (-0.305,0.345) -- (-0.358,0.375) -- (-0.537,0.467) -- (-0.590,0.497) -- (-0.636,0.526) -- (-0.680,0.558) -- (-0.718,0.590) -- (-0.750,0.623) -- (-0.776,0.657) -- (-0.796,0.690) -- (-0.812,0.724) -- (-0.824,0.760) -- (-0.832,0.800) -- (-0.837,0.842) -- (-0.838,0.889) -- (-0.836,0.941) -- (-0.830,1.000);
\draw[cur,cO] (-0.093,-1.000) -- (-0.090,-0.781) -- (-0.085,-0.635) -- (-0.081,-0.579) -- (-0.077,-0.531) -- (-0.071,-0.490) -- (-0.063,-0.455) -- (-0.056,-0.428) -- (-0.047,-0.404) -- (-0.036,-0.382) -- (-0.024,-0.361) -- (-0.002,-0.331) -- (0.055,-0.265) -- (0.079,-0.234) -- (0.102,-0.202) -- (0.120,-0.169) -- (0.135,-0.132) -- (0.146,-0.092) -- (0.152,-0.050) -- (0.154,-0.004) -- (0.152,0.042) -- (0.146,0.084) -- (0.134,0.122) -- (0.118,0.157) -- (0.099,0.188) -- (0.075,0.218) -- (0.048,0.246) -- (-0.009,0.300) -- (-0.034,0.328) -- (-0.047,0.347) -- (-0.057,0.367) -- (-0.065,0.388) -- (-0.072,0.413) -- (-0.079,0.446) -- (-0.084,0.484) -- (-0.088,0.531) -- (-0.091,0.588) -- (-0.093,0.742) -- (-0.093,1.000);
\draw[ffr] (-1,-1) rectangle (1,1);
\draw[evo] (-0.196,0.405) +(-1.6pt,-1.6pt) rectangle +(1.6pt,1.6pt);
\draw[evo] (-0.254,-0.691) +(-1.6pt,-1.6pt) rectangle +(1.6pt,1.6pt);
\draw[evo] (-0.414,1.000) circle (1.5pt);
\draw[evo] (-0.414,-1.000) circle (1.5pt);
\draw[evo] (1.000,0.634) circle (1.5pt);
\draw[evo] (-1.000,0.634) circle (1.5pt);
\node[lab,below] at (-0.830,-1) {$e_1$};
\node[lab,above] at (-0.830,1) {$e_2$};
\node[lab,above] at (-0.093,1) {$e_3$};
\node[lab,below] at (-0.093,-1) {$e_4$};
\node[lab] at (-0.62,0.0) {$A$};
\node[lab] at (0.42,0.0) {$B$};
\node[lab,right] at (-0.166,0.405) {$q_1$};
\node[lab,right] at (-0.224,-0.691) {$q_2$};
\node[lab,below] at (-0.414,-1) {$p$};
\node[lab,above] at (-0.414,1) {$p$};
\node[lab,right] at (-0.97,0.634) {$p\smash{\mathstrut}^{\prime}$};
\node[font=\scriptsize] at (0,-1.30) {(a) $t=t_0=\frac15$};
\end{scope}
\begin{scope}[shift={(2.20,0.00)}]
\draw[fax] (-1,0) -- (1,0); \draw[fax] (0,-1) -- (0,1);
\draw[band,cL] (-0.830,-1.000) -- (-0.823,-0.950) -- (-0.813,-0.903) -- (-0.801,-0.859) -- (-0.786,-0.819) -- (-0.768,-0.779) -- (-0.747,-0.742) -- (-0.722,-0.706) -- (-0.694,-0.671) -- (-0.667,-0.641) -- (-0.637,-0.612) -- (-0.603,-0.583) -- (-0.565,-0.553) -- (-0.491,-0.500) -- (-0.336,-0.398) -- (-0.282,-0.359) -- (-0.260,-0.340) -- (-0.240,-0.321) -- (-0.224,-0.302) -- (-0.210,-0.283) -- (-0.196,-0.259) -- (-0.184,-0.232) -- (-0.175,-0.203) -- (-0.167,-0.170) -- (-0.162,-0.132) -- (-0.158,-0.089) -- (-0.155,-0.038) -- (-0.154,0.018) -- (-0.155,0.071) -- (-0.158,0.117) -- (-0.162,0.155) -- (-0.169,0.188) -- (-0.177,0.216) -- (-0.188,0.240) -- (-0.202,0.262) -- (-0.220,0.282) -- (-0.236,0.298) -- (-0.255,0.313) -- (-0.305,0.345) -- (-0.358,0.375) -- (-0.537,0.467) -- (-0.590,0.497) -- (-0.636,0.526) -- (-0.680,0.558) -- (-0.718,0.590) -- (-0.750,0.623) -- (-0.776,0.657) -- (-0.796,0.690) -- (-0.812,0.724) -- (-0.824,0.760) -- (-0.832,0.800) -- (-0.837,0.842) -- (-0.838,0.889) -- (-0.836,0.941) -- (-0.830,1.000);
\draw[band,cO] (-0.093,-1.000) -- (-0.090,-0.781) -- (-0.085,-0.635) -- (-0.081,-0.579) -- (-0.077,-0.531) -- (-0.071,-0.490) -- (-0.063,-0.455) -- (-0.056,-0.428) -- (-0.047,-0.404) -- (-0.036,-0.382) -- (-0.024,-0.361) -- (-0.002,-0.331) -- (0.055,-0.265) -- (0.079,-0.234) -- (0.102,-0.202) -- (0.120,-0.169) -- (0.135,-0.132) -- (0.146,-0.092) -- (0.152,-0.050) -- (0.154,-0.004) -- (0.152,0.042) -- (0.146,0.084) -- (0.134,0.122) -- (0.118,0.157) -- (0.099,0.188) -- (0.075,0.218) -- (0.048,0.246) -- (-0.009,0.300) -- (-0.034,0.328) -- (-0.047,0.347) -- (-0.057,0.367) -- (-0.065,0.388) -- (-0.072,0.413) -- (-0.079,0.446) -- (-0.084,0.484) -- (-0.088,0.531) -- (-0.091,0.588) -- (-0.093,0.742) -- (-0.093,1.000);
\draw[curev] (-0.694,-1.000) -- (-0.674,-0.951) -- (-0.650,-0.904) -- (-0.621,-0.859) -- (-0.588,-0.814) -- (-0.550,-0.770) -- (-0.505,-0.724) -- (-0.459,-0.681) -- (-0.356,-0.591) -- (-0.312,-0.547) -- (-0.291,-0.522) -- (-0.273,-0.496) -- (-0.259,-0.471) -- (-0.247,-0.442) -- (-0.235,-0.402) -- (-0.226,-0.355) -- (-0.219,-0.297) -- (-0.214,-0.225) -- (-0.207,-0.078) -- (-0.196,0.397) -- (-0.195,0.405) -- (-0.057,0.353) -- (0.027,0.318) -- (0.056,0.303) -- (0.082,0.286) -- (0.104,0.269) -- (0.124,0.251) -- (0.144,0.227) -- (0.160,0.203) -- (0.174,0.175) -- (0.185,0.145) -- (0.194,0.112) -- (0.200,0.075) -- (0.204,0.034) -- (0.206,-0.012) -- (0.205,-0.055) -- (0.203,-0.095) -- (0.198,-0.133) -- (0.191,-0.168) -- (0.183,-0.203) -- (0.172,-0.235) -- (0.159,-0.265) -- (0.144,-0.294) -- (0.129,-0.318) -- (0.112,-0.342) -- (0.093,-0.364) -- (0.073,-0.386) -- (0.033,-0.423) -- (-0.047,-0.487) -- (-0.068,-0.506) -- (-0.086,-0.525) -- (-0.102,-0.545) -- (-0.116,-0.565) -- (-0.127,-0.586) -- (-0.137,-0.609) -- (-0.147,-0.636) -- (-0.154,-0.667) -- (-0.161,-0.704) -- (-0.166,-0.745) -- (-0.175,-0.851) -- (-0.180,-1.000);
\draw[curev] (-0.180,1.000) -- (-0.196,0.405) -- (-0.322,0.451) -- (-0.405,0.484) -- (-0.476,0.514) -- (-0.536,0.542) -- (-0.589,0.571) -- (-0.633,0.599) -- (-0.669,0.627) -- (-0.698,0.657) -- (-0.720,0.688) -- (-0.728,0.703) -- (-0.735,0.720) -- (-0.740,0.736) -- (-0.744,0.754) -- (-0.746,0.773) -- (-0.747,0.792) -- (-0.743,0.832) -- (-0.734,0.878) -- (-0.719,0.931) -- (-0.694,1.000);
\draw[sep] (-0.196,0.405) -- (-0.215,0.386) -- (-0.230,0.365) -- (-0.243,0.341) -- (-0.254,0.312);
\draw[sep] (-0.196,0.405) -- (-0.171,0.423) -- (-0.144,0.437) -- (-0.113,0.447) -- (-0.080,0.453);
\draw[land] (-0.254,0.312) circle (1.5pt);
\draw[land] (-0.080,0.453) circle (1.5pt);
\draw[ffr] (-1,-1) rectangle (1,1);
\draw[evo] (-0.196,0.405) +(-1.6pt,-1.6pt) rectangle +(1.6pt,1.6pt);
\draw[evo] (-0.254,-0.691) +(-1.6pt,-1.6pt) rectangle +(1.6pt,1.6pt);
\draw[evo] (-0.414,1.000) circle (1.5pt);
\draw[evo] (-0.414,-1.000) circle (1.5pt);
\draw[evo] (1.000,0.634) circle (1.5pt);
\draw[evo] (-1.000,0.634) circle (1.5pt);
\draw[gray!70,line width=1pt] (-0.346,0.250) rectangle (-0.046,0.550);
\draw[gray!55,line width=1pt] (-0.046,0.550) -- (0.190,0.880) (-0.046,0.250) -- (0.190,0.130);
\fill[white] (0.190,0.130) rectangle (0.940,0.880);
\begin{scope}
\clip (0.190,0.130) rectangle (0.940,0.880);
\draw[bandin,cL] (0.624,0.006) -- (0.609,0.050) -- (0.591,0.091) -- (0.571,0.128) -- (0.547,0.163) -- (0.519,0.197) -- (0.487,0.230) -- (0.451,0.261) -- (0.409,0.293) -- (0.370,0.320) -- (0.329,0.346) -- (0.229,0.403) -- (0.123,0.460) -- (-0.118,0.582) -- (-0.226,0.638) -- (-0.336,0.698) -- (-0.432,0.755) -- (-0.531,0.817) -- (-0.619,0.880) -- (-0.697,0.942) -- (-0.732,0.973) -- (-0.765,1.005);
\draw[bandin,cO] (1.279,0.006) -- (1.242,0.050) -- (1.198,0.096) -- (1.150,0.142) -- (1.046,0.238) -- (1.002,0.285) -- (0.975,0.317) -- (0.951,0.351) -- (0.931,0.385) -- (0.913,0.421) -- (0.894,0.468) -- (0.878,0.520) -- (0.865,0.577) -- (0.854,0.641) -- (0.845,0.713) -- (0.838,0.797) -- (0.832,0.893) -- (0.828,1.005);
\draw[curev] (0.554,0.055) -- (0.566,0.517) -- (0.822,0.422) -- (1.015,0.346);
\draw[curev] (0.576,0.955) -- (0.564,0.518) -- (0.293,0.618) -- (0.115,0.685);
\draw[sep] (0.565,0.517) -- (0.540,0.495) -- (0.518,0.471) -- (0.497,0.445) -- (0.479,0.418) -- (0.462,0.388) -- (0.447,0.357) -- (0.433,0.323) -- (0.420,0.285);
\draw[sep] (0.565,0.518) -- (0.595,0.541) -- (0.626,0.562) -- (0.660,0.580) -- (0.696,0.597) -- (0.733,0.610) -- (0.772,0.622) -- (0.812,0.630) -- (0.854,0.637);
\draw[land] (0.420,0.285) circle (1.6pt);
\draw[land] (0.854,0.637) circle (1.6pt);
\draw[evo] (0.565,0.518) +(-1.7pt,-1.7pt) rectangle +(1.7pt,1.7pt);
\end{scope}
\draw[gray!70,line width=1pt] (0.190,0.130) rectangle (0.940,0.880);
\node[lab] at (0.400,0.175) {$c_1$};
\node[lab] at (0.834,0.737) {$c_2$};
\node[lab] at (0.465,0.628) {$q_1$};
\node[font=\scriptsize] at (0,-1.30) {(b) $t\approx0.401$: $q_1$};
\end{scope}
\begin{scope}[shift={(4.40,0.00)}]
\draw[fax] (-1,0) -- (1,0); \draw[fax] (0,-1) -- (0,1);
\draw[band,cO] (-0.830,-1.000) -- (-0.823,-0.951) -- (-0.814,-0.904) -- (-0.802,-0.861) -- (-0.787,-0.821) -- (-0.769,-0.782) -- (-0.749,-0.745) -- (-0.725,-0.710) -- (-0.698,-0.675) -- (-0.670,-0.645) -- (-0.640,-0.615) -- (-0.606,-0.585) -- (-0.568,-0.555) -- (-0.492,-0.501) -- (-0.332,-0.395) -- (-0.299,-0.371) -- (-0.272,-0.350) -- (-0.244,-0.325) -- (-0.222,-0.300) -- (-0.204,-0.274) -- (-0.190,-0.246) -- (-0.178,-0.216) -- (-0.170,-0.182) -- (-0.163,-0.143) -- (-0.158,-0.098) -- (-0.155,-0.038) -- (-0.154,0.030) -- (-0.156,0.088) -- (-0.160,0.136) -- (-0.164,0.167) -- (-0.171,0.195) -- (-0.178,0.218) -- (-0.188,0.240) -- (-0.200,0.260) -- (-0.215,0.278) -- (-0.233,0.295) -- (-0.254,0.312);
\draw[band,cL] (-0.254,0.312) -- (-0.285,0.334) -- (-0.325,0.357) -- (-0.523,0.459) -- (-0.571,0.486) -- (-0.614,0.512) -- (-0.657,0.541) -- (-0.695,0.570) -- (-0.728,0.600) -- (-0.755,0.629) -- (-0.781,0.665) -- (-0.802,0.702) -- (-0.818,0.740) -- (-0.829,0.782) -- (-0.836,0.828) -- (-0.838,0.878) -- (-0.836,0.935) -- (-0.830,1.000);
\draw[band,cO] (-0.093,-1.000) -- (-0.090,-0.781) -- (-0.085,-0.635) -- (-0.081,-0.578) -- (-0.076,-0.530) -- (-0.071,-0.490) -- (-0.063,-0.455) -- (-0.056,-0.428) -- (-0.047,-0.404) -- (-0.036,-0.382) -- (-0.024,-0.361) -- (-0.002,-0.331) -- (0.055,-0.265) -- (0.079,-0.235) -- (0.101,-0.203) -- (0.119,-0.171) -- (0.134,-0.134) -- (0.145,-0.095) -- (0.152,-0.053) -- (0.154,-0.009) -- (0.153,0.036) -- (0.147,0.078) -- (0.136,0.115) -- (0.122,0.151) -- (0.103,0.183) -- (0.078,0.214) -- (0.050,0.244) -- (-0.012,0.302) -- (-0.028,0.320) -- (-0.040,0.337) -- (-0.054,0.361) -- (-0.065,0.387) -- (-0.074,0.417) -- (-0.080,0.453);
\draw[band,cL] (-0.080,0.453) -- (-0.088,0.526) -- (-0.092,0.627) -- (-0.093,0.771) -- (-0.093,1.000);
\draw[cur,cO] (-0.644,-1.000) -- (-0.624,-0.964) -- (-0.602,-0.928) -- (-0.577,-0.892) -- (-0.550,-0.858) -- (-0.516,-0.820) -- (-0.479,-0.781) -- (-0.355,-0.666) -- (-0.320,-0.631) -- (-0.302,-0.610) -- (-0.287,-0.589) -- (-0.275,-0.567) -- (-0.265,-0.545) -- (-0.255,-0.513) -- (-0.247,-0.475) -- (-0.241,-0.429) -- (-0.235,-0.371) -- (-0.227,-0.220) -- (-0.216,0.133) -- (-0.211,0.211) -- (-0.205,0.266) -- (-0.200,0.292) -- (-0.195,0.312) -- (-0.188,0.327) -- (-0.181,0.340) -- (-0.171,0.350) -- (-0.160,0.356) -- (-0.146,0.359) -- (-0.129,0.361) -- (-0.096,0.358) -- (-0.052,0.348) -- (0.006,0.331) -- (0.048,0.315) -- (0.082,0.298) -- (0.111,0.278) -- (0.137,0.256) -- (0.158,0.232) -- (0.176,0.205) -- (0.190,0.176) -- (0.202,0.143) -- (0.210,0.106) -- (0.216,0.064) -- (0.220,0.016) -- (0.221,-0.041) -- (0.219,-0.093) -- (0.215,-0.134) -- (0.210,-0.172) -- (0.202,-0.207) -- (0.193,-0.242) -- (0.181,-0.274) -- (0.168,-0.305) -- (0.152,-0.334) -- (0.134,-0.362) -- (0.118,-0.384) -- (0.099,-0.405) -- (0.079,-0.426) -- (0.057,-0.446) -- (0.021,-0.475) -- (-0.071,-0.541) -- (-0.117,-0.580) -- (-0.135,-0.597) -- (-0.150,-0.615) -- (-0.162,-0.633) -- (-0.172,-0.652) -- (-0.182,-0.675) -- (-0.190,-0.702) -- (-0.197,-0.733) -- (-0.202,-0.769) -- (-0.210,-0.862) -- (-0.217,-1.000);
\draw[cur,cL] (-0.217,1.000) -- (-0.223,0.816) -- (-0.229,0.702) -- (-0.235,0.622) -- (-0.243,0.567) -- (-0.247,0.547) -- (-0.253,0.530) -- (-0.260,0.517) -- (-0.267,0.506) -- (-0.276,0.499) -- (-0.287,0.493) -- (-0.300,0.490) -- (-0.315,0.489) -- (-0.335,0.491) -- (-0.360,0.494) -- (-0.421,0.510) -- (-0.492,0.535) -- (-0.559,0.564) -- (-0.610,0.591) -- (-0.651,0.618) -- (-0.668,0.632) -- (-0.683,0.646) -- (-0.696,0.661) -- (-0.707,0.676) -- (-0.721,0.704) -- (-0.729,0.733) -- (-0.731,0.764) -- (-0.727,0.799) -- (-0.717,0.835) -- (-0.702,0.878) -- (-0.680,0.926) -- (-0.644,1.000);
\draw[sep] (-0.196,0.405) -- (-0.215,0.386) -- (-0.230,0.365) -- (-0.243,0.341) -- (-0.254,0.312);
\draw[sep] (-0.196,0.405) -- (-0.171,0.423) -- (-0.144,0.437) -- (-0.113,0.447) -- (-0.080,0.453);
\draw[asc] (-0.196,0.405) -- (-0.230,0.456) -- (-0.269,0.505);
\draw[asc] (-0.196,0.405) -- (-0.180,0.376) -- (-0.168,0.351);
\fill[sepc] (-0.269,0.505) circle (1.0pt);
\fill[sepc] (-0.168,0.351) circle (1.0pt);
\draw[land] (-0.254,0.312) circle (1.5pt);
\draw[land] (-0.080,0.453) circle (1.5pt);
\draw[ffr] (-1,-1) rectangle (1,1);
\draw[evf] (-0.196,0.405) +(-1.6pt,-1.6pt) rectangle +(1.6pt,1.6pt);
\draw[evo] (-0.254,-0.691) +(-1.6pt,-1.6pt) rectangle +(1.6pt,1.6pt);
\draw[evo] (-0.414,1.000) circle (1.5pt);
\draw[evo] (-0.414,-1.000) circle (1.5pt);
\draw[evo] (1.000,0.634) circle (1.5pt);
\draw[evo] (-1.000,0.634) circle (1.5pt);
\node[lab] at (0.22,0.84) {$L_1$};
\node[lab] at (0.48,0.0) {$L_2$};
\node[font=\scriptsize] at (0,-1.30) {(c) $t=0.48$};
\end{scope}
\begin{scope}[shift={(6.60,0.00)}]
\draw[fax] (-1,0) -- (1,0); \draw[fax] (0,-1) -- (0,1);
\draw[band,cO] (-0.830,-1.000) -- (-0.823,-0.951) -- (-0.814,-0.904) -- (-0.802,-0.861) -- (-0.787,-0.821) -- (-0.769,-0.782) -- (-0.749,-0.745) -- (-0.725,-0.710) -- (-0.698,-0.675) -- (-0.670,-0.645) -- (-0.640,-0.615) -- (-0.606,-0.585) -- (-0.568,-0.555) -- (-0.492,-0.501) -- (-0.332,-0.395) -- (-0.299,-0.371) -- (-0.272,-0.350) -- (-0.244,-0.325) -- (-0.222,-0.300) -- (-0.204,-0.274) -- (-0.190,-0.246) -- (-0.178,-0.216) -- (-0.170,-0.182) -- (-0.163,-0.143) -- (-0.158,-0.098) -- (-0.155,-0.038) -- (-0.154,0.030) -- (-0.156,0.088) -- (-0.160,0.136) -- (-0.164,0.167) -- (-0.171,0.195) -- (-0.178,0.218) -- (-0.188,0.240) -- (-0.200,0.260) -- (-0.215,0.278) -- (-0.233,0.295) -- (-0.254,0.312);
\draw[band,cL] (-0.254,0.312) -- (-0.285,0.334) -- (-0.325,0.357) -- (-0.523,0.459) -- (-0.571,0.486) -- (-0.614,0.512) -- (-0.657,0.541) -- (-0.695,0.570) -- (-0.728,0.600) -- (-0.755,0.629) -- (-0.781,0.665) -- (-0.802,0.702) -- (-0.818,0.740) -- (-0.829,0.782) -- (-0.836,0.828) -- (-0.838,0.878) -- (-0.836,0.935) -- (-0.830,1.000);
\draw[band,cO] (-0.093,-1.000) -- (-0.090,-0.781) -- (-0.085,-0.635) -- (-0.081,-0.578) -- (-0.076,-0.530) -- (-0.071,-0.490) -- (-0.063,-0.455) -- (-0.056,-0.428) -- (-0.047,-0.404) -- (-0.036,-0.382) -- (-0.024,-0.361) -- (-0.002,-0.331) -- (0.055,-0.265) -- (0.079,-0.235) -- (0.101,-0.203) -- (0.119,-0.171) -- (0.134,-0.134) -- (0.145,-0.095) -- (0.152,-0.053) -- (0.154,-0.009) -- (0.153,0.036) -- (0.147,0.078) -- (0.136,0.115) -- (0.122,0.151) -- (0.103,0.183) -- (0.078,0.214) -- (0.050,0.244) -- (-0.012,0.302) -- (-0.028,0.320) -- (-0.040,0.337) -- (-0.054,0.361) -- (-0.065,0.387) -- (-0.074,0.417) -- (-0.080,0.453);
\draw[band,cL] (-0.080,0.453) -- (-0.088,0.526) -- (-0.092,0.627) -- (-0.093,0.771) -- (-0.093,1.000);
\draw[curev] (-0.587,-1.000) -- (-0.559,-0.961) -- (-0.527,-0.922) -- (-0.492,-0.884) -- (-0.452,-0.845) -- (-0.410,-0.807) -- (-0.363,-0.770) -- (-0.311,-0.731) -- (-0.254,-0.691) -- (-0.260,-1.000);
\draw[curev] (-0.260,1.000) -- (-0.264,0.851) -- (-0.270,0.742) -- (-0.278,0.663) -- (-0.282,0.633) -- (-0.288,0.607) -- (-0.295,0.586) -- (-0.303,0.569) -- (-0.312,0.555) -- (-0.322,0.544) -- (-0.335,0.536) -- (-0.350,0.530) -- (-0.366,0.526) -- (-0.386,0.525) -- (-0.407,0.526) -- (-0.432,0.530) -- (-0.487,0.542) -- (-0.547,0.561) -- (-0.603,0.584) -- (-0.646,0.607) -- (-0.680,0.632) -- (-0.693,0.644) -- (-0.705,0.657) -- (-0.714,0.671) -- (-0.722,0.685) -- (-0.729,0.710) -- (-0.730,0.736) -- (-0.725,0.765) -- (-0.714,0.798) -- (-0.698,0.830) -- (-0.675,0.869) -- (-0.587,1.000);
\draw[curev] (-0.253,-0.690) -- (-0.245,-0.331) -- (-0.233,0.053) -- (-0.229,0.124) -- (-0.224,0.180) -- (-0.218,0.225) -- (-0.211,0.262) -- (-0.201,0.293) -- (-0.195,0.305) -- (-0.187,0.316) -- (-0.180,0.325) -- (-0.171,0.333) -- (-0.161,0.338) -- (-0.150,0.343) -- (-0.123,0.348) -- (-0.088,0.347) -- (-0.045,0.342) -- (0.006,0.331) -- (0.045,0.319) -- (0.079,0.305) -- (0.110,0.288) -- (0.137,0.270) -- (0.160,0.248) -- (0.179,0.224) -- (0.195,0.197) -- (0.209,0.166) -- (0.220,0.127) -- (0.228,0.083) -- (0.233,0.032) -- (0.235,-0.029) -- (0.235,-0.088) -- (0.231,-0.144) -- (0.224,-0.196) -- (0.213,-0.244) -- (0.204,-0.276) -- (0.193,-0.307) -- (0.181,-0.337) -- (0.166,-0.364) -- (0.150,-0.391) -- (0.132,-0.416) -- (0.112,-0.440) -- (0.089,-0.463) -- (0.056,-0.492) -- (0.018,-0.520) -- (-0.158,-0.629) -- (-0.253,-0.690);
\draw[sep] (-0.196,0.405) -- (-0.215,0.386) -- (-0.230,0.365) -- (-0.243,0.341) -- (-0.254,0.312);
\draw[sep] (-0.196,0.405) -- (-0.171,0.423) -- (-0.144,0.437) -- (-0.113,0.447) -- (-0.080,0.453);
\draw[sep] (-0.254,-0.691) -- (-0.288,-0.678) -- (-0.321,-0.661) -- (-0.352,-0.640) -- (-0.382,-0.615) -- (-0.407,-0.587) -- (-0.430,-0.556) -- (-0.450,-0.522) -- (-0.468,-0.485);
\draw[sep] (-0.254,-0.691) -- (-0.223,-0.698) -- (-0.189,-0.703) -- (-0.147,-0.706) -- (-0.088,-0.707);
\draw[asc] (-0.196,0.405) -- (-0.222,0.446) -- (-0.252,0.485) -- (-0.283,0.521) -- (-0.315,0.551);
\draw[asc] (-0.196,0.405) -- (-0.176,0.368) -- (-0.163,0.337);
\fill[sepc] (-0.315,0.551) circle (1.0pt);
\fill[sepc] (-0.163,0.337) circle (1.0pt);
\draw[land] (-0.254,0.312) circle (1.5pt);
\draw[land] (-0.080,0.453) circle (1.5pt);
\draw[land] (-0.468,-0.485) circle (1.5pt);
\draw[land] (-0.088,-0.707) circle (1.5pt);
\draw[ffr] (-1,-1) rectangle (1,1);
\draw[evf] (-0.196,0.405) +(-1.6pt,-1.6pt) rectangle +(1.6pt,1.6pt);
\draw[evo] (-0.254,-0.691) +(-1.6pt,-1.6pt) rectangle +(1.6pt,1.6pt);
\draw[evo] (-0.414,1.000) circle (1.5pt);
\draw[evo] (-0.414,-1.000) circle (1.5pt);
\draw[evo] (1.000,0.634) circle (1.5pt);
\draw[evo] (-1.000,0.634) circle (1.5pt);
\node[lab] at (-0.104,-0.826) {$q_2$};
\node[lab,left] at (-0.498,-0.485) {$d_1$};
\node[lab,right] at (-0.058,-0.707) {$d_2$};
\node[font=\scriptsize] at (0,-1.30) {(d) $t\approx0.564$: $q_2$};
\end{scope}
\begin{scope}[shift={(8.80,0.00)}]
\draw[fax] (-1,0) -- (1,0); \draw[fax] (0,-1) -- (0,1);
\draw[band,cO] (-0.830,-1.000) -- (-0.825,-0.958) -- (-0.817,-0.918) -- (-0.808,-0.880) -- (-0.796,-0.845) -- (-0.783,-0.811) -- (-0.768,-0.778) -- (-0.750,-0.747) -- (-0.730,-0.716) -- (-0.708,-0.687) -- (-0.683,-0.658) -- (-0.656,-0.631) -- (-0.626,-0.603) -- (-0.593,-0.575) -- (-0.557,-0.546) -- (-0.468,-0.485);
\draw[band,cM] (-0.468,-0.485) -- (-0.322,-0.388) -- (-0.282,-0.359) -- (-0.251,-0.332) -- (-0.224,-0.302) -- (-0.203,-0.272) -- (-0.187,-0.240) -- (-0.175,-0.203) -- (-0.167,-0.170) -- (-0.162,-0.134) -- (-0.158,-0.092) -- (-0.155,-0.044) -- (-0.154,0.015) -- (-0.155,0.070) -- (-0.158,0.116) -- (-0.162,0.155) -- (-0.167,0.182) -- (-0.174,0.206) -- (-0.182,0.226) -- (-0.191,0.246) -- (-0.203,0.264) -- (-0.217,0.280) -- (-0.234,0.296) -- (-0.254,0.312);
\draw[band,cL] (-0.254,0.312) -- (-0.285,0.334) -- (-0.325,0.357) -- (-0.523,0.459) -- (-0.571,0.486) -- (-0.614,0.512) -- (-0.657,0.541) -- (-0.695,0.570) -- (-0.728,0.600) -- (-0.755,0.629) -- (-0.781,0.665) -- (-0.802,0.702) -- (-0.818,0.740) -- (-0.829,0.782) -- (-0.836,0.828) -- (-0.838,0.878) -- (-0.836,0.935) -- (-0.830,1.000);
\draw[band,cO] (-0.093,-1.000) -- (-0.091,-0.832) -- (-0.088,-0.707);
\draw[band,cM] (-0.088,-0.707) -- (-0.083,-0.607) -- (-0.077,-0.531) -- (-0.067,-0.471) -- (-0.053,-0.422) -- (-0.038,-0.386) -- (-0.019,-0.353) -- (-0.000,-0.329) -- (0.054,-0.266) -- (0.077,-0.237) -- (0.099,-0.206) -- (0.117,-0.174) -- (0.134,-0.136) -- (0.145,-0.097) -- (0.152,-0.054) -- (0.154,-0.008) -- (0.153,0.038) -- (0.147,0.079) -- (0.136,0.116) -- (0.121,0.151) -- (0.102,0.183) -- (0.078,0.214) -- (0.050,0.244) -- (-0.012,0.302) -- (-0.028,0.320) -- (-0.040,0.337) -- (-0.054,0.361) -- (-0.065,0.387) -- (-0.074,0.417) -- (-0.080,0.453);
\draw[band,cL] (-0.080,0.453) -- (-0.088,0.526) -- (-0.092,0.627) -- (-0.093,0.771) -- (-0.093,1.000);
\draw[cur,cO] (-0.527,-1.000) -- (-0.504,-0.977) -- (-0.480,-0.954) -- (-0.455,-0.933) -- (-0.432,-0.916) -- (-0.411,-0.903) -- (-0.392,-0.893) -- (-0.375,-0.887) -- (-0.361,-0.885) -- (-0.350,-0.887) -- (-0.341,-0.892) -- (-0.333,-0.900) -- (-0.326,-0.912) -- (-0.321,-0.927) -- (-0.316,-0.947) -- (-0.310,-1.000);
\draw[cur,cL] (-0.310,1.000) -- (-0.307,0.948) -- (-0.305,0.891) -- (-0.307,0.779) -- (-0.310,0.732) -- (-0.314,0.692) -- (-0.319,0.657) -- (-0.326,0.629) -- (-0.334,0.606) -- (-0.343,0.588) -- (-0.353,0.573) -- (-0.366,0.561) -- (-0.381,0.552) -- (-0.398,0.546) -- (-0.419,0.542) -- (-0.442,0.541) -- (-0.465,0.542) -- (-0.490,0.545) -- (-0.545,0.556) -- (-0.602,0.573) -- (-0.653,0.594) -- (-0.691,0.615) -- (-0.706,0.626) -- (-0.719,0.637) -- (-0.730,0.650) -- (-0.738,0.662) -- (-0.744,0.674) -- (-0.748,0.688) -- (-0.749,0.699) -- (-0.749,0.711) -- (-0.743,0.735) -- (-0.731,0.763) -- (-0.712,0.793) -- (-0.689,0.823) -- (-0.658,0.860) -- (-0.527,1.000);
\draw[cur,cM] (-0.202,-0.651) -- (-0.211,-0.649) -- (-0.218,-0.645) -- (-0.226,-0.639) -- (-0.232,-0.630) -- (-0.237,-0.620) -- (-0.241,-0.607) -- (-0.248,-0.573) -- (-0.253,-0.526) -- (-0.255,-0.463) -- (-0.256,-0.380) -- (-0.255,-0.275) -- (-0.251,-0.131) -- (-0.245,0.025) -- (-0.241,0.104) -- (-0.235,0.166) -- (-0.226,0.220) -- (-0.215,0.260) -- (-0.208,0.277) -- (-0.201,0.291) -- (-0.192,0.303) -- (-0.183,0.313) -- (-0.168,0.323) -- (-0.151,0.331) -- (-0.131,0.336) -- (-0.107,0.339) -- (-0.079,0.339) -- (-0.047,0.336) -- (-0.009,0.330) -- (0.026,0.323) -- (0.054,0.315) -- (0.079,0.306) -- (0.103,0.296) -- (0.125,0.284) -- (0.145,0.271) -- (0.162,0.257) -- (0.177,0.241) -- (0.191,0.223) -- (0.202,0.205) -- (0.212,0.186) -- (0.220,0.165) -- (0.227,0.143) -- (0.238,0.092) -- (0.245,0.031) -- (0.247,-0.054) -- (0.245,-0.133) -- (0.237,-0.204) -- (0.232,-0.237) -- (0.224,-0.270) -- (0.214,-0.305) -- (0.203,-0.338) -- (0.188,-0.371) -- (0.173,-0.401) -- (0.155,-0.429) -- (0.135,-0.455) -- (0.113,-0.481) -- (0.088,-0.504) -- (0.057,-0.530) -- (0.023,-0.553) -- (-0.023,-0.579) -- (-0.082,-0.611) -- (-0.124,-0.631) -- (-0.156,-0.643) -- (-0.182,-0.650) -- (-0.202,-0.651);
\draw[sep] (-0.196,0.405) -- (-0.215,0.386) -- (-0.230,0.365) -- (-0.243,0.341) -- (-0.254,0.312);
\draw[sep] (-0.196,0.405) -- (-0.171,0.423) -- (-0.144,0.437) -- (-0.113,0.447) -- (-0.080,0.453);
\draw[sep] (-0.254,-0.691) -- (-0.288,-0.678) -- (-0.321,-0.661) -- (-0.352,-0.640) -- (-0.382,-0.615) -- (-0.407,-0.587) -- (-0.430,-0.556) -- (-0.450,-0.522) -- (-0.468,-0.485);
\draw[sep] (-0.254,-0.691) -- (-0.223,-0.698) -- (-0.189,-0.703) -- (-0.147,-0.706) -- (-0.088,-0.707);
\draw[asc] (-0.196,0.405) -- (-0.230,0.457) -- (-0.269,0.505) -- (-0.309,0.546) -- (-0.329,0.564) -- (-0.349,0.579);
\draw[asc] (-0.196,0.405) -- (-0.173,0.363) -- (-0.166,0.345) -- (-0.161,0.327);
\draw[asc] (-0.254,-0.691) -- (-0.234,-0.661) -- (-0.219,-0.644);
\draw[asc] (-0.254,-0.691) -- (-0.299,-0.771) -- (-0.359,-0.885);
\fill[sepc] (-0.349,0.579) circle (1.0pt);
\fill[sepc] (-0.161,0.327) circle (1.0pt);
\fill[sepc] (-0.219,-0.644) circle (1.0pt);
\fill[sepc] (-0.359,-0.885) circle (1.0pt);
\draw[land] (-0.254,0.312) circle (1.5pt);
\draw[land] (-0.080,0.453) circle (1.5pt);
\draw[land] (-0.468,-0.485) circle (1.5pt);
\draw[land] (-0.088,-0.707) circle (1.5pt);
\draw[ffr] (-1,-1) rectangle (1,1);
\draw[evf] (-0.196,0.405) +(-1.6pt,-1.6pt) rectangle +(1.6pt,1.6pt);
\draw[evf] (-0.254,-0.691) +(-1.6pt,-1.6pt) rectangle +(1.6pt,1.6pt);
\draw[evo] (-0.414,1.000) circle (1.5pt);
\draw[evo] (-0.414,-1.000) circle (1.5pt);
\draw[evo] (1.000,0.634) circle (1.5pt);
\draw[evo] (-1.000,0.634) circle (1.5pt);
\node[lab] at (0.22,0.84) {$L_1$};
\node[lab] at (0.45,0.0) {$O_1$};
\node[lab] at (-0.75,-0.75) {$L_2^{\prime}$};
\node[font=\scriptsize] at (0,-1.30) {(e) $t=0.64$};
\end{scope}
\begin{scope}[shift={(0.00,-2.70)}]
\draw[fax] (-1,0) -- (1,0); \draw[fax] (0,-1) -- (0,1);
\draw[band,cO] (-0.830,-1.000) -- (-0.825,-0.958) -- (-0.817,-0.918) -- (-0.808,-0.880) -- (-0.796,-0.845) -- (-0.783,-0.811) -- (-0.768,-0.778) -- (-0.750,-0.747) -- (-0.730,-0.716) -- (-0.708,-0.687) -- (-0.683,-0.658) -- (-0.656,-0.631) -- (-0.626,-0.603) -- (-0.593,-0.575) -- (-0.557,-0.546) -- (-0.468,-0.485);
\draw[band,cM] (-0.468,-0.485) -- (-0.322,-0.388) -- (-0.282,-0.359) -- (-0.251,-0.332) -- (-0.224,-0.302) -- (-0.203,-0.272) -- (-0.187,-0.240) -- (-0.175,-0.203) -- (-0.167,-0.170) -- (-0.162,-0.134) -- (-0.158,-0.092) -- (-0.155,-0.044) -- (-0.154,0.015) -- (-0.155,0.070) -- (-0.158,0.116) -- (-0.162,0.155) -- (-0.167,0.182) -- (-0.174,0.206) -- (-0.182,0.226) -- (-0.191,0.246) -- (-0.203,0.264) -- (-0.217,0.280) -- (-0.234,0.296) -- (-0.254,0.312);
\draw[band,cL] (-0.254,0.312) -- (-0.285,0.334) -- (-0.325,0.357) -- (-0.523,0.459) -- (-0.571,0.486) -- (-0.614,0.512) -- (-0.657,0.541) -- (-0.695,0.570) -- (-0.728,0.600) -- (-0.755,0.629) -- (-0.781,0.665) -- (-0.802,0.702) -- (-0.818,0.740) -- (-0.829,0.782) -- (-0.836,0.828) -- (-0.838,0.878) -- (-0.836,0.935) -- (-0.830,1.000);
\draw[band,cO] (-0.093,-1.000) -- (-0.091,-0.832) -- (-0.088,-0.707);
\draw[band,cM] (-0.088,-0.707) -- (-0.083,-0.607) -- (-0.077,-0.531) -- (-0.067,-0.471) -- (-0.053,-0.422) -- (-0.038,-0.386) -- (-0.019,-0.353) -- (-0.000,-0.329) -- (0.054,-0.266) -- (0.077,-0.237) -- (0.099,-0.206) -- (0.117,-0.174) -- (0.134,-0.136) -- (0.145,-0.097) -- (0.152,-0.054) -- (0.154,-0.008) -- (0.153,0.038) -- (0.147,0.079) -- (0.136,0.116) -- (0.121,0.151) -- (0.102,0.183) -- (0.078,0.214) -- (0.050,0.244) -- (-0.012,0.302) -- (-0.028,0.320) -- (-0.040,0.337) -- (-0.054,0.361) -- (-0.065,0.387) -- (-0.074,0.417) -- (-0.080,0.453);
\draw[band,cL] (-0.080,0.453) -- (-0.088,0.526) -- (-0.092,0.627) -- (-0.093,0.771) -- (-0.093,1.000);
\draw[curev] (-0.410,1.000) -- (-0.400,0.998) -- (-0.390,0.993) -- (-0.382,0.986) -- (-0.374,0.976) -- (-0.367,0.964) -- (-0.361,0.949) -- (-0.351,0.910) -- (-0.344,0.854) -- (-0.341,0.790) -- (-0.342,0.726) -- (-0.348,0.671) -- (-0.355,0.639) -- (-0.365,0.611) -- (-0.377,0.589) -- (-0.391,0.572) -- (-0.410,0.559) -- (-0.431,0.550) -- (-0.456,0.545) -- (-0.486,0.543) -- (-0.512,0.544) -- (-0.540,0.547) -- (-0.601,0.558) -- (-0.661,0.575) -- (-0.688,0.585) -- (-0.712,0.596) -- (-0.731,0.606) -- (-0.748,0.617) -- (-0.762,0.629) -- (-0.772,0.640) -- (-0.780,0.652) -- (-0.785,0.664) -- (-0.788,0.676) -- (-0.787,0.690) -- (-0.784,0.704) -- (-0.777,0.719) -- (-0.768,0.734) -- (-0.756,0.751) -- (-0.721,0.788) -- (-0.670,0.833) -- (-0.580,0.905) -- (-0.502,0.962) -- (-0.476,0.978) -- (-0.454,0.989) -- (-0.435,0.997) -- (-0.418,1.000);
\draw[curev] (-0.192,-0.663) -- (-0.202,-0.661) -- (-0.211,-0.657) -- (-0.218,-0.652) -- (-0.226,-0.645) -- (-0.238,-0.626) -- (-0.247,-0.600) -- (-0.255,-0.564) -- (-0.260,-0.520) -- (-0.263,-0.464) -- (-0.265,-0.395) -- (-0.264,-0.281) -- (-0.261,-0.142) -- (-0.255,0.018) -- (-0.249,0.108) -- (-0.241,0.172) -- (-0.231,0.221) -- (-0.218,0.258) -- (-0.211,0.274) -- (-0.202,0.286) -- (-0.192,0.298) -- (-0.178,0.309) -- (-0.164,0.317) -- (-0.148,0.323) -- (-0.131,0.327) -- (-0.109,0.330) -- (-0.086,0.331) -- (-0.059,0.330) -- (-0.026,0.327) -- (0.011,0.322) -- (0.041,0.316) -- (0.068,0.308) -- (0.094,0.300) -- (0.118,0.289) -- (0.140,0.277) -- (0.159,0.264) -- (0.184,0.241) -- (0.205,0.214) -- (0.222,0.183) -- (0.235,0.147) -- (0.245,0.106) -- (0.251,0.057) -- (0.256,0.001) -- (0.257,-0.067) -- (0.256,-0.132) -- (0.251,-0.193) -- (0.243,-0.248) -- (0.231,-0.301) -- (0.216,-0.348) -- (0.198,-0.393) -- (0.177,-0.434) -- (0.152,-0.471) -- (0.124,-0.504) -- (0.092,-0.535) -- (0.056,-0.563) -- (0.016,-0.588) -- (-0.057,-0.625) -- (-0.116,-0.649) -- (-0.140,-0.657) -- (-0.160,-0.662) -- (-0.177,-0.664) -- (-0.192,-0.663);
\draw[sep] (-0.196,0.405) -- (-0.215,0.386) -- (-0.230,0.365) -- (-0.243,0.341) -- (-0.254,0.312);
\draw[sep] (-0.196,0.405) -- (-0.171,0.423) -- (-0.144,0.437) -- (-0.113,0.447) -- (-0.080,0.453);
\draw[sep] (-0.254,-0.691) -- (-0.288,-0.678) -- (-0.321,-0.661) -- (-0.352,-0.640) -- (-0.382,-0.615) -- (-0.407,-0.587) -- (-0.430,-0.556) -- (-0.450,-0.522) -- (-0.468,-0.485);
\draw[sep] (-0.254,-0.691) -- (-0.223,-0.698) -- (-0.189,-0.703) -- (-0.147,-0.706) -- (-0.088,-0.707);
\draw[asc] (-0.196,0.405) -- (-0.215,0.435) -- (-0.237,0.465) -- (-0.259,0.493) -- (-0.282,0.520) -- (-0.306,0.543) -- (-0.329,0.563) -- (-0.351,0.580) -- (-0.373,0.594);
\draw[asc] (-0.196,0.405) -- (-0.182,0.380) -- (-0.171,0.358) -- (-0.164,0.339) -- (-0.159,0.320);
\draw[asc] (-0.254,-0.691) -- (-0.234,-0.661) -- (-0.219,-0.644) -- (-0.224,-0.647);
\draw[asc] (-0.254,-0.691) -- (-0.286,-0.747) -- (-0.328,-0.825) -- (-0.373,-0.915) -- (-0.414,-1.000) -- (-0.414,-1.000);
\fill[sepc] (-0.373,0.594) circle (1.0pt);
\fill[sepc] (-0.159,0.320) circle (1.0pt);
\fill[sepc] (-0.224,-0.647) circle (1.0pt);
\draw[land] (-0.254,0.312) circle (1.5pt);
\draw[land] (-0.080,0.453) circle (1.5pt);
\draw[land] (-0.468,-0.485) circle (1.5pt);
\draw[land] (-0.088,-0.707) circle (1.5pt);
\draw[ffr] (-1,-1) rectangle (1,1);
\draw[evf] (-0.196,0.405) +(-1.6pt,-1.6pt) rectangle +(1.6pt,1.6pt);
\draw[evf] (-0.254,-0.691) +(-1.6pt,-1.6pt) rectangle +(1.6pt,1.6pt);
\draw[evo] (-0.414,1.000) circle (1.5pt);
\draw[evo] (-0.414,-1.000) circle (1.5pt);
\draw[evo] (1.000,0.634) circle (1.5pt);
\draw[evo] (-1.000,0.634) circle (1.5pt);
\node[lab,above] at (-0.414,1) {$p$};
\node[font=\scriptsize] at (0,-1.30) {(f) $t=\frac1{\sqrt2}$: $p$};
\end{scope}
\begin{scope}[shift={(2.20,-2.70)}]
\draw[fax] (-1,0) -- (1,0); \draw[fax] (0,-1) -- (0,1);
\draw[band,gray!40] (-0.830,-1.000) -- (-0.825,-0.958) -- (-0.817,-0.918) -- (-0.808,-0.880) -- (-0.796,-0.845) -- (-0.783,-0.811) -- (-0.768,-0.778) -- (-0.750,-0.747) -- (-0.730,-0.716) -- (-0.708,-0.687) -- (-0.683,-0.658) -- (-0.656,-0.631) -- (-0.626,-0.603) -- (-0.593,-0.575) -- (-0.557,-0.546) -- (-0.468,-0.485);
\draw[band,cM] (-0.468,-0.485) -- (-0.322,-0.388) -- (-0.282,-0.359) -- (-0.251,-0.332) -- (-0.224,-0.302) -- (-0.203,-0.272) -- (-0.187,-0.240) -- (-0.175,-0.203) -- (-0.167,-0.170) -- (-0.162,-0.134) -- (-0.158,-0.092) -- (-0.155,-0.044) -- (-0.154,0.015) -- (-0.155,0.070) -- (-0.158,0.116) -- (-0.162,0.155) -- (-0.167,0.182) -- (-0.174,0.206) -- (-0.182,0.226) -- (-0.191,0.246) -- (-0.203,0.264) -- (-0.217,0.280) -- (-0.234,0.296) -- (-0.254,0.312);
\draw[band,cL] (-0.254,0.312) -- (-0.285,0.334) -- (-0.325,0.357) -- (-0.523,0.459) -- (-0.571,0.486) -- (-0.614,0.512) -- (-0.657,0.541) -- (-0.695,0.570) -- (-0.728,0.600) -- (-0.755,0.629) -- (-0.781,0.665) -- (-0.802,0.702) -- (-0.818,0.740) -- (-0.829,0.782) -- (-0.836,0.828) -- (-0.838,0.878) -- (-0.836,0.935) -- (-0.830,1.000);
\draw[band,gray!40] (-0.093,-1.000) -- (-0.091,-0.832) -- (-0.088,-0.707);
\draw[band,cM] (-0.088,-0.707) -- (-0.083,-0.607) -- (-0.077,-0.531) -- (-0.067,-0.471) -- (-0.053,-0.422) -- (-0.038,-0.386) -- (-0.019,-0.353) -- (-0.000,-0.329) -- (0.054,-0.266) -- (0.077,-0.237) -- (0.099,-0.206) -- (0.117,-0.174) -- (0.134,-0.136) -- (0.145,-0.097) -- (0.152,-0.054) -- (0.154,-0.008) -- (0.153,0.038) -- (0.147,0.079) -- (0.136,0.116) -- (0.121,0.151) -- (0.102,0.183) -- (0.078,0.214) -- (0.050,0.244) -- (-0.012,0.302) -- (-0.028,0.320) -- (-0.040,0.337) -- (-0.054,0.361) -- (-0.065,0.387) -- (-0.074,0.417) -- (-0.080,0.453);
\draw[band,cL] (-0.080,0.453) -- (-0.088,0.526) -- (-0.092,0.627) -- (-0.093,0.771) -- (-0.093,1.000);
\draw[cur,cM] (-0.182,-0.682) -- (-0.202,-0.677) -- (-0.218,-0.668) -- (-0.232,-0.654) -- (-0.243,-0.634) -- (-0.253,-0.609) -- (-0.260,-0.576) -- (-0.266,-0.537) -- (-0.271,-0.490) -- (-0.274,-0.413) -- (-0.275,-0.320) -- (-0.273,-0.207) -- (-0.269,-0.082) -- (-0.264,0.024) -- (-0.258,0.094) -- (-0.251,0.153) -- (-0.242,0.200) -- (-0.236,0.222) -- (-0.228,0.242) -- (-0.220,0.259) -- (-0.210,0.274) -- (-0.198,0.286) -- (-0.186,0.296) -- (-0.172,0.305) -- (-0.155,0.312) -- (-0.136,0.317) -- (-0.114,0.320) -- (-0.089,0.322) -- (-0.063,0.322) -- (-0.029,0.320) -- (0.007,0.316) -- (0.037,0.311) -- (0.066,0.305) -- (0.103,0.294) -- (0.134,0.281) -- (0.161,0.265) -- (0.185,0.246) -- (0.204,0.225) -- (0.221,0.201) -- (0.235,0.172) -- (0.246,0.140) -- (0.255,0.096) -- (0.262,0.044) -- (0.266,-0.023) -- (0.267,-0.094) -- (0.265,-0.156) -- (0.260,-0.215) -- (0.252,-0.270) -- (0.241,-0.321) -- (0.230,-0.360) -- (0.217,-0.395) -- (0.203,-0.429) -- (0.186,-0.460) -- (0.168,-0.489) -- (0.147,-0.515) -- (0.124,-0.541) -- (0.099,-0.563) -- (0.068,-0.588) -- (0.035,-0.609) -- (0.003,-0.626) -- (-0.048,-0.649) -- (-0.092,-0.665) -- (-0.127,-0.676) -- (-0.157,-0.682) -- (-0.182,-0.682);
\draw[cur,cL] (-0.562,0.535) -- (-0.623,0.542) -- (-0.686,0.554) -- (-0.747,0.571) -- (-0.795,0.590) -- (-0.813,0.599) -- (-0.829,0.609) -- (-0.841,0.619) -- (-0.851,0.630) -- (-0.857,0.640) -- (-0.860,0.651) -- (-0.861,0.662) -- (-0.858,0.674) -- (-0.851,0.690) -- (-0.839,0.706) -- (-0.821,0.724) -- (-0.799,0.743) -- (-0.771,0.764) -- (-0.737,0.788) -- (-0.651,0.840) -- (-0.573,0.882) -- (-0.542,0.897) -- (-0.515,0.907) -- (-0.493,0.914) -- (-0.473,0.918) -- (-0.455,0.918) -- (-0.440,0.915) -- (-0.430,0.910) -- (-0.420,0.903) -- (-0.411,0.894) -- (-0.403,0.883) -- (-0.396,0.870) -- (-0.390,0.854) -- (-0.380,0.815) -- (-0.374,0.765) -- (-0.373,0.712) -- (-0.378,0.663) -- (-0.382,0.642) -- (-0.387,0.623) -- (-0.397,0.599) -- (-0.410,0.579) -- (-0.425,0.563) -- (-0.444,0.551) -- (-0.467,0.542) -- (-0.494,0.537) -- (-0.525,0.534) -- (-0.562,0.535);
\draw[sep] (-0.196,0.405) -- (-0.215,0.386) -- (-0.230,0.365) -- (-0.243,0.341) -- (-0.254,0.312);
\draw[sep] (-0.196,0.405) -- (-0.171,0.423) -- (-0.144,0.437) -- (-0.113,0.447) -- (-0.080,0.453);
\draw[sep] (-0.254,-0.691) -- (-0.288,-0.678) -- (-0.321,-0.661) -- (-0.352,-0.640) -- (-0.382,-0.615) -- (-0.407,-0.587) -- (-0.430,-0.556) -- (-0.450,-0.522) -- (-0.468,-0.485);
\draw[sep] (-0.254,-0.691) -- (-0.223,-0.698) -- (-0.189,-0.703) -- (-0.147,-0.706) -- (-0.088,-0.707);
\draw[asc] (-0.196,0.405) -- (-0.218,0.439) -- (-0.242,0.472) -- (-0.268,0.503) -- (-0.294,0.532) -- (-0.320,0.556) -- (-0.345,0.576) -- (-0.370,0.592) -- (-0.394,0.604);
\draw[asc] (-0.196,0.405) -- (-0.181,0.378) -- (-0.170,0.354) -- (-0.162,0.334) -- (-0.157,0.311);
\draw[asc] (-0.254,-0.691) -- (-0.234,-0.661) -- (-0.219,-0.644) -- (-0.234,-0.651);
\draw[asc] (-0.254,-0.691) -- (-0.286,-0.747) -- (-0.328,-0.825) -- (-0.373,-0.915) -- (-0.414,-1.000) -- (-0.414,-1.000);
\draw[asc] (-0.414,1.000) -- (-0.414,1.000) -- (-0.436,0.956) -- (-0.458,0.918);
\fill[sepc] (-0.394,0.604) circle (1.0pt);
\fill[sepc] (-0.157,0.311) circle (1.0pt);
\fill[sepc] (-0.234,-0.651) circle (1.0pt);
\fill[sepc] (-0.458,0.918) circle (1.0pt);
\draw[land] (-0.254,0.312) circle (1.5pt);
\draw[land] (-0.080,0.453) circle (1.5pt);
\draw[land] (-0.468,-0.485) circle (1.5pt);
\draw[land] (-0.088,-0.707) circle (1.5pt);
\draw[ffr] (-1,-1) rectangle (1,1);
\draw[evf] (-0.196,0.405) +(-1.6pt,-1.6pt) rectangle +(1.6pt,1.6pt);
\draw[evf] (-0.254,-0.691) +(-1.6pt,-1.6pt) rectangle +(1.6pt,1.6pt);
\draw[evf] (-0.414,1.000) circle (1.5pt);
\draw[evf] (-0.414,-1.000) circle (1.5pt);
\draw[evo] (1.000,0.634) circle (1.5pt);
\draw[evo] (-1.000,0.634) circle (1.5pt);
\node[lab] at (0.18,0.80) {$O_2$};
\node[lab] at (0.45,0.0) {$O_1$};
\node[font=\scriptsize] at (0,-1.30) {(g) $t=0.78$};
\end{scope}
\begin{scope}[shift={(4.40,-2.70)}]
\draw[fax] (-1,0) -- (1,0); \draw[fax] (0,-1) -- (0,1);
\draw[band,gray!40] (-0.830,-1.000) -- (-0.825,-0.958) -- (-0.817,-0.918) -- (-0.808,-0.880) -- (-0.796,-0.845) -- (-0.783,-0.811) -- (-0.768,-0.778) -- (-0.750,-0.747) -- (-0.730,-0.716) -- (-0.708,-0.687) -- (-0.683,-0.658) -- (-0.656,-0.631) -- (-0.626,-0.603) -- (-0.593,-0.575) -- (-0.557,-0.546) -- (-0.468,-0.485);
\draw[band,cM] (-0.468,-0.485) -- (-0.322,-0.388) -- (-0.282,-0.359) -- (-0.251,-0.332) -- (-0.224,-0.302) -- (-0.203,-0.272) -- (-0.187,-0.240) -- (-0.175,-0.203) -- (-0.167,-0.170) -- (-0.162,-0.134) -- (-0.158,-0.092) -- (-0.155,-0.044) -- (-0.154,0.015) -- (-0.155,0.070) -- (-0.158,0.116) -- (-0.162,0.155) -- (-0.167,0.182) -- (-0.174,0.206) -- (-0.182,0.226) -- (-0.191,0.246) -- (-0.203,0.264) -- (-0.217,0.280) -- (-0.234,0.296) -- (-0.254,0.312);
\draw[band,cL] (-0.254,0.312) -- (-0.285,0.334) -- (-0.325,0.357) -- (-0.523,0.459) -- (-0.571,0.486) -- (-0.614,0.512) -- (-0.657,0.541) -- (-0.695,0.570) -- (-0.728,0.600) -- (-0.755,0.629) -- (-0.781,0.665) -- (-0.802,0.702) -- (-0.818,0.740) -- (-0.829,0.782) -- (-0.836,0.828) -- (-0.838,0.878) -- (-0.836,0.935) -- (-0.830,1.000);
\draw[band,gray!40] (-0.093,-1.000) -- (-0.091,-0.832) -- (-0.088,-0.707);
\draw[band,cM] (-0.088,-0.707) -- (-0.083,-0.607) -- (-0.077,-0.531) -- (-0.067,-0.471) -- (-0.053,-0.422) -- (-0.038,-0.386) -- (-0.019,-0.353) -- (-0.000,-0.329) -- (0.054,-0.266) -- (0.077,-0.237) -- (0.099,-0.206) -- (0.117,-0.174) -- (0.134,-0.136) -- (0.145,-0.097) -- (0.152,-0.054) -- (0.154,-0.008) -- (0.153,0.038) -- (0.147,0.079) -- (0.136,0.116) -- (0.121,0.151) -- (0.102,0.183) -- (0.078,0.214) -- (0.050,0.244) -- (-0.012,0.302) -- (-0.028,0.320) -- (-0.040,0.337) -- (-0.054,0.361) -- (-0.065,0.387) -- (-0.074,0.417) -- (-0.080,0.453);
\draw[band,cL] (-0.080,0.453) -- (-0.088,0.526) -- (-0.092,0.627) -- (-0.093,0.771) -- (-0.093,1.000);
\draw[curev] (-1.000,0.637) -- (-0.997,0.649) -- (-0.990,0.660) -- (-0.980,0.672) -- (-0.964,0.685) -- (-0.944,0.700) -- (-0.921,0.714) -- (-0.860,0.746) -- (-0.756,0.794) -- (-0.652,0.836) -- (-0.613,0.850) -- (-0.580,0.860) -- (-0.551,0.867) -- (-0.526,0.871) -- (-0.496,0.871) -- (-0.484,0.869) -- (-0.472,0.865) -- (-0.461,0.860) -- (-0.451,0.853) -- (-0.442,0.844) -- (-0.434,0.835) -- (-0.426,0.822) -- (-0.420,0.809) -- (-0.410,0.776) -- (-0.403,0.738) -- (-0.401,0.695) -- (-0.403,0.655) -- (-0.409,0.620) -- (-0.419,0.590) -- (-0.426,0.577) -- (-0.434,0.565) -- (-0.443,0.555) -- (-0.453,0.546) -- (-0.476,0.532) -- (-0.505,0.522) -- (-0.540,0.516) -- (-0.569,0.514) -- (-0.602,0.514) -- (-0.639,0.516) -- (-0.680,0.520) -- (-0.768,0.532) -- (-0.852,0.550) -- (-0.913,0.568) -- (-0.939,0.577) -- (-0.960,0.587) -- (-0.976,0.597) -- (-0.989,0.609) -- (-0.997,0.620) -- (-1.000,0.631);
\draw[curev] (-0.176,-0.705) -- (-0.198,-0.701) -- (-0.217,-0.691) -- (-0.234,-0.675) -- (-0.247,-0.655) -- (-0.258,-0.628) -- (-0.267,-0.595) -- (-0.275,-0.554) -- (-0.280,-0.505) -- (-0.284,-0.431) -- (-0.286,-0.341) -- (-0.285,-0.235) -- (-0.281,-0.118) -- (-0.275,0.004) -- (-0.270,0.075) -- (-0.262,0.137) -- (-0.252,0.186) -- (-0.244,0.210) -- (-0.236,0.230) -- (-0.227,0.247) -- (-0.216,0.262) -- (-0.204,0.274) -- (-0.190,0.285) -- (-0.173,0.294) -- (-0.155,0.301) -- (-0.133,0.306) -- (-0.108,0.310) -- (-0.081,0.311) -- (-0.048,0.311) -- (-0.011,0.309) -- (0.027,0.305) -- (0.061,0.300) -- (0.091,0.293) -- (0.125,0.282) -- (0.155,0.268) -- (0.181,0.252) -- (0.203,0.233) -- (0.222,0.211) -- (0.237,0.185) -- (0.250,0.156) -- (0.259,0.122) -- (0.268,0.075) -- (0.274,0.018) -- (0.278,-0.057) -- (0.278,-0.131) -- (0.275,-0.194) -- (0.269,-0.254) -- (0.260,-0.310) -- (0.248,-0.361) -- (0.237,-0.398) -- (0.224,-0.434) -- (0.208,-0.467) -- (0.192,-0.497) -- (0.173,-0.526) -- (0.152,-0.553) -- (0.128,-0.577) -- (0.104,-0.599) -- (0.074,-0.621) -- (0.041,-0.641) -- (0.007,-0.657) -- (-0.038,-0.676) -- (-0.082,-0.691) -- (-0.118,-0.701) -- (-0.150,-0.705) -- (-0.176,-0.705);
\draw[sep] (-0.196,0.405) -- (-0.215,0.386) -- (-0.230,0.365) -- (-0.243,0.341) -- (-0.254,0.312);
\draw[sep] (-0.196,0.405) -- (-0.171,0.423) -- (-0.144,0.437) -- (-0.113,0.447) -- (-0.080,0.453);
\draw[sep] (-0.254,-0.691) -- (-0.288,-0.678) -- (-0.321,-0.661) -- (-0.352,-0.640) -- (-0.382,-0.615) -- (-0.407,-0.587) -- (-0.430,-0.556) -- (-0.450,-0.522) -- (-0.468,-0.485);
\draw[sep] (-0.254,-0.691) -- (-0.223,-0.698) -- (-0.189,-0.703) -- (-0.147,-0.706) -- (-0.088,-0.707);
\draw[sep] (-1.000,0.634) -- (-1.000,0.634) -- (-0.905,0.648) -- (-0.826,0.664) -- (-0.762,0.680) -- (-0.737,0.689) -- (-0.718,0.697) -- (-0.731,0.688) -- (-0.744,0.675) -- (-0.756,0.661) -- (-0.767,0.645);
\draw[asc] (-0.196,0.405) -- (-0.220,0.442) -- (-0.247,0.478) -- (-0.275,0.512) -- (-0.304,0.541) -- (-0.332,0.566) -- (-0.360,0.586) -- (-0.386,0.601) -- (-0.412,0.610);
\draw[asc] (-0.196,0.405) -- (-0.180,0.376) -- (-0.168,0.350) -- (-0.161,0.327) -- (-0.155,0.301);
\draw[asc] (-0.254,-0.691) -- (-0.234,-0.661) -- (-0.219,-0.644) -- (-0.230,-0.650) -- (-0.247,-0.656);
\draw[asc] (-0.254,-0.691) -- (-0.286,-0.747) -- (-0.328,-0.825) -- (-0.373,-0.915) -- (-0.414,-1.000) -- (-0.414,-1.000);
\draw[asc] (-0.414,1.000) -- (-0.414,1.000) -- (-0.433,0.962) -- (-0.453,0.928) -- (-0.473,0.897) -- (-0.493,0.870);
\fill[sepc] (-0.412,0.610) circle (1.0pt);
\fill[sepc] (-0.155,0.301) circle (1.0pt);
\fill[sepc] (-0.247,-0.656) circle (1.0pt);
\fill[sepc] (-0.493,0.870) circle (1.0pt);
\draw[land] (-0.254,0.312) circle (1.5pt);
\draw[land] (-0.080,0.453) circle (1.5pt);
\draw[land] (-0.468,-0.485) circle (1.5pt);
\draw[land] (-0.088,-0.707) circle (1.5pt);
\draw[land] (-0.767,0.645) circle (1.5pt);
\draw[ffr] (-1,-1) rectangle (1,1);
\draw[evf] (-0.196,0.405) +(-1.6pt,-1.6pt) rectangle +(1.6pt,1.6pt);
\draw[evf] (-0.254,-0.691) +(-1.6pt,-1.6pt) rectangle +(1.6pt,1.6pt);
\draw[evf] (-0.414,1.000) circle (1.5pt);
\draw[evf] (-0.414,-1.000) circle (1.5pt);
\draw[evo] (1.000,0.634) circle (1.5pt);
\draw[evo] (-1.000,0.634) circle (1.5pt);
\node[lab,above left] at (-1,0.634) {$p\smash{\mathstrut}^{\prime}$};
\node[lab] at (-0.682,0.640) {$b$};
\node[font=\scriptsize] at (0,-1.30) {(h) $t=\frac{\sqrt3}2$: $p^{\prime}$};
\end{scope}
\begin{scope}[shift={(6.60,-2.70)}]
\draw[fax] (-1,0) -- (1,0); \draw[fax] (0,-1) -- (0,1);
\draw[band,gray!40] (-0.830,-1.000) -- (-0.825,-0.958) -- (-0.817,-0.918) -- (-0.808,-0.880) -- (-0.796,-0.845) -- (-0.783,-0.811) -- (-0.768,-0.778) -- (-0.750,-0.747) -- (-0.730,-0.716) -- (-0.708,-0.687) -- (-0.683,-0.658) -- (-0.656,-0.631) -- (-0.626,-0.603) -- (-0.593,-0.575) -- (-0.557,-0.546) -- (-0.468,-0.485);
\draw[band,cM] (-0.468,-0.485) -- (-0.322,-0.388) -- (-0.282,-0.359) -- (-0.251,-0.332) -- (-0.224,-0.302) -- (-0.203,-0.272) -- (-0.187,-0.240) -- (-0.175,-0.203) -- (-0.167,-0.170) -- (-0.162,-0.134) -- (-0.158,-0.092) -- (-0.155,-0.044) -- (-0.154,0.015) -- (-0.155,0.070) -- (-0.158,0.116) -- (-0.162,0.155) -- (-0.167,0.182) -- (-0.174,0.206) -- (-0.182,0.226) -- (-0.191,0.246) -- (-0.203,0.264) -- (-0.217,0.280) -- (-0.234,0.296) -- (-0.254,0.312);
\draw[band,cL] (-0.254,0.312) -- (-0.285,0.334) -- (-0.325,0.357) -- (-0.523,0.459) -- (-0.571,0.486) -- (-0.614,0.512) -- (-0.657,0.541) -- (-0.695,0.570) -- (-0.728,0.600) -- (-0.755,0.629) -- (-0.781,0.665) -- (-0.802,0.702) -- (-0.818,0.740) -- (-0.829,0.782) -- (-0.836,0.828) -- (-0.838,0.878) -- (-0.836,0.935) -- (-0.830,1.000);
\draw[band,gray!40] (-0.093,-1.000) -- (-0.091,-0.832) -- (-0.088,-0.707);
\draw[band,cM] (-0.088,-0.707) -- (-0.083,-0.607) -- (-0.077,-0.531) -- (-0.067,-0.471) -- (-0.053,-0.422) -- (-0.038,-0.386) -- (-0.019,-0.353) -- (-0.000,-0.329) -- (0.054,-0.266) -- (0.077,-0.237) -- (0.099,-0.206) -- (0.117,-0.174) -- (0.134,-0.136) -- (0.145,-0.097) -- (0.152,-0.054) -- (0.154,-0.008) -- (0.153,0.038) -- (0.147,0.079) -- (0.136,0.116) -- (0.121,0.151) -- (0.102,0.183) -- (0.078,0.214) -- (0.050,0.244) -- (-0.012,0.302) -- (-0.028,0.320) -- (-0.040,0.337) -- (-0.054,0.361) -- (-0.065,0.387) -- (-0.074,0.417) -- (-0.080,0.453);
\draw[band,cL] (-0.080,0.453) -- (-0.088,0.526) -- (-0.092,0.627) -- (-0.093,0.771) -- (-0.093,1.000);
\draw[cur,cP] (1.000,0.500) -- (0.950,0.507) -- (0.906,0.516) -- (0.871,0.526) -- (0.842,0.538) -- (0.820,0.551) -- (0.803,0.566) -- (0.793,0.583) -- (0.791,0.592) -- (0.790,0.602) -- (0.791,0.612) -- (0.793,0.622) -- (0.805,0.643) -- (0.823,0.664) -- (0.848,0.684) -- (0.878,0.703) -- (0.913,0.720) -- (0.954,0.736) -- (1.000,0.750);
\draw[cur,cL] (-1.000,0.750) -- (-0.797,0.805) -- (-0.679,0.834) -- (-0.634,0.842) -- (-0.596,0.846) -- (-0.564,0.847) -- (-0.537,0.845) -- (-0.512,0.838) -- (-0.490,0.827) -- (-0.471,0.812) -- (-0.455,0.791) -- (-0.442,0.764) -- (-0.433,0.732) -- (-0.427,0.695) -- (-0.425,0.657) -- (-0.427,0.623) -- (-0.432,0.592) -- (-0.440,0.565) -- (-0.452,0.543) -- (-0.467,0.524) -- (-0.485,0.509) -- (-0.507,0.497) -- (-0.534,0.487) -- (-0.565,0.481) -- (-0.604,0.476) -- (-0.649,0.474) -- (-0.702,0.475) -- (-0.765,0.478) -- (-0.838,0.483) -- (-1.000,0.500);
\draw[cur,cM] (-0.168,-0.741) -- (-0.195,-0.736) -- (-0.217,-0.726) -- (-0.236,-0.711) -- (-0.253,-0.689) -- (-0.266,-0.662) -- (-0.278,-0.627) -- (-0.287,-0.585) -- (-0.293,-0.535) -- (-0.299,-0.462) -- (-0.302,-0.374) -- (-0.302,-0.272) -- (-0.298,-0.160) -- (-0.293,-0.042) -- (-0.286,0.052) -- (-0.277,0.116) -- (-0.266,0.167) -- (-0.258,0.191) -- (-0.249,0.212) -- (-0.238,0.229) -- (-0.226,0.244) -- (-0.212,0.257) -- (-0.196,0.268) -- (-0.178,0.277) -- (-0.157,0.284) -- (-0.131,0.289) -- (-0.101,0.293) -- (-0.066,0.295) -- (-0.027,0.294) -- (0.016,0.292) -- (0.055,0.288) -- (0.090,0.282) -- (0.122,0.274) -- (0.154,0.263) -- (0.183,0.249) -- (0.206,0.233) -- (0.227,0.213) -- (0.244,0.190) -- (0.258,0.163) -- (0.269,0.132) -- (0.278,0.096) -- (0.286,0.042) -- (0.292,-0.026) -- (0.294,-0.104) -- (0.293,-0.180) -- (0.289,-0.245) -- (0.282,-0.307) -- (0.272,-0.365) -- (0.259,-0.417) -- (0.247,-0.455) -- (0.233,-0.490) -- (0.218,-0.522) -- (0.201,-0.551) -- (0.182,-0.579) -- (0.161,-0.604) -- (0.137,-0.627) -- (0.113,-0.647) -- (0.084,-0.667) -- (0.052,-0.685) -- (0.018,-0.700) -- (-0.019,-0.714) -- (-0.065,-0.728) -- (-0.105,-0.737) -- (-0.140,-0.741) -- (-0.168,-0.741);
\draw[sep] (-0.196,0.405) -- (-0.215,0.386) -- (-0.230,0.365) -- (-0.243,0.341) -- (-0.254,0.312);
\draw[sep] (-0.196,0.405) -- (-0.171,0.423) -- (-0.144,0.437) -- (-0.113,0.447) -- (-0.080,0.453);
\draw[sep] (-0.254,-0.691) -- (-0.288,-0.678) -- (-0.321,-0.661) -- (-0.352,-0.640) -- (-0.382,-0.615) -- (-0.407,-0.587) -- (-0.430,-0.556) -- (-0.450,-0.522) -- (-0.468,-0.485);
\draw[sep] (-0.254,-0.691) -- (-0.223,-0.698) -- (-0.189,-0.703) -- (-0.147,-0.706) -- (-0.088,-0.707);
\draw[sep] (-1.000,0.634) -- (-1.000,0.634) -- (-0.905,0.648) -- (-0.826,0.664) -- (-0.762,0.680) -- (-0.737,0.689) -- (-0.718,0.697) -- (-0.731,0.688) -- (-0.744,0.675) -- (-0.756,0.661) -- (-0.767,0.645);
\draw[asc] (-0.196,0.405) -- (-0.222,0.445) -- (-0.252,0.484) -- (-0.283,0.520) -- (-0.314,0.550) -- (-0.344,0.575) -- (-0.374,0.594) -- (-0.402,0.607) -- (-0.428,0.614);
\draw[asc] (-0.196,0.405) -- (-0.178,0.372) -- (-0.166,0.344) -- (-0.158,0.318) -- (-0.153,0.285);
\draw[asc] (-0.254,-0.691) -- (-0.234,-0.661) -- (-0.219,-0.644) -- (-0.238,-0.653) -- (-0.267,-0.661);
\draw[asc] (-0.254,-0.691) -- (-0.286,-0.747) -- (-0.328,-0.825) -- (-0.373,-0.915) -- (-0.414,-1.000) -- (-0.414,-1.000);
\draw[asc] (-0.414,1.000) -- (-0.414,1.000) -- (-0.440,0.949) -- (-0.466,0.906) -- (-0.494,0.869) -- (-0.521,0.841);
\fill[sepc] (-0.428,0.614) circle (1.0pt);
\fill[sepc] (-0.153,0.285) circle (1.0pt);
\fill[sepc] (-0.267,-0.661) circle (1.0pt);
\fill[sepc] (-0.521,0.841) circle (1.0pt);
\draw[land] (-0.254,0.312) circle (1.5pt);
\draw[land] (-0.080,0.453) circle (1.5pt);
\draw[land] (-0.468,-0.485) circle (1.5pt);
\draw[land] (-0.088,-0.707) circle (1.5pt);
\draw[land] (-0.767,0.645) circle (1.5pt);
\draw[ffr] (-1,-1) rectangle (1,1);
\draw[evf] (-0.196,0.405) +(-1.6pt,-1.6pt) rectangle +(1.6pt,1.6pt);
\draw[evf] (-0.254,-0.691) +(-1.6pt,-1.6pt) rectangle +(1.6pt,1.6pt);
\draw[evf] (-0.414,1.000) circle (1.5pt);
\draw[evf] (-0.414,-1.000) circle (1.5pt);
\draw[evf] (1.000,0.634) circle (1.5pt);
\draw[evf] (-1.000,0.634) circle (1.5pt);
\node[lab] at (-0.76,0.30) {$L_3$};
\node[lab] at (0.45,0.0) {$O_1$};
\node[lab] at (0.66,0.63) {$\lambda$};
\node[font=\scriptsize] at (0,-1.30) {(i) $t=1$};
\end{scope}
\begin{scope}[shift={(8.80,-2.70)}]
\draw[band,cL] (-1.0,0.80) -- (-0.74,0.80); \node[lab,right] at (-0.70,0.80) {bottom $\hat C_{t_0}$};
\draw[cur,cL] (-1.0,0.50) -- (-0.74,0.50); \node[lab,right] at (-0.70,0.50) {$C_t$, regular level};
\draw[curev] (-1.0,0.20) -- (-0.74,0.20); \node[lab,right] at (-0.70,0.20) {$C_t$, event level};
\draw[sep] (-1.0,-0.10) -- (-0.76,-0.10); \draw[land] (-0.76,-0.10) circle (1.5pt); \node[lab,right] at (-0.70,-0.10) {descending separatrix};
\draw[asc] (-1.0,-0.40) -- (-0.76,-0.40); \fill[sepc] (-0.76,-0.40) circle (1.0pt); \node[lab,right] at (-0.70,-0.40) {ascending separatrix};
\draw[evo] (-0.96,-0.70) +(-1.6pt,-1.6pt) rectangle +(1.6pt,1.6pt); \draw[evo] (-0.80,-0.70) circle (1.5pt); \node[lab,right] at (-0.70,-0.70) {event points};
\end{scope}
\end{tikzpicture}
\caption{Example~\ref{ex:infinity} in the square model $u=\bigl(\frac{x_1}{2+|x_1|},\frac{x_2}{1+|x_2|}\bigr)$ of $X_\Delta=\RP^1\times\RP^1$ (opposite sides identified; left and right: $D_{e_R}$, top and bottom: $D_{e_T}$). (a), (c), (e), (g), (i): regular levels; (b), (d), (f), (h): the events at $q_1$, $q_2$, $p$ and $p'$. The band is coloured by the component whose arcs land on each bottom interval, at an event just before it, grey on dead intervals; circles on the band are landing points, dots on $C_t$ special points, and event points are filled after their event. (b) Inset: near $q_1$. (f) $L_2'$ has shrunk to $p$, and $L_1$ touches $D_{e_T}$ at $p$ from the other side. (h) The separatrix of the boundary saddle at $p'$ lands at $b$. In $\R^2$ separatrices can turn back where $\partial_tf_t=0$ (h, i).}
\label{fig:sep}
\end{figure}
\FloatBarrier
\section{Complements to Section~\ref{sec:lifting}}\label{app:kurtz}

We use the notation of Section~\ref{sec:lifting}: we compare the dequantization path with other liftings and give a criterion for the absence of boundary events.

\subsection{Dequantization versus other liftings}\label{app:otherrays}

Let $D=\Delta_\cA\prod_{e\text{ outer}}\Delta_{\cA\cap e}$, the factor of $E_\cA$ that detects singular points in $(\C^*)^2$ and multiple roots of the edge polynomials (a discriminant whose variety is not a hypersurface is $1$). The singular points of $f_t$ in $(\C^*)^2$, resp.\ the multiple roots of $\psi^e_t$, form orbits of the finite group $G$ of the $\lambda\in(\C^*)^2$ with $\lambda^a=1$ for all $a\in\cA$, resp.\ $G_e$ of the $\mu\in\C^*$ with $\mu^k=\mu^{k'}$ for all exponents $k,k'$ of $\psi^e$, acting freely; if its order is even, it contains a real element of order two, and real event points come in pairs. A zero of $D(c(t))$ need not be an event: $1+by^2+y^4$, $b>0$, has the double roots $\pm i$ at $b=2$, where its discriminant $b^2-4$ changes sign, but no real root. We call the path \emph{real-generic} if every zero of $D(c(t))$ in $(0,1]$ is an event, as when $G$ and the $G_e$ have odd order and at every zero of $D(c(t))$ the singular points, resp.\ multiple roots, form one orbit, since a conjugation-invariant orbit of odd size contains exactly one real point.

\begin{proposition}[dequantization versus other liftings]\label{prop:otherrays}
Assume that $\cA$ contains a circuit; if it has none, no lifting has an event (\S\ref{ssec:window}), and there is nothing to compare. Let $D=\sum_\beta p_\beta c^\beta$, and let $u,v\in\R^{\cA}$, $0<g(u)<\infty$, be such that the ray $\{u+\varrho v\}_{\varrho\ge0}$ meets the codimension-one skeleton of the normal fan of $\operatorname{Newt}(D)$ only in relative interiors of codimension-one cones, transversally, traversing the normal cones of the vertices $\beta^{(0)},\dots,\beta^{(q)}$ of $\operatorname{Newt}(D)$; let $\operatorname{sc}(u,v)$ be the number of indices $j$ with $\sgn(p_{\beta^{(j-1)}}\varsigma^{\beta^{(j-1)}})\neq\sgn(p_{\beta^{(j)}}\varsigma^{\beta^{(j)}})$. For $c=\varsigma e^{\sigma u}$: (i) the dequantization path has no events for $\sigma>r/g(u)$; (ii) there is $\sigma_0$ such that for $\sigma\ge\sigma_0$ the lifting $h=-v$ has at least $\operatorname{sc}(u,v)$ events if its path is real-generic. Consequently, if $u$ is standard Gaussian and $v$ is independent of $u$ with a density (so that $g(u)>0$ and the transversality hold almost surely), and the paths in (ii) are almost surely real-generic for all large $\sigma$, then $\liminf_{\sigma\to\infty}\E\bigl[K_{-v}(\varsigma e^{\sigma u})\bigr]\ge\E[\operatorname{sc}(u,v)]$, where $K_{-v}(c)$ is the number of events of the path of $h=-v$ for the coefficients $c$, whereas $\E[K(\sigma)]\to0$.
\end{proposition}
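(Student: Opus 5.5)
Part (i) is Corollary~\ref{cor:window} applied to $\ell=\log|c|=\sigma u$. Since $g$ is positively homogeneous, $g(\sigma u)=\sigma g(u)$, so $\sigma>r/g(u)$ gives $g(\ell)>r$ and hence no events. For the final assertion $\E[K(\sigma)]\to0$ we use Theorem~\ref{thm:prob}, whose bound $(|\cB|-1)C_1/\sigma$ tends to zero.

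For (ii), write the path of $h=-v$ for $c=\varsigma e^{\sigma u}$ as $\log|c(t)|=\sigma u+\log(1/t)\,v$ and substitute $\varrho=\log(1/t)/\sigma\in[0,\infty)$. Then $\log|c(t)|=\sigma(u+\varrho v)$ and
\[
D(c(t))=\sum_\beta p_\beta\varsigma^\beta e^{\sigma\langle\beta,u+\varrho v\rangle}.
\]
The ray of the statement crosses the codimension-one walls at finitely many parameters $0<\varrho_1<\dots<\varrho_q$, and $u$ lies in the open normal cone of $\beta^{(0)}$ (transversality; $g(u)>0$ keeps $u$ off walls, or we simply assume it). Choose points $\varrho'_0<\varrho_1<\varrho'_1<\dots<\varrho_q<\varrho'_q$, with $\varrho'_0=0$, and put $t'_j=e^{-\sigma\varrho'_j}$. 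At $\varrho'_j$ the vertex $\beta^{(j)}$ is the unique maximizer of $\langle\beta,u+\varrho'_jv\rangle$, with margin $\mu_j>0$ over all other $\beta$. Hence for $\sigma$ large the term $p_{\beta^{(j)}}\varsigma^{\beta^{(j)}}e^{\sigma\langle\cdot\rangle}$ dominates the sum of the others, whose ratio to it is at most $|\cB_D|\max|p|/\min|p|\cdot e^{-\sigma\mu_j}$. This gives $\sgn D(c(t'_j))=\sgn(p_{\beta^{(j)}}\varsigma^{\beta^{(j)}})$. Each index $j$ counted by $\operatorname{sc}$ then produces a zero of $D(c(t))$ in $(t'_j,t'_{j-1})\subset(0,1]$. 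These intervals are disjoint, so we get at least $\operatorname{sc}(u,v)$ distinct zeros, and by real-genericity each of them is an event. The threshold is $\sigma_0$ with $\sigma_0\min_j\mu_j$ large enough. The parameter $t=1$, i.e.\ $\varrho=0$, lies inside the cone of $\beta^{(0)}$, so there $D\neq0$ and no zero sits at the endpoint.

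For the expectation, note that almost surely the hypotheses hold. Since $u$ is Gaussian, $g(u)>0$ off finitely many hyperplanes, and $g(u)<\infty$ because $\cA$ has a circuit. The ray is transversal for almost every $(u,v)$: the bad set is contained in a finite union of lower-dimensional semialgebraic sets, namely pairs whose ray meets the codimension-two skeleton, and this is null since $v$ has a density independent of $u$. Almost surely the paths are real-generic for large $\sigma$ by assumption. So $\liminf_\sigma K_{-v}(\varsigma e^{\sigma u})\ge\operatorname{sc}(u,v)$ pointwise almost surely, and Fatou's lemma gives $\liminf\E[K_{-v}]\ge\E[\operatorname{sc}]$.

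The main obstacle is the care in (ii): the threshold $\sigma_0$ depends on $(u,v)$ through the margins $\mu_j$. This is harmless for a pointwise liminf followed by Fatou, but it must be stated that way and not uniformly. One must also check that transversality really is almost sure. I would handle this by noting that the ray meets a codimension-two cone $\Sigma$ only if $u\in\Sigma-\R_{\ge0}v$, which for fixed $v$ is a set of dimension at most $|\cA|-1$, so it is null for the Gaussian $u$.
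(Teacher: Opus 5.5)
Your proof is correct and is essentially the paper's. Part (i) comes from Corollary~\ref{cor:window} by homogeneity of $g$. Part (ii) evaluates $D$ at points $u+\varrho_jv$ in the open normal cones, where the vertex term dominates for large $\sigma$, and applies the intermediate value theorem. The limit statement follows from Fatou's lemma and Theorem~\ref{thm:prob}.

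The one loose step is $\varrho'_0=0$, which needs $u$ itself off the walls. The paper sidesteps this by taking $\varrho_0>0$ small. Your claim that $g(u)>0$ suffices is true: for generic $-u$ the functional $u$ has a unique maximizer on $\operatorname{Newt}(E_\cA)$, hence on its Minkowski summand $\operatorname{Newt}(D)$. But it should be argued rather than left as ``or we simply assume it''.
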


\begin{proof}
(i) is Corollary~\ref{cor:window}(ii). (ii) Choose $0<\varrho_0<\dots<\varrho_q$ with $u+\varrho_jv$ in the open normal cone of $\beta^{(j)}$. The path of $h=-v$ is $\log|c(t)|=\sigma u+(s-1)v$, and at $s_j=1+\sigma\varrho_j$ the differences $\langle\beta^{(j)}-\beta,\sigma u+\sigma\varrho_jv\rangle$, $\beta\neq\beta^{(j)}$ in the support of $D$, are $\sigma$ times positive numbers. Hence for $\sigma$ larger than some $\sigma_0$ the term of $\beta^{(j)}$ dominates at every $s_j$, i.e.\ exceeds the sum of the absolute values of the others, so $\sgn D(c(s_j))=\sgn(p_{\beta^{(j)}}\varsigma^{\beta^{(j)}})$, and $D(c(s))$ vanishes between $s_{j-1}$ and $s_j$ whenever these signs differ; on a real-generic path its zeros are events. The last statement follows from Fatou's lemma and Theorem~\ref{thm:prob}.
\end{proof}

Such sign changes occur: for the cubic of Example~\ref{ex:cubic}, $D=c_{11}^3+27c_{00}c_{30}c_{03}$, and for $\varsigma=(+,+,+,-)$ its two terms have opposite signs, so $\operatorname{sc}(u,v)=1$ if and only if the ray crosses the wall, i.e.\ if $\langle w,u\rangle$ and $\langle w,v\rangle$ have opposite signs, where $w$ is the difference of the two vertices of $\operatorname{Newt}(D)$; for standard Gaussian $v$ this has probability $\frac12$.

\subsection{A criterion for the absence of boundary events}\label{app:kurtzcrit}

The event window (Theorem~\ref{thm:window}) confines the events of the dequantization path for every support. For an outer edge all of whose lattice points carry terms, a condition on consecutive coefficients excludes boundary events on that edge altogether; Corollary~\ref{cor:window}(i) is a multivariate analogue.

\begin{proposition}[no boundary events]\label{prop:kurtz}
Let $e$ be an outer edge all of whose lattice points lie in $\cA$, and write $\psi^e=y^{k_0}\sum_{k=0}^dc_ky^k$. Suppose that $|c_k|^2\ge5|c_{k-1}c_{k+1}|$ for $0<k<d$, or that $|c_k|^2>4|c_{k-1}c_{k+1}|$ for $0<k<d$ and the signs of the $c_k$ are constant or alternating. Then every polynomial on the dequantization path of $\psi^e$ has $d$ simple real roots; in particular the dequantization path of $f$ has no boundary event on $e$.
\end{proposition}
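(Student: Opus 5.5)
The plan is to reduce to classical root-counting criteria for univariate polynomials and to check that the hypotheses are stable along the dequantization path. The path of $\psi^e$ is $\psi^{e,(s)}=y^{k_0}\sum_k\varsigma_k|c_k|^sy^k$ for $s\ge1$. Its coefficient ratios satisfy
\[
\frac{|c_k|^{2s}}{|c_{k-1}c_{k+1}|^{s}}=\Bigl(\frac{|c_k|^2}{|c_{k-1}c_{k+1}|}\Bigr)^s\ge\frac{|c_k|^2}{|c_{k-1}c_{k+1}|}.
\]
The inequality holds because the ratio is at least $4>1$ and $s\ge1$. So both hypotheses, with their constants $5$ and $4$, persist for every $s\ge1$, and the sign pattern $\varsigma$ is unchanged. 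It therefore suffices to prove the claim for a single polynomial $p(y)=\sum_{k=0}^dc_ky^k$ satisfying either hypothesis, with all lattice points of $e$ present so that no $c_k$ vanishes.

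For the first hypothesis, $|c_k|^2\ge5|c_{k-1}c_{k+1}|$, I would invoke Kurtz's theorem. It states that a real polynomial with positive coefficients and $c_k^2>4c_{k-1}c_{k+1}$ has only real, simple, negative roots. The constant $5$ should absorb arbitrary signs. The standard route is the Hutchinson–Kurtz argument: evaluate $p$ at the points $y_j=-c_{j}/c_{j+1}$ (with signs adjusted), where the term $c_jy^j$ dominates. One shows that $p(y_j)$ has the sign of its dominant term, and that these signs alternate in $j$. This yields $d$ sign changes and hence $d$ real roots.

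With arbitrary signs, replacing $y$ by $-y$ does not make all coefficients positive. Instead I would use the dominance estimate directly on $|y|=|c_j/c_{j+1}|$, on the positive or negative half-line as needed. The ratio condition gives geometric decay of the neighbouring terms relative to $|c_jy^j|$, with ratio at most $1/5$ on each side. The sum of the other terms is then bounded by about $2\sum_{m\ge1}5^{-m(m+1)/2}$-type tails, which are less than $1$. Sign information on the two half-lines then produces $d$ real roots.

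For the second hypothesis, with constant or alternating signs, the substitution $y\mapsto\pm y$ and $p\mapsto\pm p$ reduces to positive coefficients, and Kurtz's theorem applies verbatim. In both cases, having $d$ distinct real roots in $\R^*$ for a polynomial of degree $d$ with $c_0\neq0$ forces all roots to be simple and non-zero. Hence $\psi^{e,(s)}$ has no multiple root in $\R^*$ for any $s\ge1$, so by Definition~\ref{def:event}(B) there is no boundary event on $e$.

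The main obstacle is the mixed-sign case under the constant $5$. The dominance points must be placed on the correct half-line so that the sign alternation still gives $d$ sign changes in total over $\R^*$. The plan is to count sign changes of $p$ along the sequence of points $\pm|c_j/c_{j+1}|$ separately on $\R_{>0}$ and $\R_{<0}$. On each half-line the signs of the dominant terms $c_jy^j$ change exactly where Descartes' rule for $p(y)$, respectively $p(-y)$, predicts. The total is then $\mathrm{var}(c)+\mathrm{var}((-1)^kc_k)=d$, so $p$ has $d$ real roots.
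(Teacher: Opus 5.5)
Your reduction to $s=1$ and your treatment of constant or alternating signs by Kurtz's theorem are what the paper does. The gap is in the mixed-sign case with the constant $5$: the evaluation points you chose are wrong, and the dominance estimate fails there.

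\emph{Why the proposed points fail.} Write $q_k=|c_k/c_{k+1}|$, so that $q_k\ge5q_{k-1}$. At $|y|=q_j$ the next term $c_{j+1}y^{j+1}$ has exactly the modulus of $c_jy^j$. Indeed, $c_jy^j+c_{j+1}y^{j+1}=0$ at $y=-c_j/c_{j+1}$, for every sign pattern. At $|y|=q_j$ the terms $m$ steps to the left are bounded by $5^{-m(m+1)/2}|c_jy^j|$. Those $m$ steps to the right are only bounded by $5^{-m(m-1)/2}|c_jy^j|$, which is $1$ for $m=1$. So ``ratio at most $1/5$ on each side'' and the tail bound $2\sum_m5^{-m(m+1)/2}$ are false. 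No sign of $p$ is determined at these points, and the sign-change count never starts.

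Adjusting signs does not rescue this. At $y=c_j/c_{j+1}$ the two terms add, but that gives only one point per $j$, on a half-line fixed by the signs. For signs $(+,-,+,+)$, for instance, it detects neither of the two positive roots.

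\emph{The repair.} Evaluate at the geometric means $\rho_k=\sqrt{q_{k-1}q_k}=\sqrt{|c_{k-1}/c_{k+1}|}$ for $0<k<d$. There $|c_jy^j|\le5^{-(j-k)^2/2}|c_ky^k|$ for every $j$, so the other terms sum to at most $2\sum_{i\ge1}5^{-i^2/2}|c_ky^k|<0.98\,|c_ky^k|$. The margin is thin: with $4$ in place of $5$ this sum is about $1.13$, which is why the constant $5$ is needed for arbitrary signs. Your estimate suggested much more room than there is.

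Then $\sgn p(\pm\rho_k)=\sgn\bigl(c_k(\pm1)^k\bigr)$ and $\rho_{k+1}\ge5\rho_k$. Add the signs $\sgn c_0$ near $0$ and $\sgn\bigl(c_d(\pm1)^d\bigr)$ near $\pm\infty$. The intermediate value theorem now gives $\mathrm{var}(c)$ positive and $\mathrm{var}((-1)^kc_k)$ negative roots in disjoint intervals. Since no $c_k$ vanishes, these add up to $d$, as you intended.

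\emph{Comparison with the paper.} The repaired argument is a genuinely different finish. The paper uses the same estimate on the circles $|y|=\rho_k$, applies Rouch\'e's theorem to put exactly one root in each of the $d$ annuli, and uses complex conjugation to force these roots to be real. Your version stays on the real line and locates each root together with its sign. The paper's version needs no bookkeeping of sign variations.
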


\begin{proof}
Along the path the ratios $|c_k|^2/|c_{k-1}c_{k+1}|$ are raised to the power $s\ge1$, so the hypotheses persist, and it suffices to treat $s=1$. For constant signs this is Kurtz's theorem \cite{Kur92}, and alternating signs are reduced to it by $y\mapsto-y$. In general put $q_k=|c_k|/|c_{k+1}|$, so that $q_k\ge5q_{k-1}$, and $\rho_k=\sqrt{q_{k-1}q_k}$ for $0<k<d$. On the circle $|y|=\rho_k$ one finds $|c_jy^j|\le5^{-(j-k)^2/2}|c_ky^k|$ for every $j$, hence $\sum_{j\neq k}|c_jy^j|\le2\sum_{i\ge1}5^{-i^2/2}\,|c_ky^k|<0.98\,|c_ky^k|$. By Rouch\'e's theorem, $\sum_kc_ky^k$ has exactly $k$ roots in $|y|<\rho_k$, hence exactly one root in each of the $d$ regions into which the circles $|y|=\rho_k$ divide $\C$. A non-real root would come with its conjugate in the same region, so all roots are real, and they are simple.
\end{proof}

For arbitrary signs the constant $4$ does not suffice: $1+5y+6y^2-y^3$ has the ratios $25/6$ and $36/5$, both larger than $4$, but only one real root.


\end{document}